\def\beq{\begin{equation}}
\def\endeq{\end{equation}}
\def\lesim{\lesssim}
\def\lesim{\lesssim}
\numberwithin{equation}{section}
\numberwithin{equation}{section}
\newcommand{\bbH}{\mathbb{H}}
\newcommand{\C}{\mathbb{C}}
\newcommand{\R}{\mathbb{R}}
\newcommand{\Z}{\mathbb{Z}}
\newcommand{\N}{\mathbb{N}}
\newcommand{\ud}{\mathrm{d}}
\newcommand{\supp}{\mathrm{supp}\,}
\newcommand{\fa}{\mathfrak{a}}
\newcommand{\fb}{\mathfrak{b}}
\newcommand{\fc}{\mathfrak{c}}
\newcommand{\ba}{\mathbbm{a}}
\newcommand{\bc}{\mathbbm{c}}
\theoremstyle{plain}
\newtheorem{theorem}{Theorem}[section]
\newtheorem{lemma}[theorem]{Lemma}
\newtheorem{proposition}[theorem]{Proposition}
\newtheorem{corollary}[theorem]{Corollary}
\newtheorem*{key estimate}{Key estimate}
\theoremstyle{definition}
\newtheorem{definition}[theorem]{Definition}
\newtheorem*{key example}{Key example}
\newcommand{\Be}{\begin{equation}}
\newcommand{\Ee}{\end{equation}}
\newcommand{\Bea}{\begin{align}}
\newcommand{\Eea}{\end{align}}
\newcommand{\Beas}{\begin{align*}}
\newcommand{\Eeas}{\end{endalign*}}
\newcommand{\Benu}{\begin{enumerate}}
\newcommand{\Eenu}{\end{enumerate}}
\newcommand{\Bi}{\begin{itemize}}
\newcommand{\Ei}{\end{itemize}}
\newcommand\bbone{{\mathbbm 1}}
\newcommand{\normm}[1]{{\left\vert\kern-0.25ex\left\vert\kern-0.25ex\left\vert #1 \right\vert\kern-0.25ex\right\vert\kern-0.25ex\right\vert}}
\newcommand{\floor}[1]{\lfloor #1 \rfloor }
\def\dil{{\text{\rm Dil}}}
\def\intslash{\rlap{\kern  .32em $\mspace {.5mu}\backslash$ }\int}
\def\qsl{{\rlap{\kern  .32em $\mspace {.5mu}\backslash$ }\int_{Q_x}}}
\def\vth{\vartheta}
\def\R{\mathbb R}
\def\N{\mathbb N}
\def\floor#1{{\lfloor #1 \rfloor }}
\def\emph#1{{\it #1 }}
\def\supp{{\text{\rm supp }}}
\def\inn#1#2{\langle#1,#2\rangle}
\def\biginn#1#2{\big\langle#1,#2\big\rangle}
\def\noi{\noindent}
\def\meas{{\text{\rm meas}}}
\def\lc{\lesssim}
\def\gc{\gtrsim}
\def\gam{\gamma}             
\def\eps{\varepsilon}
\def\ka{\kappa}
\def\la{\lambda}
\def\om{\omega}
\def\fA{{\mathfrak {A}}}
\def\fB{{\mathfrak {B}}}
\def\fM{{\mathfrak {M}}}
\def\fP{{\mathfrak {P}}}
\def\fa{{\mathfrak {a}}}
\def\fb{{\mathfrak {b}}}
\def\fc{{\mathfrak {c}}}
\def\fu{{\mathfrak {u}}}
\def\fv{{\mathfrak {v}}}
\def\bbH{{\mathbb {H}}}
\def\bbN{{\mathbb {N}}}
\def\bbR{{\mathbb {R}}}
\def\bbZ{{\mathbb {Z}}}
\def\cG{{\mathcal {G}}}
\def\cJ{{\mathcal {J}}}
\def\cL{{\mathcal {L}}}
\def\cM{{\mathcal {M}}}
\def\be#1{\begin{equation}\label{#1}}
\def\endeq{\end{equation}}
\def\endal{\end{align}}
\def\bas{\begin{align*}}
\def\eas{\end{align*}}
\def\bi{\begin{itemize}}
\def\ei{\end{itemize}}
\begin{document}
\title
[The circular maximal operator on Heisenberg radial functions]{The circular maximal operator \\on Heisenberg radial functions}

\author[D. Beltran, S. Guo, J. Hickman,  A. Seeger]{David Beltran \quad   Shaoming Guo \\  Jonathan Hickman   \quad  Andreas Seeger}
\date{\today}
\subjclass[2010]{42B25, 22E25, 43A80, 35S30}

\makeatletter
\def\author@andify{%
  \nxandlist {\unskip ,\penalty-1 \space\ignorespaces}%
    {\unskip {} \@@and~}%
    {\unskip \penalty-2 \space \@@and~}%
}
\makeatother

\address{David Beltran: Department of Mathematics, University of Wisconsin, 480 Lincoln Drive, Madison, WI, 53706, USA.}
\email{dbeltran@math.wisc.edu}
\address{Shaoming Guo: Department of Mathematics, University of Wisconsin, 480 Lincoln Drive, Madison, WI, 53706, USA.}
\email{shaomingguo@math.wisc.edu}
\address{Jonathan Hickman: School of Mathematics, James Clerk Maxwell Building, The King's Buildings, Peter Guthrie Tait Road, Edinburgh, EH9 3FD, UK.}
\email{jonathan.hickman@ed.ac.uk}
\address{Andreas Seeger: Department of Mathematics, University of Wisconsin, 480 Lincoln Drive, Madison, WI, 53706, USA.}
\email{seeger@math.wisc.edu}

\tdplotsetmaincoords{70}{15}
\tikzset{every circle/.append style={x=1cm, y=1cm}}

\begin{abstract} 
 Lebesgue space estimates are obtained for the  circular maximal function on the Heisenberg group $\bbH^1$ restricted to a class of  Heisenberg radial functions. Under this  assumption, the problem reduces to studying a maximal operator on the Euclidean plane. This  operator has a number of interesting features: it is associated to a non-smooth curve distribution and, furthermore, fails both the usual rotational curvature and cinematic curvature conditions. 
\end{abstract}

\maketitle

\section{Introduction} Let $\bbH^n$ denote the Heisenberg group given by endowing $\R\times \bbR^{2n}$ with the non-commutative group operation
\begin{equation*}
 (u, x) \cdot (v, y) := \big(u + v + x^{\top}By, x + y\big) \qquad \textrm{for all $(u,x), (v,y) \in \R \times \R^{2n}$}
\end{equation*}
where $B = bJ$ with
$J :=\begin{pmatrix} 
0 & -I_n \\
I_n & 0\end{pmatrix} $
the matrix associated to the standard symplectic form on $\bbR^{2n}$ and
 $b \neq 0$ (usually one takes $b = 1/2$).

Let $\mu\equiv\mu_1$ denote the normalised surface measure on the sphere 
\begin{equation*}
    \{0\} \times S^{2n-1} := \{ (0,y) \in \R \times \R^{2n} : |y| =1\}.
\end{equation*} 
If $\dil_t(u,x):=(t^2u, tx)$ are the automorphic dilations on $\bbH^n$, then the normalised surface measure $\mu_t$ supported on $tS^{2n-1}$ can be viewed as a dilate of $\mu_1$ in the sense that $\inn{f}{\mu_t}= \inn {f(\dil_t\,\cdot\,)}{\mu} $.

Given a function $f$ on $\bbH^n$ belonging to a suitable \emph{a priori} class consider the spherical means 
\begin{equation*}
f* \mu_t (u,x) := \int_{S^{2n-1}} f(u - t x^{\top}By,x -ty )\,\ud \mu(y) \quad 
 \textrm{for $(u,x) \in \bbH^n$ and $t > 0$.}
\end{equation*}
For smooth functions $f$ one has $f*\mu_t(u,x)\to f(u,x)$ pointwise as $t\to 0$. It is of interest to extend this convergence result to an almost everywhere convergence result for functions on $L^p(\bbH^n)$, in a suitable range of $p$. Such a result  follows from $L^p$ bounds for the associated spherical maximal function
\begin{equation}\label{circular maximal operator}
Mf(u,x):=  \sup_{t > 0} |f* \mu_t (u,x)|.
\end{equation} 
The operator $M$ can be understood as a Heisenberg analogue of the classical (Euclidean) spherical maximal function of Stein~\cite{Stein1976} and Bourgain~\cite{Bourgain1986} (see also \cite{Mockenhaupt1992, SS1997, Schlag1998}).
The maximal function \eqref{circular maximal operator} was introduced by Nevo and Thangavelu in~\cite{Nevo1997} where $L^p$ estimates were proven in dimensions $n \geq 2$ for $p$ belonging to a non-sharp range.
By choosing $f$ to be the standard example 
\begin{equation*}
  f(u,x) := \big(|x|\log(1/|x|)\big)^{1-2n} \chi(u,x)  
\end{equation*} 
for an appropriate choice of cutoff function $\chi$, it follows that $L^p\to L^p$ estimates can only hold for $p> \frac{2n}{2n-1}$. For $n\ge 2$ the sufficiency of this condition was established independently by M\"uller and the fourth author~\cite{MS2004}  and by Narayanan and Thangavelu~\cite{Narayanan2004}; the work in~\cite{MS2004}  also treats a wider class of operators defined on M\'etivier groups. Results in a more general variable coefficient setting can be found in a recent paper by Kim \cite{joonil2019}.
Related to these investigations the $L^p$ results of~\cite{MS2004, Narayanan2004} were extended in~\cite{ACPS} to deal with variants of the operator \eqref{circular maximal operator} where the original sphere, centred at the origin, does not lie in the subspace 
$\{0\}\times \bbR^{2n}$ (that is, the corresponding dilates of $\mu$  are no longer supported in a fixed hyperplane). The  latter paper 
is closely related to 
\cite{PrSe2007}, \cite{PS2019}  which establish sharp $L^p$-Sobolev bounds for certain Radon-type operators associated to curves in three-dimensional manifolds;  in particular \cite{PS2019} covers the averages  $f \mapsto f \ast \mu_t$ in $\mathbb{H}^1$, and perturbations of these operators, when acting on compactly supported functions.
Mapping properties and sparse domination for a lacunary version of $M$ have been recently studied in~\cite{BHRT}, also under the assumption $n \geq 2$.
We note that for   the proofs of  the positive  results on the Heisenberg spherical maximal functions   mentioned above it was  essential that a boundedness result holds for $p=2$, which leads to the restriction $n\ge 2$. Such an $L^2$ result fails to hold on $\bbH^1$, and 
 it is currently not known whether the   circular maximal operator \eqref{circular maximal operator}  on the Heisenberg group $\bbH^1$ is bounded on $L^p(\bbH^1)$ for {\it any} $p<\infty$.

In this paper we consider the problem of estimating the maximal function \eqref{circular maximal operator} on the  sub-algebra of {\it Heisenberg-radial} (or {\it $\mathbb H$-radial}) functions on $\bbH^1$. 
Here  a function $f \colon \bbH^1 \to \C$ is said to be  $\mathbb{H}$-radial if 
$f(u,Rx)=f(u,x)$ for all $R\in \mathrm{SO}(2)$.
Given the underlying symmetries of the maximal operator, this is a natural condition to impose on the input function: indeed, if $f$ is $\bbH$-radial then, $Mf$ is also $\bbH$-radial. Our main theorem characterises the $L^p$ mapping properties of $M$ acting on  $\mathbb{H}$-radial functions.


\begin{theorem}\label{circular maximal theorem}
For $2 < p \leq \infty$ the \emph{a priori} estimate
\begin{equation*}
\|Mf \|_{L^p(\bbH^1)} \leq C_p \|f \|_{L^p(\bbH^1)}
\end{equation*}
holds for $\mathbb{H}$-radial functions on $\bbH^1$. Here $C_p$ is a constant depending only on $p$. 
\end{theorem}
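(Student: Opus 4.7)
My plan is to exploit the $\bbH$-radial symmetry to reduce the three-dimensional maximal function $M$ to a two-dimensional maximal operator on the upper half-plane, and then to bound the latter by Fourier-analytic methods, with extra care in the degenerate regime.

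Writing an $\bbH$-radial function as $f(u,x) = F(u, |x|)$, parameterising $y = (\cos\phi, \sin\phi)$, and using $x^{\top} B y = br\sin(\theta-\phi)$ and $|x-ty|^2 = r^2 + t^2 - 2tr\cos(\theta-\phi)$ for $x = r(\cos\theta, \sin\theta)$, the change of variable $\phi \mapsto \theta - \psi$ gives
\[
f * \mu_t(u, x) \;=\; \frac{1}{2\pi} \int_0^{2\pi} F\bigl(\, u - tbr\sin\psi,\; \sqrt{r^2 + t^2 - 2tr\cos\psi} \,\bigr)\, d\psi \;=:\; \tilde{T}_t F(u, r),
\]
which is independent of $\theta$. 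Thus $Mf$ is $\bbH$-radial and $Mf(u,x) = \tilde{M}F(u,|x|)$ for $\tilde{M}F := \sup_{t > 0} |\tilde{T}_t F|$, and since $\|f\|_{L^p(\bbH^1)}^p = 2\pi \int\int_0^\infty |F(u,r)|^p \, r\,dr\,du$, Theorem~\ref{circular maximal theorem} is equivalent to the weighted two-dimensional bound $\|\tilde{M} F\|_{L^p(r\,dr\,du)} \lesssim \|F\|_{L^p(r\,dr\,du)}$ for $p > 2$.

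For fixed $(r,t)$ the curve $\psi \mapsto (u - tbr\sin\psi, \sqrt{r^2+t^2-2tr\cos\psi})$ becomes, in the coordinates $(v, \sigma) = (v, \rho^2)$, a true ellipse centred at $(u, r^2 + t^2)$ with semi-axes $btr$ and $2tr$. In the original $(v, \rho)$-plane this curve is smooth whenever $r \neq t$, but its curvature at the lowest point $\rho = |r-t|$ diverges as $|r-t| \to 0$, and when $r = t$ the curve pinches tangentially onto the boundary $\{\rho = 0\}$. This degeneration is the root of the failure of the rotational and cinematic curvature conditions mentioned in the abstract. I would dyadically partition $(r,t)$-space in both $t$ and $|r-t|/r$ (reducing by scale-invariance to $t \sim 1$), and treat the non-degenerate and degenerate regimes separately. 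In the non-degenerate regime $|r-t| \gtrsim 1$, Fourier analysis in the $u$-variable combined with stationary phase in $\psi$ (critical points at $\psi = \pm\pi/2$) represents $\tilde{T}_t$ as a sum of two Fourier integral operators whose symbols have order $-1/2$ in the $u$-frequency; one then seeks a Sogge-type local smoothing estimate in $t \in [1,2]$ and combines it with the trivial $L^\infty$-bound via a Littlewood-Paley decomposition and the square-function / Sobolev embedding argument of Bourgain, giving the maximal bound for $p > 2$ on the non-degenerate piece.

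The main obstacle is the degenerate regime $|r - t| \ll r \sim t$. Here I would decompose further according to the scale $|r-t| \sim 2^{-k}$ and perform an anisotropic rescaling that normalises each shell, producing a model operator whose behaviour near the contact point $\rho = 0$ is universal. Crucially, the analysis must exploit (a) the radial weight $r$, which suppresses the contribution of small $r$, and (b) the boundary constraint $\rho \geq 0$, which rules out certain otherwise fatal overlap configurations between the ellipses corresponding to different values of $t$. Summing the contributions over the dyadic shells in $|r-t|$ and over the dyadic $t$-scales, using vector-valued $L^p$ inequalities to control the supremum, would then close the argument.
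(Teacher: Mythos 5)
Your reduction to a two-dimensional maximal operator is exactly the paper's first step, and your decision to decompose dyadically in $|r-t|$ and to invoke local smoothing in the nondegenerate regime follows the standard Mockenhaupt--Seeger--Sogge framework the paper also uses. But the proposal has a substantive gap in the degenerate regime, and moreover misidentifies the nature of the degeneracy.

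You write that the pinching of the curve at $r=t$, $\rho=0$ ``is the root of the failure of the rotational and cinematic curvature conditions.'' In fact these fail on \emph{different} hypersurfaces. The defining function of the averaging operator is $\Phi_t(u,r;v,\rho)=(u-v)^2-(b/2)^2\big(4r^2\rho^2-(r^2+\rho^2-t^2)^2\big)$, and one computes on $\{\Phi_t=0\}$ that $\mathrm{Rot}(\Phi_t)=4b^4 r t^2\rho(t^2-r^2-\rho^2)$, while $\mathrm{Cin}(\Phi)$ and $\mathrm{Proj}(\Phi)$ carry the factor $r^2-t^2$. So the rotational curvature degenerates on the hypersurface $t^2=r^2+\rho^2$, and the cinematic curvature degenerates on $r=t$; these intersect only at $\rho=0$. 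Your decomposition in $t$ and $|r-t|/r$ alone does not separate these two sources of trouble. The paper decomposes additionally in $\rho\sim 2^{-k}$, and the rotational degeneracy occurs precisely in the resonant strip $|r-t|\sim 2^{-\ell}$ with $\ell\approx 2k$, whereas the cinematic degeneracy occurs when $|r-t|$ is smaller still; without the $\rho$-decomposition you cannot localise to either.

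The deeper gap is that you propose no mechanism to absorb the $L^2$ loss caused by the vanishing rotational curvature. Near $t^2=r^2+\rho^2$ the canonical relation has a two-sided fold, so the $L^2$ oscillatory-integral estimate for a fixed $t$ degrades from $\lambda^{-1/2}$ to $\lambda^{-1/3}$; this is \emph{not} a loss you can rescale away or dispose of by the radial weight $r$ or by the positivity of $\rho$. Summing over frequency scales $\lambda=2^j$ with only $\lambda^{-1/3}$ decay would fail. The paper recovers the extra $\lambda^{-1/6}$ from the operator-specific identity $\mathrm{Rot}(\Phi_t)=4b^2 rt\rho\,\partial_t\Phi_t$: the vanishing of the rotational curvature is tied to vanishing of the $t$-derivative of the phase, so one gains when estimating $\partial_t T^\lambda_t$ in the Sobolev-embedding step. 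This requires both the two-sided fold machinery (Melrose--Taylor, Pan--Sogge, Phong--Stein, Cuccagna) adapted to a nonisotropic two-parameter setting, and the key identity, neither of which appears in your outline. Your appeal to a ``model operator whose behaviour near the contact point is universal,'' to the weight $r$ suppressing small $r$, and to the boundary constraint $\rho\ge 0$ ruling out overlaps, does not address this loss and I do not see how those ideas could replace it.
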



We  shall reduce 
Theorem~\ref{circular maximal theorem} 
to bounding a maximal function $\sup_{t>0} |A_tf|$ where the $A_t$ are non-convolution averaging operators 
in two dimensions.
We aim to follow the strategy used in \cite{Mockenhaupt1992, Mockenhaupt1993} to study the  Euclidean circular maximal function and its relatives. However, in comparison with~\cite{Mockenhaupt1993}, substantial new difficulties arise. First, we need to consider a  distribution of  curves which is  not smooth. Moreover, 
the \textit{rotational curvature} and \textit{cinematic curvature} conditions (as formulated in~\cite{Sogge1991,Mockenhaupt1993}) fail to hold, and hence $\sup_{t>0} |A_tf|$ does not belong to the classes of variable coefficient maximal functions considered in~\cite{Mockenhaupt1993}. 
Significant technical challenges are encountered when dealing with the various singularities of the operator, and our arguments are based on the analysis of a class 
 of oscillatory integral operators with 2-sided fold singularities which extends the work in \cite{phong-stein91} and \cite{Cuccagna}. A more detailed discussion of the proof strategy can be found in \S\ref{sec:proof strategy} below.

\begin{figure}
\begin{tikzpicture}[tdplot_main_coords, scale=2]
 \begin{scope}[rotate around z=0, rotate=0]

\foreach \y in {100, 95,...,0}
    {
\def\x{0};
\draw[color=white!\y!blue, very thin] 
		({\x - 1},{\y/50 },{- \y/50))})
    \foreach \a in {0,0.2,...,6.5}
    {
       --({\x - cos(deg(\a))},{\y/50 - sin(deg(\a))},{ - (\y/50)*cos(deg(\a)) + \x*sin(deg(\a))})
    }
    ;
}

\draw[->][black] 
		(-1.5,0,0)->(1.5,0,0) node[anchor=north]{$x_1$};
\draw[->][black] 
		(0,-1.5,0)->(0,3,0) node[anchor=east]{$x_2$};
\draw[->][black] 
		(0,0,-1)->(0,0,1.5) node[anchor=east]{$u$};	
	\end{scope}
	\end{tikzpicture}
\caption{The unit circle tilts and stretches as it is translated along the $x_2$-axis under the Heisenberg operation.}
\end{figure}
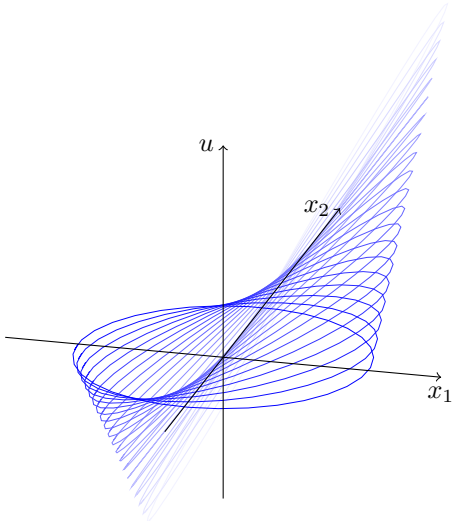

\subsection*{\it Structure of the paper.} Section~\ref{sec:proof strategy} reviews the strategy for bounding the Euclidean circular maximal function based on local smoothing estimates. The difficulties encountered in our particular situation are also described. In Sections \ref{sec:reformulation} -- \ref{Lp section} we prove bounds for a local variant of $M$, where the supremum is restricted to $1 \leq t \leq 2$. In particular, Section~\ref{sec:reformulation} reduces Theorem~\ref{circular maximal theorem} to a bound for a maximal function in two dimensions. Section~\ref{curvature considerations section} describes notions of curvature which feature in the analysis of $M$. In Section~\ref{decomposition section} the maximal function is decomposed into different pieces according to curvature considerations. 
In Section~\ref{two-parameter-sect} we consider classes of oscillatory integral operators depending on two parameters which are crucial for the relevant $L^2$-theory, mainly based on a `fixed-time' analysis. 
In Section~\ref{L2 section} we apply these $L^2$ estimates to the problem on the Heisenberg group. In Section~\ref{Lp section} we discuss the $L^p$ theory, based on $L^p$ space-time (`local smoothing') estimates. Finally, in Section~\ref{sec:global} the bounds for the local maximal function are extended to bounds for $M$. Two appendices are included for the reader's convenience, providing  useful integration-by-parts lemmata and many explicit computations helpful to the analysis.




\subsection*{\it Notational conventions} Given a (possibly empty) list of objects $L$, for real numbers $A_p, B_p \geq 0$ depending on some Lebesgue exponent $p$ the notation $A_p \lesssim_L B_p$, $A_p = O_L(B_p)$ or $B_p \gtrsim_L A_p$ signifies that $A_p \leq CB_p$ for some constant $C = C_{L,p} \geq 0$ depending on the objects in the list and $p$. In addition, $A_p \sim_L B_p$ is used to signify that both $A_p \lesssim_L B_p$ and $A_p \gtrsim_L B_p$ hold. Given $a$, $b \in \R$ we write $a \wedge b:= \min \{a, b\}$ and  $a \vee b:=\max \{a,b\}$. Given $x = (x_1, x_2, x_3) \in \R^3$ we will often write $x = (x_1, x'') \in \R \times \R^2$ or $x = (x', x_3) \in \R^2 \times \R$. Given $x \in \R^2$ and $t \in \R$ we will also often write $\vec{x} = (x,t) \in \R^2 \times \R$. 
Throughout the article $N$ denotes some fixed large integer, chosen so as to satisfy the forthcoming requirements of the proofs.
The choice of $N=10^{1000}$ is permissible (and in the $d$-dimensional  version of estimates in  Sections 
\ref{two-parameter-sect} and \ref{L2 section}, it  never needs to exceed   $10^{100d}$).
For a phase function $\varphi(x;z)$ the notation $\partial^2_{xz}\varphi$ 
refers to the matrix $A$ with entries $A_{ij}=\partial_{x_i z_j}^2 \varphi$ while
the notation $\partial^2_{zx}\varphi$ refers to its transpose.
The length of a multiindex $\alpha\in \bbN_0^d$ is given by $|\alpha|=\sum_{i=1}^d{\alpha_i}$.
The $C^N$ norm of $(x;z)\mapsto a(x;z)$ is given by $\max_{|\alpha|+|\beta|\le N}\|\partial_x^\alpha\partial_z^\beta a\|_\infty$. We also use the notation $\|a\|_{C^N_z}$ for 
$\sup_x \|a(x;\cdot) \|_{C^N}$.
For a linear operator $T$ bounded from $L^p$ to $L^q$ we use both $\|T\|_{L^p\to L^q}$, $\|T\|_{p\to q}$ as a notation for the operator norm. For a one-parameter family of linear operators $\{T_t\}_{t \in E}$, $\|\sup_{t\in E}  |T_t|\|_{p \to q}$ denotes the $L^p\to L^q$  operator norm of  the sublinear operator
$f\mapsto \sup_{t\in E} |T_t f|$.




\section*{Acknowledgements}
{The authors thank the American Institute of Mathematics for funding their collaboration through the SQuaRE program, also  supported in part  by the National Science Foundation. D.B. was partially supported by ERCEA Advanced Grant 2014 669689 - HADE, by the MINECO project MTM2014-53850-P, by Basque Government project IT-641-13 and also by the Basque Government through the BERC 2018-2021 program and by Spanish Ministry of Economy and Competitiveness MINECO: BCAM Severo Ochoa excellence accreditation SEV-2017-0718. S.G. was partially supported by  NSF grant DMS-1800274. A.S. was partially supported by  NSF grant DMS-1764295 and by a Simons fellowship. This material is partly based upon work supported by the National Science Foundation under Grant No. DMS-1440140 while the authors were in residence at the Mathematical Sciences Research Institute in Berkeley, California, during the Spring 2017 semester. The authors would also like to thank the anonymous referee for a careful reading and valuable suggestions.}

\section{Proof strategy}\label{sec:proof strategy}

Theorem~\ref{circular maximal theorem} easily reduces to bounding a maximal function $\sup_{t>0} |A_tf|$ where the $A_t$ are averaging operators on the Euclidean plane. We aim to follow the broad strategy introduced in~\cite{Mockenhaupt1992}
 to study the Euclidean circular maximal function, which we now recall.
Define $A_t^{\mathrm{eucl}}f$ by taking $A_t^{\mathrm{eucl}}f(x)$ to be the average of $f$ over the circle $\Sigma_{x,t}^{\mathrm{eucl}}$ in the plane centred at $x$ with radius $t$. 
Note that the associated curve distribution is described by the defining function
\begin{equation*}
 \Phi^{\mathrm{eucl}}(x,t;y) := |x-y|^2 - t^2 \qquad \textrm{for $(x,t;y) \in \R^2 \times \R \times \R^2$};   
\end{equation*} 
in particular, $\Sigma^{\mathrm{eucl}}_{x,t} = \{y \in \R^2 : \Phi(x,t;y) = 0\}$. The associated maximal function
\begin{equation*}
    M^{\mathrm{eucl}}f(x) := \sup_{t > 0} |A_t^{\mathrm{eucl}}f(x)| 
\end{equation*}
is the classical circular maximal function studied by Bourgain \cite{Bourgain1986} and also in \cite{Mockenhaupt1992}. A Littlewood--Paley argument reduces the problem of bounding $M^{\mathrm{eucl}}f$ to bounding the local maximal function 
\begin{equation*}
    \sup_{1 \leq t \leq 2} |A_t^{\mathrm{eucl}}f(x)|.
\end{equation*} 

Decompose the averaging operator $A_t^{\mathrm{eucl}}f$ as a sum of pieces $A_t^{\mathrm{eucl},j}f$ localised at frequency scale $2^j$. The sum of the low frequency pieces $(j \leq 0)$ can be bounded in one go via comparison with the Hardy--Littlewood maximal operator and it remains to bound the high frequency pieces. There are two steps in the argument:

\begin{enumerate}[i)]
\item The first step is to show that the inequality 
\begin{equation}\label{introduction L2}
    \big\|\sup_{1 \leq t \leq 2} |A_t^{\mathrm{eucl},j} f| \big\|_{L^2(\R^2)} \leq C \|f\|_{L^2(\R^2)}
\end{equation}
holds uniformly in $j$. An elementary Sobolev embedding reduces \eqref{introduction L2} to proving $L^2$ estimates for certain oscillatory integral operators. A $T^*T$ argument further reduces \eqref{introduction L2} to bounding the corresponding kernels, which are then amenable to stationary phase analysis. 

\item Interpolating \eqref{introduction L2} with the trivial $L^{\infty}$ estimate, 
\begin{equation}\label{introduction Lp}
    \big\|\sup_{1 \leq t \leq 2} |A_t^{\mathrm{eucl},j}f| \big\|_{L^p(\R^2)} \leq C \|f\|_{L^p(\R^2)} \qquad \textrm{for all $2 \leq p \leq \infty$.}
\end{equation}
The problem here is that \eqref{introduction Lp} does not sum in $j$. If, however, there exists some $2 < p_{\circ} < \infty$ and $\varepsilon(p_{\circ}) > 0$ such that 
\begin{equation}\label{introduction local smoothing}
    \big\|\sup_{1 \leq t \leq 2} | A_t^{\mathrm{eucl},j} f| \big\|_{L^{p_{\circ}}(\R^2)} \leq C2^{-j\varepsilon(p_{\circ})} \|f\|_{L^{p_{\circ}}(\R^2)},
\end{equation}
then one may interpolate \eqref{introduction Lp} and \eqref{introduction local smoothing} to obtain favourable $j$ dependence for all $2 < p < \infty$, concluding the proof. The strategy in~\cite{Mockenhaupt1992} is to prove a bound of the form \eqref{introduction local smoothing} via local in time $L^p$ space-time bounds (so-called \textit{local smoothing estimates}) for the wave equation. 
\end{enumerate}
 
There are two key properties of the circular maximal function which allow the above analysis to be carried out, both of which can be expressed in terms of the defining function $\Phi^{\mathrm{eucl}}$. The first is the standard decay properties of the Fourier transform of surface carried measure which correspond to nonvanishing of the  \textit{Phong--Stein rotational curvature} (see, for instance,~\cite[Chapter IX, \S3.1]{Stein1993},\footnote{The definitions of the rotational curvature and other concepts featured in this discussion are also reviewed in \S\ref{curvature considerations section} below.}). This is used to prove the oscillatory integral estimates i). The second is that the \textit{cinematic curvature} (see,~\cite{Sogge1991})
is non-vanishing, which features in the proof of the local smoothing estimates used in ii). The analysis can be generalised to variable coefficient maximal functions formed by averaging operators on the plane associated to defining functions $\Phi$ which satisfy these two conditions \cite{Sogge1991}. 
 
 Now suppose $A_tf$ denote the averaging operators on $\R^2$ which arises in the study of our  maximal operator acting on $\mathbb{H}$-radial functions. This family of operators has an associated defining function $\Phi$, which is described in \eqref{defining function} below. As before, one may decompose $A_tf$ as a sum of pieces $A_t^jf$ localised at a frequency scale $2^j$. Significant issues arise, however, when it comes to implementing either of the above steps to analyse the $A_t^jf$ in this case: 

\begin{enumerate}[i$'$)]
    \item The defining function $\Phi$ has vanishing  rotational curvature. Indeed, the oscillatory integral estimates in the above proof sketch of \eqref{introduction L2} do not hold in this case. 
    \item The defining function $\Phi$ also has vanishing cinematic curvature. This precludes direct application of local smoothing estimates in the proof of \eqref{introduction local smoothing}. 
\end{enumerate}

In order to deal with these issues it is necessary decompose the operator $A_t$ with respect to the various curvatures and to prove bounds of the form \eqref{introduction L2}, \eqref{introduction Lp} and \eqref{introduction local smoothing} for each of the localised pieces.

In bounding the localised pieces of $A_t$, the main difficulty is caused by the vanishing of the rotational curvature. In particular, here the $L^2$ theory relies on certain two parameter variants of estimates for oscillatory integral operators with two-sided fold singularities. Our arguments build on the techniques in~\cite{Cuccagna, GS1999}. This is in contrast with the analysis of the Euclidean maximal function, where the classical estimates for non-degenerate oscillatory integral operators of H\"ormander~\cite{Hormander1973} suffice. The presence of a two-sided fold incurs a (necessary) loss in the oscillatory integral estimates (compared with the non-vanishing rotational curvature case), but special properties of the Heisenberg maximal function allow one to compensate for this. A similar phenomenon was previously observed in the analysis of the spherical maximal function in $\mathbb{H}^n$ for $n > 1$ in~\cite{MS2004}.  

The vanishing of the cinematic curvature presents less of a problem, essentially because the desired bound \eqref{introduction local smoothing} is non-quantitative: all that is required is for \eqref{introduction local smoothing} to hold for \emph{some} $p_{\circ}$ and \emph{some} $\varepsilon(p_{\circ}) > 0$. Roughly speaking, the strategy is to decompose the operator into two parts: one piece supported on the $\delta$-neighbourhood of the variety where the cinematic curvature vanishes and a complementary piece. The former is dealt using a variant of \eqref{introduction Lp} which includes a gain in $\delta$ arising from the additional localisation. The latter piece has non-vanishing cinematic curvature and can be dealt with using local smoothing estimates. Choosing $\delta$ appropriately, one obtains the desired bound. Similar ideas were used by Kung \cite{Kung} to treat a family of Fourier integral operators  with vanishing cinematic curvature.




\section{Reduction to a maximal operator in the plane}\label{sec:reformulation}

\subsection{ Singular support of the Schwartz kernel and implicit definition} A  computation shows that $f \ast \mu_t(u, x)$ corresponds to an average of $f$ over the ellipse in $\R^3$ given by 
\begin{equation*}
  S_{u,x,t} := \big\{(v,z) \in \R \times \R^2 :  v - u + b(x_1z_2 - x_2z_1) = 0, \quad
        |x - z|^2 - t^2 = 0 \big\}.
\end{equation*}
Furthermore, using the identity $(x_1z_1 + x_2z_2)^2 + (x_1z_2 - x_2z_1)^2 = |x|^2|z|^2$, 
 one checks that  $(v,z)\in S_{u,x,t}$ satisfies 
\begin{equation}\label{implicit formulation}
    \Phi_t(u,|x|;v, |z|) = 0
\end{equation}
where $\Phi_t(u,r;v,\rho):=\Phi(u, r, t;v, \rho)$ and
\begin{equation}\label{defining function}
  \Phi(u, r, t;v, \rho) := (u-v)^2 - \Big(\frac{b}{2}\Big)^2\big( 4r^2\rho^2 - (r^2 + \rho^2 - t^2)^2 \big).
\end{equation}
Below we relate explicitly $f*\mu_t$ to an operator acting on functions of the two variables $(v,\rho)$, with a Schwartz kernel $\delta\circ\Phi$ which will define this  integral operator as a weakly singular Radon transform.

In the forthcoming sections it will be necessary to carry out many computations involving $\Phi$. For the reader's convenience, a dictionary of derivatives of this function is provided in Appendix~\ref{derivative appendix}.

\subsection{ Properties of $\mathbb{H}$-radial functions} A function $f \colon \bbH^1 \to \C$ is $\mathbb{H}$-radial if and only if there exists some function $f_0 \colon \R \times  [0,\infty) \to \C$ such that
\begin{equation}\label{radial}
 f(u,x)= f_0(u,|x|).
\end{equation}
Using the fact that $R^{\top}BR=B$ for $R\in \mathrm{SO}(2)$, if $f$ and $g$ are $\mathbb{H}$-radial, then $f*g$ is $\bbH$-radial, and we have
\[
 (f*g)_0(u,r)= \!\int_0^{2\pi} \!\!\! \int_\R \! \int_0^\infty \!\! f_0(v,\rho)g_0(
u-v-br\rho\sin\vth,\sqrt{r^2+\rho^2-2r\rho\cos\vth})\,
\rho \, \ud  \rho  \ud v \ud\vth.
\]
This observation extends to $\mathbb{H}$-radial measures and, in particular, if $f$ is $\mathbb{H}$-radial, then  $f*\mu_t$ is $\bbH$-radial, and we get  
\begin{align} (f*\mu_t)_0(u,r)&= \frac{1}{2\pi}\int_{-\pi}^{\pi}
f_0(u-btr\sin \vth ,\sqrt{r^2+t^2-2rt\cos\vth
})\, \ud\vth
\notag
\\
&=
\sum_{\pm} \frac{1}{2\pi}\int_0^{\pi}
f_0(u\pm btr\sin \vth, \sqrt{r^2+t^2-2rt\cos\vth}) \,
\ud\vth.
\label{sym}
\end{align}

Applying polar coordinates in the planar slices $\{u\} \times \R^2$, given $p>2$ and $f$ as in \eqref{radial}, the goal is to establish the inequality 
\begin{equation}\label{radial maximal}
\Big(\int_{-\infty}^\infty\int_0^\infty\big|(Mf)_0(u,r)\big| ^p r\, \ud r \ud u\Big)^{1/p}\lesssim
\Big(\int_{-\infty}^\infty\int_0^\infty\big|f_0(v,\rho)|^p \rho \, \ud\rho \, \ud v\Big)^{1/p}.
\end{equation}

\subsection{ A weakly singular Radon-type  operator on $\R^2$} By the implicit definition of the circle $S_{u,x,t}$ from \eqref{implicit formulation}, the function $(f*\mu_t)_0$ corresponds to an integral operator associated with the  curve 
\begin{equation*}
    \Sigma_{u,r,t} := \big\{ (v,\rho) \in \R \times  [0,\infty) : \Phi_t(u,r;v,\rho) = 0 \big\}.
\end{equation*}
It is easy to see that $\Sigma_{u,r,t}$ is smooth whenever $r \neq t > 0$. If $r = t > 0$, then there is a unique singular point on the curve at the point where it touches the $v$ axis. See Figure~\ref{singular curve figure}. Furthermore, any $(v,\rho) \in \Sigma_{u,r,t}$ satisfies
\begin{equation}\label{orthogonality relation}
    |r-t| \leq \rho \leq r + t \quad \textrm{and} \quad |u - v| \leq |b| \min\{r\rho, rt, t\rho\};
\end{equation}
these bounds follow since for $(v,\rho) \in \Sigma_{u,r,t}$
\begin{align}
\notag   0 \leq (b/2)^{-2}(u-v)^2 &= 4r^2 \rho^2 - (r^2 + \rho^2 - t^2)^2\\ \notag &= 4r^2 t^2 - (r^2 + t^2 - \rho^2)^2\\& = 4t^2 \rho^2 - (t^2 + \rho^2 - r^2)^2. \label{identities Phi=0}
\end{align}


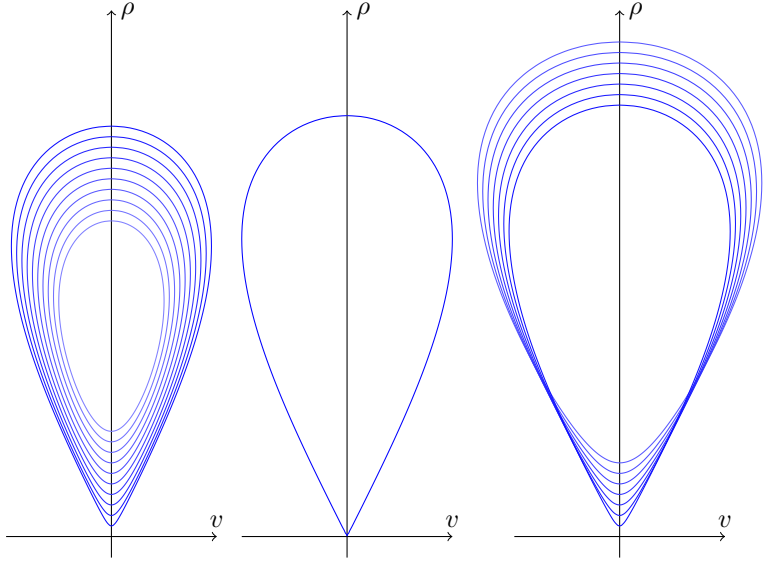
\begin{figure}
\begin{center}
\begin{tikzpicture}[scale=2.8]

      \draw[->] (0,-0.1) -- (0,2.5) node[right] {$\rho$};
      \draw[->] (-0.5,0) -- (0.5,0) node[above] {$v$};

\foreach \y in {50,55, ..., 95}
    {     
\def \a{\y/100};     
      \draw[domain=-1.772:1.772, samples=100, smooth,variable=\x,blue!\y!white] plot ({\a*(1/2)*sin(deg(\x^2))},{sqrt(\a*\a + 1 - \a*2*cos(deg(\x^2)))});
      
}      
    \end{tikzpicture}
    \begin{tikzpicture}[scale=2.8]
\def \a {1}
      \draw[->] (0,-0.1) -- (0,2.5) node[right] {$\rho$};
      \draw[->] (-0.5,0) -- (0.5,0) node[above] {$v$};
      \draw[domain=-1.772:1.772, samples=100, smooth,variable=\x,blue] plot ({\a*(1/2)*sin(deg(\x^2))},{sqrt(\a*\a + 1 - \a*2*cos(deg(\x^2)))});
    \end{tikzpicture}
    \begin{tikzpicture}[scale=2.8]
\def \a {6/5}
      \draw[->] (0,-0.1) -- (0,2.5) node[right] {$\rho$};
      \draw[->] (-0.5,0) -- (0.5,0) node[above] {$v$};
      
    \foreach \y in {65, 70, ..., 95}
   {     
\def \a{(2 - \y/100)};     
      \draw[domain=-1.772:1.772, samples=100, smooth,variable=\x,blue!\y!white] plot ({\a*(1/2)*sin(deg(\x^2))},{sqrt(\a*\a + 1 - \a*2*cos(deg(\x^2)))});
      
}      
    \end{tikzpicture}

\caption{The curves $\Sigma_{0,r,t}$ for $t$ fixed and $r < t$ (left), $r = t$ (centre) and $r > t$ (right). When $r = t$ the curve has a unique singular point on the $v$ axis.}
\label{singular curve figure}
\end{center}
\end{figure}

Consider the integral operator in two dimensions 
defined on functions of the variables $(v,\rho)$ by
\begin{equation}\label{averaging operator}
    A_t f(u,r) \equiv A_{p,t} f(u,r):= \int_{-\infty}^{\infty}\int_0^{\infty} f(v,\rho) r^{1/p}\rho^{1-1/p} \delta\big(\Phi_t(u,r;v,\rho)\big)\ud v\ud \rho.
\end{equation}
In view of 
\eqref{radial maximal}, Theorem~\ref{circular maximal theorem}
 will be  a consequence of the following maximal estimate in the Euclidean plane.

\begin{theorem}\label{main theorem} For all $p > 2$,
\begin{equation*}
\Big(\int_0^\infty\int_{-\infty}^\infty \big(\sup_{t>0} | A_{t}f(u,r)|
\big) ^p \,\ud u\,\ud r \Big)^{1/p} \lesssim
\Big(\int_0^\infty\int_{-\infty}^\infty\big|f(v,\rho)|^p  \,\ud v\,\ud \rho \Big)^{1/p}.
\end{equation*}
\end{theorem}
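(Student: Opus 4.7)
The plan is to follow the Mockenhaupt--Seeger--Sogge framework sketched in Section~\ref{sec:proof strategy}. By a standard scaling/Littlewood--Paley argument exploiting the Heisenberg dilations $\dil_t$ (under which the defining function $\Phi$ in \eqref{defining function} transforms homogeneously), I would first reduce Theorem~\ref{main theorem} to the \emph{local} maximal inequality $\big\|\sup_{1 \leq t \leq 2}|A_t f|\big\|_{L^p(\ud u\,\ud r)} \lesssim \|f\|_{L^p(\ud v\,\ud\rho)}$ for $p > 2$. The weight $r^{1/p}\rho^{1-1/p}$ built into \eqref{averaging operator} is dictated by the change of measure from $r\,\ud r$ and $\rho\,\ud\rho$ to Lebesgue measure, making this passage from \eqref{radial maximal} to the stated form exact.

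Next, introduce a Littlewood--Paley decomposition in the variable dual to $(v,\rho)$, writing $A_t = \sum_{j\geq 0} A_t^j$ with $A_t^j$ concentrated at frequency $2^j$. The sum over $j \leq 0$, together with a piece localized near the singular set $r = t$ where the curve $\Sigma_{u,r,t}$ acquires a cusp, should be dominated pointwise by a Hardy--Littlewood--type maximal operator and hence controlled on every $L^p$. The core task is then to prove, for $j \geq 1$, both a uniform $L^2$ bound $\big\|\sup_{1\le t\le 2}|A_t^jf|\big\|_{L^2}\lesssim\|f\|_{L^2}$ and a decaying $L^{p_0}$ bound $\big\|\sup_{1\le t\le 2}|A_t^jf|\big\|_{L^{p_0}}\lesssim 2^{-j\varepsilon}\|f\|_{L^{p_0}}$ for some $p_0 > 2$ and $\varepsilon > 0$; interpolation with the trivial $L^\infty$ bound then yields $2^{-j\varepsilon(p)}$ decay for every $p > 2$, and summing in $j$ concludes the proof.

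The $L^2$ estimate reduces, via Sobolev embedding in $t$ and a $TT^*$ argument, to fixed-time oscillatory integral bounds for the kernels of $A_s^j (A_t^j)^*$. The main obstacle I anticipate is precisely here: the defining function $\Phi$ has \emph{vanishing rotational curvature} along a variety (traceable from the identities \eqref{identities Phi=0}), so H\"ormander's standard non-degenerate oscillatory integral theorem is unavailable. One must decompose the operator according to distance from the rotational-curvature zero set. On the bulk, standard stationary phase suffices; near the degenerate set the canonical relation carries two-sided fold singularities, forcing the use of the two-parameter refinement of the Phong--Stein and Cuccagna oscillatory estimates developed in Section~\ref{two-parameter-sect}. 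This incurs a necessary loss relative to the non-degenerate case, but, as in~\cite{MS2004}, the loss is compensated by additional structural features of $\Phi$ (in particular, the vanishing of a suitable derivative quantity along the same variety).

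For the $L^{p_0}$ bound with $j$-decay, the cinematic curvature of $\Phi$ also degenerates, so the classical local smoothing approach of~\cite{Mockenhaupt1992} does not apply directly. Following the device of Kung~\cite{Kung}, I would split $A_t^j$ into a piece supported on a $\delta$-neighborhood of the cinematic-curvature zero set and its complement: on the former, an $L^{p_0}$ estimate gains a power of $\delta$ from the extra spatial localization; on the latter, genuine cinematic curvature is restored and a standard local smoothing estimate (via decoupling or square functions) applies. Optimizing $\delta$ as a suitable power of $2^{-j}$ produces the required $2^{-j\varepsilon}$ gain at some $p_0 > 2$, completing the interpolation scheme and yielding Theorem~\ref{main theorem}.
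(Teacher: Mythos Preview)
Your overall strategy matches the paper's approach closely: reduce to a local maximal inequality, frequency-decompose, obtain uniform $L^2$ bounds via the two-parameter fold estimates of \S\ref{two-parameter-sect} (with the loss compensated by the identity \eqref{key identity} linking $\partial_t\Phi_t$ to $\mathrm{Rot}(\Phi_t)$, which you correctly identify), obtain $L^{p_0}$ bounds with $j$-decay via local smoothing plus the Kung device, and interpolate. That said, two steps in your sketch would not go through as written.

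First, the claim that the piece localized near $r=t$ is ``dominated pointwise by a Hardy--Littlewood--type maximal operator'' is incorrect. The set $\{r=t\}$ is precisely where the cinematic curvature vanishes (see \eqref{plane curve curvature bound}, \eqref{projection formula}, \eqref{cinematic formula}), and this piece---the $\fc$-piece of \S\ref{decomposition section}---requires the full machinery: on the $L^2$ side it is handled by exploiting the rotational curvature of $\Phi_r^\star$ obtained by swapping the r\^oles of $r$ and $t$ (see \eqref{fixed r rotational curvature} and Proposition~\ref{L2 proposition} iii)), and on the $L^p$ side it is exactly where the Kung decomposition you describe later is invoked (Proposition~\ref{Lp proposition} iii)). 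The cusp at $\rho=0$ is not the operative singularity, since the a priori class restricts to functions supported away from $\rho=0$. Second, the passage from the local estimate to the global theorem is not a ``standard'' scaling argument: \S\ref{sec:global} splits according to whether $t\ll r$, $t\sim r$, or $t\gg r$, using the orthogonality relations \eqref{orthogonality relation} to recover summability; the regime $t\ll r$ in particular needs an additional Littlewood--Paley decomposition together with the frequency-localized bounds of Proposition~\ref{frequency decomposition prop}, not just Theorem~\ref{thm local maximal function} itself. You also omit the preliminary spatial decomposition in $r$, $\rho$, and $|r-t|$ (\S\ref{decomposition section}), which is what produces the two-parameter structure $(\lambda,\delta_\circ)$ that your fold estimates presuppose.
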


Note that the $r^{1/p}\rho^{-1/p}$ factor  featured in the averaging operator in \eqref{averaging operator} arises from the weights induced by the polar coordinates in \eqref{radial maximal}. In order to relate
Theorem~\ref{circular maximal theorem} to  Theorem \ref{main theorem} we have to 
write  for $\bbH$-radial test functions the expression   $(f*\mu_t)_0(u,r)$ in terms of 
the distribution $\delta\circ\Phi_t$ which is understood as a weak limit of $\chi_\eps\circ\Phi_t$ as $\eps\to 0$.
The calculation, which is given in the proof of Lemma \ref{deltaPhi-lemma} below, is standard, (\textit{cf.}  \cite[p.498]{Stein1993} which provides a proof for a local version). For the sake of convenience we include below a direct proof for our example.

In what follows we shall use, for a continuous compactly supported  function $g$   the integral notation
$g(c)=\int g(v) \delta(c-v)\, \ud v$ for the pairing of $g$ with the Dirac measure at $c$.
We also let $\chi_\eps(s)=\eps^{-1}\chi(\eps^{-1} s)$ with $\chi$ even and supported in $(-1/2,1/2)$ such that $\int \chi \ud s=1$. We shall prove the following.

\begin{lemma} \label{deltaPhi-lemma}
Let $f\in C^1(\bbH^1)$ be $\bbH$-radial and compactly supported in 
$\{(v,\rho) \in \R^2 :\rho>0\}$. Then, for any $r>0$,
\begin{align*}(f*\mu_t)_0(u,r)
&= \frac{|b|}{\pi}
\lim_{\eps\to 0} 
 \int_0^\infty\int_\bbR \chi_\eps(\Phi_t(u,r;v,\rho)) f_0(v,\rho)\rho \,\ud v \, \ud\rho 
\\&=: \frac{|b|}{\pi}\int_0^\infty\int_\bbR \delta(\Phi_t(u,r;v,\rho)) f_0(v,\rho)\rho \,
\ud v \,\ud\rho .
\end{align*}
\end{lemma}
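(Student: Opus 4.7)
The approach is to start from the explicit angular expression \eqref{sym} and recognize the integral against $\delta \circ \Phi_t$ as its coarea-formula representation. First, I would observe that the map $\vth \mapsto (v(\vth),\rho(\vth)) := (u+btr\sin\vth,\sqrt{r^2+t^2-2rt\cos\vth})$, $\vth \in [0,2\pi)$, parameterizes the zero set $\Sigma_{u,r,t}$ of $\Phi_t(u,r;\cdot,\cdot)$; combining the two branches in \eqref{sym} via the substitution $\vth \mapsto 2\pi-\vth$ rewrites that identity as
\[
(f*\mu_t)_0(u,r) \;=\; \frac{1}{2\pi}\int_0^{2\pi} f_0(v(\vth),\rho(\vth)) \, \ud\vth.
\]
Since $\supp f_0 \subset \{\rho > 0\}$ and the only possible singular point of $\Sigma_{u,r,t}$ sits at $(u,0)$ (occurring when $r=t$), the support of $f_0$ meets only the smooth part of the curve.

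Next, I would directly compute the ingredients of the coarea formula on $\Sigma_{u,r,t}$. From the derivative computations, $\partial_v \Phi_t = -2(u-v)$ and $\partial_\rho \Phi_t = -b^2\rho(r^2+t^2-\rho^2)$; substituting the parameterization gives $\partial_v\Phi_t|_{\Sigma}=-2btr\sin\vth$ and $\partial_\rho\Phi_t|_{\Sigma}=-2b^2 rt\rho\cos\vth$, whence
\[
|\nabla_{(v,\rho)} \Phi_t|_{\Sigma_{u,r,t}} \;=\; 2|b|rt \sqrt{\sin^2\vth + b^2 \rho^2 \cos^2\vth}.
\]
Since $v'(\vth) = btr\cos\vth$ and $\rho'(\vth) = rt\sin\vth/\rho$, the arclength element is $\ud\sigma = (rt/\rho)\sqrt{\sin^2\vth + b^2\rho^2\cos^2\vth}\,\ud\vth$, and the two combine to the clean identity $\ud\sigma/|\nabla\Phi_t| = \ud\vth/(2|b|\rho(\vth))$ on the support of $f_0$.

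Finally, the limit
\[
\lim_{\eps\to 0}\int \chi_\eps(\Phi_t) \, f_0(v,\rho) \, \rho\, \ud v\, \ud\rho \;=\; \int_{\Sigma_{u,r,t}} f_0 \, \rho\, \frac{\ud\sigma}{|\nabla\Phi_t|}
\]
is the standard coarea formula, valid because $|\nabla\Phi_t|$ is bounded below on the compact intersection $\Sigma_{u,r,t}\cap \supp f_0$. Rigorously, one covers this intersection by finitely many charts on each of which either $|\partial_v\Phi_t|$ or $|\partial_\rho\Phi_t|$ is bounded below; on such a chart the change of variables $(v,\rho)\mapsto (v,\Phi_t)$ or $(\rho,\Phi_t)$ is a smooth diffeomorphism whose Jacobian is the reciprocal of the corresponding partial, and dominated convergence gives the pointwise limit. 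Assembling the pieces via a partition of unity and inserting the identity for $\ud\sigma/|\nabla\Phi_t|$ yields
\[
\lim_{\eps\to 0}\int \chi_\eps(\Phi_t) f_0\rho\, \ud v\, \ud\rho \;=\; \int_0^{2\pi}\!\! f_0(v(\vth),\rho(\vth))\, \frac{\rho(\vth)\,\ud\vth}{2|b|\,\rho(\vth)} \;=\; \frac{\pi}{|b|}(f*\mu_t)_0(u,r),
\]
which gives the lemma after multiplying by $|b|/\pi$. The only point requiring real care is the partition-of-unity bookkeeping, since neither $\partial_v\Phi_t$ nor $\partial_\rho\Phi_t$ alone is nonvanishing along the full curve; but no genuine obstacle arises because the support hypothesis $\supp f \subset \{\rho > 0\}$ excludes the unique singular point of $\Sigma_{u,r,t}$.
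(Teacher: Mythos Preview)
Your proof is correct and takes a genuinely different route from the paper's. The paper proceeds by first converting the $\vth$-integral in \eqref{sym} to a $\rho$-integral via $\rho=\sqrt{r^2+t^2-2rt\cos\vth}$, obtaining an expression with the explicit weight $\rho/G(r,t,\rho)$ where $G(r,t,\rho)=\sqrt{4r^2t^2-(r^2+t^2-\rho^2)^2}$; it then inserts a $\chi_\eps$ in the $v$-variable, splits the $\rho$-range according to whether $G$ is small or large, and matches $\chi_\eps(u\pm\tfrac{b}{2}G-v)$ to $\chi_\eps(\Phi_t)$ via a further change of variables and Taylor expansion, tracking $O(\eps^{(p-1)/2})$ errors. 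Your approach instead keeps the $\vth$-parameterization throughout and recognises the limit directly as a coarea integral, the key step being the clean identity $\ud\sigma/|\nabla_{(v,\rho)}\Phi_t|=\ud\vth/(2|b|\rho)$ on the curve. Your argument is more geometric and avoids the explicit $E_\eps/F_\eps$ splitting and the somewhat delicate change of variables $w\mapsto v_\pm(w)$ that the paper carries out; the paper's argument, on the other hand, is entirely self-contained and makes the $\eps\to 0$ convergence completely explicit without appealing to the coarea formula as a black box. Both approaches rest on the same crucial observation that $|\nabla_{(v,\rho)}\Phi_t|$ is bounded below on $\Sigma_{u,r,t}\cap\supp f_0$ thanks to the hypothesis $\supp f_0\subset\{\rho>0\}$, which keeps the analysis away from the singular point of the curve.
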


With the above lemma in hand, Theorem \ref{main theorem} immediately implies Theorem \ref{circular maximal theorem}.

\begin{proof} [Proof that Theorem \ref{main theorem} implies
Theorem~\ref{circular maximal theorem}]
We prove the a priori inequality for smooth
$\bbH$-radial functions which are compactly supported in $\{(u,y)\in \R^3:|y|\neq 0\}$. 
By Lemma \ref{deltaPhi-lemma}
\[ r^{1/p} (Mf)_0(u,r)= \frac{|b|}{\pi} \,\sup_{t >0} \, A_t [\rho^{1/p}f_0](u,r),\] and the assertion follows.
\end{proof}

\begin{proof}[Proof of Lemma \ref{deltaPhi-lemma}]
We use \eqref{sym} and  make a change of variable by setting
$$
\rho=\rho(\vth)=\sqrt{r^2+t^2-2rt\cos\vth}, \quad 0<\vth<\pi.
$$
Observe that the condition
$ 0<\vth<\pi$ is equivalent with $|r-t|<\rho<r+t$.
Then
\begin{equation*}
u\pm btr\sin \vth = u\pm btr\sqrt{1-\Big(\frac{r^2+t^2-\rho^2}{2rt}\Big)^2} = u\pm \frac{b}{2} G(r,t,\rho)
\end{equation*}
where
$$ 
G(r,t,\rho) := \sqrt{4r^2t^2-(r^2+t^2-\rho^2)^2}.
$$
For  the
relevant range $|r-t|<\rho<r+t$ the root is well defined (as $\sin
\vth>0$), and  we have the factorisation
\Be  \label{factorization}  G(r,t,\rho)
=\Big(
(r+t+\rho)(r+t-\rho)(r-t+\rho)(t-r+\rho)
\Big)^{1/2}.
\Ee
We calculate   $$\frac{d\rho}{d\vth}= \rho^{-1} rt \sin(
\vth)  =(2\rho)^{-1}G(r,t,\rho) $$ and thus
\begin{align*} \pi (f*\mu_t)_0(u,r)&=
\sum_{\pm} 
\int_{|r-t|}^{r+t}
f_0(u\pm \tfrac b2 G(r,t,\rho),\rho) \frac{\rho}{G(r,t,\rho)} \, \ud\rho
\\
&=\sum_{\pm} 
\lim_{\eps\to 0} 
\int_{|r-t|}^{r+t}\int_\R \rho f_0(v,\rho) \chi_\eps(u\pm \tfrac b2 G(r,t,\rho)-v)
 \, \ud v \, \frac 1{G(r,t,\rho)} \, \ud \rho.
\end{align*}

Let $U$ be an open interval with  compact closure contained in  $(0,\infty)$
such that $\supp (f_0(u,\cdot)) \subset U$ for all $u\in \bbR$. Let
$U(r,t)= \{\rho\in U: |r-t|<\rho<r+t\}$.
We observe from \eqref{factorization} 
that for fixed $r,t$ with $r\neq t$, the function  $\rho \mapsto |G(r,t,\rho)|^{-1}$ satisfies 
\Be\label{p-integrability} \int_{U(r,t)} |G(r,t,\rho)|^{-p} \ud\rho \le C(r,t)<\infty \quad \text{ for  $1\le p<2$, }
\Ee which we use for $p>1$. Let $E_\eps(r,t)= \{ \rho\in U(r,t): G(r,t,\rho)\le \eps^{1/2}\}$ and $F_\eps(r,t)=U(r,t)\setminus E_\eps(r,t)$. We use H\"older's inequality 
to bound 
\begin{align*}\int_{E_\eps(r,t)}\int_\R \rho \, &  |f_0(v,\rho) |  |\chi_\eps(u\pm \tfrac b2 G(r,t,\rho)-v)| \,  
 \ud v \, \frac 1{G(r,t,\rho)} \, \ud \rho
 \\
 &\lc _{r,t,f} |E_\eps(r,t)|^{1/p'} C(r,t)^{1/p} = O(\eps^{(p-1)/2}),
 \end{align*}
 noting that \eqref{p-integrability} implies $|E_{\varepsilon}|\lesssim_{r,t} \varepsilon^{p/2}$.  For $\rho\in F_\eps(r,t,\rho)$ we use the change of variable
 \[w\to v_\pm (w)=u \pm \frac b2 G(r,t,\rho)-(u-w)^2+\frac{b^2}{4} G(r,t,\rho)^2\]
 which is one-to-one on $(u,\infty)$ and on $(-\infty, u)$
 and satisfies 
 \[u-v_\pm(w)\pm \frac b2 G(r,t,\rho)= (u-w)^2-\frac{b^2}{4}G(r,t,\rho)^2 .\]
 We have $|v'(w)|=2|u-w|$, 
 and $|v(w)-w|=O(\eps)$ on the support of the integrand, and therefore also  
 $|u-w|= G(r,t,\rho)|b|/2+O(\eps)$.
 Hence, by Taylor expansion of $f(v,\rho)$ around $(w,\rho)$,
 \begin{align*}\int_{F_\eps(r,t)}& \int_\R   \rho  f_0(v,\rho) \chi_\eps(u\pm \tfrac b2 G(r,t,\rho)-v)
  \, \ud v \, \frac 1{G(r,t,\rho)} \, \ud \rho
  \\
 &= \frac 12
 \int_{F_\eps(r,t)}\int_\R \rho f_0(v(w),\rho) \chi_\eps((u-w)^2-(\tfrac b2 G(r,t,\rho))^2)
  \frac{2|u-w|}{G(r,t,\rho)} \,\ud w\, \ud \rho
  \\
 &= \frac{|b|}{2} 
 \int_{F_\eps(r,t)}\int_\R \rho f_0(w,\rho) \chi_\eps((u-w)^2-(\tfrac b2 G(r,t,\rho))^2)
  \,\ud w\, \ud \rho + O(\eps^{1/2})
 \end{align*} 
 and by using the estimate $|E_\eps(r,t)| \lc_{r,t} \eps^{p/2}$  the last displayed expression is equal to 
\[ \frac{|b|}{2} 
 \int_{|r-t|}^{r+t}\int_\R \rho f_0(w,\rho) \chi_\eps((u-w)^2-(\tfrac b2 G(r,t,\rho))^2)
  \,\ud w\, \ud \rho + O(\eps^{1/2}),\] 
 for both choices of $\pm$. We sum in $\pm$ and, using \eqref{identities Phi=0}, obtain,  for $r\neq t$,
 \[ (f*\mu_t)_0(u,r)= \frac{|b|}{\pi}
 \int_{|r-t|}^{r+t}\int_\R \rho f_0(w,\rho) \chi_\eps(\Phi_t(u,r;w,\rho))  \ud w\, \ud\rho
 +O(\eps^{(p-1)/2}).
 \]
Letting $\eps\to 0$ concludes the proof.
\end{proof}

\subsection{ A local variant of the maximal operator} The main work in proving Theorem \ref{main theorem} will be to establish the following local variant.

\begin{theorem}\label{thm local maximal function}
For all $p>2$,
$$
\big\| \sup_{1 \leq t \leq 2} |A_t f| \big\|_{L^p(\R \times (0,\infty))} \lesssim \|  f\|_{L^p(\R \times (0,\infty))}.
$$
\end{theorem}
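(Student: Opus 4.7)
The plan is to follow the Mockenhaupt--Seeger--Sogge template sketched in \S\ref{sec:proof strategy}, while accommodating the degeneracies of $\Phi$: both the rotational and cinematic curvatures vanish on nontrivial varieties. First I would perform a Littlewood--Paley decomposition in the frequency variables dual to $(v,\rho)$, writing $A_t = \sum_{j \geq 0} A_t^j$ where $A_t^j$ is localised at frequency scale $2^j$. The piece $A_t^0$ (and finitely many low-frequency pieces) can be controlled by a Hardy--Littlewood type maximal operator using only the support condition \eqref{orthogonality relation}, which places the relevant curves in a compact region for $1 \leq t \leq 2$. So the task reduces to showing
\begin{equation*}
\bigl\| \sup_{1\le t\le 2} |A_t^j f| \bigr\|_{L^p} \lesssim 2^{-j\eps(p)} \|f\|_{L^p}
\quad \text{for some } \eps(p)>0, \ 2<p<\infty,
\end{equation*}
which we then sum in $j$.

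Next I would decompose $A_t^j$ further according to where $\Phi$ loses rotational curvature (this failure is captured by the Hessian of a reduced phase degenerating into a two-sided fold) and where the cinematic curvature vanishes (the locus $r=t$ where $\Sigma_{u,r,t}$ itself becomes singular, see Figure~\ref{singular curve figure}). Concretely, I would introduce parameters $\delta_1, \delta_2$ measuring the distance to the bad varieties and split $A_t^j = A_t^{j,\mathrm{good}} + A_t^{j,\mathrm{bad}}$, where $A_t^{j,\mathrm{bad}}$ is localised to a $\delta$-neighbourhood of the degenerate sets and $A_t^{j,\mathrm{good}}$ satisfies nondegenerate hypotheses on its complement. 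The bad piece will be estimated trivially via support and kernel size bounds gaining a power of $\delta$, while the good piece is treated by the two-step argument of \S\ref{sec:proof strategy}.

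For the $L^2$ bound on $A_t^{j,\mathrm{good}}$, I would pass to oscillatory integral operators via Sobolev embedding in $t$ and then run a $T^*T$ argument. Because the rotational curvature vanishes, the Phong--Stein/H\"ormander bounds do not suffice, and here is where the hard work lies: one needs the two-parameter oscillatory integral estimates for operators with two-sided fold singularities developed in Section~\ref{two-parameter-sect} (extending Cuccagna~\cite{Cuccagna} and Greenleaf--Seeger~\cite{GS1999}). These give a bound of the form $2^{-j/2}$ times a loss dictated by the fold, compensated by the extra $\rho, r$ weights in \eqref{averaging operator} and by the curve support \eqref{orthogonality relation}; choosing $\delta$ appropriately in the good/bad split of the cinematic-curvature localisation yields a uniform-in-$j$ $L^2$ maximal bound. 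Interpolating trivially with the $L^\infty$ bound gives $L^p$ control without decay for $p\ge 2$.

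Finally, to obtain the decay factor $2^{-j\eps(p_\circ)}$ for some $p_\circ \in (2,\infty)$, I would prove a local smoothing estimate for the good piece: away from the cinematic-curvature variety one has a genuine one-parameter family of Fourier integral operators of Sogge type, so the sharp $L^{p_\circ}$ local smoothing of Mockenhaupt--Seeger--Sogge applies and gives a quantitative gain. For the part supported near the cinematic variety one argues as in Kung~\cite{Kung}, using the pointwise bound coming from the $\delta$-shrinking of the support together with a crude $L^{p_\circ}$ Sobolev estimate; optimising in $\delta$ converts the lossy bound into a small positive gain in $2^{-j}$. Combining the two regimes and interpolating with the $L^2$ maximal bound then yields the desired exponential summability in $j$ for all $p>2$.

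The main obstacle is clearly the $L^2$ maximal step in the presence of a two-sided fold: the usual rotational curvature tools are unavailable, the oscillatory integral analysis needs to be carried out with both parameters $(j,\text{cinematic parameter})$ simultaneously, and one must verify that the necessary $\|\sup_{1\le t\le 2}|\cdot|\|_{L^2\to L^2}$ bound holds without any $j$-loss after all geometric weights are accounted for. Everything downstream (interpolation, local smoothing, summing in $j$, passing from Theorem~\ref{thm local maximal function} to Theorem~\ref{main theorem}) is then fairly standard.
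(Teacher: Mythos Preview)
Your proposal captures the right high-level architecture --- frequency decomposition, $L^2$ maximal bounds via two-sided fold oscillatory integral estimates, local smoothing for $L^p$ decay, and a Kung-type good/bad split near the cinematic-curvature variety --- and this is indeed the paper's template. Two structural points are worth flagging, however.

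First, the paper reverses your order of operations: it performs a multi-parameter \emph{spatial} decomposition first (dyadically in $r\sim 2^m$, then for $r\sim 1$ in $\rho\sim 2^{-k}$ and $|r-t|\sim 2^{-\ell}$), rescales each piece to a normalised defining pair, and only \emph{then} applies the frequency decomposition $A_j$. This matters because the spatial parameters $(k,\ell)$ fix the size of the rotational curvature and of $\partial_t\Phi$ (via the identity \eqref{key identity}), which in turn determines the small parameter $\delta_\circ$ in the two-parameter fold estimates of \S\ref{two-parameter-sect}. Your ``localise near the bad variety with parameters $\delta_1,\delta_2$'' is morally the same device, but without first fixing the dyadic spatial scales you cannot cleanly invoke the fold machinery, and the $m\neq 0$ regimes ($r\ll t$ and $r\gg t$) need their own treatment that your sketch does not address.

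Second, your expectation that the $L^2$ maximal bound holds ``without any $j$-loss'' is slightly optimistic: for the fold pieces the paper obtains a bound of size $(j\vee 1)\,2^{-2k/3}$ (Proposition~\ref{L2 proposition}~i)), i.e.\ a logarithmic loss in $j$. This is harmless --- the local smoothing step produces geometric decay in $j$ that swallows it --- but it is the genuine output of the fold analysis, and the gain in $k$ (not the absence of $j$-loss) is what makes the pieces sum.
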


This will be established in \S\ref{curvature considerations section} -- \S\ref{Lp section}.
The passage from Theorem~\ref{thm local maximal function} to the global result in Theorem~\ref{main theorem} is postponed until \S\ref{sec:global}.




\section{Curvature considerations}\label{curvature considerations section} As indicated in the introduction and Section~\ref{sec:proof strategy}, various `curvatures', which feature extensively in the analysis of generalised Radon transforms, are fundamental to the proof of Theorem~\ref{thm local maximal function}. In this section these concepts are reviewed and some calculations are carried out in relation to the operator $A_t$ introduced above. 

\begin{definition} A \emph{smooth family of defining pairs} $[\Phi;\fa]$ consists of a pair of functions $\fa \in C^{\infty}(\R^2 \times \R \times \R^2)$ and $\Phi \in C^\infty$ defined on a neighbourhood of $\supp \fa$ satisfying
\begin{equation*}
    \nabla_{(x,z)} \Phi(x,t;z) \neq 0 \qquad \textrm{for $(x,t;z) \in \mathrm{supp}\,\fa$}.
\end{equation*}
The $t$ variable will play a preferred r\^ole in the forthcoming analysis. For any fixed $t \in \R$ let $\Phi_t(x;z) := \Phi(x,t;z)$ and $\fa_t(x;z) := \fa(x,t;z)$; then $[\Phi_t;\fa_t]$ is referred to as a \emph{defining pair}. The Schwartz kernel 
$\fa \, \delta\circ\Phi$ is then well defined, and the corresponding integral  
 operator $A[\Phi_t;\fa_t]f(x)$ mapping test functions to distributions is 
given by
the pairing
\begin{equation}\label{general average}
    \biginn{A[\Phi_t;\fa_t]f}{g}
     := \iint_{\R^2\times \R^2} g(x)f(z) \fa_t(x;z) \delta\big(\Phi_t(x;z)\big) \,\ud z \,  \ud x.
\end{equation}
\end{definition}

\begin{key example}
For the defining function $\Phi_t$ in  \eqref{defining function}, where $t \sim 1$, with the identification of coordinates $(u,r)=(x_1,x_2)$, $(v,\rho)=(z_1,z_2)$,  the distribution $\delta\circ\Phi$ is defined when paired with $g(u,r)f(v,\rho)$ where $g$ and $f$ are compactly supported $C^\infty$ functions with support away from $\{r=0\}$ and $\{\rho=0\}$ respectively.
The calculations in Lemma \ref{deltaPhi-lemma} show that in this case 
$A[\Phi_t;\fa_t]f(x)$ is pointwise defined for $x_2\neq 0$, as long as $f\in C^\infty_0(\R^2)$ with $\supp\,f\subset \{y \in \R^2:y_2\neq 0\}$.
\end{key example}

\subsection{ Rotational curvature}\label{rotational curvature section} Given a defining pair $[\Phi_t; \fa_t]$ the \emph{rotational curvature} $\mathrm{Rot}(\Phi_t)$ is defined to be the function of $(x;z) \in \R^2 \times \R^2$ given by the determinant of the Monge--Amp\`ere matrix
\begin{equation*}
 \mathfrak{M}(\Phi_t) := \begin{bmatrix}
        \Phi_t & (\partial_z \Phi_t)^{\top} \\
        \partial_x \Phi_t & \partial_{x z}^2 \Phi_t
    \end{bmatrix}.
    \end{equation*}
    Note that $\mathfrak M(\Phi_t)$ is the mixed Hessian $D^2_{(\theta, x), (s,z)} \Psi_t|_{\theta = s = 1}$ of the function
    \begin{equation*}
        (\theta,x, s, z)\mapsto \Psi_t(\theta, x; s, z) := \theta s \Phi_t(x;z)
    \end{equation*}
    and, more generally, 
\begin{equation*}
    D_{(\theta,x), (s,z)}^2 \Psi_t = \begin{bmatrix} \Phi_t& s\partial_z\Phi_t^\top
\\
\theta\partial_x\Phi_t &\theta s\partial_{xz}\Phi_t. 
 \end{bmatrix}.
\end{equation*}    

   It is well-known (see, for instance,~\cite[Chapter XI, \S 3]{Stein1993}) that the behaviour of $\mathrm{Rot}(\Phi_t)$ on the incidence relation $\{\Phi=0\}$ plays an important r\^ole in determining the mapping properties of averaging operators $A[\Phi_t;\fa_t]$ on $L^2$-Sobolev  spaces as well as the $L^p$ theory of their  maximal variants. It is of particular interest to identify points where the rotational curvature vanishes together with the defining function.
   
\begin{key example}   For the defining function $\Phi_t$ in question, as introduced in \eqref{defining function}, 
 we now have $(x_1,x_2)\equiv (u,r)$ and $(z_1,z_2)= (v,\rho)$ and
\begin{align*}
\fM(\Phi_t) & = \begin{bmatrix}
\Phi_t& \partial_v\Phi_t& \partial_\rho\Phi_t
\\ \partial_u\Phi_t& \partial^2_{uv}\Phi_t &\partial_{u\rho}^2\Phi_t
\\
\partial_r\Phi_t& \partial^2_{rv } \Phi_t&\partial^2_{r\rho } \Phi_t
\end{bmatrix} 
\\&=
\begin{bmatrix} 
\Phi_t
&-2(u-v)& -b^2\rho(r^2-\rho^2+t^2)
\\
2(u-v)&-2&0
\\
-b^2r(\rho^2-r^2+t^2)&0& -2b^2r\rho
\end{bmatrix}.
\end{align*}
Then, 
one computes that
\begin{multline*}
    \det \fM(\Phi_t)= 
    2b^2r\rho\big( b^2(\rho^2-r^2+t^2)(r^2-\rho^2+t^2) -4(u-v)^2 +2\Phi \big) 
    \\=2b^4r\rho\big((\rho^2-r^2+t^2)(r^2-\rho^2+t^2)-4r^2\rho^2+(r^2+\rho^2-t^2)^2\big)  +4b^2r\rho\Phi_t .
\end{multline*} 
Setting $\Phi_t=0$ one obtains after further computation 
\begin{equation}\label{fixed t rotational curvature}
   \mathrm{Rot}(\Phi_t) (u,r;v,\rho) = 4b^4rt^2\rho (t^2 - r^2 - \rho^2)\qquad \textrm{for $(v,\rho) \in \Sigma_{u,r,t}$}.
 \end{equation}
Thus, $\mathrm{Rot}(\Phi_t)$ vanishes along the co-ordinate hyperplanes $r = 0$, $t = 0$ and $\rho = 0$ and also, more significantly, along the hypersurface $t^2 = r^2 + \rho^2$.

Continuing with $\Phi_t$ as in \eqref{defining function}, the rotational curvature and $t$-derivative of the defining function are related via the identity
\begin{equation}\label{key identity}
    \mathrm{Rot}(\Phi_t)(u,r;v,\rho) = 4b^2 r t \rho (\partial_t \Phi_t)(u,r;v,\rho).
\end{equation}
A relationship of this kind was previously noted in~\cite{MS2004} in the context of the spherical maximal operator on $\mathbb{H}^n$ for $n \geq 2$. Here, in close analogy with~\cite{MS2004}, the identity \eqref{key identity} will be important in the analysis near the singular hypersurface $t^2 = r^2 + \rho^2$.

Rather than freezing $t$ for the computation of the rotational curvature, it is sometimes useful to freeze $r$ and set 
$$\Phi_r^{\star }(u,t;v,\rho) := \Phi_t(u,r;v,\rho).
$$
 A similar computation to the one above yields in this case
 \begin{equation}
   \label{fixed r rotational curvature}
   \mathrm{Rot}(\Phi_r^{\star }) (u,t;v,\rho) = 4b^4 r^2t\rho(r^2 - t^2 - \rho^2) \qquad \textrm{for $(v,\rho) \in \Sigma_{u,r,t}$.}
\end{equation}
\end{key example}

\subsection{  The fold conditions}  For the defining function from \eqref{defining function}, the vanishing of the rotational curvature along the hypersurface $t^2=r^2+\rho^2$ corresponds to a \textit{two-sided fold singularity}.

\begin{definition} A defining function $\Phi_{t_0}$ satisfies the  \emph{two-sided fold condition} at $(x_0;z_0) \in \R^2 \times \R^2$ if the following hold:
\begin{enumerate}[i)]
\item $\Phi_{t_0}(x_0;z_0) = 0$ and $\mathrm{Rank}\, \mathfrak{M}(\Phi_{t_0})(x_0;z_0) =2$. 
\item If $U=(u_1, u_2, u_3)$ and $V=(v_1, v_2, v_3) \in \R^3$ span the cokernel and kernel of $\mathfrak{M}(\Phi_{t_0})(x_0;z_0)$, respectively, then
\begin{equation*}
\begin{split}
&     \bigg\langle \partial_{zz}^2 \Big\langle U,  \begin{bmatrix}
    \Phi_{t_0} \\
    \partial_x \Phi_{t_0}
    \end{bmatrix}
    \Big\rangle\Big|_{(x_0; z_0)} V''\,,\, V'' \bigg \rangle\neq 0, \\
&    \bigg\langle \partial_{xx}^2 \Big\langle V,  \begin{bmatrix}
    \Phi_{t_0} \\
    \partial_z \Phi_{t_0}
    \end{bmatrix}
    \Big\rangle\Big|_{(x_0; z_0)} U''\,,\, U'' \bigg \rangle \neq 0,
    \end{split}
\end{equation*}
where $U''=(u_2, u_3)$ and $V''=(v_2, v_3)$. 
\end{enumerate}
\end{definition}

As a consequence of the fold condition, $\mathfrak{M}(\Phi_{t_0})(x_0;z_0)$ may be transformed into a `normal form'. In particular, there exist $\mathbf{X}, \mathbf{Z} \in \mathrm{GL}(3, \R)$ satisfying
\begin{itemize}
    \item $\mathbf{X}e_3 = U$ and $\mathbf{X}e_1$, $\mathbf{X}e_2$ are orthogonal to 
    \begin{equation*}
   \Big(0,  \partial_{xx}^2 \Big\langle V,  \begin{bmatrix}
    \Phi_{t_0} \\
    \partial_z \Phi_{t_0}
    \end{bmatrix}
    \Big\rangle\Big|_{(x_0; z_0)} U''\Big),
    \end{equation*}
    \item $\mathbf{Z}e_3 = V$ and $\mathbf{Z}e_1$, $\mathbf{Z}e_2$ are orthogonal to 
    \begin{equation*}
    \Big(0, \partial_{zz}^2 \Big\langle U,  \begin{bmatrix}
    \Phi_{t_0} \\
    \partial_x \Phi_{t_0}
    \end{bmatrix}
    \Big\rangle\Big|_{(x_0; z_0)} V''\Big),
    \end{equation*}
\end{itemize}
where $e_j$ denote the standard basis vectors in $\R^3$, and therefore
\begin{equation*}
\mathbf{X}^{\top} \circ \mathfrak{M}(\Phi_{t_0})(x_0;z_0) \circ \mathbf{Z} = \begin{bmatrix}
\mathbf{M}(x_0,t_0;z_0) & 0 \\
0 & 0
\end{bmatrix}
\end{equation*}
for $\mathbf{M}(x_0, t_0;z_0)$ a non-singular $2 \times 2$ matrix. 

\begin{key example} For the defining function $\Phi_t$ from \eqref{defining function}, if $\Phi_{t_0}$ and $\mathrm{Rot}(\Phi_{t_0})$ both vanish at $(x_0; z_0) = (u_0,r_0;v_0,\rho_0)$ and $r_0t_0\rho_0 \neq 0$, then
\begin{equation}\label{fold condition 1}
   U := \begin{bmatrix}
           1  \\
           -(u_0 - v_0)\\
           -r_0
         \end{bmatrix} \qquad \textrm{and} \qquad    V := \begin{bmatrix}
           1  \\
           u_0 - v_0 \\
           -\rho_0
         \end{bmatrix}
\end{equation}
span the cokernel and kernel of $\mathfrak{M}(\Phi_{t_0})(x_0;z_0)$, respectively. Moreover,
\begin{align*}
    \bigg\langle \partial_{zz}^2 \Big \langle U,  \begin{bmatrix}
    \Phi_{t_0} \\
    \partial_x \Phi_{t_0}
    \end{bmatrix}
    \Big\rangle\Big|_{(x_0; z_0)} V''\,,\, V'' \bigg \rangle  &= 2b^2\rho_0^2(r_0^2 + \rho_0^2) >0, \\
     \bigg\langle \partial_{xx}^2 \Big\langle V,  \begin{bmatrix}
    \Phi_{t_0} \\
    \partial_z \Phi_{t_0}
    \end{bmatrix}
    \Big\rangle\Big|_{(x_0; z_0)} U''\,,\, U'' \bigg \rangle  &= 2b^2r_0^2(r_0^2 + \rho_0^2) > 0
\end{align*}
and the matrices $\mathbf{X}$ and $\mathbf{Z}$ can be taken to be
\begin{equation}\label{fold condition 2}
\mathbf{X} := \begin{bmatrix} 
    1 & 0             & 1  \\
    0 &  -b^2 r_0^3 & -(u_0 - v_0)\\
    0 &  u_0 - v_0       & -r_0
    \end{bmatrix},   \qquad 
    \mathbf{Z} := \begin{bmatrix} 
    1 & 0         & 1  \\
    0 & b^2 \rho_0^3 &  u_0 - v_0\\
    0 &  u_0 - v_0  & -\rho_0 
    \end{bmatrix}.
\end{equation}
\end{key example}
 
\noindent{\it Remark.} For standard incidence relations $\cM\subset \bbR^2_L\times \bbR^2_R$, where $\R^2_L \equiv \R^2_R \equiv \R^2$ and $\cM=\{\Phi=0\}$ with $\nabla \Phi$ bounded below,  
 the two sided fold condition is equivalent to the more common assumption
 (\cite{Melrose-Taylor}, \cite{phong-stein91}) that the projections $\pi_L$, $\pi_R$ mapping the 
 conormal bundle $N^*\cM$ to $T^*\bbR^2_L$, $T^*\bbR^2_R$ have fold singularities. 
 



 \subsection{ Individual curves} It is also useful to consider the curvatures of the individual curves in the curve distribution induced by a defining family $\Phi$. In particular, for fixed $(x,t)$ the non-vanishing of the curvature of $\Sigma_{x,t}:=\{ z \in \R^2 : \Phi_t(x; z) =0\}$ is equivalent to the non-vanishing (on $\Sigma_{x,t}$) of 
 \begin{equation}\label{plane curvature definition}
     \kappa(\Phi_t)(x;z) := \det 
     \begin{bmatrix}
     0 & (\partial_z \Phi_t)^{\top}(x;z) \\
     \partial_z \Phi_t(x;z) & \partial_{zz}^2 \Phi_t(x;z)
     \end{bmatrix}.
 \end{equation}

\begin{key example} For the defining family $\Phi$ as introduced in \eqref{defining function}, the curves have non-vanishing curvature whenever $r \neq t$. To see this, note that
\begin{align*}
\kappa(\Phi_t) & = \det \begin{bmatrix}
\Phi_t & \partial_v\Phi_t& \partial_\rho\Phi_t
\\ \partial_v\Phi_t & \partial^2_{vv}\Phi_t &\partial_{v\rho}^2\Phi_t
\\
\partial_\rho\Phi_t& \partial^2_{\rho v } \Phi_t&\partial^2_{\rho \rho } \Phi_t
\end{bmatrix} 
\\&= \det
\begin{bmatrix} 
\Phi_t
&-2(u-v)& -b^2\rho(t^2 + r^2-\rho^2)
\\
-2(u-v)&2&0
\\
-b^2\rho(t^2 + r^2 - \rho^2)&0& -b^2 (t^2+r^2-3\rho^2) 
\end{bmatrix},
\end{align*}
which after a computation reduces, for $(v,\rho) \in \Sigma_{u, r, t}$, i.e. $\Phi_t=0$,   to
$$
\kappa(\Phi_t)(u,r,t;v,\rho)= b^4 \big( \rho^6 - 3(r^2+t^2)\rho^4 +3(r^2-t^2)^2 \rho^2 - (r^2-t^2)^2(r^2+t^2)  \big).
$$
 Thus, $\kappa(\Phi_t)(u,r,t;v,\rho) = \wp_{r,t}(\rho^2)$, where  $\wp_{r,t}$ is a cubic polynomial with coefficients depending on $r,t$. We first calculate
  $\wp((r-t)^2)=-8b^4 r^2t^2(r-t)^2 $.
  One may  verify that $\wp_{r,t}$ is a decreasing function on the interval $[(r-t)^2, (r+t)^2]$, 
 leading to the lower bound
\begin{equation}\label{plane curve curvature bound} 
    |\kappa(\Phi)(u,r,t;v,\rho)| \geq 8b^4r^2t^2 (r-t)^2 \qquad \textrm{for all $(v,\rho) \in \Sigma_{u,r,t}$}
\end{equation}
Thus, the curves have non-vanishing curvature if $r \neq t$, as claimed.

\end{key example}




 \subsection{ Cinematic curvature} It is also necessary to analyse the average operator from the perspective of the \textit{cinematic curvature condition} of~\cite{Sogge1991}. 
 
 \begin{definition} A smooth family of defining pairs $[\Phi;\fa]$ is said to satisfy the \textit{projection condition} if
 \begin{equation*}
     \mathrm{Proj}(\Phi) := \det \begin{bmatrix}
    \partial_{\vec{x}}\,\Phi & \partial_{\vec{x} z}^2\Phi
    \end{bmatrix}
 \end{equation*}
 is non-vanishing on an open neighbourhood $U$ of $\mathrm{supp}\, \fa$. Here $\vec{x} = (x,t) \in \R^2\times \R$.
\end{definition}

Fixing $\vec{x} \in \R^2\times \R$, the projection condition implies that the map 
\begin{equation*}
 (U \cap \Sigma_{\vec{x}}) \times \R \to \R^3; \quad (z;\theta) \mapsto \theta\partial_{\vec{x}} \Phi(\vec{x};z)
\end{equation*}
is a diffeomorphism and therefore its image $\Gamma_{\vec{x}}$ is an immersed submanifold of $\R^3$. If $\zeta := \theta\partial_{\vec{x}} \Phi(\vec{x};z) \in \Gamma_{\vec{x}}$, then a basis for $T_{\zeta} \Gamma_{\vec{x}}$ is given by the vector fields
\begin{equation}\label{tangent vector fields}
    \mathbf{T}^1 := \partial_{\vec{x}\,}\Phi, \qquad \mathbf{T}^2 := (\mathbf{T}^2_1, \mathbf{T}^2_2, \mathbf{T}^2_3) \quad \textrm{where $\mathbf{T}^2_j := 
    \det \begin{bmatrix}
    \partial_z \Phi & \partial_{z}\partial_{x_j} \Phi
    \end{bmatrix}$}
\end{equation}
evaluated at $(\vec{x};z)$; this may be seen computing the tangent vectors of the parametrisation $\sigma_{\vec{x}}$ below. Note that $\Gamma_{\vec{x}}$ is clearly a cone and therefore has everywhere vanishing Gaussian curvature. If at every point on $\Gamma_{\vec{x}}$  there is a non-zero principal curvature, then $[\Phi;\fa]$ is said to satisfy the \textit{cinematic curvature condition} (see~\cite{Sogge1991} or~\cite{Mockenhaupt1993} for further details).

\begin{definition}\label{cinematic curvature definition} For any defining family $\Phi$ let
\begin{equation*}
    \mathrm{Cin}(\Phi) :=  \det \begin{bmatrix} \mathbf{S} & \mathbf{T}^1 & \mathbf{T}^2
    \end{bmatrix}
\end{equation*}
where $\mathbf{S} = \mathbf{S}^1 - \mathbf{S}^2$ where $\mathbf{S}^i = (\mathbf{S}_1^i, \mathbf{S}_2^i, \mathbf{S}_3^i)$ for
\begin{equation*}
    \mathbf{S}^1_j := \det
    \begin{bmatrix}
    0 & (\partial_{z} \Phi)^{\top}\\
    \partial_{z} \Phi & \partial_{z z}^2\partial_{x_j} \Phi 
    \end{bmatrix}, \quad 
    \mathbf{S}^2_j := \det\begin{bmatrix}
    0 & (\partial_{z} \partial_{x_j} \Phi)^{\top}  \\
    \partial_{z} \Phi & \partial_{z z }^2 \Phi 
    \end{bmatrix}.
\end{equation*} 

\end{definition}

 If $[\Phi;\fa]$ satisfies the projection condition, then  the cinematic curvature condition is equivalent to the non-vanishing of $\mathrm{Cin}(\Phi)(\vec{x};z)$ whenever $z \in \Sigma_{\vec{x}}$. Indeed, fix $\vec{x}$ and let $\gamma_{\vec{x}} \colon [0,1] \to \Sigma_{\vec{x}}$ denote a unit speed parametrisation of $\Sigma_{\vec{x}}$; this induces a parametrisation $\sigma_{\vec{x}} \colon (\theta, s) \mapsto \theta\partial_{\vec{x}} \Phi(\vec{x};\gamma_{\vec{x}}(s))$ of the cone $\Gamma_{\vec{x}}$. The cinematic curvature condition is then  equivalent to the non-vanishing of
\begin{equation}\label{cone curvature}
    \det\begin{bmatrix}
    \partial_{ss} \sigma_{\vec{x}}(\theta, s)  &
    \partial_{\theta} \sigma_{\vec{x}}(\theta, s)  &
    \partial_s \sigma_{\vec{x}}(\theta, s) 
    \end{bmatrix}
\end{equation}
and a  computation shows that \eqref{cone curvature} is equal to
$-|\theta|^2|\partial_z\Phi|^{-3}\mathrm{Cin}(\Phi)$.

\begin{key example} For the defining family $\Phi$ as introduced in \eqref{defining function} one has
\begin{align}
\label{projection formula}
\mathrm{Proj}(\Phi)(u,r,t;v,\rho) &= -8b^4rt\rho(r^2 - t^2), \\
\label{cinematic formula}
 \mathrm{Cin}(\Phi)(u,r,t;v,\rho) &=  64b^8r^3t^3\rho^3(r^2 - t^2).
 \end{align}
Thus, $[\Phi;\fa]$ satisfies the cinematic curvature condition whenever $\mathrm{supp}\, \fa$ avoids the hyperplanes $r =0 $, $t = 0$ and $r = t$.\footnote{In this case, one may further deduce that $\Gamma_{u,r,t}$ is the cone defined implicitly by the equation
\begin{equation*}
    \zeta_1^2 - \frac{\zeta_2^2}{b^2(t^2 - r^2)} + \frac{\zeta_3^2}{b^2(t^2 - r^2)} = 0. 
\end{equation*}} For reference, Appendix~\ref{derivative appendix} contains the formul\ae\, for the various derivatives featured in these computations.  
\end{key example}





\section{The initial decomposition}\label{decomposition section}

For $\Phi$ as defined in \eqref{defining function} both the rotational and cinematic curvature conditions fail. In this section, the operator $A_t$ is decomposed in order to isolate the singularities corresponding to the failure of these curvature conditions.

\subsection{ Spatial decomposition}\label{spatial decomposition subsection}

The operator $A_t$ is first decomposed dyadically with respect to the $r$ variable. To this end, fix a nonnegative $\eta \in C^\infty_c(\R)$ such that
\begin{subequations}
\begin{equation}\label{eta function}
\eta(r) = 1 \quad \textrm{if $r \in [-1,1]$} \quad \textrm{and} \quad \supp \eta \subseteq [-2,2]   
\end{equation}
and define $\beta\in C^\infty_c(\R)$ and $\eta^m, \beta^m \in C^\infty_c(\R)$ by
\begin{equation}\label{beta-definition}\beta(r):=\bbone_{(0,\infty)}(r) (\eta(r) - \eta(2r))\end{equation}
\end{subequations}
and, for each $m \in \Z$,
\begin{equation}\label{beta function}
     \eta^m(r):=\eta (2^{-m} r) \quad \textrm{and} \quad \beta^m(r):=\beta (2^{-m} r) .
\end{equation} 
One may then decompose 
\begin{equation*}
   A_tf(u,r) = \sum_{m \in \Z} \beta^m(r) A_tf(u,r) \qquad \textrm{for $(u,r) \in \R \times (0,\infty)$.}
\end{equation*}

The $r$-localisation induces various spatial orthogonality relations via \eqref{orthogonality relation}. In particular, if $r \in \supp \beta^m$, then $r \sim 2^m$ and it follows from \eqref{orthogonality relation} that 
    \begin{equation}\label{r orthogonality relation}
    |u - v| \lesssim 2^m,\quad |r - \rho| \lesssim 1 \quad \textrm{and} \quad |t-\rho| \lesssim 2^m \quad \textrm{for $(v,\rho) \in \Sigma_{u,r,t}$.}
\end{equation}
To exploit this, given $m, \sigma \in \Z$ define 
\begin{equation*}
    \eta^{m,\sigma}(u,v) := \eta (2^{-m} u - \sigma) \eta (C^{-1} (2^{-m} v - \sigma)),
\end{equation*} 
where $C\geq 1$ is an absolute constant which is chosen to be sufficiently large for the purposes of the forthcoming arguments. We define
\begin{align}
\nonumber
\fa^{0}(u,r,t; v,\rho) &:= \beta(r) r^{1/p}\rho^{1-1/p}, \\
        \fa^{m,\vec{\sigma}}(u,r,t; v,\rho)  &:= \beta^m(r)\eta^{m,\sigma_1}(u,v) \eta^{0,\sigma_2}(r,\rho), \quad \: \textrm{if} \:\: m > 0, \label{m>0 def}\\
        \nonumber
        \fa^{m,\vec{\sigma}}(u,r,t; v,\rho)  &:= \beta^m(r)\eta^{m,\sigma_1}(u,v) \eta^{m,\sigma_2}(t,\rho), \quad \textrm{if} \:\: m < 0,
\end{align}
so that for $m>0$,  $\fa^{m,\vec \sigma} $ is supported where 
\begin{equation*}
    r\sim 2^m, \quad  |u-2^m\sigma_1|\lc 2^m, \quad |v-2^m\sigma_1|\lc 2^m,\quad  |r-\sigma_2|\lc 1, \quad |\rho-\sigma_2|\lc 1.
\end{equation*}
Moreover for $m<0$,   
$\fa^{m,\vec \sigma} $ is supported where 
\begin{equation*}
    r\sim 2^m,\quad  |u-2^m\sigma_1|\lc 2^m,\quad  |v-2^m\sigma_1|\lc 2^m,\quad |t-\sigma_2|\lc 2^m,\quad  |\rho-\sigma_2|\lc 2^m.
\end{equation*} In view of \eqref{r orthogonality relation}, one may bound 
(using the notation in \eqref{general average})
\begin{equation}\label{190715e5.1}
    A_tf \lesssim A[\Phi_t; \fa_t^0]f + 
    \sum_{\vec{\sigma} \in \Z^2}\sum_{m > 0}  2^m
        A[\Phi_t; \fa_t^{m,\vec{\sigma}}]f
        +\sum_{\vec{\sigma} \in \Z^2}\sum_{m < 0}  2^{m/p } A[\Phi_t; \fa_t^{m,\vec{\sigma}}]f,
    \end{equation}
whenever $f$ is a (say) continuous, non-negative function. 

The unit scale piece $\fa_t^0$ is supported where $r\sim 1$ and it is now further dyadically decomposed with respect to both the $\rho$ variable and $|r-t|$. The rationale behind this decomposition is to quantify the value of $\mathrm{Rot}(\Phi_t)$: in view of \eqref{fixed t rotational curvature}, the function $\mathrm{Rot}(\Phi_t)$ can vanish on $\supp \fa_t^0$. If $r \sim 1$ and $\rho \sim 2^{-k}$ , then it follows from \eqref{orthogonality relation} that $|u - v| \lesssim 2^{-k}$ for $(v,\rho) \in \Sigma_{u,r,t}$. Thus, given a function $k \mapsto \ell(k)$ on $\Z$ to be defined momentarily we set
\begin{align*}
    \fa^{k,\ell,\vec{\sigma}}(u,r,t; v,\rho) &:= \beta(r)\beta^{-k}(\rho)\eta^{-k,\sigma_1}(u,v)\beta^{-\ell}(|r-t|) \eta^{-\ell,\sigma_2}(r,t), \qquad  \ell < \ell(k), \\
    \fc^{k,\vec{\sigma}}(u,r,t;v,\rho) &:= \beta(r)\beta^{-k}(\rho)\eta^{-k,\sigma_1}(u,v)\eta^{-\ell(k)}(|r-t|) \eta^{-\ell(k),\sigma_2}(r,t),
 \end{align*}
 so that  on the support of $\fa^{k,\ell,\vec\sigma} $, $\fc^{k,\vec\sigma}$ we have 
 \begin{equation*}
     r\sim 1, \quad  \rho\sim 2^{-k},\quad 
 |u-2^{-k}\sigma_1|\lc 2^{-k} \quad  \text{ and } \quad |v-2^{-k}\sigma_1|\lc 2^{-k};
 \end{equation*}  
 moreover 
 \begin{equation*}
 |r-t|\sim 2^{-\ell}, \quad  |r-2^{-\ell}\sigma_2|\lc 2^{-\ell}, \quad |t-2^{-\ell} \sigma_2|\lc 2^{-\ell} 
 \end{equation*}
 on $\supp \fa^{k,\ell,\vec\sigma}$,
 and
 \begin{equation*}
 |r-t|\lc 2^{-\ell(k)}, \quad  |r-2^{-\ell(k)}\sigma_2|\lc 2^{-\ell(k)}, \quad  |t-2^{-\ell(k)} \sigma_2|\lc 2^{-\ell(k)} 
 \end{equation*}
 on $\supp \fc^{k,\vec\sigma}$.
One may bound
 \begin{align}
\label{190715e5.5}   
A[\Phi_t; \fa_t^0]f &\lesssim \sum_{\vec{\sigma} \in \Z^2}\sum_{\substack{(k,\ell) \in \Z^2 \\ \ell < \ell(k)}}  2^{-k(1-1/p)} A[\Phi_t; \fa_t^{k,\ell,\vec{\sigma}}]f \\
\label{190715e5.6}  
& \qquad  + \sum_{\vec{\sigma}\in \Z^2}\sum_{k \in \Z} 2^{-k(1-1/p)} A[\Phi_t; \fc_t^{k, \vec{\sigma}}]f.
\end{align}
For the purposes of our  proof, we let
$$
\ell(k) := 2k + C_{\mathrm{rot}}
$$
 for some (absolute) constant $C_{\mathrm{rot}} \geq 1$, suitably chosen so as to satisfy the forthcoming requirements. Furthermore, by the first inequality in \eqref{orthogonality relation}, one may in fact restrict the range of the $k$ summation in the above expression to $k \geq - 4$ and of the $(k,\ell)$ summation to the parameter set
\begin{equation*}
    \fP := \big\{(k,\ell) \in \Z \times \Z : k \geq -4 \textrm{ and } k-3 \leq \ell < \ell(k) \big\}. 
\end{equation*}

We show presently that the following bounds imply Theorem~\ref{thm local maximal function}. 
 \begin{theorem}\label{initial decomposition lemma}
 For all $2 < p < \infty$ there exists some $\varepsilon_p > 0$ such that
\begin{flushleft}
\begin{tabular}{rll}
\emph{i)}  & $\displaystyle\big\|\sup_{1 \leq t \leq 2} |A[\Phi_t;  \fa_t^{k,\ell,\vec{\sigma}}]f|\big\|_{p} \lesssim 
    2^{-\ell/p-k\varepsilon_p} 2^{k(1-1/p)} \|f\|_p$ & for $(k, \ell) \in \fP$ \\
 \emph{ii)}  & $\displaystyle\big\|\sup_{1 \leq t \leq 2} |A[\Phi_t;  \fc_t^{k,\vec{\sigma}}]f|\big\|_{p} \lesssim 
    2^{-\ell(k)/p-k\varepsilon_p} 2^{(1-1/p)k}\|f\|_p$ & for all $k \geq -4$ \\
\emph{iii)}   & $\displaystyle\big\|\sup_{1 \leq t \leq 2} |A[\Phi_t; \fa_t^{m, \vec{\sigma}}]f|\big\|_{p} \lesssim 2^{-m}\|f
\|_p$ & for $m > 0$ \\
\emph{iv)}   & $\displaystyle\big\|\sup_{1 \leq t \leq 2} |A[\Phi_t; \fa_t^{m, \vec{\sigma}}]f|\big\|_{p} \lesssim 2^{m \varepsilon_p} 2^{-m/p}\|f\|_p$ & for $m < 0$
  
\end{tabular}
\end{flushleft}
uniformly in $\vec{\sigma} \in \Z^2$.
The above a priori estimates hold for all $f\in C^\infty_0(\bbR^2)$ with support in $\{y \in \R^2:y_2\neq 0\}$.
\end{theorem}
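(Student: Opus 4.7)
The plan is to establish each of the four bounds via the two-step Mockenhaupt-style scheme outlined in \S\ref{sec:proof strategy}. For each piece I first frequency localise the input at dyadic scale $2^j$, writing
\[A[\Phi_t;\fa_t^\star]f = \sum_{j\ge 0} A^j[\Phi_t;\fa_t^\star]f,\]
and control the low-frequency part (where $2^j$ is below the reciprocal of the natural spatial scale of $\supp\fa_t^\star$) by the Hardy--Littlewood maximal operator. For each high-frequency piece I aim to prove both an $L^2$ maximal estimate uniform in $j$ and an $L^{p_\circ}$ local smoothing estimate
\[\bigl\|\sup_{1\le t\le 2}|A^j[\Phi_t;\fa_t^\star]f|\bigr\|_{p_\circ}\lesssim 2^{-j\varepsilon(p_\circ)}\|f\|_{p_\circ}\]
for some $p_\circ>2$ and $\varepsilon(p_\circ)>0$. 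Interpolating the local smoothing estimate against the trivial $L^\infty$ bound and summing the resulting geometric series in $j$ then yields the desired $L^p$ bound on the localised maximal operator for every $p>2$.

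Parts iii) and iv) are treated by anisotropic rescaling to unit scale. For $m>0$, on $\supp\fa^{m,\vec\sigma}$ one has $r,\rho\sim 2^m$ with $|r-\rho|\lesssim 1$, $|u-v|\lesssim 2^m$, and $t\sim 1$; the substitution $(u,v)\mapsto (2^m u, 2^m v)$ together with a translation of $(r,\rho)$ to the centre $\sigma_2$ converts $A[\Phi_t;\fa_t^{m,\vec\sigma}]$ into a perturbation of a fixed unit-scale averaging operator associated to a smooth curve distribution. Since the regime $t\sim 1\ll r,\rho$ sits far from both the vanishing locus $t^2=r^2+\rho^2$ of $\mathrm{Rot}(\Phi_t)$ in \eqref{fixed t rotational curvature} and the locus $r=t$ of $\mathrm{Cin}(\Phi)$ in \eqref{cinematic formula}, both curvature conditions are non-degenerate, and the classical Bourgain--Mockenhaupt analysis applies to the rescaled operator, producing the bound $2^{-m}$ after accounting for the Jacobian. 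The case $m<0$ is parallel: a rescaling that renormalises the small scale $2^m$ for $u,v,r,\rho$ while keeping $t\sim 1$ fixed converts the operator into a unit-scale object of the same type, yielding the bound $2^{m\varepsilon_p}2^{-m/p}$.

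The substantive content is parts i) and ii). For the $L^2$ maximal estimate, the standard Sobolev embedding
\[\sup_{1\le t\le 2}|F(t)|^2\le |F(1)|^2+2\int_1^2|F(t)||F'(t)|\,\ud t\]
reduces matters to fixed-time $L^2$ bounds for $A^j[\Phi_t;\fa_t^\star]$ and its $t$-derivative, which a $T^*T$ step converts into oscillatory integral estimates with a two-parameter phase. Because $\mathrm{Rot}(\Phi_t)$ vanishes along $t^2=r^2+\rho^2$ with a two-sided fold singularity, H\"ormander's non-degenerate oscillatory integral theorem is unavailable; instead I invoke the two-parameter family of oscillatory integral estimates for phases with two-sided folds developed in \S\ref{two-parameter-sect}, which extend the work of Phong--Stein and Cuccagna. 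These yield quantitative $L^2$ bounds carrying losses in $2^{-k}$ (the scale of $\rho$) and $2^{-\ell}$ (the scale of $|r-t|$), and the constraint $\ell<\ell(k)=2k+C_{\mathrm{rot}}$ in the parameter set $\fP$ is calibrated precisely so that these losses can be absorbed. The identity \eqref{key identity} relating $\mathrm{Rot}(\Phi_t)$ to $\partial_t\Phi_t$ is crucial here, because it lets the $\partial_t$ factor in the Sobolev embedding recover the very cancellation lost to the fold degeneracy. Part ii) follows the same scheme with $\ell=\ell(k)$ fixed.

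For the local smoothing step the obstruction is the simultaneous vanishing of $\mathrm{Cin}(\Phi)$ along $\{r=t\}$ in \eqref{cinematic formula}, which precludes a direct appeal to standard local smoothing for Fourier integral operators. Following the strategy indicated in \S\ref{sec:proof strategy}, I further split each piece according to distance $\delta$ from $\{r=t\}$: on the $\delta$-neighbourhood I gain a power of $\delta$ via a direct $L^{p_\circ}$ estimate exploiting the extra support constraint, while on the complement $\mathrm{Cin}(\Phi)$ is non-degenerate (with quantitative lower bound a power of $\delta$) so a standard cone-curvature local smoothing estimate applies and contributes the frequency gain $2^{-j\varepsilon(p_\circ)}$. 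Choosing $\delta=2^{-\tau j}$ for a suitable $\tau>0$ balances the two contributions; this works precisely because the required estimate is qualitative (any $p_\circ>2$ and any $\varepsilon(p_\circ)>0$ will do), a device used previously by Kung in the Fourier integral operator setting. I expect the principal obstacle to be the coordination of the parameters $j,k,\ell$ together with the auxiliary splitting parameter $\delta$: the two-sided fold $L^2$ loss, the combinatorial loss from summing over $(k,\ell,\vec\sigma)\in\fP\times\Z^2$, and the cinematic-curvature-degenerate local smoothing gain must interlock so that the geometric series in $j$ and the sum over $(k,\ell)$ both converge for every $p>2$ with the precise exponents stated in i) and ii).
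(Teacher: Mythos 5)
Your outline for parts i)--iii) tracks the paper's strategy closely: anisotropic rescaling, decomposition according to the sizes of the rotational and cinematic curvatures, frequency localisation, two-parameter oscillatory integral estimates with two-sided folds exploiting \eqref{key identity}, and a $j$-dependent $\delta$-neighbourhood splitting of $\{r=t\}$ to salvage the local smoothing step. However, your treatment of part iv), the case $m<0$, contains a genuine gap.

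You assert that for $m<0$ the rescaled operator is ``a unit-scale object of the same type'' as the $m>0$ case, with both the rotational and cinematic curvature conditions non-degenerate, so that ``the classical Bourgain--Mockenhaupt analysis applies.'' This is false. For $m<0$ one has $r\sim 2^m\ll 1$ while $t\sim\rho\sim 1$ with $|t-\rho|\lesssim 2^m$, so $t^2-r^2-\rho^2$ can vanish on the support; by \eqref{fixed t rotational curvature}, the $t$-frozen rotational curvature $\mathrm{Rot}(\Phi_t)$ therefore degenerates exactly as in the $m=0$ regime. The paper's key observation is to swap the roles of $r$ and $t$: by \eqref{fixed r rotational curvature}, the $r$-frozen rotational curvature $\mathrm{Rot}(\Phi_r^{\star})\sim r^2t\rho(r^2-t^2-\rho^2)$ has $t^2+\rho^2-r^2\sim 1$ on the support and hence is bounded below after normalisation. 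Your description of the rescaling (``renormalises the small scale $2^m$ for $u,v,r,\rho$ while keeping $t\sim 1$ fixed'') also misstates what happens: the paper's $D^m$ in \eqref{definition phi m} scales $u,v,t,\rho$ by $2^m$ and leaves $r$ unchanged, so after rescaling the amplitude still lives in a thin strip $|r|\sim 2^m$ (this is recorded by the class $\fA^m$ in Definition \ref{fA-m-def}). The $L^2$ maximal estimate (Proposition \ref{L2 proposition} v)) then comes from freezing $r=x_2$, applying H\"ormander's non-degenerate estimate to the $r$-frozen phase in the $(x_1,t;z)$-variables, and integrating in $r$ over the short interval $I_m$ of length $\sim 2^m$ to extract the factor $2^{m/2}$; the Sobolev embedding is taken in the new time variable. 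Without this role swap one lands back in the two-sided fold situation, and the bound $2^{m\varepsilon_p}2^{-m/p}$ will not emerge from a ``non-degenerate unit-scale'' argument.

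A smaller point: you promise an $L^2$ maximal estimate ``uniform in $j$,'' but for the fold pieces (i.e.\ the $\fb^{k,\ell,\vec\sigma}$ with $|\ell-2k|\lesssim 1$) the two-sided fold analysis inherently loses a factor $(j\vee 1)$, as recorded in Proposition \ref{L2 proposition} i); this log loss is harmless but must be tracked when interpolating and summing in $j$.
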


\begin{proof}[Proof of Theorem~\ref{thm local maximal function} assuming Theorem 
\ref{initial decomposition lemma} holds] 
Consider the second and third terms on the right-hand side of \eqref{190715e5.1}. 

When $m>0$ there is spatial orthogonality among the pieces of the decomposition in both $\vec{\sigma}$ and $m$. This observation combined with Theorem~\ref{initial decomposition lemma} iii) above yields
\begin{align*}
\Big\|\sum_{\vec{\sigma} \in \Z^2}\sum_{m > 0} & 2^{m} \sup_{1\le t\le 2} |A[\Phi_t; \fa_t^{m,\vec{\sigma}}]f|\Big\|_p \\
& \lesssim \Big(\sum_{\vec{\sigma} \in \Z^2}\sum_{m > 0} 2^{mp} \big\|\sup_{1 \leq t \leq 2} |A[\Phi_t; \fa_t^{m,\vec{\sigma}}]f|\big\|_p^p\Big)^{1/p}
\lesim \|f\|_p,
\end{align*}
as desired. 

When $m < 0$, note that by the support properties of $\fa_t^{m,\vec{\sigma}}$,
\begin{align*}
\sup_{1 \leq t \leq 2} \sum_{\sigma_2 \in \Z} |A[\Phi_t; \fa_t^{m,\vec{\sigma}}]f| & \lesssim  \sup_{\sigma_2 \in \Z } \sup_{\substack{1 \leq t \leq 2 \\ |t-2^m \sigma_2| \lesssim 2^m}} |A[\Phi_t; \fa_t^{m,\vec{\sigma}}]f| \\
 & \leq \Big( \sum_{\sigma_2 \in \Z } \sup_{\substack{1 \leq t \leq 2 \\ |t-2^m \sigma_2| \lesssim 2^m}} |A[\Phi_t; \fa_t^{m,\vec{\sigma}}]f|^p \Big)^{1/p}.
\end{align*}
Furthermore, applying spatial orthogonality in the $\vec{\sigma}$ parameter, the triangle inequality to the sum in $m$ and Theorem~\ref{initial decomposition lemma} iv), one deduces that
\begin{align*}
\Big\|\sum_{m < 0} &  2^{m/p} \sup_{1\le t\le 2} \sum_{\vec{\sigma} \in \Z^2} |A[\Phi_t; \fa_t^{m,\vec{\sigma}}]f|\Big\|_p \\
& \lesssim \sum_{m < 0}2^{m/p} \Big(\sum_{\vec{\sigma} \in \Z^2} \big\|\sup_{1 \leq t \leq 2} |A[\Phi_t; \fa_t^{m,\vec{\sigma}}]f|\big\|_p^p\Big)^{1/p} \lesssim_p \|f\|_{p},
\end{align*}
where the last step uses the exponential decay $2^{m \varepsilon_p}$ to sum in $m$.

Next, consider the sums in \eqref{190715e5.5}. Again, there is spatial orthogonality in the $\sigma_1$ parameter. This fact and Theorem~\ref{initial decomposition lemma} i) yield
\begin{equation*}
  \Big\| \sum_{\sigma_1 \in \Z} 2^{-k(1-1/p)} \sup_{1\le t\le 2} |A[\Phi_t; \fa_t^{k,\ell,\vec{\sigma}}]f| \Big\|_p \lesim 2^{-\ell / p} 2^{-k \varepsilon_p}\|f\|_p  
\end{equation*}
uniformly in $\sigma_2$. As the parameter $\sigma_2$ corresponds to a decomposition of the $r$ spatial variable,
\begin{align*}
 \Big\|\sum_{\vec{\sigma} \in \Z^2} & 2^{-k(1-1/p)} \sup_{1\le t\le 2} |A[\Phi_t; \fa_t^{k,\ell,\vec{\sigma}}]f|\Big\|_p \\
& \lesim \Big( \sum_{\substack{\sigma_2 \in \Z \\ |\sigma_2| \lesssim 2^\ell}}  \,  \Big\| \sum_{\sigma_1 \in \Z} 2^{-k(1-1/p)} \sup_{1\le t\le 2} |A[\Phi_t; \fa_t^{k,\ell,\vec{\sigma}}]f | \Big\|_p^p \, \Big)^{1/p}\\
& \lesim 2^{-k \varepsilon_p} \big(\sum_{\substack{\sigma_2 \in \Z \\ |\sigma_2| \lesssim 2^\ell}} 2^{-\ell} \|f\|_p^p\big)^{1/p} \lesim 2^{-k \varepsilon_p} \|f\|_p.
\end{align*}
The desired result then follows from the triangle inequality in $(k, \ell)$, using the exponential decay $2^{-k \varepsilon_p}$ to sum over $k$ and $\ell\le \ell(k)$. The sum in \eqref{190715e5.6} is bounded in a similar manner.
\end{proof}




\subsection{ Rescaling} Each piece of the decomposition is appropriately rescaled in order to obtain, wherever possible, favourable bounds on the various curvatures. For the reader's convenience, Appendix~\ref{rescaling appendix} describes the behaviour of the functions $\Phi$, $\mathrm{Rot} (\Phi)$, $\mathrm{Cin} (\Phi)$, etc under general rescalings. 
These rescalings lead to phase functions satisfying certain nonisotropic conditions
which will require  extensions of some classical results on oscillatory integral operators (see \S\ref{two-parameter-sect} below).

\medskip




\noindent\subsubsection{The case ${m = 0}$}  For $(k, 
\ell) \in \fP$ we define the dilations
\[ D^{k,\ell}(u,r,t;v,\rho) := (2^{-k} u, 2^{-\ell}r, 2^{-\ell} t; 2^{-k} v, 2^{-k}\rho).\]
Let \[e(k,\ell) := \ell - 2k + \ell \wedge 2k =
\begin{cases} 
\ell, &\text{ if } \ell\ge 2k,
\\
2\ell-2k, &\text{ if } \ell\le 2k,
\end{cases}
\]
 and  define
\Be
\label{k-rescaling}
\begin{gathered}
    \Phi^{k,\ell} :=  2^{2k + e(k,\ell)/3}\Phi \circ D^{k,\ell}, \quad \tilde{\fa}^{k, \ell, \vec{\sigma}} := \fa^{k,\ell, \vec{\sigma}} \circ D^{k,\ell},\\ \Phi^k := \Phi^{k, \ell(k)}, \quad \tilde{\fc}^{k, \vec{\sigma}} := \fc^{k,\vec{\sigma}} \circ D^{k,\ell(k)}.  
\end{gathered}
\Ee
Note that $\tilde \fa^{k,\ell,\vec\sigma} $ is supported where 
\begin{equation*}
    \rho \sim 1,\quad r\sim 2^\ell,\quad |r-t|\sim 1,\quad |u-\sigma_1|\lc 1,\quad |v-\sigma_1|\lc 1,\quad |r-\sigma_2|\lc 1 \quad |t-\sigma_2|\lc 1.
\end{equation*}
The support of $\tilde \fc^{k,\vec\sigma}$  has similar properties, with $\ell(k)$ in place of $\ell$ and $|r-t|\lesssim 1$.

The appearance of the factor $2^{2k + e(k,\ell)/3}$ is motivated by the fact that
\begin{align}\label{RotCurv1 k ell}
    \mathrm{Rot} (\Phi_t^{k, \ell}) &\sim 1  \quad \textrm{on}\:\: \mathrm{supp}\,\tilde{\fa}^{k, \ell, \vec{\sigma}} \quad \textrm{if} \:\: |\ell-2k| \geq C_{\mathrm{rot}}, \\
  \label{RotCurv ck 1}  
 \mathrm{Rot} (\Phi_t^{k}) &\sim 1  \quad \textrm{on}\:\: \mathrm{supp}\,\tilde{\fc}^{k,  \vec{\sigma}}, \\
    \label{RotCurv ck 2}
     \mathrm{Rot} ((\Phi^{k})^{\star}_r) &\sim 1  \quad \textrm{on}\:\: \mathrm{supp}\,\tilde{\fc}^{k, \vec{\sigma}}
\end{align}
where $(\Phi^{k})^{\star}_r(u,t;v,\rho) := \Phi^{k}_t(u,r;v,\rho)$. Note, however, that $\mathrm{Rot}(\Phi_t^{k,\ell})$ may vanish on $\supp \tilde{\fa}^{k,\ell,\vec{\sigma}}$ if  $|\ell-2k| \leq C_{\mathrm{rot}}$.

Setting $f_k(v,\rho)= f(2^{-k} v, 2^{-k}\rho)$, and using that $\delta$ is homogeneous of degree $-1$ one has
\begin{align*}
&A[\Phi_{2^{-\ell} t} ; \fa^{k,\ell, \vec \sigma}_{2^{-\ell}t}] f(2^{-k} u, 2^{-\ell } r) = 2^{ e(k,\ell)/3} A [\Phi^{k,\ell}_t, \tilde \fa_t^{k,\ell,\vec\sigma} ] f_k(u,r) ,
\\
&A[\Phi_{2^{-\ell(k)} t} ; \fc^{k, \vec \sigma}_{2^{-\ell(k)}t}] f(2^{-k} u, 2^{-\ell } r) = 2^{ \ell(k)/3} A [\Phi^{k,\ell}_t, \tilde \fc_t^{k,\vec\sigma} ] f_k(u,r) .
\end{align*}
Thus, by rescaling 
to prove  Theorem~\ref{initial decomposition lemma} i) and ii) it suffices to show that 
\begin{align}
\label{m 0 bound}
\big\|\sup_{t} |A[\Phi_t^{k,\ell}; \tilde{\fa}_t^{k,\ell, \vec{\sigma}}]|\big\|_{L^p \to L^p} &\lesssim 2^{-e(k,\ell)/3 +(1-2/p)k -k\varepsilon_p},\\
\label{m 0 bound 2}
\big\|\sup_{t} |A[\Phi_{t}^{k}; \tilde{\fc}_{t}^{k , \vec{\sigma}}]|\big\|_{L^p \to L^p} &\lesssim 2^{-\ell(k)/3 +(1-2/p)k -k\varepsilon_p};
\end{align}
where (by a slight abuse of notation) we indicate  the operator norms of the maximal operators on the left-hand side.  We note that in view of the support properties of $\tilde \fa^{k,\ell,\vec\sigma}$, $\tilde \fc^{k,\sigma}$ the global supremum in the definition of the maximal operator reduces to a supremum over an interval $I$ of length $|I|\sim 1$ centered at $\sigma_2$.

It is helpful to isolate the key features of the rescaled averaging operators used to prove the above inequality. As a first step in this direction, note that each $[\Phi_t^{k,\ell}; \tilde{\fa}_t^{k,\ell, \vec{\sigma}}]$  belongs to the class in the following definition. We use  coordinates $(x;z)$ for the rescaled phase functions 
where $(x_1,x_2)$ corresponds to a scaled version of $(u,r)$ and $(z_1,z_2)$ to a scaled version of $(v,\rho)$.

We define collections $\fA^{k,\ell}$ of defining pairs $[\Phi;\fa]$ involving inequalities and support assumptions that are uniform in $k,\ell$. 

\begin{definition}\label{190723def5.2} Let $\mathfrak{A}^{k,\ell}$ denote the set of all smooth families of defining pairs $[\Phi;\fa]$ for which the following conditions hold: 
\begin{flushleft}
\begin{tabular}{rl}
 a)$_{k,\ell}$    &    $|\partial_x^\alpha\partial_z^\beta\partial_t^\gamma \fa(x,t,z)|\lc 1$ and $\mathrm{diam} \,\mathrm{supp}\,\fa \lesssim 1$ \\
 $\Phi_1$)$_{k,\ell}$    &  $\displaystyle |\partial_x^{\alpha}\partial_z^{\beta} \partial_t^{\gamma} \Phi_t(x;z)| \lesssim \left\{\begin{array}{ll}
    2^{-2e(k,\ell)/3} &  \!\!\!\textrm{if $\alpha_2$ or $\gamma \neq 0$} \\
    2^{e(k,\ell)/3} & \!\!\!\textrm{otherwise}
\end{array} \right.$\!\!\!,  $|\partial_z \Phi_t(x;z)| \sim 2^{e(k,\ell)/3}$,\\[10pt]
 $\Phi_2$)$_{k,\ell}$   & $\partial_t\Phi_t(x;z)= 2^{-2e(k,\ell)/3 } c (x,t;z) \mathrm{Rot}(\Phi_t) (x;z)$ for some $c \in C^\infty$  depending \\&  on $[\Phi; \fa]$ and with uniform $C^\infty$ bounds on $\supp \fa$. \\
\end{tabular}
\end{flushleft}
These estimates are understood to hold on $\text{supp}\, \fa$,
with the constants only depending on the multiindices
$\alpha,\beta,\gamma \in \N_0^2$ . That is, if we fix a large $N$ then we get uniform estimates for   $|\alpha|$, $|\beta|$, $|\gamma| \leq N$.
\end{definition}


For $[\Phi^{k,\ell}; \tilde{\fa}^{k,\ell, \vec{\sigma}}]$ it is easy to see that  a)$_{k,\ell}$ and $\Phi_1$)$_{k,\ell}$ hold via a direct computation (the lower bound in $\Phi_1$)$_{k,\ell}$ is a little trickier and uses \eqref{identities Phi=0}). The remaining condition $\Phi_2$)$_{k,\ell}$  follows from an appropriately rescaled variant of the key identity \eqref{key identity}.  Indeed, note that
 \begin{equation*}
     \partial_t \Phi^{k,\ell}_t(u,r;v,\rho) = 2^{-2e(k,\ell)/3} \frac{1}{4b^2 (2^{-\ell}r)(2^{-\ell} t) \rho }    \mathrm{Rot}(\Phi_t^{k,\ell})(u,r;v,\rho) 
 \end{equation*}
 where $r \sim t \sim 2^\ell$ and  $\rho \sim 1$ on $\supp \tilde{a}_t^{k,\ell,\vec{\sigma}}$.
Similarly, each $[\Phi^{k}; \tilde{\fc}^{k, \vec{\sigma}}]$ belongs to $\mathfrak{A}^{k, \ell(k)} =: \mathfrak{C}^{k}$.\medskip




\subsubsection{The cases {{$m \neq 0$}}} For $m \in \Z \setminus \{0\}$, define\footnote{The $\Phi^m$ notation in \eqref{definition phi m}  conflicts with the $\Phi^k$ notation introduced in \eqref{k-rescaling}. Nevertheless, it shall always be clear from the context which definition is intended.} (recalling $m\wedge 0=\min\{m,0\}$)
\begin{gather}
   D^m(u,r, t;v,\rho) := (2^m u, r, 2^{m \wedge 0} t; 2^m v, 2^{m \wedge 0} \rho), \notag \\
    \Phi^{m} := 2^{-2m}\Phi \circ D^m, \quad \tilde{\fa}^{m,\vec{\sigma}} := \fa^{m,\vec{\sigma}} \circ D^m, \label{definition phi m} 
\end{gather}
and let $(\Phi^m)_r^{\star}(u,t;v,\rho) := \Phi^m(u,r,t;v,\rho)$. It follows from \eqref{fixed t rotational curvature} and \eqref{fixed r rotational curvature} that
\begin{equation}\label{rotational m not 0}
   \mathrm{Rot}(\Phi_t^m)  \sim 1 \quad \textrm{if $m > 0$} \quad \textrm{and} \quad \mathrm{Rot}\big((\Phi^m)_r^{\star}\big)  \sim 1 \quad \textrm{if $m < 0$} \quad \textrm{on  $\mathrm{supp}\, \tilde{\fa}^{m,\vec{\sigma}}$;} 
\end{equation}
this observation motivates the choice of normalising factor $2^{-2m}$.

Note that for $m>0$ the new amplitude  $\tilde \fa^{m,\vec\sigma}$ is supported where  
\begin{equation*}
r\sim 2^m,\quad |u-\sigma_1|\lc 1,\quad |v-\sigma_1|\lc 1,\quad |r-\sigma_2|\lc 1,\quad |\rho-\sigma_2|\lc 1,  
\end{equation*}
and if  $m<0$ then $\tilde \fa^{m,\vec\sigma}$ is supported where
\begin{equation*}
r\sim 2^m,\quad |u-\sigma_1|\lc 1,\quad |v-\sigma_1|\lc 1,\quad   |t-\sigma_2|\lc 1,\quad |\rho-\sigma_2|\lc 1.
\end{equation*}

Setting $f^m(v,\rho)=f(2^m v, 2^{m\wedge 0}\rho)$ a computation shows
\[
A[\Phi_{2^{m\wedge 0}t};\fa_{2^{m\wedge 0}t}^{m,\vec\sigma}]f(2^m u,r)=2^{-m}2^{m \wedge 0} 
A[\Phi^m_{t} , \tilde\fa_t ^{m,\vec \sigma} ]  f^m (u,r).
\]
Thus by  rescaling,
to prove  Theorem~\ref{initial decomposition lemma} iii) and iv) {it suffices to} show that 
\begin{equation}\label{m not 0 bound}
\big\|\sup_{t} |A[\Phi_t^{m}; \tilde{\fa}_t^{m, \vec{\sigma}}]|\big\|_{L^p \to L^p} \lesssim 2^{(m \wedge 0)\varepsilon_p}.
\end{equation}
Note that in view of the support properties of $\tilde{\fa}^{m,\vec{\sigma}}_t$, the global supremum in the definition of the maximal operator reduces to a supremum over an interval $I$ which equals $[1,2]$ if $m>0$ and has length $|I|\sim 1$ and it is centered at $\sigma_2$ if $m<0$; in the case $m>0$ we abuse of notation and assume that $\tilde{\fa}_t^{m,\vec{\sigma}}$ is supported on $t\sim 1$, adding a cut-off function if necessary.

If $m > 0$, then a simple computation shows that $[\Phi^{m}; \tilde{\fa}^{m, \vec{\sigma}}] \in \mathfrak{A}^{0,0}=:\mathfrak{A}^0$. On the other hand, if $m < 0$, then  $[\Phi^{m}; \tilde{\fa}^{m, \vec{\sigma}}]$ belongs to the following class classes $\fA_m$ in the following definition where the implicit constants are uniform in $m$.

\begin{definition} \label{fA-m-def} 
For $m < 0$ let $\mathfrak{A}^{m}$ denote the set of all smooth families of defining pairs $[\Phi;\fa]$ satisfying:
\begin{flushleft}
\begin{tabular}{rl}
 a)$_{m}$    & $ \displaystyle |\partial_{x}^{\alpha} \partial_z^\beta \partial_t^\gamma  \fa (x,t;z)| \lesssim  \begin{cases}
    2^{-m\alpha_2} & \textrm{if $\alpha_2 \neq 0$} \\
    1 & \text{otherwise}
 \end{cases} $, $\quad  \mathrm{diam}\,\mathrm{supp}\,\mathfrak{a} \lesssim 1$, and the \\ & projection of   $\supp \fa$  in the $x_2$-variable lies in
  an interval of length $\lesssim 2^{m}$; \\[5pt]
 $\Phi_1$)$_{m}$    &  $\displaystyle |\partial_x^{\alpha}\partial_z^{\beta} \partial_t^{\gamma} \Phi_t(x;z)| \lesssim \left\{\begin{array}{ll}
    2^{-2m} &  \textrm{if $\alpha_2 \neq 0$} \\
    1 & \textrm{otherwise}
\end{array} \right.$,  $\quad |\partial_z \Phi_t(x;z)| \sim 1$ 
\end{tabular}
\end{flushleft}
on $\mathrm{supp}\,\fa$ for all $\alpha,\beta,\gamma \in \N_0^2$ with $|\alpha|$, $|\beta|$, $|\gamma| \leq N$.
\end{definition}

The derivative bounds on the amplitude for $\alpha_2=0$, which are uniformly bounded, are used for the $L^2$-estimates in Section \ref{L2 section}. The bounds for $\alpha_2 \neq 0$ are used for the $L^p$-estimates in Section \ref{Lp section}, although they do not introduce any loss for the purposes of the desired inequality \eqref{Lp inequality}.





\subsection{ Cinematic curvature decomposition}

The decomposition described in \S\ref{spatial decomposition subsection} automatically isolates the region where the cinematic curvature vanishes.\medskip

\subsubsection{The case $m = 0$}  By \eqref{plane curve curvature bound}, \eqref{projection formula} and \eqref{cinematic formula}, each $[\Phi^{k,\ell}; \tilde{\fa}^{k,\ell, \vec{\sigma}}]$ belongs to the following class.

\begin{definition} Let $\mathfrak{A}_{\mathrm{Cin}}^{k,\ell}$ denote the set of all $[\Phi; \fa] \in \mathfrak{A}^{k,\ell}$ satisfying:
\begin{flushleft}
\begin{tabular}{rl}
  \textbf{C})$_{k}$   & $|\kappa(\Phi)(\vec{x};z)|,\:\: |\mathrm{Proj}(\Phi)(\vec{x};z)|,\:\: |\mathrm{Cin}(\Phi)(\vec{x};z)|  \gtrsim 2^{-M k}$ \quad \textrm{for $(\vec{x};z) \in \mathrm{supp}\,\fa$.}
\end{tabular}
  \end{flushleft}
  Here $M \geq 1$ is an appropriate chosen absolute constant.
\end{definition}

Observe, however, that the $[\Phi^{k}; \tilde{\fc}^{k, \vec{\sigma}}]$ lie in $\mathfrak{A}^{k,\ell(k)}$ but do \textbf{not} belong to $\mathfrak{A}_{\mathrm{Cin}}^{k, \ell(k)}$; it is for this reason that this part of the operator is isolated in the analysis. Indeed, the amplitude $\tilde{\fc}^{k,\vec{\sigma}}$ is supported on the region $|r-t|\lesssim 2^{-\ell(k)}$ and therefore $\kappa(\Phi)$, $\mathrm{Proj}(\Phi)$ and $\mathrm{Cin}(\Phi)$ can vanish on $\supp\tilde{\fc}^{k,\vec{\sigma}}$. Nevertheless, these quantities only vanish on a small set and, in particular, $[\Phi^{k}; \tilde{\fc}^{k, \vec{\sigma}}]$ belongs to the following class. 

\begin{definition} Let $\mathfrak{C}_{\mathrm{Cin}}^{k}$ denote the set of all $[\Phi; \fc] \in \mathfrak{A}^{k,\ell(k)}$ such that, for all $\delta > 0$, if $(x,t;z) \in \mathrm{supp}\,\fc$ with $|t - x_2| > \delta$, then
 \begin{flushleft}
\begin{tabular}{rl}
  \textbf{C}$_\delta$)$_{k}$   & $|\kappa(\Phi)(x,t;z)|,\:\: |\mathrm{Proj}(\Phi)(x,t;z)|,\:\: |\mathrm{Cin}(\Phi)(x,t;z)|  \gtrsim \delta 2^{-M k}$. 
\end{tabular}
  \end{flushleft} 
As before, $M \geq 1$ is an appropriately chosen absolute constant.
\end{definition}

\subsubsection{The cases $m \neq 0$} If $m > 0$,  then \eqref{plane curve curvature bound}, \eqref{projection formula} and \eqref{cinematic formula} show that $[\Phi^{m}; \tilde{\fa}^{m, \vec{\sigma}}] $ belongs to   $\mathfrak{A}_{\mathrm{Cin}}^{0,0} =: \mathfrak{A}_{\mathrm{Cin}}^{0}$. On the other hand, if $m < 0$, then $[\Phi^{m}; \tilde{\fa}^{m, \vec{\sigma}}]$ belongs to the following class.

\begin{definition} For $m <0$ let $\mathfrak{A}_{\mathrm{Cin}}^{m}$ denote the set of $[\Phi; \fc] \in \mathfrak{A}^{m}$ satisfying \textbf{C})$_{-m}$.
\end{definition}




\subsection{ Rotational curvature decomposition} Further decomposition is required in order to isolate the regions where the rotational curvature vanishes.\medskip

\subsubsection{The case $m = 0$} Let $\varepsilon_{\circ} > 0$ be a fixed constant, chosen small enough to satisfy the requirements of the forthcoming proof, and define
\begin{equation*}
    \fb^{k,\ell, \vec{\sigma}}(u,r,t;v,\rho) := \tilde{\fa}^{k,\ell, \vec{\sigma}}(u,r,t;v,\rho)\eta\big(\varepsilon_{\circ}^{-1}\mathrm{Rot}(\Phi_t^{k,\ell})(u,r;v,\rho) \big)
    \big).
\end{equation*}
In view of \eqref{RotCurv1 k ell}, one may readily verify that $\fb^{k,\ell,\vec{\sigma}}$ is identically zero unless $|\ell - 2k| \lesssim 1$, in which case $[\Phi^{k,\ell};\fb^{k,\ell,\vec{\sigma}}] \in \mathfrak{A}_{\mathrm{Cin}}^{k,2k} =: \mathfrak{B}_{\mathrm{Cin}}^k$.\medskip

\subsubsection*{\underline{Vanishing rotational curvature}}
 To analyse the operators $A[\Phi_t^{k,\ell}; \fb_t^{k,\ell, \vec{\sigma}}]$ it is necessary to exploit the fold conditions discussed in \S\ref{rotational curvature section}. The observations of \S\ref{rotational curvature section} imply that $[\Phi^{k,\ell}; \fb^{k,\ell,\vec{\sigma}}]$ belongs to the following class.

\begin{definition}\label{190723def5.7}
 Let $\mathfrak{B}^{k}_{\mathrm{Rot}}$ denote the set of all smooth families of defining pairs $[\Phi; \fb] \in \mathfrak{A}^{k,2k}$ that, in addition to a)$_{k,2k}$, $\Phi_1$)$_{k,2k}$, $\Phi_2$)$_{k,2k}$, satisfy:\medskip

\noindent The \textit{support condition}:

\begin{itemize}
\item[b)$_{k}$] $\mathrm{supp}\,\fb_t$ is contained in an $O(\varepsilon_{\circ})$-neighbourhood of $\mathrm{supp}\,\fb_t \cap  \mathcal{Z}_t$ where $ \mathcal{Z}_t$ denotes the \textit{fold surface}
\begin{equation}\label{fold surface}
    \mathcal{Z}_t := \big\{(x;z) \in \R^2  \times \R^2 : \Phi_t(x;z) = \mathrm{Rot}(\Phi_t)(x;z) = 0 \big\}.
\end{equation} 
\end{itemize}

\noindent The \textit{fold conditions}: For every $(x_0;z_0) \in \mathrm{supp}\,\fb_{t_0} \cap  \mathcal{Z}_{t_0}$ there exist:

\begin{itemize}
\item[F$_1$)$_{k}$] Vectors $U=(u_1, u_2, u_3), V=(v_1, v_2, v_3) \in \R^3$ satisfying
\begin{equation*}
\begin{split}
   &  \bigg|\bigg\langle \partial_{zz}^2 \Big\langle U,  \begin{bmatrix}
    \Phi_{t_0} \\
    \partial_x \Phi_{t_0}
    \end{bmatrix}
    \Big\rangle\Big|_{(x_0; z_0)} V''\,,\, V'' \bigg \rangle\bigg|\sim 2^{-4k/3}, \\
   &  \bigg|\bigg\langle \partial_{xx}^2 \Big\langle V,  \begin{bmatrix}
    \Phi_{t_0} \\
    \partial_z \Phi_{t_0}
    \end{bmatrix}
    \Big\rangle\Big|_{(x_0; z_0)} U''\,,\, U'' \bigg \rangle\bigg| \sim 2^{-4k/3},
    \end{split}
\end{equation*}
where $U''=(u_2, u_3)$ and $V''=(v_2, v_3)$.

\item[F$_2$)$_{k}$]  $3 \times 3$ real matrices $\mathbf{X}$ and $\mathbf{Z}$ such that:
\begin{enumerate}[i)]
\item If $\mathbf{X}^{ij}$ and $\mathbf{Z}^{ij}$ denote the $(i,j)$ entry of $\mathbf{X}$ and $\mathbf{Z}$, respectively, then 
\begin{equation*}
   |\mathbf{X}^{ij}| \lesssim \left\{\begin{array}{ll}
        2^{-2k} & \textrm{if $(i,j) \in \{(1,3), (2,3)\}$}  \\
        1 & \textrm{otherwise}
    \end{array} \right., \quad  |\mathbf{Z}^{ij}| \lesssim 1 .
\end{equation*}
\item $\mathbf{X}e_3 = U$, $\mathbf{Z}e_3 = V$ and $|\det \mathbf{X}| \sim |\det \mathbf{Z}| \sim 1$.
\item The matrices $\mathbf{X}, \mathbf{Z}$ transform $\mathfrak{M}(\Phi_{t_0})(x_0;z_0)$ into the normal form
\begin{equation*}
    \mathbf{X}^{\top} \circ \mathfrak{M}(\Phi_{t_0})(x_0;z_0)\circ \mathbf{Z} = \begin{bmatrix}
\mathbf{M}_{t_0}(x_0;z_0) & 0 \\
0 & 0
\end{bmatrix},
\end{equation*}
where the $2 \times 2$ principal minor satisfies $|\det \mathbf{M}_{t_0}(x_0;z_0)| \sim 2^{4k/3}$.

\end{enumerate}
\end{itemize}
\end{definition}

For $[\Phi^{k,\ell}; \fb^{k,\ell,\vec{\sigma}}]$ the support condition is satisfied owing to the choice of localisation whilst, for the fold conditions, $U$, $V$ and $\mathbf{X}$, $\mathbf{Z}$ can be taken to be suitably rescaled versions of the vectors in \eqref{fold condition 1} and the matrices in \eqref{fold condition 2}, respectively. \medskip

 \subsubsection*{\underline{Nonvanishing rotational curvature}}
 By \eqref{RotCurv1 k ell}, each $[\Phi^{k,\ell}; \tilde{\fa}^{k,\ell, \vec{\sigma}} - \fb^{k,\ell, \vec{\sigma}}]$ belongs to the following class.

\begin{definition}\label{fA-k-ell-Rot}
Let $\mathfrak{A}_{\mathrm{Rot}}^{k,\ell}$ denote the set of all $[\Phi; \fa] \in \mathfrak{A}^{k,\ell}$ that satisfy
\begin{flushleft}
\begin{tabular}{rl}
   \textbf{R})$_{k,\ell}$  &  $\mathrm{Rot}(\Phi_t) \sim 1$ on $\mathrm{supp}\,\fa_t$. 
\end{tabular}
\end{flushleft}
\end{definition}

Recalling \eqref{m 0 bound}, to prove Lemma~\ref{initial decomposition lemma} i) it therefore suffices to show: 
\begin{flushleft}
\begin{tabular}{ll}
    $\displaystyle \big\|\sup_{t \in I } |A[\Phi_t; \fb_t]|\big\|_{L^p \to L^p} \lesssim 2^{-\frac{2k}{3}
+(1-\frac{2}{p})k -k\varepsilon_p}\|\fb\|_{C^N}$
&\textrm{if  $[\Phi;\fb] \in \mathfrak{B}^{k}_{\mathrm{Cin}} \cap \mathfrak{B}^{k}_{\mathrm{Rot}}$}, \\
$\displaystyle \big\|\sup_{t \in I} |A[\Phi_t; \fa_t]|\big\|_{L^p \to L^p} \lesssim 2^{-\frac{e(k,\ell)}{3} +(1-\frac{2}{p})k -k\varepsilon_p}\|\fa\|_{C^N} \qquad$ &\textrm{if $[\Phi;\fa] \in \mathfrak{A}_{\mathrm{Cin}}^{k, \ell} \cap \mathfrak{A}_{\mathrm{Rot}}^{k,\ell}$,} 
\end{tabular}
\end{flushleft}
where $I$ is an interval of length $|I|\sim 1$ containing the $t$-support of $\fa$ or $\fb$.

Similarly, by \eqref{RotCurv ck 1} and \eqref{RotCurv ck 2}, each $[\Phi^{k}; \tilde{\fc}^{k, \vec{\sigma}}]$ belongs to the following class.

\begin{definition} Let $\mathfrak{C}^{k}_{\mathrm{Rot}}$ denote the set of all $[\Phi; \fc] \in \mathfrak{C}^k$ that satisfy $\textbf{R})_{k, \ell(k)}$ and
\begin{flushleft}
\begin{tabular}{rl}
   \textbf{R}$^{\star}$)$_{k}$  & $\mathrm{Rot}(\Phi_{x_2}^{\star}) \sim 1$ on $\mathrm{supp}\,\fc_{x_2}^{\star}$ 
\end{tabular}
\end{flushleft}
where $\Phi_{x_2}^{\star}(x_1,t;z) := \Phi_t(x_1,x_2;z)$ and $\fc_{x_2}^{\star}(x_1,t;z) := \fc_t(x_1,x_2;z)$.
\end{definition}
Thus, recalling \eqref{m 0 bound 2}, to prove Lemma~\ref{initial decomposition lemma} ii) it suffices to show: 
\begin{align*}
   \displaystyle\big\|\sup_{t \in I } |A[\Phi_t; \fc_t]|\big\|_{L^p \to L^p} &\lesssim 2^{-\frac{2k}{3}
+(1-\frac{2}{p})k -k\varepsilon_p}\|\fc\|_{C^N}\quad\:\, & \textrm{if $[\Phi;\fc] \in \mathfrak{C}^{k}_{\mathrm{Cin}} \cap \mathfrak{C}^{k}_{\mathrm{Rot}}$,}
\end{align*}
where $I$ is an interval of length $|I|\sim 1$ containing the $t$-support of $\fc$.




 \subsubsection{The cases $m \neq 0$} If $m > 0$, then it follows from \eqref{rotational m not 0} that $[\Phi^{m}; \tilde{\fa}^{m, \vec{\sigma}}] \in \mathfrak{A}_{\mathrm{Rot}}^{0,0} =: \mathfrak{A}_{\mathrm{Rot}}^0$. On the other hand, if $m < 0$, then \eqref{rotational m not 0} implies that $[\Phi^{m}; \tilde{\fa}^{m, \vec{\sigma}}]$ has favourable rotational curvature properties once the r\^oles of the $r$ and $t$ variables are interchanged. In particular, in this case $[\Phi^{m}; \tilde{\fa}^{m, \vec{\sigma}}]$ belongs to the following class.

\begin{definition}\label{fA-m-rot-def}  For $m < 0$ let $\mathfrak{A}^{m}_{\mathrm{Rot}}$ denote the set of all $[\Phi; \fa] \in \mathfrak{A}^{m}$ that satisfy
\begin{flushleft}
\begin{tabular}{rl}
   \textbf{R}$^{\star}$)$_{m}$  & $\mathrm{Rot}(\Phi_{x_2}^{\star}) \sim 1$ on $\mathrm{supp}\,\fa_{x_2}^{\star}$
\end{tabular}
\end{flushleft}
where $\Phi_{x_2}^{\star}(x_1,t;z) := \Phi_t(x_1,x_2;z)$ and $\fa_{x_2}^{\star}(x_1,t;z) := \fa_t(x_1,x_2;z)$.
\end{definition}

Thus, recalling \eqref{m not 0 bound}, to prove Lemma~\ref{initial decomposition lemma} iii) and iv) it suffices to show that 
\begin{flushleft}
\begin{tabular}{ll}
$\displaystyle \big\|\sup_{t \in I} |A[\Phi_t; \fa_t]|\big\|_{L^p \to L^p} \lesssim \|\fa\|_{C^N}$
 &\textrm{if $[\Phi;\fa] \in \mathfrak{A}_{\mathrm{Cin}}^{0} \cap \mathfrak{A}_{\mathrm{Rot}}^{0}$,}\\
 $\displaystyle\big\|\sup_{t \in I} |A[\Phi_t; \fa_t]|\big\|_{L^p \to L^p} \lesssim 2^{m \varepsilon_p}\sup_{x_2}\|\fa\|_{C^N_{x_1,z,t}} \qquad \qquad $  & \textrm{if $[\Phi;\fa] \in \mathfrak{A}_{\mathrm{Cin}}^{m} \cap \mathfrak{A}_{\mathrm{Rot}}^{m}$,  $m < 0$,}  
\end{tabular}
\end{flushleft}
where $I$ is an interval of length $|I|\sim 1$ containing the $t$-support of $\fa$.




\subsection{ Frequency decomposition} Given a smooth family of defining pairs $[\Phi; \fa]$ note that, since the inverse Fourier transform $\check{\eta}$ of the cutoff $\eta$ from \eqref{eta function} has unit mean,
 \begin{equation*}
    A[\Phi_t;\fa_t]f(x) = \lim_{j \to \infty} 2^j \int_{\R^2} \check{\eta}(2^j \Phi_t(x;z))\fa_t(x;z) f(z)\,\ud z,
\end{equation*}
where $\eta$ is a bump function as in \eqref{eta function}. The integral formula for $\check{\eta}$ then yields 
\begin{equation*}
A[\Phi_t;\fa_t] = A_{\leq J}[\Phi_t;\fa_t] + \sum_{j=J}^{\infty} A_j[\Phi_t;\fa_t]
\end{equation*}
for any $J \in \Z$ where
  \begin{align}
  \nonumber
  A_{\leq J}[\Phi_t;\fa_t]f(x) &:= \frac{1}{2\pi} \int_{\R^2} \int_{\R} e^{i \theta \Phi_t(x;z)}  \fa_t(x;z)\eta^J(\theta) \,\ud\theta\, f(z)\,\ud z, \\
  \label{frequence localised definition}
      A_j[\Phi_t;\fa_t]f(x) &:=  \frac{1}{2\pi} \int_{\R^2} \int_{\R} e^{i \theta \Phi_t(x;z)}  \fa_t(x;z)\beta^j(|\theta|) \,\ud\theta\, f(z)\,\ud z .
  \end{align} 
This provides a frequency decomposition of \eqref{general average}. The low frequency part of the operator (corresponding to $A_{\leq J}[\Phi_t;\fa_t]$ for a suitable choice of $J$) can be dealt with via pointwise comparison with the Hardy--Littlewood maximal operator, and so the remainder of the article will focus on the high frequency parts. In view of this and the observations of the preceding subsection, Theorem~\ref{initial decomposition lemma} is a consequence of the following proposition, which will be proved in \S\ref{L2 section} and \S\ref{Lp section} using the theory developed in \S\ref{two-parameter-sect}.

\begin{proposition}\label{frequency decomposition prop} There exists $N\in \bbN$, $\varepsilon_p>0$ 
such that 
for all $k \geq -4$, $(k,\ell) \in \fP$, $j \geq -e(k,\ell)/3$ and $2 < p < \infty$,  the following bounds hold, with the implicit constants depending on $p$. In each inequality, $I$ denotes an interval of length $|I|\sim 1$ containing the $t$-support of the amplitude.

\smallskip

(i) For $[\Phi;\fb] \in \mathfrak{B}^{k}_{\mathrm{Cin}} \cap \mathfrak{B}^{k}_{\mathrm{Rot}}$,
\[
 \big\|\sup_{t \in I } |A_j[\Phi_t; \fb_t]f|\big\|_{p} \lesssim 2^{-(j\vee 0) \varepsilon_p} 2^{-\frac{2k}{3}
+(1-\frac{2}{p})k -k\varepsilon_p}\|\fb\|_{C^N} \|f\|_p.
\]

(ii) For
$[\Phi;\fa] \in \mathfrak{A}_{\mathrm{Cin}}^{k, \ell} \cap \mathfrak{A}_{\mathrm{Rot}}^{k,\ell}$,
\[\big\|\sup_{t \in I } |A_j[\Phi_t; \fa_t]f|\big\|_{p} \lesssim 2^{-(j\vee 0) \varepsilon_p}  2^{-\frac{e(k,\ell)}{3} +(1-\frac{2}{p})k -k\varepsilon_p}\|\fa\|_{C^N} \|f\|_p.
\]

(iii) For
 $[\Phi;\fc] \in \mathfrak{C}^{k}_{\mathrm{Cin}} \cap \mathfrak{C}^{k}_{\mathrm{Rot}}$,
 \[\big\|\sup_{t \in I } |A_j[\Phi_t; \fc_t]f|\big\|_{p} \lesssim 2^{-(j\vee 0) \varepsilon_p}  2^{-\frac{2k}{3}
+k(1-\frac{2}{p}) -k\varepsilon_p}\|\fc\|_{C^N}\|f\|_p.
\]

(iv) For $[\Phi;\fa] \in \mathfrak{A}_{\mathrm{Cin}}^{0} \cap \mathfrak{A}_{\mathrm{Rot}}^{0}$,
\[ 
\big\|\sup_{t \in I } |A_j[\Phi_t; \fa_t]f|\big\|_{p} \lesssim 2^{-j \varepsilon_p}  \|\fa\|_{C^N} \|f\|_p.
\]

(v) For $m<0$ and $[\Phi;\fa] \in \mathfrak{A}_{\mathrm{Cin}}^{m} \cap \mathfrak{A}_{\mathrm{Rot}}^{m}$,
\[ \big\|\sup_{t \in I} |A_j[\Phi_t; \fa_t]f|\big\|_{p } \lesssim 2^{-j \varepsilon_p}  2^{m \varepsilon_p}\sup_{x_2}\|\fa\|_{C^N_{x_1,z,t}}\|f\|_p.
\]
\end{proposition}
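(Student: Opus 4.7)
My plan is to follow the Mockenhaupt--Seeger--Sogge paradigm for maximal operators along curves, adapted to our rescaled setting: prove a $j$-uniform $L^2$ bound using a Sobolev embedding in $t$ together with a fixed-time $L^2$ oscillatory integral estimate, then prove a quantitative $L^{p_\circ}$ local smoothing estimate at some $p_\circ > 2$, and finally interpolate with the trivial $L^\infty$ bound in order to transfer the $j$-decay to all $p > 2$. The Sobolev trick over the unit interval $I$ gives
\[
\big\|\sup_{t\in I}|A_j[\Phi_t;\fa_t]f|\big\|_{L^2_x}^2 \lesssim \|A_j f\|_{L^2_{x,t}}^2 + \|A_j f\|_{L^2_{x,t}}\,\|\partial_t A_j f\|_{L^2_{x,t}},
\]
and since $\partial_t$ acting on $e^{i\theta\Phi_t}$ produces a factor $\theta\sim 2^j$, everything reduces to the fixed-time bound for $A_j[\Phi_t;\fa_t]$.

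\textbf{Fixed-time $L^2$ bounds.} In cases (ii), (iv), (v), after possibly interchanging the r\^oles of $r$ and $t$ to exploit $\mathbf{R}^\star)$, the defining pair has non-degenerate rotational curvature. A $T^{\ast}T$ argument, followed by stationary phase in $\theta$ using the non-degenerate mixed Hessian guaranteed by $\Phi_1)$ and $\mathbf{R})$, yields the H\"ormander-type decay $\|A_j[\Phi_t;\fa_t]\|_{2\to 2} \lesssim 2^{-j/2}$, with constants independent of $k$, $\ell$, $m$ thanks to the uniform normalizations in Definitions \ref{190723def5.2}, \ref{fA-m-def}, \ref{fA-m-rot-def}. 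Case (iii) is analogous, using $\mathbf{R}^\star)_k$ to run the stationary phase in the $t$-variable. The hard case is (i): the rotational curvature vanishes on the fold surface $\mathcal{Z}_t$, and only the two-sided fold conditions F$_1)_k$, F$_2)_k$ are available. Here I will invoke the two-parameter anisotropic oscillatory integral estimates developed in Section~\ref{two-parameter-sect}, extending \cite{phong-stein91, Cuccagna}; this produces a fixed-time bound weaker by a power of $2^j$, but the rescaling prefactor $2^{e(k,\ell)/3}$ in \eqref{k-rescaling} is designed precisely to absorb that loss.

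\textbf{Local smoothing and cinematic degeneration.} To obtain $j$-decay at $p > 2$, I will prove an $L^{p_\circ}$ local smoothing estimate of the form
\[
\|A_j[\Phi;\fa]f\|_{L^{p_\circ}(\R^2 \times I)} \lesssim 2^{-j(1/p_\circ + \varepsilon(p_\circ))}\,C(k)\,\|f\|_{p_\circ}
\]
for some $p_\circ > 2$ and some $\varepsilon(p_\circ) > 0$. In cases (i), (ii), (iv), (v), the cinematic curvature is quantitatively bounded below on $\supp\fa$ (by $2^{-Mk}$, or by $2^{Mm}$ when $m<0$), so the classical Mockenhaupt--Sogge local smoothing for Fourier integral operators associated with the canonical relation of $\Phi$ applies directly; the polynomial $k$-loss is harmless after the final interpolation since only \emph{some} $\varepsilon_p > 0$ is needed. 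In case (iii), the amplitude $\fc$ lives where $|r-t|$ may be as small as $2^{-\ell(k)}$ and so $\mathrm{Cin}(\Phi)$ may vanish. I will split $\fc = \fc^{\mathrm{near}}_\delta + \fc^{\mathrm{far}}_\delta$ according to $|t-x_2| < \delta$ vs.\ $|t-x_2| \ge \delta$: the thin $t$-support of $\fc^{\mathrm{near}}_\delta$ combined with the $j$-uniform $L^2$ bound contributes a $\delta^{1/p}$ gain, while on $\fc^{\mathrm{far}}_\delta$ the condition $\mathbf{C}_\delta)_k$ guarantees $\mathrm{Cin}(\Phi) \gtrsim \delta \cdot 2^{-Mk}$ so that local smoothing applies at the cost of a polynomial factor $\delta^{-C}$; optimizing $\delta \sim 2^{-j\eta}$ for small $\eta > 0$ delivers $2^{-j\varepsilon_p}$.

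\textbf{Interpolation and main obstacle.} Combining the $j$-uniform $L^2$ bound with the trivial $L^\infty$ bound via Riesz--Thorin, then interpolating with the local smoothing estimate at $p_\circ$, yields $2^{-j\varepsilon_p}$ decay for every $2 < p < \infty$; the precise $k$-weights in (i)--(iii) arise from tracking the rescaling factors in \eqref{k-rescaling} and \eqref{definition phi m}. The principal obstacle will be the $L^2$ analysis of case (i): the two-sided fold is considerably more subtle than the non-degenerate situation, and one must carefully track how the anisotropic rescaling $D^{k,\ell}$ interacts with the fold normal form of F$_2)_k$ so that the Phong--Stein--Cuccagna-type bound is \emph{uniform} in the two parameters $k$ and $\ell$. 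A secondary technical point is arranging the $\delta$-balanced decomposition of case (iii) so that the cinematic curvature lower bound in $\mathbf{C}_\delta)_k$ survives the rescaling with a polynomial (rather than exponential) dependence on $\delta^{-1}$.
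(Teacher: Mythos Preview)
Your overall architecture matches the paper's, but there is a genuine gap in the $L^2$ step that breaks the interpolation for $p$ near $2$.

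You write that ``$\partial_t$ acting on $e^{i\theta\Phi_t}$ produces a factor $\theta\sim 2^j$'' and conclude that the Sobolev-in-$t$ argument gives a $j$-uniform $L^2$ maximal bound of size $O(1)$. But Proposition~\ref{frequency decomposition prop} requires, after the final interpolation, the factor $2^{-e(k,\ell)/3}$ (cases (ii)) or $2^{-2k/3}$ (cases (i), (iii)) for the \emph{rescaled} operator; this factor is not supplied by the local smoothing estimate (which carries a $2^{Mk}$ loss) nor by ``tracking rescaling factors'' (the rescaling \eqref{k-rescaling} has already been absorbed into the class definitions, and the target bounds \eqref{m 0 bound}, \eqref{m 0 bound 2} are stated \emph{after} rescaling). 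With an $L^2$ maximal bound of only $O(1)$, interpolation against the $L^\infty$ bound $2^{-e/3}$ and the $L^{p_\circ}$ local smoothing bound cannot produce the required $2^{-e/3+(1-2/p)k}$ for $p$ close to $2$: for instance at $p=3$ and $e=2k$ you would obtain at best $2^{-2k/9}$, whereas the target is $2^{-k/3}$.

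The missing ingredient is precisely the anisotropic smallness of $\partial_t\Phi_t$ encoded in the class. Condition $\Phi_1)_{k,\ell}$ gives $|\partial_t\Phi_t|\lesssim 2^{-2e(k,\ell)/3}$, so that in the non-degenerate cases $\partial_t$ costs only $2^{j-2e/3}$; combined with the two-parameter H\"ormander bound of \S\ref{nondegenerate-sect} (which itself gains a factor $(\lambda\delta_\circ)^{-1/2}$ over the naive $\lambda^{-(d-1)/2}$), the Sobolev argument then yields the correct $L^2$ maximal bound $2^{-e/3}$. In the fold case (i) even this is not enough: one must use the finer identity $\Phi_2)_{k,2k}$, namely $\partial_t\Phi_t = c\,2^{-4k/3}\mathrm{Rot}(\Phi_t)$, so that on each Phong--Stein shell $|\sigma|\sim 2^{-m}\delta_\circ$ the $t$-derivative costs only $\lambda\,2^{-m}\delta_\circ$ rather than $\lambda\delta_\circ$; this is what converts the would-be polynomial $j$-loss into the harmless logarithm $(j\vee 1)$ in Proposition~\ref{L2 proposition}(i). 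Your remark that ``the rescaling prefactor $2^{e(k,\ell)/3}$ is designed precisely to absorb that loss'' misidentifies the mechanism: that prefactor normalises $|\partial_z\Phi|$, not the $j$-dependence, and it is the key identity \eqref{key identity} (in rescaled form $\Phi_2)_{k,\ell}$) that does the real work.
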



\noindent{\it Remark.}
Here cases i), iii), iv) and v) are understood to hold for $\ell = 2k$ so that $j$ ranges over values $j \geq -2k/3$, with $k=0$ in the cases iv) and v). In each case, similar estimates hold for $A_{\leq -e(k,\ell)/3}[\Phi_t; \fa_t]$ (corresponding to the low frequency part), which can be proved by elementary means.



\section{$L^2$ bounds for two parameter oscillatory integral
operators}\label{two-parameter-sect}

The first step towards establishing Proposition~\ref{frequency decomposition prop} is to obtain $L^2$ bounds for the frequency localised pieces with favourable dependence in the parameters $k$ and $\ell$. This will follow from certain estimates for maximal functions associated to two parameter oscillatory integrals, which will be proven in this section.

To this end, let $U \subset \R^d \times \R^d$ be an open set, $\Psi:U \to \R$ be a smooth phase function and $a \in C_0^\infty(U)$. Consider, for $\lambda>1$,  the oscillatory integral operator associated to the phase/amplitude pair $[\Psi;a]$, 
\Be \label{TLa}T^\lambda f(x)\equiv T^\lambda[\Psi;a]f(x):=\int_{\R^d} e^{i\lambda \Psi(x;z)} a(x;z) f(z)\, \ud z.\Ee



We now let $0<\delta_\circ\le 1$ and we shall assume that the  following nonisotropic derivative estimates
\Be\label{upperboundsder}
|\partial_{x} ^\alpha \partial_z^\beta \Psi (x;z) |
+ \delta_\circ^{-1} |\partial_{x} ^\alpha \partial_z^\beta \partial_{x_d} \Psi (x;z) |
\le C_{\alpha, \beta}
\Ee
hold for all $(x;z) \in U$ and all $\alpha,\beta  \in \N_0^d$. We shall then derive estimates in terms of the two parameters $\lambda > 1$ and $\delta_\circ\le 1$.
Our results  could be rewritten as a two parameter oscillatory integral estimates 
with phase $\lambda (\varphi(x';z)+\delta_\circ \psi(x;z))$, where $x=(x',x_d)$, and  uniform upper bound derivative estimates on $\varphi$ and $\psi$.

\subsection{ The nondegenerate case}\label{nondegenerate-sect} We first formulate a variant of the classical $L^2$ result of H\"ormander in \cite{Hormander1973}
under the assumption \eqref{upperboundsder}.
\begin{proposition}\label{2par-Hoerprop}
Let $\lambda \geq 1$, $0 < \delta_\circ \leq 1$, $\Psi$ be as in \eqref{upperboundsder}  and suppose that there is $c>0$ such  that $|\det \partial^2_{zx}\Psi (x_0;z_0)| \ge c\delta_\circ$ for some $(x_0; z_0) \in U$.
 Then there exist $\eps_\circ>0$ and $N>0$, independent of  $\lambda$ and $\delta_\circ$,  such that for all smooth $a$ with $\supp a \subset B_{\eps_\circ}(x_0;z_0)$, 
\[\|T^\lambda \|_{L^2(\R^d)\to L^2(\R^d)} \lc \lambda^{-\frac{d-1}{2}} 
\min\{ (\lambda\delta_\circ)^{-1/2}, 1\}\|a\|_{C^N}.\]
\end{proposition}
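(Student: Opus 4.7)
My approach is a $TT^*$ argument. Setting $\tilde\Psi(x,y;z) := \Psi(x;z) - \Psi(y;z)$, the Schwartz kernel of $T^\lambda(T^\lambda)^*$ is
\[
K(x,y) = \int_{\R^d} e^{i\lambda\tilde\Psi(x,y;z)}\, a(x;z)\overline{a(y;z)}\, \ud z.
\]
Applying Schur's test, the claimed $L^2 \to L^2$ bound will follow (by taking square roots) from
\[
\sup_x \int_{\R^d} |K(x,y)|\, \ud y \;\lesssim\; \lambda^{-(d-1)}\min\bigl(1,(\lambda\delta_\circ)^{-1}\bigr)
\]
together with the symmetric bound. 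This in turn reduces to the pointwise kernel estimate
\[
|K(x,y)| \;\lesssim_N\; \bigl(1 + \lambda|x'-y'| + \lambda\delta_\circ|x_d - y_d|\bigr)^{-N}
\]
for all large $N$, where I write $x = (x', x_d) \in \R^{d-1}\times \R$ and similarly for $y$.

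This pointwise estimate follows from iterated integration by parts in $z$ via the operator $L := (i\lambda|\nabla_z\tilde\Psi|^2)^{-1}\nabla_z\tilde\Psi\cdot\nabla_z$, provided I can establish the lower bound
\[
|\nabla_z\tilde\Psi(x,y;z)| \;\gtrsim\; |x'-y'| + \delta_\circ|x_d - y_d|
\]
uniformly on the support of $a(x;\cdot)\overline{a(y;\cdot)}$. Writing $\nabla_z\tilde\Psi = G(x,y;z)^{\top}(x-y)$ with $G(x,y;z) := \int_0^1 \partial^2_{xz}\Psi(y + s(x-y);z)\, \ud s$, the hypothesis \eqref{upperboundsder} forces the last row of $G$ to be $O(\delta_\circ)$ while the other entries are $O(1)$; moreover $|\det G(x,y;z)| \gtrsim \delta_\circ$ throughout a $\delta_\circ$-independent ball $B_{\eps_\circ}(x_0, x_0; z_0)$, since the relevant Schur complement has $C^1$-norm of the same order $O(\delta_\circ)$ as its pointwise lower bound and the two balance out. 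Cofactor expansion along the last row forces some $(d-1)\times(d-1)$ minor of the first $d-1$ rows of $G$ to be bounded below by an absolute constant; after permuting the $z$-coordinates (which preserves all hypotheses) I may assume this is the top-left block $A$, yielding the decomposition $G = \begin{pmatrix} A & b \\ c^{\top} & \alpha\end{pmatrix}$ with $A$, $b$ of size $O(1)$, $|\det A| \gtrsim 1$, and $c$, $\alpha$ of size $O(\delta_\circ)$.

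From here a two-case analysis produces the lower bound: in the regime $|x'-y'| \geq C\delta_\circ|x_d - y_d|$ for a large constant $C$, the block $A$ dominates the first $d-1$ components of $G^{\top}(x-y)$ (using $\sigma_{\min}(A) \gtrsim 1$) and absorbs the contribution $c(x_d - y_d) = O(\delta_\circ|x_d - y_d|)$; in the complementary regime I invoke the general inequality $\sigma_{\min}(G) \gtrsim |\det G|/\|G\|^{d-1} \gtrsim \delta_\circ$, which gives $|G^{\top}(x-y)| \gtrsim \delta_\circ|x-y| \gtrsim \delta_\circ|x_d-y_d|$. Given the pointwise bound on $|K|$, the Schur integral is evaluated by rescaling $u' = \lambda(x'-y')$, $u_d = \lambda\delta_\circ(x_d - y_d)$ with Jacobian $\lambda^{-d}\delta_\circ^{-1}$, yielding $\lambda^{-d}\delta_\circ^{-1}$ when $\lambda\delta_\circ \geq 1$ and $\lambda^{-(d-1)}$ in the complementary range (where compactness of the $x_d - y_d$ support absorbs the would-be $u_d$ rescaling factor). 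The main technical subtlety I anticipate is the IBP bookkeeping: I need to verify that each application of $L^t$ to the amplitude produces factors of size $|\nabla_z\tilde\Psi|^{-1}$ with no spurious $\delta_\circ^{-1}$. This works because every $z$-derivative of $\nabla_z\tilde\Psi$ is $O(1)$ uniformly in $\delta_\circ$ by \eqref{upperboundsder}, so after $N$ iterations the amplitude is $O(|\nabla_z\tilde\Psi|^{-N})$ with uniform constants, and the geometric lower bound then translates directly into the target decay of $|K|$.
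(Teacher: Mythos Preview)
Your overall strategy—$TT^*$, Schur's test, and integration by parts to reach the pointwise kernel bound $|K(x,y)| \lesssim (1+\lambda|x'-y'|+\lambda\delta_\circ|x_d-y_d|)^{-N}$—is exactly the paper's. Your route to the lower bound $|\nabla_z\tilde\Psi| \gtrsim |x'-y'|+\delta_\circ|x_d-y_d|$ via block decomposition and a two-case split is correct but more laborious than necessary: the paper simply writes
\[
\nabla_z\tilde\Psi \;=\; A_{\delta_\circ}(x,y;z)\begin{pmatrix} x'-y' \\ \delta_\circ(x_d-y_d)\end{pmatrix},
\]
where $A_{\delta_\circ}$ is the integrated mixed Hessian with its last column scaled by $\delta_\circ^{-1}$. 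By \eqref{upperboundsder} all entries of $A_{\delta_\circ}$ are uniformly $O(1)$, and $|\det A_{\delta_\circ}(x_0,x_0;z_0)| = \delta_\circ^{-1}|\det\partial^2_{zx}\Psi(x_0;z_0)| \geq c$, so $A_{\delta_\circ}$ is uniformly invertible on a $\delta_\circ$-independent ball and the lower bound drops out with no case analysis.

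There is, however, a real gap in your integration-by-parts bookkeeping. From the bound $|\partial_z^\alpha\nabla_z\tilde\Psi| = O(1)$ alone one does \emph{not} obtain $|(L^t)^N h| \lesssim |\nabla_z\tilde\Psi|^{-N}$: already a single application of $L^t$ produces the term $h\,\Delta_z\tilde\Psi/|\nabla_z\tilde\Psi|^2$, which under a mere $O(1)$ bound on $\Delta_z\tilde\Psi$ is $O(\rho^{-2})$ rather than $O(\rho^{-1})$, and iterating yields $O(\rho^{-2N})$ in the worst case—too weak for the claimed kernel decay. What you actually need (and what the paper invokes) is the sharper upper bound
\[
|\partial_z^\alpha\tilde\Psi(x,y;z)| \;\lesssim\; |x'-y'|+\delta_\circ|x_d-y_d| \qquad \text{for all } \alpha,
\]
which follows from the \emph{same} mean-value-along-the-segment argument you used for $\nabla_z\tilde\Psi$, combined with \eqref{upperboundsder}. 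With this in hand each application of $L^t$ contributes exactly one factor of $\rho^{-1}$, and the pointwise kernel estimate follows as claimed.
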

\begin{proof} After applying translation operators we may assume $(x_0;z_0)=(0;0)$. 
The kernel of $T^\lambda (T^\lambda)^*$ is given by 
$$K^\lambda(x,y) :=\int_{\R^d} e^{i\lambda (\Psi(x;z)-\Psi(y;z) )} 
a(x;z)\overline {a(y;z) } \, \mathrm dz,
$$
and by the Schur's test, the desired estimate follows from the bounds
\begin{equation}\label{eq:basic hormander 1}
\sup_{x \in \R^d} \int_{\R^d} |K^\lambda(x,y)| \, \ud y, \,\,\, \sup_{y \in \R^d} \int_{\R^d} |K^\lambda(x,y)| \, \ud x \,\,  \lesssim \lambda^{-(d-1)} \min \{ (\lambda \delta_\circ)^{-1}, 1\} \| a \|^2_{C^N}.
\end{equation}
We have 
\[
\nabla _{z} ( \Psi(x;z)-\Psi(y;z) )  = A_{\delta_\circ}(x,y;z) \begin{bmatrix} x'-y' \\ \delta_\circ(x_d-y_d)\end{bmatrix} 
\]
where $x=(x',x_d), y=(y',y_d)$ and $$
A_{\delta_\circ}(x,y;z)=\int_0^1\begin{bmatrix} 
\partial_{z'x'}^2\Psi &\delta_\circ^{-1}\partial_{z'x_d}^2\Psi\\ 
\partial_{z_dx'}^2\Psi &\delta_\circ^{-1}\partial_{z_dx_d}^2\Psi \end{bmatrix} \Big|_{(y+s(x-y);z)} \,\ud s. 
$$
By \eqref{upperboundsder}  
we have  $\|A_{\delta_\circ}\|_{C^N}\lc_N 1$. Also clearly $|\det A_{\delta_\circ}(0,0;0)|\ge c$ and thus 
there is an $\eps_\circ>0$ such that 
for $|(x,y;z)|\le \eps_\circ$ the matrix $A_\delta$ is invertible and we obtain the estimate 
$\|\partial_{x,y,z} ^\alpha A_{\delta_\circ}^{-1}(x,y;z)\| \le C_\alpha $ for  all $\alpha \in \N_0^{3d}$ for the matrix norms of the derivatives of $A_{\delta_\circ}^{-1}$.
Hence for $|x|, |y|, |z| \le \eps_\circ$ 
\[|\nabla _{z} ( \Psi(x;z)-\Psi(y;z) )| \ge c(|x'-y'|+\delta_\circ |x_d-y_d|). \] 
By \eqref{upperboundsder}   we have 
\[
|\partial_z^\alpha ( \Psi(x;z)-\Psi(y;z) )| \le C(|x'-y'|+\delta_\circ |x_d-y_d|)
\] 
for all $\alpha \in \N_0^d$. By repeated integration-by-parts in the form of  Corollary \ref{corollaryapp}, 
with the choices of  $\rho(x,y)=|x'-y'|+\delta_\circ|x_d-y_d|$ and $R_2(x,y)=1$,
one  obtains 
\[|K^\lambda(x,y)| \lc_N \|a\|_{C^N}^2(1+\lambda|x'-y'| +\lambda \delta_\circ|x_d-y_d|)^{-N}.\]   In view of the compact support of $a$,  the desired bounds \eqref{eq:basic hormander 1} follow from integrating in $x \in \supp a$ for fixed $y \in \supp a$, and in $y \in \supp a$ for fixed $x \in \supp a$ respectively.
\end{proof}

\subsection{ A two parameter oscillatory integral estimate under two-sided fold conditions}
We shall also formulate a variant of the $L^2$ estimates for oscillatory integral operators with fold singularities of Pan and Sogge \cite{pan-sogge}, which are based on the previous work on Fourier integral operators by Melrose and Taylor \cite{Melrose-Taylor}, under the assumption \eqref{upperboundsder}. We will instead follow the approach in the works of Phong and Stein \cite{phong-stein91}, Cuccagna \cite{Cuccagna} and Greenleaf and the fourth author \cite{GS1999}. 

 \begin{proposition} \label{ps-thm}
Let $\lambda\ge 1$, $0 < \delta_\circ<1$, $\Psi$ be as in  \eqref{upperboundsder}
and suppose  that for some $(x_0;z_0) \in U$ there is $c>0$ such that 
\begin{subequations}
\begin{align}
&|\det \partial_{z'x'}^2\Psi (x_0;z_0)| \geq c,
\label{x'z'}
\\
&\partial_{zx_d}^2\Psi(x_0;z_0)=0,\qquad \qquad\:\:\: \partial_{z_dx}^2\Psi(x_0;z_0)=0,
\label{lastcolumnrow}
\\
&|\partial_{x_dz_dz_d}^3\Psi(x_0;z_0)|\ge c\delta_\circ, \qquad|\partial^3_{z_dx_dx_d}\Psi(x_0;z_0)|\ge c\delta_\circ.
\label{folds}
\end{align} \end{subequations}
Then there exist ${\eps_\circ}>0$ and $N>0$, independent of  $\lambda$ and $\delta_\circ$, such that for all smooth $a$ with $\supp a \subset B_{\eps_\circ}(x_0;z_0)$, 
\[\|T^\lambda \|_{L^2(\R^d)\to L^2(\R^d)} \lc \lambda^{-\frac{d-1}{2}} 
\min\{ (\lambda\delta_\circ)^{-1/3}, 1\}\|a\|_{C^N}.\]
\end{proposition}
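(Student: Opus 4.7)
The plan is to use the $TT^{\ast}$--Schur approach of Phong--Stein~\cite{phong-stein91} and Cuccagna~\cite{Cuccagna}, implemented at the kernel level with careful tracking of the anisotropic parameter~$\delta_\circ$. After translating $(x_0;z_0)$ to the origin and applying an affine change of coordinates in $(x',z')$, we may assume $\partial_{z'x'}^2\Psi(0;0)=\mathrm{Id}_{d-1}$. By Schur's test and $TT^{\ast}$, the theorem will follow from the bound
\[
\sup_x\int|K^\lambda(x,y)|\,dy\ +\ \sup_y\int|K^\lambda(x,y)|\,dx\ \lesssim\ \lambda^{-(d-1)}\min\!\big\{(\lambda\delta_\circ)^{-2/3},\,1\big\}\|a\|_{C^N}^2,
\]
where $K^\lambda(x,y)=\int_{\R^d} e^{i\lambda[\Psi(x;z)-\Psi(y;z)]}\,a(x;z)\,\overline{a(y;z)}\,dz$.

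For the $z'$-variables, the argument is analogous to the proof of Proposition~\ref{2par-Hoerprop}. Writing
\[
\nabla_{z'}[\Psi(x;z)-\Psi(y;z)] \;=\; M(x,y;z)(x'-y') \;+\; \delta_\circ\,N(x,y;z)(x_d-y_d)
\]
with $M(0,0;0)=\mathrm{Id}_{d-1}$ and $N$ uniformly bounded via~\eqref{upperboundsder}, one obtains $|\nabla_{z'}[\Psi(x;z)-\Psi(y;z)]|\gtrsim|x'-y'|$ on the support of $a$ (outside a thin neighbourhood of $\{x'=y'\}$), and all higher $z'$-derivatives of the phase are $O(|x'-y'|+\delta_\circ|x_d-y_d|)$. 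Repeated integration by parts in $z'$ via Corollary~\ref{corollaryapp} then inserts the decay factor $(1+\lambda|x'-y'|)^{-N}$ into the kernel; subsequent integration in~$y'$ contributes the $\lambda^{-(d-1)}$ gain, reducing matters to a sharp 1D Schur estimate in $(x_d,y_d)$ for the residual $z_d$-integration.

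This residual estimate is of Phong--Stein two-sided fold type, with $\delta_\circ$ playing the role of the effective frequency scale in the degenerate direction: condition~\eqref{lastcolumnrow} gives $\partial^2_{z_dx_d}\Psi(0;0)=0$, while~\eqref{folds} supply the twin cubic non-degeneracies, both of size~$\delta_\circ$. A single application of van der Corput's second-derivative lemma in $z_d$ yields only $(\lambda\delta_\circ)^{-1/2}$ in the Schur integration over $y_d$; the additional $(\lambda\delta_\circ)^{-1/6}$ decay needed for the sharp $(\lambda\delta_\circ)^{-2/3}$ bound comes from iterating the $TT^{\ast}$ argument at the kernel level, as in~\cite{phong-stein91,Cuccagna,GS1999}. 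The point is that the reduced 1D kernel carries a non-degenerate cubic phase in $(x_d,y_d)$ --- schematically $\sim \lambda\delta_\circ(x_d-y_d)(x_d+y_d)^2$, arising from the companion fold condition on $\partial^3_{z_dx_dx_d}\Psi$ --- to which a second $TT^{\ast}$--van der Corput step can be applied, yielding the remaining decay.

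The main obstacle is the sharp tracking of $\delta_\circ$ throughout the iterated analysis: the fold lower bounds in~\eqref{folds} have size $\delta_\circ$ rather than~$1$, so each van der Corput and stationary phase step at the second stage must be executed with effective frequency $\lambda\delta_\circ$ and with amplitude estimates uniform in~$\delta_\circ$. A useful technical device is to rescale $(x_d,z_d)\mapsto\delta_\circ^{-1/3}(x_d,z_d)$ on the 1D slice: this converts~\eqref{folds} to the classical ($\delta_\circ=1$) Phong--Stein hypotheses at the cost of replacing $\lambda$ by~$\lambda\delta_\circ$, and reduces the problem to applying the unit-scale Phong--Stein bound~\cite{phong-stein91} at this effective frequency. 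Verifying that the amplitude-side hypotheses of~\cite{phong-stein91} are met uniformly in $\delta_\circ$ after rescaling --- which relies on the full strength of the anisotropic derivative bounds~\eqref{upperboundsder} --- is the main technical check.
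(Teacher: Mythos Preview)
Your proposal has two genuine gaps.

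\textbf{The reduction to 1D does not work as stated.} Integrating by parts in $z'$ to insert the factor $(1+\lambda|x'-y'|)^{-N}$ discards the $z_d$-oscillation: once you bound $\big|\int e^{i\lambda[\cdots]}a\bar a\,dz'\big|\lesssim(1+\lambda|x'-y'|)^{-N}$ uniformly in $z_d$ and integrate $dz_d$ trivially, the subsequent $y$-integration only yields $\lambda^{-(d-1)}$ and no further decay in $y_d$. To retain the $z_d$-oscillation you would need to perform stationary phase in $z'$ rather than integration by parts, but then the critical point $z'_{cr}(x,y,z_d)$ enters the reduced $z_d$-phase and the amplitude in a way that couples the $(x',y')$ and $(x_d,y_d)$ variables; the ``residual 1D problem'' is not cleanly separated from the $(d-1)$-dimensional part. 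The vague appeal to an ``iterated $TT^*$'' step does not repair this: the extra $(\lambda\delta_\circ)^{-1/6}$ in the classical Phong--Stein argument does not come from iterating $TT^*$ but from a dyadic decomposition in the size of the rotational curvature.

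\textbf{The rescaling argument fails because the hypothesis \eqref{upperboundsder} is asymmetric.} Only derivatives involving $\partial_{x_d}$ carry the factor $\delta_\circ$; pure $\partial_{z_d}$-derivatives satisfy merely $|\partial_z^\beta\Psi|\lesssim 1$. The rescaling that normalises both fold constants in \eqref{folds} to unit size is forced to be symmetric in $(x_d,z_d)$ (you need $\alpha\beta^2=\alpha^2\beta=\delta_\circ^{-1}$, so $\alpha=\beta=\delta_\circ^{-1/3}$), and under it $|\partial_{z_d}^k\tilde\Psi|\lesssim\delta_\circ^{-k/3}$ blows up for every $k\ge 1$. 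Hence the ``unit-scale Phong--Stein bound'' cannot be invoked with constants uniform in $\delta_\circ$; any polynomial dependence on $\|\tilde\Psi\|_{C^N}$ produces a loss $\delta_\circ^{-M}$ that destroys the estimate.

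The paper instead works directly with the anisotropic phase. It introduces the scalar quantity $\sigma=\partial^2_{x_dz_d}\Psi-\partial^2_{x_dz'}\Psi[(\partial^2_{x'z'}\Psi)^\top]^{-1}\partial^2_{z_dx'}\Psi$, so that $\det\partial^2_{xz}\Psi=\sigma\cdot\det\partial^2_{x'z'}\Psi$ and, crucially, $|\partial_{x_d}\sigma|,|\partial_{z_d}\sigma|\sim\delta_\circ$ and $|\partial^\alpha\sigma|\lesssim\delta_\circ$ for all $\alpha$. One then decomposes $T^\lambda=\sum_{m=0}^{M}T^{\lambda,m}$ according to $|\sigma|\sim 2^{-m}\delta_\circ$ and proves two competing bounds for each piece (Proposition~\ref{curvdecprop}): one via a full $d$-dimensional nondegenerate oscillatory integral estimate with $|\det\partial^2_{xz}\Psi|\sim 2^{-m}\delta_\circ$, the other via the $(d-1)$-dimensional H\"ormander estimate combined with the $O(2^{-m})$ localisation in $x_d,z_d$ forced by the $\sigma$-cutoff. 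The proof of each bound uses the Cotlar--Stein lemma after a further decomposition into $2^{-m}$-cubes; the diagonal estimate (Lemma~\ref{diagonal lemma-m}) is where the $\delta_\circ$-tracking is delicate, and it relies on the derivative bounds on $\sigma$ rather than on any rescaling. Summing $\min\{(2^m/\lambda\delta_\circ)^{1/2},2^{-m}\}$ over $m$ gives the asserted $(\lambda\delta_\circ)^{-1/3}$.
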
	

Following \cite{phong-stein91,Cuccagna,GS1999}, we decompose dyadically our operator according to the size of $\det \partial^2_{xz} \Psi$. It is useful to consider the auxiliary quantity 
\begin{equation}\label{sigmadef}
\sigma\equiv \sigma(\Psi)=\partial_{x_dz_d}^2\Psi - {\partial_{x_dz'}^2\Psi[(\partial_{x'z'}^2\Psi)^\top]}^{-1}\partial_{z_dx'}^2\Psi,
\end{equation}
which measures the size of the mixed Hessian. In fact, note that if $A$ is an invertible $(d-1)\times(d-1)$ matrix, $b,c\in \bbR^{d-1}$ and $d\in \bbR$, one has the identity
\[
\begin{bmatrix} I&0\\ -c^\top A^{-1}&1\end{bmatrix}
\begin{bmatrix} A&b\\ c^{\top}&d\end{bmatrix} =\begin{bmatrix} A&b\\0^\top & d-c^\top A^{-1}d\end{bmatrix} \] and therefore
 \Be  \label{rotcurv vs sigma} \det \partial_{xz}^2\Psi(x;z)=\sigma(x;z) \det \partial_{x'z'}^2\Psi(x;z)\Ee for $(x;z)$ near $(x_0;z_0)$. Hence we get, assuming that $\eps_\circ$ is small enough,
$$ 
 |\sigma(x;z)|\sim |\det \partial_{xz}^2\Psi(x;z)|.
 $$
 The fold conditions \eqref{folds} together with 
  \eqref{lastcolumnrow}  imply that
\begin{align*}
 &|\partial_{x_d}  \sigma(x;z)|=
 |\partial_{x_dx_dz_d}^3\Psi(x;z)|+ O(\eps_\circ\delta_\circ),
 \\
 &|\partial_{z_d}  \sigma(x;z)|=
 |\partial_{x_dz_dz_d}^3\Psi(x;z)|+ O(\eps_\circ\delta_\circ), 
 \end{align*}
and using \eqref{folds} we get
\begin{equation}\label{sigma bounds 3}
 |\partial_{x_d}  \sigma(x;z)|\sim \delta_\circ, \qquad 
  |\partial_{z_d}  \sigma(x;z)|\sim \delta_\circ.
\end{equation}
  Finally, note that the assumption \eqref{upperboundsder} implies
 \Be\label{sigma bounds 4}
| \partial_x^\alpha \partial_z^\beta  \sigma(x;z)|\lc_{\alpha,\beta} \delta_\circ
\Ee
for all $\alpha, \beta \in \N_0^d$.  



Let $\eta_0, \eta_1$ be $C^\infty$ functions on the real line with \[ \supp \eta_0 \subset [-2,2], \qquad \supp \eta_1  \subset [-2,-1/2]\cup[1/2,2].\]
For $\lambda \geq 1$, set 
 \Be\label{Mdef}M := \max \{
 \floor{\log_2 (\lambda^{1/2})}, 0 \}
 \Ee 
 and define
\begin{align} 
T^{\lambda, m} f(x)&:=\int_{\R^d} e^{i\lambda \Psi(x;z)} a(x;z)\eta_1(2^m\delta_\circ^{-1} |\sigma(x;z)|) f(z)\,  \mathrm dz, \quad 0 \leq m<M , \label{Tm def}
\\
T^{\lambda, M} f(x)&:=\int_{\R^d} e^{i\lambda \Psi(x;z)} a(x;z)\eta_0(2^M\delta_\circ^{-1} \sigma(x;z) ) f(z) \, \mathrm dz \label{TM def}.
\end{align}
By \eqref{rotcurv vs sigma} and  \eqref{sigma bounds 4} we have $|\det \partial_{zx}^2 \Psi |\sim 2^{-m}\delta_\circ$ on the support of the amplitude in $T^{\lambda, m}$ if $0 \leq m < M$ and 
$|\det \partial^2_{zx}\Psi|\lc 2^{-M}\delta_\circ\lc  \lambda^{-1/2} \delta_\circ$ on the support of the amplitude in $T^{\lambda, M}$.


\begin{proposition}\label{curvdecprop}
Let $\lambda \geq 1$, $\delta_\circ<1$, $[\Psi; a]$ be as in Proposition \ref{ps-thm} and $M$ as in \eqref{Mdef}.
\begin{enumerate}[(i)]
\item If $\lambda\ge \delta_\circ^{-1} $ then,  for $0 \leq m< M$,
\[
\|T^{\lambda, m} \|_{L^2(\R^d)\to L^2(\R^d)} \lc \lambda^{-\frac{d-1}{2}} \min\{
 (2^m/(\lambda\delta_\circ))^{1/2}, 2^{-m}\} \| a \|_{C^N} .\]
 Moreover,
 \[
\|T^{\lambda, M} \|_{L^2(\R^d)\to L^2(\R^d)} \lc \lambda^{-\frac{d}{2}} \| a \|_{C^N}.
\]
\item  If $1\le \lambda\le \delta_\circ^{-1}$ then,
for $0 \leq m<  M $
\[
\|T^{\lambda,m} \|_{L^2(\R^d)\to L^2(\R^d)} \lc 2^{-m} \lambda^{-\frac{d-1}{2}} \| a \|_{C^N}.
\]
Moreover,
 \[
\|T^{\lambda, M} \|_{L^2(\R^d)\to L^2(\R^d)} \lc \lambda^{-\frac{d}{2}} \| a \|_{C^N}.
\]
\end{enumerate}
\end{proposition}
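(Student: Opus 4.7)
The plan is to apply a $TT^*$ argument combined with non-isotropic integration by parts in $z$, following the approach of Phong--Stein, Cuccagna, and Greenleaf--Seeger, adapted to the two-parameter setting. Writing $\psi(x,y;z) := \Psi(x;z) - \Psi(y;z)$, the kernel of $T^{\lambda,m}(T^{\lambda,m})^*$ is
\[
K_m(x,y) = \int e^{i\lambda \psi(x,y;z)}\, a(x;z)\overline{a(y;z)}\,\chi_m(x;z)\,\chi_m(y;z)\, dz,
\]
where $\chi_m$ denotes $\eta_1(2^m\delta_\circ^{-1}\sigma)$ for $m<M$ and $\eta_0(2^M\delta_\circ^{-1}\sigma)$ for $m=M$. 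By \eqref{sigma bounds 3} and \eqref{rotcurv vs sigma}, on the support of $\chi_m$ the $z_d$-section (for fixed $x$) has length $\lesssim 2^{-m}$, and $|\det\partial^2_{xz}\Psi|\sim 2^{-m}\delta_\circ$ for $m<M$; moreover $\|\chi_m\|_{C^N_z}\lesssim 2^{mN}$.

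I would then use the mean value identity
\[
\nabla_z \psi(x,y;z) = B(x,y;z)(x-y), \qquad B := \int_0^1 \partial^2_{xz}\Psi\Big|_{(y+s(x-y);\,z)}\, ds,
\]
and unpack the structure of $B$ imposed by \eqref{upperboundsder}, \eqref{x'z'}, \eqref{lastcolumnrow}: the top-left $(d-1)\times(d-1)$ block is invertible with bounded inverse; the last column is $O(\delta_\circ)$; and the last row vanishes at the critical set (hence is $O(\eps_\circ)$ on the support). On the cutoff support one further has $|\det B|\sim 2^{-m}\delta_\circ$, from \eqref{rotcurv vs sigma}. A case analysis (splitting based on whether $|x'-y'|$ or $\delta_\circ |x_d-y_d|$ dominates, using the invertibility of the top block in the first case and the $|\det B|$ lower bound in the second) then produces
\[
|\nabla_z\psi(x,y;z)| \gtrsim |x'-y'| + 2^{-m}\delta_\circ|x_d-y_d|
\]
on the support. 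Applying the non-isotropic integration by parts of Corollary~\ref{corollaryapp}, with the $C^N$-growth of $\chi_m$ absorbed against this gradient bound, yields the kernel estimate
\[
|K_m(x,y)| \lesssim_N 2^{-m}\|a\|_{C^N}^2\,(1+\lambda|x'-y'|)^{-N}\,(1+\lambda 2^{-m}\delta_\circ|x_d-y_d|)^{-N},
\]
with the $2^{-m}$ prefactor coming from the $z_d$-support size.

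Schur's test then produces the claimed bounds. Integration in $y'$ contributes $\lambda^{-(d-1)}$, while integration in $y_d$ yields $\min\{(\lambda 2^{-m}\delta_\circ)^{-1},\,1\}$; the two regimes correspond, respectively, to effective IBP decay in $z_d$ and to control by the $y_d$-support size. The first regime gives the H\"ormander-type bound $\lambda^{-(d-1)/2}(2^m/(\lambda\delta_\circ))^{1/2}$ in (i), while the second gives the support-based bound $\lambda^{-(d-1)/2}\cdot 2^{-m}$ appearing in both (i) and (ii). The $m=M$ statement is the support-based bound at the critical scale $2^M\sim\lambda^{1/2}$: $\lambda^{-(d-1)/2}\cdot\lambda^{-1/2}=\lambda^{-d/2}$. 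In case (ii), where $\lambda\le\delta_\circ^{-1}$, the condition $\lambda 2^{-m}\delta_\circ\ge 1$ is never reached for $m\ge 0$, so only the support-based bound survives.

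The main technical obstacle is controlling the $C^N$-growth of $\chi_m$ during IBP: each $z_d$-derivative of $\chi_m$ produces a factor of $2^m$, so IBP in $z_d$ only yields decay beyond the trivial support bound once $\lambda 2^{-m}\delta_\circ \gtrsim 2^m$, i.e.\ $\lambda\delta_\circ\gtrsim 2^{2m}$; outside this range the support-based bound dominates. The flexible formulation of the IBP lemma in Appendix~A, applied with the anisotropic weights provided by the fold structure of $B$, is what allows this trade-off to be handled cleanly and produces the $\min$ in (i).
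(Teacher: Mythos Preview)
Your approach has a genuine gap in the gradient lower bound. You claim that on the support of $\chi_m(x;z)\chi_m(y;z)$ one has $|\det B|\sim 2^{-m}\delta_\circ$, and from this deduce $|\nabla_z\psi|\gtrsim |x'-y'|+2^{-m}\delta_\circ|x_d-y_d|$. But the support condition only gives $|\det\partial^2_{xz}\Psi(x;z)|\sim 2^{-m}\delta_\circ$ and $|\det\partial^2_{xz}\Psi(y;z)|\sim 2^{-m}\delta_\circ$; it says nothing about the \emph{averaged} matrix $B=\int_0^1\partial^2_{xz}\Psi|_{(y+s(x-y);z)}\,ds$. Consider the model phase $\Psi(x;z)=x'\cdot z'+\tfrac{\delta_\circ}{2}(x_dz_d^2+x_d^2z_d)$, for which $\sigma(x;z)=\delta_\circ(x_d+z_d)$. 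Take $x'=y'$, $x_d=2^{-m}-z_d$, $y_d=-2^{-m}-z_d$: then $|\sigma(x;z)|=|\sigma(y;z)|=2^{-m}\delta_\circ$, yet the bottom-right entry of $B$ is $\delta_\circ(z_d+\tfrac{x_d+y_d}{2})=0$, so $B$ is singular and $\nabla_z\psi=0$ despite $|x_d-y_d|=2^{1-m}$. Your lower bound fails here, and with it the kernel estimate and the Schur bound for the $(2^m/(\lambda\delta_\circ))^{1/2}$ branch.

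This is exactly why the paper cannot run a direct $TT^*$ argument. Instead it decomposes $T^{\lambda,m}=\sum_{\mu,\nu}T^{\lambda,m}_{\mu\nu}$ into pieces supported on $2^{-m}$-cubes and applies Cotlar--Stein. The off-diagonal terms are handled via the orthogonality relation \eqref{orthogonalityx}, which forces $|x_d-y_d|\lesssim|x'-y'|$ once $|\mu-\tilde\mu|$ is large, so the $z'$-gradient alone suffices. For the diagonal terms the paper introduces the critical point $X=X(x_d;y;z)$ solving $\partial_{z'}\Psi(X,x_d;z)=\partial_{z'}\Psi(y;z)$ and proves $|\nabla_z\psi|\gtrsim |x'-X_\nu|+2^{-m}\delta_\circ|x_d-y_d|$ with $X_\nu=X(x_d;y;z_\nu)$ tied to the cube center $z_\nu$; the localization to a $2^{-m}$-cube in $z$ is what makes the reference point $z_\nu$ usable and the error terms in \eqref{nablazphaselower} absorbable. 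The point is that the $z'$-critical set of $\psi$ is not $\{x'=y'\}$ but moves with $x_d$, and the correct distance in the kernel estimate is $|x'-X_\nu|$, not $|x'-y'|$. Your argument would go through for the $2^{-m}$ bound (Lemma~\ref{diagonal lemma-M}, which uses only $z'$-IBP and slicing in $x_d,z_d$), but not for the sharper $(2^m/(\lambda\delta_\circ))^{1/2}$ bound.
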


 



We first note  that the bounds in Proposition \ref{curvdecprop} imply Proposition \ref{ps-thm} by summing in the  $m$-parameter.
\begin{proof}[Proof of Proposition \ref{ps-thm}, assuming Proposition \ref{curvdecprop}]
Let  $\eta$, $\beta$ be defined as in \eqref{eta function}, \eqref{beta-definition}.
 Taking $\eta_1=\beta(|\cdot|)$ and $\eta_0=\eta$ in the definitions \eqref{Tm def}, \eqref{TM def}, we have  $T^\lambda= \sum_{m=0}^{M} T^{\lambda, m}$.

If $\lambda \delta_\circ \leq 1$, the bound trivially follows from summing in $m$ the estimates in (ii) in Proposition \ref{curvdecprop}.

If $\lambda \delta_\circ \geq 1$, note that the bounds in (i) in Proposition \ref{curvdecprop} imply
\begin{align*}
    \| T^\lambda \|_{L^2\to L^2} & \lesssim  \, \lambda^{-\frac{d-1}{2}} \Big( \sum_{\substack{1 \leq  2^m \leq (\lambda \delta_\circ)^{1/3}}} 2^{m/2} (\lambda \delta_\circ)^{-1/2} + \sum_{\substack{ (\lambda \delta_\circ)^{1/3}  < 2^m \leq \lambda^{1/2}}} 2^{-m} \Big)  \| a \|_{C^N}      \\
     & \lesssim \, \lambda^{-\frac{d-1}{2}} (\lambda \delta_\circ)^{-1/3} \| a \|_{C^N},
\end{align*}
as desired.
\end{proof}

\subsection{\it Proof of Proposition \ref{curvdecprop}}
We fix $N\ge 100 d$. As the operators depend linearly on $a$ we may assume $\|a\|_{C_N}\le 1$.
 The proof is based on a variant of the arguments in \cite{phong-stein91}, \cite{Cuccagna}, \cite{GS1999}; the latter two are themselves inspired by the Calder\'on--Vaillancourt theorem on the $L^2$ boundedness of pseudo-differential operators \cite{CV1971}.
 Again, by performing translations we may take $(x_0; z_0)=(0;0)$.

 Recall that, by hypothesis, $\sigma(0;0)=0$ and by \eqref{sigma bounds 4} and \eqref{sigma bounds 3}  we have that
$|\partial_{x_d} \sigma |\sim \delta_\circ$,
$| \partial_{z_d}\sigma|\sim \delta_\circ$ and $|\partial_{x}^\alpha \partial_z^\beta \sigma|\lesssim_{\alpha,\beta} \delta_\circ $ in $B_{ \varepsilon_\circ}(0;0)$ for some small $\varepsilon_\circ >0$. By an application of a quantitative
version of the  implicit function theorem  (see for example  \cite[\S8]{Christ1985}) 
there exist  smooth functions   
$$
(x';z)\mapsto \fu(x';z) \qquad \text{ and } \qquad (x;z')\mapsto \fv(x;z'),
$$
defined for $|x'|\le 2\eps_\circ$, $|z|\le 2\eps_\circ$ and 
$|x|\le 2\eps_\circ$, $|z'|\le 2\eps_\circ$  respectively, such that
$$
\sigma(x', \fu(x';z);z)=0\qquad
\text{ and }  \qquad
\sigma(x;z', \fv(x;z'))=0.
$$
Furthermore, by \eqref{sigma bounds 3}
$$
|\fu(x';z)-x_d|, \,\,|\fv(x;z')-z_d| \sim \delta_\circ^{-1} |\sigma(x;z)|.   
$$ 
We may expand 
$|x_d-y_d|\le |x_d-\fu(x';z)|+|\fu(x';z)-\fu(y';z)|+|\fu(y';z)-y_d |$
and obtain the crucial estimate 
\begin{align}
 \label{orthogonalityx}
&|\sigma(x;z)|\sim 2^{-m}\delta_\circ,\, |\sigma(y;z)|\sim 2^{-m}\delta_\circ  \implies 
 |x_d-y_d|
\lc 2^{-m}+|x'-y'|
\\
\intertext{ and similarly (using $\fv$) }
&|\sigma(x;w)|\sim 2^{-m}\delta_\circ,\, |\sigma(x;z)|\sim 2^{-m}\delta_\circ  \implies  |w_d-z_d| 
\lc 2^{-m}+|w'-z'|. \notag
\end{align}

These observations suggest further decomposing the amplitude into functions supported essentially on
 $C\eps_\circ 2^{-m}$ cubes. Let $\zeta\in C^\infty_0(\bbR)$ supported in $(-1,1)$ such that $\sum_{n\in \bbZ}\zeta(\cdot-n)\equiv 1$.
 Set 
 \begin{equation*}
    b_{\mu\nu}^{m}(x;z) := a(x;z)\eta_1(2^m\delta_\circ^{-1}\sigma(x;z))\Big(\prod_{j=1}^d\zeta(\varepsilon_{\circ}^{-1}2^m x_j - \mu_j)\zeta(\varepsilon_{\circ}^{-1}2^m z_j - \nu_j)\Big)
    \end{equation*}
    and write the corresponding decomposition
            $$T^{\lambda, m}=  
    \sum_{(\mu,\nu)\in \bbZ^d\times\bbZ^d} T_{\mu\nu}^{\lambda, m}$$ where
    $ T_{\mu\nu}^{\lambda, m} f := T^\lambda [\Psi; b_{\mu \nu}^m] f.$ Observe that 
    \[ |\partial_{x}^{\alpha} \partial_z^\beta  b_{\mu\nu}^m(x;z)|    \lc 2^{m(|\alpha|+|\beta|)}  \]
for all $\alpha,\beta \in \N_0^d$. 
 By the  Cotlar--Stein lemma (see, for instance,~\cite[Chapter VII, \S 2.1]{Stein1993}), the proof of the proposition reduces to showing the estimates
 \Be\label{Cotlar-Stein}
     \|T^{\lambda,m}_{\mu\nu}(T^{\lambda, m}_{\tilde{\mu}\tilde{\nu}})^*\|_{2\to 2} + \|(T^{\lambda, m}_{\mu\nu})^*T^{\lambda, m}_{\tilde{\mu}\tilde{\nu}}\|_{2 \to 2} \lesssim 
      \frac{\lambda^{-(d-1)} \min \{ 2^m/(\lambda\delta_\circ) , 2^{-2m}\}}
       {(1 + |\mu - \tilde{\mu}| + |\nu - \tilde{\nu}|)^{3d}} 
\Ee
for all $(\mu, \nu), (\tilde{\mu}, \tilde{\nu}) \in \Z^d \times \Z^d$. The proof of \eqref{Cotlar-Stein} is divided in two cases.




\subsubsection*{Off-diagonal estimates} 
The first step is to establish \eqref{Cotlar-Stein} in the \textit{off-diagonal} case where 
\begin{equation}\label{off-diagonal case}
    \max\{|\mu - \tilde{\mu}|, |\nu - \tilde{\nu}|\} \geq C_{\mathrm{diag}}\varepsilon_{\circ}^{-1}
\end{equation}
for a large absolute constant  $C_{\mathrm{diag}} \geq 2$, chosen  independently of $\varepsilon_{\circ}$. To this end, it is convenient to introduce the kernels associated to the operators of the type $TT^*$ and $T^*T$.
The Schwartz kernel of $T^{\lambda, m}_{\mu\nu}(T^{\lambda, m}_{\tilde{\mu}\tilde{\nu}})^*$  is given by 
\Be\label{degenerate kernel}
K^{\lambda, m}_{\mu\nu,\tilde{\mu}\tilde{\nu}}(x,y) 
:= \int_{\R^d} e^{i\lambda(\Psi(x;z) - \Psi(y;z))} b^{m}_{\mu\nu,\tilde{\mu}\tilde{\nu}}(x,y;z)\,\ud z,
\Ee and the Schwartz kernel of 
 $(T^{\lambda, m}_{\mu\nu})^*T^{\lambda, m}_{\tilde{\mu}\tilde{\nu}}$ is given by 
$$
  \tilde{K}^{\lambda, m}_{\mu\nu,\tilde{\mu}\tilde{\nu}}(z,w) := \int_{\R^d} e^{-i\lambda(\Psi(x;z) - \Psi(x;w))} \tilde{b}^{m}_{\mu\nu,\tilde{\mu}\tilde{\nu}}(x;z, w)\,\ud x;
$$
here the symbols are given by
\begin{equation}
      b^{m}_{\mu\nu,\tilde{\mu}\tilde{\nu}}(x,y;z) := b_{\mu\nu}^{m}(x;z)\overline{b_{\tilde{\mu}\tilde{\nu}}^{m}(y;z)}, \qquad  \tilde{b}^{m}_{\mu\nu,\tilde{\mu}\tilde{\nu}}(x;z, w) := b_{\mu\nu}^{m}(x;z)\overline{b_{\tilde{\mu}\tilde{\nu}}^{m}(x;w)}.\nonumber
\end{equation}


\begin{lemma}[Off-diagonal estimate]\label{off-diagonal lemma} 
 Let $1 \leq 2^m\le \lambda^{1/2}$
and suppose that  \eqref{off-diagonal case} holds.
\begin{enumerate}[i)]
\item If $|\mu-\tilde{\mu}| \geq C_{\mathrm{diag}} \varepsilon_{\circ}^{-1}$, then $(T^{\lambda, m}_{\mu\nu})^*T^{\lambda, m}_{\tilde{\mu}\tilde{\nu}} \equiv 0$ and 
\[
\|T^{\lambda,m}_{\mu\nu}(T^{\lambda, m}_{\tilde{\mu}\tilde{\nu}})^*\|_{2 \to 2} \lc_N
2^{-2dm} (\lambda2^{-2m} |\mu-\tilde\mu|)^{-N} .
\]
    \item If $|\nu-\tilde{\nu}| \geq C_{\mathrm{diag}} \varepsilon_{\circ}^{-1}$, then $T^{\lambda, m}_{\mu\nu}(T^{\lambda, m}_{\tilde{\mu}\tilde{\nu}} )^*\equiv 0$ and
    \[
\|(T^{\lambda, m}_{\mu\nu})^*T^{\lambda, m}_{\tilde{\mu}\tilde{\nu}}\|_{2 \to 2} \lc_N
2^{-2dm} (\lambda2^{-2m} |\nu-\tilde \nu|)^{-N} .
\]


\end{enumerate}
\end{lemma}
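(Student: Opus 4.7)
My plan is to exploit a dichotomy in the style of Cotlar--Stein: in each of (i), (ii) exactly one of the composed operators vanishes identically by disjointness of supports, while the other decays through integration by parts. I describe part (i); (ii) follows by symmetry, swapping the r\^oles of $(x,y)$ and $(z,w)$.

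The amplitude of $(T^{\lambda,m}_{\mu\nu})^{*}T^{\lambda,m}_{\tilde\mu\tilde\nu}$ is $b^{m}_{\mu\nu}(x;z)\,\overline{b^{m}_{\tilde\mu\tilde\nu}(x;w)}$, which confines $x$ to two cubes of side $O(\varepsilon_\circ 2^{-m})$ centred at $\varepsilon_\circ 2^{-m}\mu$ and $\varepsilon_\circ 2^{-m}\tilde\mu$; choosing $C_{\mathrm{diag}}\ge 2$ renders them disjoint whenever $|\mu-\tilde\mu|\ge C_{\mathrm{diag}}\varepsilon_\circ^{-1}$, and the composition is identically zero. For $T^{\lambda,m}_{\mu\nu}(T^{\lambda,m}_{\tilde\mu\tilde\nu})^{*}$ the kernel \eqref{degenerate kernel} has phase $\varphi(x,y;z):=\Psi(x;z)-\Psi(y;z)$ and I plan to control it by integration by parts in $z'$. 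The mean value theorem gives $\nabla_{z'}\varphi=\int_{0}^{1}\bigl[\partial^{2}_{z'x'}\Psi\,(x-y)'+\partial^{2}_{z'x_{d}}\Psi\,(x_{d}-y_{d})\bigr]\,\mathrm{d}s$ along the segment from $y$ to $x$; by \eqref{x'z'} and continuity the leading block is invertible with uniformly bounded inverse (after shrinking $\varepsilon_\circ$), while \eqref{upperboundsder} makes the second term $O(\delta_\circ|x_{d}-y_{d}|)$. Hence $|\nabla_{z'}\varphi|\gtrsim |(x-y)'|-C\delta_\circ|x_{d}-y_{d}|$.

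The crucial step is to combine this inequality with the orthogonality relation \eqref{orthogonalityx}, which applies because both amplitudes enforce $|\sigma(x;z)|,|\sigma(y;z)|\sim 2^{-m}\delta_\circ$; it yields $|x_{d}-y_{d}|\lesssim 2^{-m}+|(x-y)'|$ on the support of the integrand. Together with the cube-support bounds $|(x-y)'|\sim\varepsilon_\circ 2^{-m}|\mu'-\tilde\mu'|$ and $|x_{d}-y_{d}|\sim\varepsilon_\circ 2^{-m}|\mu_{d}-\tilde\mu_{d}|$, a short case analysis (taking $C_{\mathrm{diag}}$ large enough relative to the constants from \eqref{orthogonalityx}) shows that either the integrand vanishes identically or $|\mu'-\tilde\mu'|\sim|\mu-\tilde\mu|$, and therefore $|\nabla_{z'}\varphi|\gtrsim\varepsilon_\circ 2^{-m}|\mu-\tilde\mu|$ (provided $\delta_\circ$ is reduced to be sufficiently small, which may be assumed WLOG). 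Applying Corollary \ref{corollaryapp} for repeated integration by parts in the $z'$ variables, using the amplitude derivative bounds $\|\partial_{x,z}^{\alpha}b^{m}\|_{\infty}\lesssim 2^{m|\alpha|}$, and trivially integrating in $z_{d}$ over its $O(2^{-m})$ support, yields $|K^{\lambda,m}_{\mu\nu,\tilde\mu\tilde\nu}(x,y)|\lesssim_{N}2^{-dm}\bigl(\lambda 2^{-2m}|\mu-\tilde\mu|\bigr)^{-N}$; Schur's test over the $O(2^{-dm})$-volume $x$- and $y$-supports then delivers the claimed operator norm bound.

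For (ii) the r\^oles are reversed: disjointness of $z$-supports kills $T^{\lambda,m}_{\mu\nu}(T^{\lambda,m}_{\tilde\mu\tilde\nu})^{*}$, and the kernel of $(T^{\lambda,m}_{\mu\nu})^{*}T^{\lambda,m}_{\tilde\mu\tilde\nu}$ is estimated by integration by parts in $x'$, invoking the second half of \eqref{orthogonalityx} to slave $|z_{d}-w_{d}|$ to $|z'-w'|+2^{-m}$ and the same invertibility of $\partial^{2}_{x'z'}\Psi$ to produce the lower bound $|\partial_{x'}(\Psi(x;z)-\Psi(x;w))|\gtrsim\varepsilon_\circ 2^{-m}|\nu-\tilde\nu|$. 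The principal technical point throughout is that the off-diagonal blocks of $\partial^{2}_{xz}\Psi$ are only of size $\delta_\circ$, not zero; the orthogonality relation \eqref{orthogonalityx}, obtained from the fold conditions \eqref{folds} via the implicit function theorem applied to $\sigma$, is exactly what converts this small but nonzero perturbation into a bona fide lower bound on the transverse phase gradient.
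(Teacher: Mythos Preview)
Your overall strategy coincides with the paper's: disjoint $x$- (resp.\ $z$-) supports kill one composition, and the other is handled by Schur's test together with integration by parts in $z'$ (resp.\ $x'$), using the orthogonality relation \eqref{orthogonalityx} to control $|x_d-y_d|$ in terms of $|x'-y'|$. The kernel bound, the passage to $|\mu'-\tilde\mu'|\sim|\mu-\tilde\mu|$, and the final Schur estimate are all carried out as in the paper.

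There is, however, one genuine gap. In deriving the lower bound $|\nabla_{z'}\varphi|\gtrsim\varepsilon_\circ 2^{-m}|\mu-\tilde\mu|$ you absorb the error term $C\delta_\circ|x_d-y_d|$ by asserting that ``$\delta_\circ$ may be reduced to be sufficiently small WLOG.'' This is not permissible: $\delta_\circ\in(0,1)$ is a \emph{given} parameter in the hypotheses of Proposition~\ref{ps-thm} (and hence of this lemma), not a quantity at your disposal. If $\delta_\circ$ is close to $1$, your inequality $|x'-y'|-C\delta_\circ|x_d-y_d|\gtrsim|x'-y'|$ can fail, since after invoking \eqref{orthogonalityx} you are left needing $1-C\delta_\circ\gtrsim 1$ with $C$ determined by the constants in \eqref{upperboundsder} and \eqref{x'z'}.

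The paper closes this gap by using the additional hypothesis \eqref{lastcolumnrow}, which you do not invoke: since $\partial^2_{z'x_d}\Psi(x_0;z_0)=0$ and all points in play lie within $\varepsilon_\circ$ of $(x_0;z_0)$, the mean value theorem upgrades your bound $|\partial^2_{z'x_d}\Psi|\lesssim\delta_\circ$ to $|\partial^2_{z'x_d}\Psi|\lesssim\varepsilon_\circ$ on the support, so the error term becomes $O(\varepsilon_\circ|x_d-y_d|)$. Combined with $|x_d-y_d|\lesssim|x'-y'|$ this gives an error $O(\varepsilon_\circ|x'-y'|)$, which is absorbed by choosing $\varepsilon_\circ$ small --- and $\varepsilon_\circ$ \emph{is} yours to choose. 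The same remark applies symmetrically in part~(ii), using the other half of \eqref{lastcolumnrow}. With this correction your argument is complete and matches the paper's.
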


\begin{proof} Only the proof of i) is given; the same argument can be applied to ii) \textit{mutatis mutandis} (the asymmetry of assumptions regarding the $x_d$ dependence does not make a difference for the current proof).
Furthermore, if $|\mu - \tilde{\mu}| \geq 2$, then it immediately follows from the support properties of the symbols that $(T^{\lambda, m}_{\mu\nu})^*T^{\lambda, m}_{\tilde{\mu}\tilde{\nu}} \equiv 0$ and it only remains to consider the Schwartz kernel $K^{\lambda, m}_{\mu\nu,\tilde{\mu}\tilde{\nu}}(x,y)$  of 
 of $T^{\lambda, m}_{\mu\nu}(T^{\lambda, m}_{\tilde{\mu}\tilde{\nu}})^*$. By Schur's test, the desired estimate follows  from 
\begin{multline}\label{kerneloffdiagbounds}
   \sup_{x \in \R^d} \int_{\R^d} |K^{\lambda,m}_{\mu \nu , \tilde{\mu} \tilde{\nu}}(x, y)| \, \ud y, \:\: \sup_{y \in \R^d} \int_{\R^d} |K^{\lambda,m}_{\mu \nu , \tilde{\mu} \tilde{\nu}}(x, y)| \, \ud x   \lesssim \frac{2^{-2dm}(2^{-2m} \lambda)^{-N}}{|\mu-\tilde{\mu}|^{N}}.
\end{multline}

First note that, provided $C_{\mathrm{diag}}$ is suitably chosen, combining the hypothesis $|\mu - \tilde{\mu}| \geq C_{\mathrm{diag}}\varepsilon_{\circ}^{-1}$ with \eqref{orthogonalityx} yields \begin{equation}\label{off-diagonal 1}    |x_d - y_d| \lesssim |x' - y'| \qquad \textrm{on $\mathrm{supp}\,b^{m}_{\mu\nu,\tilde{\mu}\tilde{\nu}}$.}\end{equation}
Thus, by Taylor's theorem and \eqref{off-diagonal 1}
\begin{subequations}\Be\label{partz'derupper}
\big|\partial_{z'}^\alpha (\Psi (x;z) -  \Psi (y;z))\big|\lc_\alpha |x'-y'|.
\Ee
For the lower bounds we use \eqref{x'z'} and, from  \eqref{lastcolumnrow}, $\partial_{z'x_d}^2 \Psi(0;0)=0$, to  deduce 
\begin{equation*}
\partial_{z'}\Psi (x;z)-\partial_{z'}\Psi (y;z) =\int_0^1\partial_{z'x'}^2 \Psi (y+s(x-y);z) \,\ud s \, (x'-y') + O(\eps_\circ|x_d-y_d|).
\end{equation*} 
Thus, from \eqref{off-diagonal 1} we obtain that, for $(x,y;z)$ near $(0,0;0)$, 
\Be\label{partz'derlower}
\big|\partial_{z'} (\Psi (x;z) -  \Psi (y;z))\big|\ge c|x'-y'|.
\Ee
Finally, $|\partial^\alpha_z b^m_{\mu\nu,\tilde\mu\tilde\nu}|\lc_\alpha  2^{m|\alpha|}$,
 and the $z$-integration is extended over a set of diameter $O(2^{-m})$. 
\end{subequations}
By \eqref{partz'derlower} and \eqref{partz'derupper}, we may  use repeated integration-by-parts in the form of  Corollary \ref{corollaryapp}, 
with the choices of  $\rho(x,y):=|x'-y'|$ and   $R(x,y):=1$, to
obtain 
\begin{align*} |K^{\lambda, m}_{\mu\nu,\tilde\mu\tilde\nu} (x,y)|&\lc  2^{-dm} (2^{-m} \lambda|x'-y'|)^{-N} .
\end{align*}
By \eqref{orthogonalityx}, the kernel is identically zero unless $|\mu_3 - \tilde{\mu}_3| \lesssim \max\{1, |\mu' - \tilde{\mu}'|\}$. Provided $C_{\mathrm{diag}}$ is sufficiently large, $|\mu'-\tilde{\mu}'| \sim |\mu-\tilde{\mu}|$ and, furthermore, $|\mu'-\tilde{\mu}'| \geq 2$. Consequently, $\varepsilon_{\circ}^{-1}2^m|x'-y'| \sim |\mu - \tilde{\mu}|$ and so
$$
|K^{\lambda, m}_{\mu\nu,\tilde\mu\tilde\nu} (x,y)|\lc 2^{-dm} (2^{-2m}\lambda|\mu-\tilde{\mu}|)^{-N}.
$$
For fixed $x$, the support of $y\mapsto K^{\lambda, m}_{\mu\nu,\tilde\mu\tilde\nu}(x,y) $ 
is a set of measure $O(2^{-dm})$ and likewise, for fixed $y$  the support of
$x\mapsto K^{\lambda, m}_{\mu\nu,\tilde\mu\tilde\nu}(x,y) $, and \eqref{kerneloffdiagbounds} follows.
\end{proof}




\subsubsection*{Diagonal estimates} The proof of \eqref{Cotlar-Stein} has now been reduced  to the following two lemmata.

\begin{lemma}
\label{diagonal lemma-M} Suppose that $\lambda\ge 1$ and $1 \leq 2^m\lc \lambda^{1/2}$.
Then, 
for all $(\mu,\nu) \in \Z^d \times \Z^d$,
\begin{equation*}
\|T^{\lambda, m}_{\mu\nu}\|_{2 \to 2} \lesssim 2^{-m} \lambda^{-(d-1)/{2} } .
\end{equation*}
Furthermore,
\begin{equation*}
\|T^{\lambda, M}_{\mu\nu}\|_{2 \to 2} \lesssim  \lambda^{-d/2} .
\end{equation*}

\end{lemma}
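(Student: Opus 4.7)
The plan is to rescale $T^{\lambda,m}_{\mu\nu}$ to unit spatial scale and then invoke Hörmander's classical $L^2$ oscillatory integral estimate in dimension $d-1$ on slices transverse to the degenerate direction.

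First, I would let $(x^*,z^*)$ denote the centre of $\supp b^m_{\mu\nu}$ and perform the substitution $x = x^* + 2^{-m}\tilde x$, $z = z^* + 2^{-m}\tilde z$. A direct computation yields
\[
\|T^{\lambda,m}_{\mu\nu}\|_{L^2\to L^2} = 2^{-md}\,\|\tilde T\|_{L^2\to L^2},
\]
where $\tilde T g(\tilde x) := \int e^{i\lambda_1\Psi_1(\tilde x;\tilde z)}\tilde a(\tilde x;\tilde z)g(\tilde z)\,\mathrm d\tilde z$ with effective frequency $\lambda_1 := \lambda 2^{-2m}\geq 1$ (using $2^m\leq\lambda^{1/2}$) and rescaled phase $\Psi_1(\tilde x;\tilde z) := 2^{2m}\Psi(x^*+2^{-m}\tilde x;z^*+2^{-m}\tilde z)$ (modulo affine terms in $(\tilde x,\tilde z)$ which do not affect $L^2$ norms). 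The rescaled amplitude $\tilde a(\tilde x;\tilde z) := b^m_{\mu\nu}(x^*+2^{-m}\tilde x;z^*+2^{-m}\tilde z)$ is supported in an $O(1)$ cube. The elementary bounds $|\partial_x^\alpha\partial_z^\beta b^m_{\mu\nu}|\lesssim 2^{m(|\alpha|+|\beta|)}$, following from the $2^{-m}$ scale of the cutoffs $\zeta$ and $\eta_1(2^m\delta_\circ^{-1}\sigma)$ together with \eqref{sigma bounds 4}, give $\|\tilde a\|_{C^N}\lesssim 1$; similarly $|\partial_{\tilde x}^\alpha\partial_{\tilde z}^\beta\Psi_1|\lesssim 1$ for $|\alpha|+|\beta|\geq 2$ by \eqref{upperboundsder}.

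Splitting $\tilde x = (\tilde x',\tilde x_d)$, $\tilde z = (\tilde z',\tilde z_d)\in\R^{d-1}\times\R$, I would next freeze the pair $(\tilde x_d,\tilde z_d)$ and consider the $(d-1)$-dimensional slice operators
\[
\tilde T_{\tilde x_d,\tilde z_d}h(\tilde x') := \int_{\R^{d-1}} e^{i\lambda_1 \Psi_1(\tilde x',\tilde x_d;\tilde z',\tilde z_d)}\tilde a(\tilde x',\tilde x_d;\tilde z',\tilde z_d)\,h(\tilde z')\,\mathrm d\tilde z'.
\]
Their mixed Hessian $\partial_{\tilde x'\tilde z'}^2\Psi_1 = \partial_{x'z'}^2\Psi$ is uniformly nonsingular on $\supp \tilde a$ by \eqref{x'z'}, so Hörmander's classical $L^2$ theorem in dimension $d-1$ (equivalently, the $\delta_\circ = 1$ case of Proposition \ref{2par-Hoerprop} applied in that dimension) gives
\[
\|\tilde T_{\tilde x_d,\tilde z_d}\|_{L^2(\R^{d-1})\to L^2(\R^{d-1})} \lesssim \lambda_1^{-(d-1)/2},
\]
uniformly in $(\tilde x_d, \tilde z_d)$ in their compact supports.

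Finally, Fubini in $\tilde x_d$ converts the slice bound into an estimate $\lesssim \lambda_1^{-(d-1)/2}$ on the partial operator $g \mapsto \int_{\R^{d-1}} e^{i\lambda_1\Psi_1}\tilde a \cdot g(\tilde z',\tilde z_d)\,\mathrm d\tilde z'$ viewed as a map $L^2(\R^{d-1})\to L^2(\R^d)$; Minkowski's integral inequality and Cauchy--Schwarz in the compact $\tilde z_d$ range then yield $\|\tilde T\|_{L^2\to L^2}\lesssim \lambda_1^{-(d-1)/2} = \lambda^{-(d-1)/2}2^{m(d-1)}$. Undoing the rescaling gives $\|T^{\lambda,m}_{\mu\nu}\|_{L^2\to L^2}\lesssim 2^{-m}\lambda^{-(d-1)/2}$, and the bound for $T^{\lambda,M}_{\mu\nu}$ follows by substituting $2^M\sim\lambda^{1/2}$. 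The main technical obstacle is the bookkeeping in the first step, namely tracking the rescaling of the cutoffs and verifying the uniform $C^N$ bounds on $\tilde a$ and $\Psi_1$ from \eqref{upperboundsder}, \eqref{sigma bounds 3}--\eqref{sigma bounds 4}; once these are in hand, the argument reduces to a direct application of the nondegenerate Hörmander estimate in $d-1$ variables.
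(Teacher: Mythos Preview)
Your proof is correct and follows essentially the same route as the paper's: freeze the degenerate pair $(x_d,z_d)$, apply the nondegenerate $(d-1)$-dimensional H\"ormander estimate using \eqref{x'z'}, and then recover the factor $2^{-m}$ from the $O(2^{-m})$ support in the frozen variables via Minkowski and Cauchy--Schwarz. The only cosmetic difference is that you first rescale to unit spatial scale and effective frequency $\lambda_1=\lambda 2^{-2m}$ before invoking the H\"ormander bound as a black box, whereas the paper works directly at the original scale and re-derives the slice estimate via the kernel bound $|K^{\lambda,m,x_d,z_d}_{\mu\nu}(x',y')|\lesssim 2^{-m(d-1)}(1+\lambda 2^{-m}|x'-y'|)^{-N}$ and Schur's test.
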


\begin{lemma}
\label{diagonal lemma-m} 
Suppose that $\lambda\delta_\circ\ge 1$ and 
$1\le 2^m \le (\lambda\delta_\circ)^{1/3}$. Then for all $(\mu,\nu) \in \Z^d \times \Z^d$, 
\begin{equation*}
\|T^{\lambda, m}_{\mu\nu}\|_{2 \to 2} \lesssim 2^{m/2}\delta_\circ^{-1/2}\lambda^{-d/2}. 
\end{equation*}
\end{lemma}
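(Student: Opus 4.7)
The target bound $\|T^{\lambda,m}_{\mu\nu}\|_{2\to 2}\lesssim 2^{m/2}\delta_\circ^{-1/2}\lambda^{-d/2}$ corresponds, via Schur's test on the Schwartz kernel $K(x,y)$ of $T^{\lambda,m}_{\mu\nu}(T^{\lambda,m}_{\mu\nu})^*$, to the requirement $\sup_x \int|K(x,y)|\,dy \lesssim 2^m\delta_\circ^{-1}\lambda^{-d}$. My plan is to prove the requisite kernel decay by integration by parts in the $z$-variable, exploiting two distinct mechanisms working in the two groups of $z$-coordinates.

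The first step is to analyse the averaged mixed Hessian $A(x,y;z):=\int_0^1 \partial_{zx}^2\Psi|_{y+s(x-y);z}\,ds$, appearing as $\nabla_z\phi = A(x-y)$ with $\phi:=\Psi(x;z)-\Psi(y;z)$. In the block decomposition $A = \bigl(\begin{smallmatrix} A_{11} & A_{12}\\ A_{21} & A_{22}\end{smallmatrix}\bigr)$ adapted to the $(d-1)\oplus 1$ split, the conditions \eqref{x'z'}, \eqref{lastcolumnrow}, \eqref{upperboundsder} together with continuity give $|\det A_{11}|\sim 1$, $|A_{12}|\lesssim \delta_\circ$, $|A_{21}|\lesssim \varepsilon_\circ$, while the cutoff $\eta_1(2^m\delta_\circ^{-1}\sigma)$ defining $T^{\lambda,m}$ forces the Schur complement $A_{22}-A_{21}A_{11}^{-1}A_{12}$ (which agrees with $\sigma$ modulo corrections) to have magnitude $\sim 2^{-m}\delta_\circ$ on the support of $B(x,y;z):=b^m_{\mu\nu}(x;z)\overline{b^m_{\mu\nu}(y;z)}$.

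Next I will carry out integration by parts in the $z'$ directions using $|\nabla_{z'}\phi|\gtrsim |x'-y'|-C\delta_\circ|x_d-y_d|$ obtained from $A_{11}$'s nondegeneracy. Carefully accounting for the factors of $2^m$ arising from derivatives of $B$ (whose amplitude satisfies $|\partial^\alpha B|\lesssim 2^{m|\alpha|}$), and using the refined bound $|\partial_{z'}^2\phi|\lesssim |x'-y'|+\delta_\circ|x_d-y_d|$ to control correction terms, each iteration contributes $\lambda^{-1}\cdot 2^m/|x'-y'|$. In parallel, IBP in $z_d$ will use the lower bound $|\partial_{z_d}\phi|\gtrsim 2^{-m}\delta_\circ|x_d-y_d|$ coming from the Schur-complement estimate. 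Here the fold condition \eqref{folds} enters crucially through $|\partial_{z_dz_dx_d}^3\Psi|\sim\delta_\circ$, producing the refined upper bound $|\partial_{z_d}^2\phi|\lesssim |x'-y'|+\delta_\circ|x_d-y_d|$ that prevents the correction terms from swamping the main gain. Each $z_d$-IBP then contributes $\lambda^{-1}\cdot 2^{2m}/(\delta_\circ|x_d-y_d|)$.

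Carrying out $N$ iterations in each direction simultaneously (the $z'$ and $z_d$ integration-by-parts operators commute modulo lower-order terms already accounted for) and combining with the trivial bound from the $z$-support volume $\sim 2^{-md}$, I will obtain
\begin{equation*}
|K(x,y)|\lesssim 2^{-md}\bigl(1+\lambda|x'-y'|/2^m\bigr)^{-N}\bigl(1+\lambda\delta_\circ|x_d-y_d|/2^{2m}\bigr)^{-N}.
\end{equation*}
Integrating over $y$ (range $|y-x|\lesssim 2^{-m}$) then yields $\int|K|\,dy\lesssim 2^{-md}\cdot (2^m/\lambda)^{d-1}\cdot 2^{2m}/(\lambda\delta_\circ) = 2^m/(\lambda^d\delta_\circ)$, as required. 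The hypothesis $2^m\le (\lambda\delta_\circ)^{1/3}$ is exactly what makes the $z_d$-decay become effective over the full cube: it guarantees $2^{2m}/(\lambda\delta_\circ)\le 2^{-m}$, so the decay length in $x_d-y_d$ fits inside the support.

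The main obstacle will be Step 4: establishing the effective lower bound $|\partial_{z_d}\phi|\gtrsim 2^{-m}\delta_\circ|x_d-y_d|$ uniformly on the support in the presence of the coupling term $A_{21}(x'-y')$ with $|A_{21}|\lesssim\varepsilon_\circ$. I plan to handle this either by a linear change of the $z$-variable that decouples the Schur complement from the off-diagonal block (using the invertibility of $A_{11}$), or by a case split on whether $|x'-y'|$ or $\delta_\circ^{-1}2^{-m}|x_d-y_d|$ dominates, with the first regime being handled by the $z'$-decay alone and the second by the $z_d$-IBP argument described above. The bookkeeping of the $2^m$ amplitude factors through both IBP directions also requires care, but follows the standard pattern of Calder\'on--Vaillancourt-type proofs as in \cite{Cuccagna,GS1999}.
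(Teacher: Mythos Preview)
Your overall architecture---Schur's test on the kernel of $T^{\lambda,m}_{\mu\nu}(T^{\lambda,m}_{\mu\nu})^*$, followed by integration by parts in $z$---matches the paper's. The gap is in the kernel estimate itself: the product bound
\[
|K(x,y)|\lesssim 2^{-md}\bigl(1+\lambda 2^{-m}|x'-y'|\bigr)^{-N}\bigl(1+\lambda 2^{-2m}\delta_\circ|x_d-y_d|\bigr)^{-N}
\]
is \emph{false} for large $m$, and neither of your proposed repairs works. Writing $\nabla_z\phi=A(x-y)$ with blocks $A_{ij}$, the point where $\nabla_{z'}\phi=0$ is $x'=y'-A_{11}^{-1}A_{12}(x_d-y_d)$, so $|x'-y'|\sim\varepsilon_\circ\delta_\circ|x_d-y_d|$ there. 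At that $(x,y)$ the only available gradient is $|\partial_{z_d}\phi|\sim 2^{-m}\delta_\circ|x_d-y_d|$ (the Schur complement), giving $|K|\sim 2^{-md}(1+\lambda 2^{-2m}\delta_\circ|x_d-y_d|)^{-N}$ with no extra decay from $|x'-y'|$; your first factor would claim an additional $(1+\varepsilon_\circ 2^m\cdot\lambda 2^{-2m}\delta_\circ|x_d-y_d|)^{-N}$, which is spurious once $\varepsilon_\circ 2^m\gg 1$. The case split you sketch fails for the same reason: in the intermediate regime $2^{-m}\delta_\circ|x_d-y_d|\ll|x'-y'|\ll\delta_\circ|x_d-y_d|$ (nonempty when $2^m\gg 1$), the $z'$-lower bound $|\nabla_{z'}\phi|\gtrsim|x'-y'|-C\varepsilon_\circ\delta_\circ|x_d-y_d|$ can be negative while simultaneously $|\partial_{z_d}\phi|\gtrsim 2^{-m}\delta_\circ|x_d-y_d|-C\varepsilon_\circ|x'-y'|$ is dominated by its error term. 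A fixed linear change of $z$ cannot decouple because $A$ depends on $z$.

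The missing idea is to \emph{recenter} in $x'$ around the zero of $\nabla_{z'}\phi$. The paper introduces $X(x_d;y;z)$ defined implicitly by $\partial_{z'}\Psi(X,x_d;z)=\partial_{z'}\Psi(y;z)$, shows that $X$ varies by only $O(\varepsilon_\circ 2^{-m}\delta_\circ|x_d-y_d|)$ over the $z$-support (so one may freeze $X_\nu:=X(x_d;y;z_\nu)$), and then proves the lower bound $|\nabla_z\phi|\gtrsim|x'-X_\nu|+2^{-m}\delta_\circ|x_d-y_d|$ together with the matching upper bound $|\partial_z^\alpha\phi|\lesssim|x'-X_\nu|+\delta_\circ|x_d-y_d|$. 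This yields the (sum-form) kernel estimate
\[
|K(x,y)|\lesssim 2^{-md}\bigl(1+\lambda 2^{-m}|x'-X_\nu|+\lambda 2^{-2m}\delta_\circ|x_d-y_d|\bigr)^{-N},
\]
and since $X_\nu$ is independent of $x'$, integrating in $x$ gives exactly $2^m\delta_\circ^{-1}\lambda^{-d}$. Your Schur-complement intuition is correct, but it must be implemented by tracking this moving center rather than by a direct bound in $|x'-y'|$.
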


Note that the estimate in Lemma \ref{diagonal lemma-m} is better than the estimate in Lemma \ref{diagonal lemma-M} in the range $\lambda\delta_\circ\ge 1$, $1 \leq 2^m \le (\lambda\delta_\circ)^{1/3}$.

\begin{proof}[Proof of Lemma \ref{diagonal lemma-M}]
Let  $I_{\mu_d}$, $J_{\nu_d}$ denote the intervals of length $\eps_\circ2^{1-m}$ centered at 
$x_{\mu_d}=\eps_\circ 2^{-m} \mu_d$ ,
$z_{\nu_d}=\eps_\circ 2^{-m} \nu_d$ , respectively. For $g\in L^2(\bbR^{d-1})$ define 
\[T^{\lambda,m,x_d,z_d}_{\mu\nu}g(x')
 = \int_{\R^{d-1}} e^{i\lambda\Psi(x',x_d;z', z_d)} b_{\mu,\nu}^m(x;z) g(z') \, \ud {z'}\] and observe that
$$
T^{\lambda, m}_{\mu\nu} 
f(x)= \bbone_{I_{\mu_d} } (x_d) \int_{J_{\nu_d} } 
T^{\lambda, m,x_d,z_d}_{\mu\nu}[f(\cdot,x_d)] \, \ud{z_d}.
$$
The Schwartz kernel $K_{\mu\nu}^{\lambda, m,x_d,z_d} (x',y')$ of  $T^{\lambda, m,x_d,z_d}_{\mu\nu}(T^{\lambda, M,x_d,z_d}_{\mu\nu})^*$ is equal to
\[
\int_{\R^{d-1}} e^{i\lambda (\Psi (x', x_d;z', z_d)- \Psi (y', x_d;z', z_d))} 
b_{\mu\nu}^m(x',x_d;z',z_d) \overline{ b_{\mu\nu}^m(y', x_d;z',z_d)} \, \ud {z'}.
\] 
We use 
integration-by-parts based on \eqref{x'z'}; that is, we use the $(d-1)$-dimensional case of Corollary \ref{corollaryapp}  with  the choices     $\rho(x',y'):=|x'-y'|$, $R(x,y):=1$ and  the fact that $\partial_{z'}^\alpha$ applied  to the amplitude yields a term which is $O(2^{m|\alpha|})$.
This implies
\[|K_{\mu\nu}^{\lambda, m,x_d,z_d} (x',y')|\lc_N  2^{-m(d-1)}(1+\lambda 2^{-m}|x'-y'|)^{-N}\]
 uniformly in $x_d, z_d$, and
by the  Schur's test one has 
$$\|T^{\lambda,m,x_d,z_d}_{\mu,\nu}
\|_{L^2(\R^{d-1})\to L^2(\R^{d-1})} \lesssim \lambda^{-(d-1)/2} . $$
Consequently,
\begin{align*}
\|T^{\lambda, m}_{\mu\nu}f\|_{L^2(\R^d)} & \lc \int_{J_{\nu_d} } 
\Big(\int_{I_{\mu_d}} 
\|T^{\lambda, m,x_d,z_d}_{\mu\nu}
 [f(\cdot, z_d)]\|_{L^2(\bbR^{d-1} ) }^2 \ud{x_d} \Big)^{1/2} \ud{z_d} 
\\ &\lc 2^{-m/2}  \lambda^{-(d-1)/2} 
\int_{J_{\nu_d} } 
\|[f(\cdot, z_d)]\|_{L^2(\bbR^{d-1} ) }  \, \ud{z_d} \\
& \lesssim 2^{-m}  \lambda^{-(d-1)/2}  \|f\|_{L^2(\R^d)}
\end{align*} 
and hence 
$\|T^{\lambda, m}_{\mu\nu} \|_{2 \to 2} \lc  2^{-m} \lambda^{-(d-1)/2}$  as desired. 
The arguments for $T^{\lambda, M}_{\mu\nu}$ is analogous.
\end{proof}

\begin{proof}[Proof of Lemma \ref{diagonal lemma-m}]
Let $K_{ \mu\nu}^{\lambda, m} := K_{\mu\nu,\mu\nu}^{\lambda, m}$ denote the kernel of $T^{\lambda,m}_{\mu\nu}(T^{\lambda,m}_{\mu\nu})^*$, as given by the formula in 
\eqref{degenerate kernel}. It will also be useful to write $b_{\mu\nu}^m$ for the symbol $b_{\mu\nu,\mu\nu}^{m}$. By the Schur test, the problem is reduced to showing
\begin{subequations}
\begin{align}\label{diagonal estimate 1}
  &\sup_{y \in \R^d} \int_{\R^d} |K_{\mu\nu}^{\lambda, m}(x, y)| \, \ud x 
\lesssim  2^m\delta_\circ^{-1}\lambda^{-d},
\\ 
&\sup_{x \in \R^d} \int_{\R^d} |K_{\mu\nu}^{\lambda, m}(x, y)| \, \ud y 
\lesssim  2^m\delta_\circ^{-1}\lambda^{-d} .
\label{diagonal-partb}
\end{align}
\end{subequations}
 Since 
$T^{\lambda, m}_{\mu\nu}(T^{\lambda, m}_{\mu\nu})^*$ is self-adjoint \eqref{diagonal-partb} follows from 
\eqref{diagonal estimate 1}. We proceed to show \eqref{diagonal estimate 1}.

Since the partial mixed Hessian $\partial^2_{z'x'}\Psi$ is non-singular, there exist local solutions in $x'$ to the implicit equation $\nabla_{z'} \Psi (x;z)=\nabla_{z'} \Psi (y;z)$. In particular, by applying a quantitative version of the implicit function theorem (see, for instance,~\cite[\S 8]{Christ1985}), provided $\varepsilon_{\circ}$ is 
chosen suitably small, there exists a smooth $\bbR^{d-1}$-valued function $(x_d,y,z)\mapsto X(x_d;y;z)$ defined by 
\begin{align}
 \partial_{z'} \Psi (X(x_d;y;z), x_d;z) &= \partial_{z'} \Psi (y;z),    \label{Xdef1}  \\
     X(y_d;y;z) &= y' . \label{Xdef2}  
\end{align}
Implicit differentiation yields
\begin{equation}\label{X derivative}
  \partial_{x_d}X(x_d;y;z) 
  = - (\partial^2_{x'z'}\Psi)^{-1}  
    \partial_{z'x_d}^2\Psi \Big|_{(X(x_d;y;z),x_d; z)}.
\end{equation}
From this formula, the chain rule and the definition of $\sigma$ one deduces that
\begin{equation}\label{sigma derivative formula}
    \partial_{x_d}  \big[ \partial_{z_d}\Psi(X(x_d;y;z), x_d;z))\big] = \sigma(X(x_d;y;z), x_d;z) .
\end{equation}
Notice that the right hand side of \eqref{X derivative} vanishes at $(x_d;y;z)=(0;0;0)$ and that $\partial_{x_d}^\alpha X(x_d;y;z) =O(\delta_\circ)$. Hence 
we get
\begin{equation}\label{Xxd bounds}
     |\partial_{x_d}X(x_d;y;z)| \lesssim \varepsilon_{\circ} \delta_\circ.
\end{equation}
Moreover, implicit differentiation of \eqref{Xdef1} with respect to $z$ yields
\begin{align*} 
\partial_{z'x'}^2\Psi (X(x_d;y;z),y_d; z) \partial_z X(x_d;y;z) 
&= 
\partial_{z'z}^2\Psi(y',y_d;z)- \partial_{z'z}^2\Psi(X(x_d;y;z),x_d;z) 
\\
&\lc |y'-X(x_d;y;z)|+\delta_\circ |x_d-y_d|  \\ & =\, O(\delta_\circ|x_d-y_d|),
\end{align*}
where we have used \eqref{Xdef2} and \eqref{Xxd bounds}. This gives  
\Be\label{Xzd bounds}
|\partial_{z} X(x_d;y;z)| \lesssim \delta_\circ|x_d - y_d|.
\Ee

We shall now state the inequalities  for the integration-by-parts argument
which will allow us to prove \eqref{diagonal estimate 1}.
In what follows we write
$X := X(x_d;y;z)$ and $X_{\nu} := X(x_d;y;z_{\nu})$ where $z_{\nu} := \varepsilon_{\circ}2^m\nu$, noting that the $z$-support of $b_{\mu\nu}^m$ lies in a ball of radius $O(\varepsilon_{\circ}2^{-m})$ about this point. We claim that
\Be \label{nablazphaseupper}
\big|\partial_z^\alpha\Psi(x;z)-\partial_z^\alpha\Psi(y;z)\big| \le 
C_\alpha\big( |x'-X_\nu|+ \delta_\circ  |x_d-y_d|\big)
\Ee
and 
\Be\label{nablazphaselower}
|\nabla_z\Psi(x;z)-\nabla_z\Psi(y;z)| \ge 
c\big( |x'-X_\nu|+ \delta_\circ 2^{-m} |x_d-y_d|\big).
\Ee

To see \eqref{nablazphaseupper}, by Taylor expansion the left-hand side is dominated by  a constant times 
$|x'-y'|+\delta_\circ|x_d-y_d|$. We then bound $|x'-y'|\leq |x'-X_\nu| + |y'-X_\nu|$ and, using \eqref{Xdef2}, by the mean value theorem, \eqref{Xxd bounds} and \eqref{Xzd bounds} one has
\begin{align*}|y'-X_\nu|&\le |X(x_d;y;z)-X(y_d;y;z)|
+|X(y_d;y;z)-X(y_d;y;z_\nu)|
\\
&\lc \delta_\circ|x_d-y_d|.
\end{align*}
Now \eqref{nablazphaseupper} easily follows.

We turn to \eqref{nablazphaselower}.
Taking a Taylor expansion in the $x'$ variables,
\begin{align}
\nonumber
    \partial_{z'}\Psi(x;z)-\partial_{z'}\Psi(y;z) &= \partial_{z'}\Psi(x;z) - \partial_{z'}\Psi(X, x_d;z) \\
\label{diagonal estimate 2}
&= \partial_{z'x'}^2\Psi(X, x_d;z) (x'-X) + O(|x' - X|^2)
\end{align}
whilst, by a Taylor expansion in the $z$-variables, the last  expression is equal to
\begin{equation}\label{diagonal estimate 2 bis}
\partial_{z'x'}^2\Psi (X, x_d;z) (x'-X_{\nu}) + O\big(|x' - X_{\nu}|^2 + \varepsilon_{\circ}2^{-m}\delta_\circ|x_d-y_d|\big).
\end{equation}
Here the additional error term arises by applying the mean value theorem to $|X-X_\nu|$ together with \eqref{Xzd bounds}.

On the other hand, one may write $\partial_{z_d}\Psi (x; z)-\partial_{z_d}\Psi  (y; z) = \mathrm{I} + \mathrm{II}$ where
\begin{equation*}
   \mathrm{I} := \partial_{z_d}\Psi (X,x_d;z) - \partial_{z_d}\Psi  (y; z), \quad \mathrm{II} := \partial_{z_d} \Psi (x; z)-\partial_{z_d}\Psi (X,x_d;z).
\end{equation*}
To estimate $\mathrm{I}$, take a Taylor expansion first in the $x_d$ variable and then in the $z$ variable to obtain
\begin{align}
\nonumber
   \mathrm{I} &= \sigma(y;z)(x_d - y_d) + O(\delta_\circ|x_d - y_d|^2) \\
   \label{diagonal estimate 2.5}
   &= \sigma(y;z_{\nu})(x_d - y_d) + O\big(\varepsilon_{\circ}2^{-m}\delta_\circ|x_d - y_d| \big).
\end{align}
Here $\sigma$ appears owing to \eqref{sigma derivative formula} and \eqref{Xdef2}. The second estimate holds due to \eqref{sigma bounds 4} and the localisation of the $(x,y;z)$-support of $b_{\mu\nu}^m$. To estimate the $\mathrm{II}$ term, arguing as in \eqref{diagonal estimate 2}, take a Taylor expansion in the $x'$ variable and then in the $z$ variable to obtain
\begin{align}\nonumber
  \mathrm{II} &=  
  \partial_{z_dx'}^2\Psi(X,x_d;z)(x'-X) 
  + O(|x' - X|^2)  \\
  \label{II bound}
   &=  \partial^2_{z_dx'}\Psi(X,x_d;z)(x'-X_\nu)  + O\big(\varepsilon_{\circ}|x' - X_{\nu}| + \varepsilon_{\circ}2^{-m}\delta_\circ|x_d - y_d| \big). 
\end{align}
In the last step we applied \eqref{Xzd bounds}.
From \eqref{diagonal estimate 2 bis},
\eqref{diagonal estimate 2.5} and \eqref{II bound} 
we get (assuming $\eps_\circ$ is chosen sufficiently small)
that
\begin{multline*}
|\partial_{z'}\Psi
(x;z)-\partial_{z'}\Psi(y;z)| \ge c_1|x'-X_\nu|    \quad\text{ if  }  |x'-X_\nu| \ge C_1\eps_\circ2^{-m} \delta_\circ|x_d-y_d|
\end{multline*} and
\begin{multline*}
|\partial_{z_d}\Psi(x;z)-\partial_{z_d}\Psi(y;z)| \ge (\delta_\circ/2) 2^{-m}|x_d-y_d|   \\ \text{ if  }   |x'-X_\nu| \le C_1\eps_\circ2^{-m} \delta_\circ|x_d-y_d|,
\end{multline*}
and these inequalities imply  \eqref{nablazphaselower}.

We now estimate 
 $K_{\mu\nu}^{\lambda, m}(x, y) $. 
Using just the size and support of the integrand we get
\Be\label{integrand-size}
|K_{\mu\nu}^{\lambda, m}(x, y) | \lc 2^{-md}\Ee
which we use for $|x'-X_\nu|+2^{-m}\delta_\circ|x_d-y_d|\le \lambda^{-1}$.

Now assume  $|x'-X_\nu|+2^{-m}\delta_\circ|x_d-y_d|\ge \lambda^{-1}$; we use integration-by-parts to improve on \eqref{integrand-size}. By 
\eqref{nablazphaseupper}, \eqref{nablazphaselower} 
we can apply  Corollary \ref{corollaryapp}  
with the choices    
 $R(x,y):=2^m$ and  $\rho(x,y):=|x'-X_\nu(x_d,y;z_\nu)|+2^{-m}\delta_\circ|x_d-y_d|$. 
 We also use that for fixed $x,y$ the amplitude is supported  in a set of diameter $2^{-m}$ and the  estimates \[|\partial_z^\alpha[ b^m_{\mu\nu} (x,z) b^m_{\mu\nu}(y,z)]|\lc 2^{m|\alpha|} 
.\] 
Altogether, Corollary \ref{corollaryapp}  yields, for $x\neq y$,
\[
 |K_{\mu\nu}^{\lambda, m}(x, y) | \lesssim  2^{-md} \la^{-N} 2^{mN}
 \big(|x'-X_\nu|+\lambda 2^{-m}\delta_\circ|x_d-y_d|)\big)^{-N} .
\]
 Combining this  with \eqref{integrand-size} we obtain
 $$
 |K_{\mu\nu}^{\lambda, m}(x, y) | \lesssim  2^{-md} 
 \big(1+\lambda 2^{-m} |x'-X_\nu|+\lambda 2^{-2m}\delta_\circ|x_d-y_d|)\big)^{-N} .
$$
 
 Fixing $y$ and integrating in $x$ yields 
\[\int_{\R^d} |K_{\mu\nu}^{\lambda, m}(x, y) | \, \ud x \lc 2^{-md} 
(2^m\lambda^{-1})^{d-1} 2^{2m}\lambda^{-1} \delta_\circ^{-1} \lc
2^{m}\delta_\circ^{-1}\lambda^{-d} ,
\]
which is the desired estimate for the first term in \eqref{diagonal estimate 1}. 
This finishes the proof of \eqref{diagonal estimate 1} and the proof  of the lemma.
\end{proof}




\subsection{Uniform estimates depending on a $t$-variable}\label{subsec:t extension}

The estimates obtained in Propositions \ref{2par-Hoerprop}, \ref{ps-thm} and \ref{curvdecprop} will be used to obtain $L^2$-bounds for the operators $A_j[\Phi_t; \fa_t]$. To this end, we shall allow a $t$-dependence in our operator and obtain uniform estimates in $t$. Consider now an open set $U \subset \R^d \times \R \times \R^d$, a phase function $\Psi:  U \to \R$ and an amplitude $a \in C_0^{\infty}(U)$, and define
\Be\label{Psit-at-def}
\Psi_t(x;z)=\Psi(x;t;z) \quad \text{ and } \quad a_t(x;z)=a(x;t;z).
\Ee
Given $\lambda \geq 1$, let $T_t^\lambda$ denote the  oscillatory integral associated to the  pair $[\Psi_t; a_t]$ as in \eqref{TLa}, given by $T_t^\lambda \equiv T^\lambda[\Psi_t;a_t]$. 
For $0 < \delta_\circ \leq 1$, we assume that the condition \eqref{upperboundsder} continues to hold under $t$-derivatives. That is, the estimates
\begin{equation}\label{upperboundsder t included}
|\partial_x^\alpha \partial_z^\beta \partial_t^\gamma \Psi_t(x;z)| + \delta_\circ^{-1}|\partial_x^\alpha \partial_z^\beta \partial_t^\gamma \partial_{x_d} \Psi_t(x;z)| \leq C_{\alpha,\beta,\gamma}
\end{equation}
hold for all $(x;t;z) \in U$ and all $\alpha, \beta \in \N_0^d$, $\gamma \in \N_0$. Thus, if the condition $|\det \partial_{zx}^2 \Psi_{t_0}(x_0;z_0)| \geq c \delta_\circ$ holds  for some $(x_0;t_0;z_0) \in U$, Proposition \ref{2par-Hoerprop} in conjunction with  \eqref{upperboundsder t included} immediately extends to  a uniform estimate for the operators $T_t^\lambda$ for all $|t-t_0| \leq \varepsilon_\circ$, for  suitable $\eps_\circ$.
Likewise if \eqref{upperboundsder t included} holds and 
 the conditions \eqref{x'z'}, \eqref{lastcolumnrow} and \eqref{folds} are satisfied at a certain $(x_0;t_0;z_0) \in U$, Propositions \ref{ps-thm} and \ref{curvdecprop} also extend to the operators $T_t^\lambda$ for all $|t-t_0|\leq \varepsilon_\circ$, with uniform bounds on $t$; note that \eqref{upperboundsder t included} implies that the quantity $\sigma_t(x;z)\equiv \sigma (x;t;z)$ defined as in \eqref{sigmadef}, also satisfies the derivative bounds \eqref{sigma bounds 4} under $t$-differentiation, that is,
\begin{equation}\label{sigma_t der}
|\partial^\alpha_x \partial_z^\beta \partial^\gamma_t \sigma_t(x;z)| \lesssim_{\alpha,\beta,\gamma} \delta_\circ
\end{equation}
holds for all $(x;t;z) \in U$ and all $\alpha,\beta \in \N_0^d$, $\gamma \in \N_0$.




\subsection{ Estimates for maximal oscillatory integrals}
We now state the version of the estimates in Propositions \ref{2par-Hoerprop} and \ref{ps-thm} for the maximal functions
associated to the oscillatory integral operators $T_t^\lambda$.

To obtain such maximal estimates we will assume that \eqref{upperboundsder t included} holds and that, in addition,  there is $\delta_\circ$-smallness when we differentiate with respect to the $t$-variable; more precisely we assume that
\begin{equation}\label{delta smallness t derivative}
| \partial_x^\alpha\partial_z^\beta\partial_t^\gamma \Psi_t(x;z)| \lesssim_{\gam} \delta_\circ 
\end{equation}
holds for all $(x;t;z) \in U$ and all $\alpha, \beta \in \N_0^d$,  $\gamma>0$.

\begin{proposition}\label{Hoerm-max-prop}
Let  $[\Psi$; $a]$ be as in \eqref{Psit-at-def}.
Suppose 
$\Psi$  satisfies 
  \eqref{upperboundsder t included}, \eqref{delta smallness t derivative} and $|\det \partial_{zx}^2\Psi_{t_0}(x_0;z_0)|\ge c\delta_\circ$ for some $(x_0;t_0;z_0) \in U$.
  Then there is $\eps_\circ>0$ and $N>0$ such that, under the assumption of $a_t$ supported in $B_{\eps_\circ}(x_0,z_0)$,
 \[
\big\| \sup_{|t-t_0|\le \eps_\circ} |T^\lambda[\Psi_t;a_t] | \big\|_{L^2(\R^d) \to L^2(\R^d)} \lc \lambda^{-\frac{d-1}{2}} \| a \|_{C^N}.
\]
\end{proposition}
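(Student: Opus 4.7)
\textbf{Proof plan for Proposition~\ref{Hoerm-max-prop}.}
The plan is to pass from the fixed--time $L^2$ bound of Proposition~\ref{2par-Hoerprop} to the maximal bound via the standard Sobolev--embedding inequality in the $t$--variable. Set $F(t,x):=T^\lambda[\Psi_t;a_t]f(x)$ and $I:=[t_0-\varepsilon_\circ,t_0+\varepsilon_\circ]$. Starting from the pointwise identity $|F(t,x)|^2 = |F(t_0,x)|^2 + 2\Re \int_{t_0}^{t}F(s,x)\overline{\partial_s F(s,x)}\,\ud s$ and integrating in $x$, Cauchy--Schwarz yields
\[
\big\|\sup_{t\in I}|F(t,\cdot)|\big\|_2^{\,2}\;\lesssim\;\|F(t_0,\cdot)\|_2^{\,2}\;+\;\|F\|_{L^2(I\times\R^d)}\,\|\partial_t F\|_{L^2(I\times\R^d)}.
\]
Thus it suffices to bound $\|T_t^\lambda\|_{L^2\to L^2}$ and $\|\partial_t T_t^\lambda\|_{L^2\to L^2}$ uniformly for $t\in I$.

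First I would verify that the non-degeneracy hypothesis persists throughout $I$. Each entry of $\partial_{zx}^2\Psi_t(x;z)$ differs from the corresponding entry of $\partial_{zx}^2\Psi_{t_0}(x_0;z_0)$ by $O(|(x,t;z)-(x_0,t_0;z_0)|)$, and in view of \eqref{delta smallness t derivative} the perturbation in the last (small) column/row is in fact $O(\delta_\circ|t-t_0|)$. Expanding the determinant along this small row, the perturbation of $\det\partial_{zx}^2\Psi_t$ is $O(\delta_\circ\,\varepsilon_\circ)$, so by shrinking $\varepsilon_\circ$ we have $|\det\partial_{zx}^2\Psi_t(x;z)|\ge (c/2)\delta_\circ$ on the support of $a_t$ for every $t\in I$. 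Applying Proposition~\ref{2par-Hoerprop} uniformly in $t$ gives
\[
\|T_t^\lambda\|_{L^2\to L^2}\;\lesssim\;\lambda^{-\frac{d-1}{2}}\min\{(\lambda\delta_\circ)^{-1/2},\,1\}\,\|a\|_{C^N}.
\]

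Next, differentiating under the integral sign,
\[
\partial_t T_t^\lambda f\;=\; i\lambda\, T^\lambda[\Psi_t;\,(\partial_t\Psi_t)\,a_t]f\;+\;T^\lambda[\Psi_t;\,\partial_t a_t]f.
\]
The second term has amplitude $\partial_t a_t$ of unit $C^N$--size, so Proposition~\ref{2par-Hoerprop} gives an operator norm bounded by $\lambda^{-(d-1)/2}\min\{(\lambda\delta_\circ)^{-1/2},1\}$. For the first term the key point is the hypothesis \eqref{delta smallness t derivative}: it forces $\|(\partial_t\Psi_t)a_t\|_{C^N}\lesssim\delta_\circ$, so that Proposition~\ref{2par-Hoerprop} yields
\[
\big\|\,i\lambda\, T^\lambda[\Psi_t;(\partial_t\Psi_t)a_t]\,\big\|_{L^2\to L^2}\;\lesssim\;\lambda\delta_\circ\cdot\lambda^{-\frac{d-1}{2}}\min\{(\lambda\delta_\circ)^{-1/2},1\}\;=\;\lambda^{-\frac{d-1}{2}}\max\{\lambda\delta_\circ,(\lambda\delta_\circ)^{1/2}\}.
\]
In particular $\|\partial_t T_t^\lambda\|_{L^2\to L^2}\lesssim \lambda^{-(d-1)/2}\max\{1,(\lambda\delta_\circ)^{1/2}\}$.

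Finally, multiplying the two bounds the $\delta_\circ$--factors cancel in both regimes:
\[
\|T_t^\lambda\|_{2\to 2}\,\|\partial_t T_t^\lambda\|_{2\to 2}\;\lesssim\;\lambda^{-(d-1)}\min\{(\lambda\delta_\circ)^{-1/2},1\}\cdot\max\{1,(\lambda\delta_\circ)^{1/2}\}\;\lesssim\;\lambda^{-(d-1)},
\]
since $\min\{a^{-1/2},1\}\max\{1,a^{1/2}\}=1$ for all $a>0$. Since $|I|\sim 1$, we conclude
\[
\big\|\sup_{t\in I}|T^\lambda[\Psi_t;a_t]f|\big\|_2^{\,2}\;\lesssim\;\lambda^{-(d-1)}\|f\|_2^{\,2},
\]
which is the desired estimate. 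The only nontrivial step is the verification that the determinantal non-degeneracy passes to the $t$--neighbourhood, which requires the small-$t$-derivative condition \eqref{delta smallness t derivative}; once this is in hand, the estimate is a clean application of the fixed-time bound together with the elementary cancellation in the min/max product above.
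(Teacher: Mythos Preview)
Your proof is correct and follows essentially the same route as the paper: apply the Sobolev embedding in $t$ and feed in the fixed-time bound of Proposition~\ref{2par-Hoerprop} for both $T_t^\lambda$ and $\partial_t T_t^\lambda$, using \eqref{delta smallness t derivative} to control the amplitude $(\partial_t\Psi_t)a_t$ so that the $\delta_\circ$--factors cancel. One cosmetic slip: $\lambda\delta_\circ\cdot\min\{(\lambda\delta_\circ)^{-1/2},1\}$ equals $\min\{(\lambda\delta_\circ)^{1/2},\lambda\delta_\circ\}$, not the max, but your subsequent bound $\|\partial_t T_t^\lambda\|\lesssim \lambda^{-(d-1)/2}\max\{1,(\lambda\delta_\circ)^{1/2}\}$ is nevertheless correct and the final cancellation goes through unchanged.
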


\begin{proposition}\label{fold-max-prop}
Let $[\Psi$; $a]$ be as in \eqref{Psit-at-def}. Assume that  $\Psi$ satisfies  \eqref{x'z'}, \eqref{lastcolumnrow}
\eqref{folds} at a certain $(x_0;t_0;z_0) \in U$, the estimates \eqref{upperboundsder t included} and \eqref{delta smallness t derivative} and, in addition, assume that
\Be\label{pointwise-est-t-deriv}
\partial_t \Psi_t(x;z)= c_t(x;z)\det \partial^2_{zx}\Psi_t(x;z)\Ee 
for some $c \in C^\infty$ in a neighbourhood of $\supp a$.
Then there is $\eps_\circ >0$ and $N>0$ such that, under the assumption of $a_t$ supported in $B_{\eps_\circ}(x_0,z_0)$
\[
\big\| \sup_{|t-t_0|\le \eps_\circ} |T^\lambda[\Psi_t;a_t] | \big\|_{L^2(\R^d) \to L^2(\R^d)} \lc \lambda^{-\frac{d-1}{2}} 
\log(2+\lambda \delta_\circ) \|a\|_{C^N}.
\]
\end{proposition}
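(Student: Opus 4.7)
The plan is to combine a one-dimensional Sobolev embedding in $t$ with the dyadic decomposition of the operator by level sets of $\sigma_t(x;z)$ used in the proof of Proposition~\ref{curvdecprop}, and to use the identity \eqref{pointwise-est-t-deriv} to control $\partial_t T^\lambda_t$ at the symbol level. The $\sigma_t$-localisation produces the gain needed to absorb the extra $\lambda$ coming from $\partial_t e^{i\lambda \Psi_t}$, and summation over the dyadic scales accounts for the logarithmic factor.

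\textit{Step 1 (Sobolev in $t$).} For smooth $g$ on $I = [t_0 - \varepsilon_\circ, t_0+\varepsilon_\circ]$ one has $\|g\|_{L^\infty(I)}^2 \lesssim |g(t_0)|^2 + \|g\|_{L^2(I)}\|g'\|_{L^2(I)}$. Applied to $g(t) = T^{\lambda,m}_t f(x)$, where $T^{\lambda,m}_t$ denotes the $t$-dependent analogue of \eqref{Tm def}--\eqref{TM def} built from $\sigma_t$, followed by integration in $x$, Cauchy--Schwarz in $x$, and the fact that $|I|\lesssim 1$, this yields
\[
\bigl\|\sup_{t \in I}|T^{\lambda,m}_t f|\bigr\|_{L^2}^2 \lesssim (B_m^2 + B_m \tilde{B}_m) \|f\|_2^2,
\]
where $B_m := \sup_{t}\|T^{\lambda,m}_t\|_{L^2\to L^2}$ (controlled by Proposition~\ref{curvdecprop} in view of \S\ref{subsec:t extension}) and $\tilde{B}_m := \sup_{t}\|\partial_t T^{\lambda,m}_t\|_{L^2\to L^2}$. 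Applying the embedding to each piece separately rather than to $T^\lambda_t$ directly is essential, as it is only after the $\sigma_t$-localisation that the $\partial_t\Psi_t$ factor can be kept small.

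\textit{Step 2 (Bound for $\tilde{B}_m$).} Differentiating $T^{\lambda,m}_t$ under the integral splits it into three operators, with amplitudes $i\lambda \partial_t\Psi_t \cdot a_t\eta_1$, $\partial_t a_t\cdot \eta_1$, and $a_t\cdot 2^m\delta_\circ^{-1}\partial_t\sigma_t\cdot \eta_1'$ respectively, where $\eta_1=\eta_1(2^m\delta_\circ^{-1}\sigma_t)$. The hypothesis \eqref{pointwise-est-t-deriv}, combined with \eqref{rotcurv vs sigma} and \eqref{x'z'}, gives $\partial_t\Psi_t = c_t\, \sigma_t\, \det\partial^2_{z'x'}\Psi_t$ with $|\det\partial^2_{z'x'}\Psi_t|\sim 1$, hence $|\partial_t\Psi_t|\lesssim 2^{-m}\delta_\circ$ on $\mathrm{supp}\,\eta_1$; meanwhile the estimate \eqref{sigma_t der} provides $C^N$-control of $\partial_t\sigma_t$ by $\delta_\circ$, so the third amplitude has $C^N$-norm $\lesssim 2^m$. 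Re-invoking Proposition~\ref{curvdecprop} on each of the three operators (all of which preserve the $\sigma_t$-localisation structure) yields
\[
\tilde{B}_m \lesssim \bigl(1 + \lambda\, 2^{-m}\delta_\circ + 2^m\bigr) B_m.
\]

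\textit{Step 3 (Summation in $m$).} Combining the previous two steps gives
\[
\bigl\|\sup_{t\in I}|T^{\lambda,m}_t|\bigr\|_{L^2\to L^2}\lesssim B_m\bigl(1+(\lambda\, 2^{-m}\delta_\circ)^{1/2}+2^{m/2}\bigr).
\]
Substituting $B_m$ from Proposition~\ref{curvdecprop}(i) and splitting the range at the threshold $2^m\sim(\lambda\delta_\circ)^{1/3}$, where the form of $B_m$ changes, a direct case analysis shows that each summand is $\lesssim \lambda^{-(d-1)/2}$ for $1\le 2^m\le(\lambda\delta_\circ)^{1/3}$ and decays geometrically in $m$ for $2^m>(\lambda\delta_\circ)^{1/3}$. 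Since the first range contains $O(\log(\lambda\delta_\circ))$ dyadic scales, the triangle inequality $\|\sup_t|T^\lambda_t|\|_{L^2\to L^2}\le \sum_{m=0}^M \|\sup_t|T^{\lambda,m}_t|\|_{L^2\to L^2}$ produces the bound $\lambda^{-(d-1)/2}\log(2+\lambda\delta_\circ)$, with the $m=M$ piece contributing $\lambda^{-d/2}$. When $\lambda\delta_\circ\le 1$ one uses Proposition~\ref{curvdecprop}(ii) instead: the Sobolev correction is harmless and no logarithm is lost.

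The main obstacle is the $2^{m/2}$ factor in Step~2 coming from $\partial_t\eta_1$, which competes with the geometric decay of $B_m$ at large $m$; it survives only because Proposition~\ref{curvdecprop} has the correct $m$-dependence and because \eqref{sigma_t der} provides the necessary $\delta_\circ$-smallness for $\partial_t\sigma_t$. The identity \eqref{pointwise-est-t-deriv} plays the parallel role for the $\lambda\partial_t\Psi_t$ contribution, ensuring that it is subordinate to $B_m$ exactly in the critical regime $2^m\sim(\lambda\delta_\circ)^{1/3}$ where Proposition~\ref{curvdecprop} is sharp.
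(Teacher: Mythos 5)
Your proposal is correct and follows essentially the same strategy as the paper: both decompose $T^\lambda_t$ by the size of $\sigma_t$, bound $\partial_t T^{\lambda,m}_t$ term-by-term using \eqref{pointwise-est-t-deriv} (through the factorisation $\partial_t\Psi_t = c_t\,\sigma_t\,\det\partial^2_{x'z'}\Psi_t$) and \eqref{sigma_t der}, apply the Sobolev embedding \eqref{sobolev lemma} to each localised piece, and sum in $m$ with the log arising from the $O(\log(\lambda\delta_\circ))$ scales below $(\lambda\delta_\circ)^{1/3}$. The one presentational gap is in Step~2: to apply Proposition~\ref{curvdecprop} to the $i\lambda\partial_t\Psi_t\cdot a_t\eta_1$ term you should first absorb the $\sigma_t$ factor into the cutoff (writing $\sigma_t\eta_1(2^m\delta_\circ^{-1}\sigma_t)=\delta_\circ 2^{-m}\tilde\eta(2^m\delta_\circ^{-1}\sigma_t)$ with $\tilde\eta(s)=s\eta_1(s)$), as the paper does, rather than merely noting $|\partial_t\Psi_t|\lesssim 2^{-m}\delta_\circ$ pointwise — otherwise the $C^N$-control on this amplitude is not literally what Proposition~\ref{curvdecprop} requires.
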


The proofs rely on a standard Sobolev embedding inequality
(see for instance ~\cite[Chapter XI, $\S$3.2]{Stein1993}).
Namely, for a $C^1$ function $t\mapsto g(t)$ supported on an interval $I$, with
$t_0\in I$,  we have, for $1\le p<\infty$,
\Be\label{sobolev inequality pre}
\sup_{t \in I}|g(t)|^p  \leq  |g(t_0)|^p + p  \|g\|_{L^p(I)}^{p-1} \|g'\|_p
\Ee
which follows by the fundamental theorem of calculus applied to $|g|^p$ and H\"older's inequality. We can apply this to  $F(x,t)$ with $F\in L^p(\bbR^d;C^1)$, and after integrating in $x$ and  another application of H\"older's inequality, 
\eqref{sobolev inequality pre}  gives
\begin{equation}\label{sobolev lemma}
  \|\sup_{t \in I}|F(\cdot,t)| \|_{L^p(\R^d)}^p \leq  \inf_{t_0\in I} \|F(\cdot,t_0)\|_{L^p(\R^d)}^p + p  \|F\|_{L^p(\R^d \times I)}^{p-1} \|\partial_t F\|_{L^p(\R^d \times I)}.
\end{equation}


\begin{proof} [Proof of Proposition \ref{Hoerm-max-prop}]
Note that if $T_t^\lambda :=T^\lambda[\Psi_t; a_t] $, then $\partial_t T_t^\lambda = T^\lambda[\Psi_t; d_t]$, where $d_t:=(i \lambda \partial_t \Psi_t) a_t + \partial_t a_t$. By \eqref{delta smallness t derivative} one has $\| d_t \|_{C^N} \lesssim (1+ \lambda \delta_\circ) \| a_t \|_{C^{N+1}}$. Thus, by the hypothesis and Proposition \ref{2par-Hoerprop} applied to $T_t^\lambda$ and $\partial_t T_t^\lambda$ (as discussed in \S\ref{subsec:t extension}), there exist $\varepsilon_\circ$ and $N>0$ such that, if $a_t$ is supported in $B_{\varepsilon_\circ}(x_0;z_0)$, the bounds
 \[
 (1+\lambda\delta_\circ)^{1/2} \|T_t^\lambda f\|_{L^2(\R^d)}+
  (1+\lambda\delta_\circ)^{-1/2} \|\partial_t T_t^\lambda f\|_{L^2(\R^d)}\lc \lambda^{-\frac{d-1}{2}} \| a \|_{C^{N+1}}\|f\|_{L^2(\R^d)}
  \]
  hold uniformly in $|t-t_0| \leq \varepsilon_\circ$. Now the assertion follows immediately by the Sobolev inequality \eqref{sobolev lemma} for the exponent $p=2$. 
  \end{proof}
 
 \begin{proof} [Proof of Proposition \ref{fold-max-prop}]
 Given $0\leq m \leq M$, let 
 \begin{align*} 
T_t^{\lambda, m} f(x)&:=\int_{\R^d} e^{i\lambda \Psi_t(x;z)} a_t(x;z)\beta(2^m\delta_\circ^{-1} |\sigma_t(x;z)|) f(z)\,  \mathrm dz, \quad 0 \leq m<M , 
\\
T_t^{\lambda, M} f(x)&:=\int_{\R^d} e^{i\lambda \Psi_t(x;z)} a_t(x;z)\eta(2^M\delta_\circ^{-1} \sigma_t(x;z) ) f(z) \, \mathrm dz;
\end{align*}
that is, \eqref{Tm def} and \eqref{TM def} with $\beta(|\cdot|)$, $\eta$ in place of $\eta_1$, $\eta_0$ and with the phase-amplitude pair $[\Psi_t; a_t]$.
 
 Using \eqref{pointwise-est-t-deriv} and \eqref{rotcurv vs sigma} we compute
 \begin{align*} 
 \partial_t T_t^{\la,m} f(x) &=
 \la \delta_\circ  2^{-m}
 \int_{\R^d} e^{i\la\Psi_t(x;z) } \widetilde c_t(x;z) a_t(x;z) \eta_{1}(2^m\delta_\circ^{-1}\sigma_t(x;z))  f(z) \, \mathrm d z
 \\
 &+ 2^m \int_{\R^d} e^{i\la\Psi_t(x;z) } 2^{-m} \partial_t a_t(x;z)  \beta(2^m\delta_\circ^{-1}|\sigma_t(x;z)|)  f(z) \, \mathrm d z
 \\&+ 2^m \int_{\R^d} e^{i\la \Psi_t(x;z)} \delta_\circ ^{-1}\partial_t\sigma_t(x;z) a_t(x;z)
 \widetilde\eta_1(2^m\delta_\circ^{-1}\sigma_t(x;z))  f(z)\, \mathrm d z
 \end{align*}
 where $\widetilde c_t=c_t \partial^2_{x'z'} \Psi_t$,  is smooth and $\eta_1(s)=s\beta(|s|)$, $\widetilde \eta_1(s)=\frac{d}{ds}(\beta(|s|))$.
 For  $m=M$ we have a similar formula   with $\beta$ replaced by $\eta$. 
 Note that in view of \eqref{sigma_t der} we have
 $|\partial_x^\alpha\partial_z^\beta [\delta_0^{-1}\partial_t\sigma_t a_t ] |\lc C_{\alpha,\beta}$.
 


 Assume $1 \leq 2^m \le (\lambda\delta_\circ)^{1/3}$. By the hypothesis and Proposition \ref{curvdecprop} applied to $T_t^{\lambda, m}$ and $\partial_t T_t^{\lambda, m}$, 
 there exist $\varepsilon_\circ$ and $N>0$ such that, if $a_t$ is supported in $B_{\varepsilon_\circ}(x_0;z_0)$, one has the bounds
 \begin{align*}\|T^{\lambda, m}_t f\|_{L^2(\R^d)} &\lc \lambda^{-\frac{d-1}{2}} \Big(\frac{2^m}{\lambda\delta_\circ}\Big)^{1/2} \|f\|_{L^2(\R^d)}
     \\
     \intertext{and} 
     \|\partial_t T^{\lambda, m}_t f\|_{L^2(\R^d)} & \lc \lambda^{-\frac{d-1}{2}} \Big(\frac{2^m}{\lambda\delta_\circ}\Big)^{1/2} 
     \big( \lambda\delta_\circ2^{-m}+2^m\big)
     \|f\|_{L^2(\R^d)}
     \\ &\lc  \lambda^{-\frac{d-1}{2}} \Big(\frac{2^m}{\lambda\delta_\circ}\Big)^{-1/2} 
    \|f\|_{L^2(\R^d)}
     \end{align*}
      uniformly in $|t-t_0|\leq \varepsilon_\circ$, where the last inequality follows because we are under the assumption $1 \leq 2^{m} \leq (\lambda \delta_\circ)^{1/3} \leq (\lambda \delta_\circ)^{1/2}$. Therefore, the above estimates combined with  \eqref{sobolev lemma} yield
  \Be\notag
 \sum_{0\le m\le\floor{\log_2 (\lambda\delta_\circ)^{1/3}}} \big\|\sup_{|t-t_0|\le\eps_\circ} |T_{t}^{\lambda, m}f| \big\|_{L^2(\R^d)} \lc
 \log(2+\lambda\delta_\circ) \lambda^{-\frac{d-1}{2}} \|f\|_{L^2(\R^d)}.
   \Ee
 Similarly, if $\lambda^{1/2} \geq 2^m \ge \min\{ (\lambda\delta_\circ)^{1/3},1\}$,  Proposition \ref{curvdecprop} implies 
  \begin{align*}\|T^{\lambda, m}_t f\|_{L^2(\R^d)} &\lc \lambda^{-\frac{d-1}{2}} 2^{-m} \|f\|_{L^2(\R^d)}
     \\
     \intertext{and}
     \|\partial_t T^{\lambda,m}_t f\|_{L^2(\R^d)} &\lc \lambda^{-\frac{d-1}{2}} 2^{-m} 
     \big( \lambda\delta_\circ2^{-m}+2^m\big)
      \|f\|_{L^2(\R^d)} 
     \end{align*}
     uniformly in $|t-t_0|\leq \varepsilon_\circ$. The above bounds imply, by \eqref{sobolev lemma}, that 
     $$
     \big\|\sup_{|t-t_0|\le\eps_\circ} |T_{t}^{\lambda, m}f| \big\|_{L^2(\R^d)} \lesssim 
     \lambda^{- \frac{d-1}{2}} 2^{-m} (\lambda \delta_\circ 2^{-m} + 2^m)^{1/2}  \|f\|_{L^2(\R^d)},
     $$
     and thus
     \begin{align*}
\sum_{\floor{\log_2 (\lambda\delta_\circ)^{1/3}}\wedge 1 \le m \le M} \big\|\sup_{|t-t_0|\le\eps_\circ} |T_{t}^{\lambda, m}f| \big\|_{L^2(\R^d)} \lc  \lambda^{-\frac{d-1}{2}}  \|f\|_{L^2(\R^d)}
 \end{align*}
 follows from summing a geometric series, as $\lambda \delta_\circ 2^{-m} \leq 2^{2m}$ in the range of summation. Combining both sums one obtains the desired bound by the triangle inequality, which concludes the proof of the proposition.
 \end{proof}




\subsection{  Radon-type operators in $d$ dimensions versus  oscillatory integral operators in $d+1$ dimensions}\label{reduction to oscillatory integral estimates section} 
In this section we use  variables $(x;z)\in \bbR^{d+1}\times\bbR^{d+1}$ and split 
$x=(x_1,x'')$, $z=(z_1,z'')$ with $x''\in \bbR^d$, $z''\in \bbR^d$.
Recall that the  frequency localised Radon-type operators in \eqref{frequence localised definition} are of the form (with $d=2$)
\begin{align} 
A_j[\Phi_t;\fa_t] f(x'')&= \int_{\R^d}  \fa_t(x''; z'') \int_\R \beta(2^{-j}|\theta|) e^{i\theta \Phi_t(x''; z'')}f(z'') \, \ud\theta \, \ud z''
\notag
\\
&= 2^j \int_{\R \times \R^d} \fa_{t}(x''; z'')\beta(|\om|)  e^{i2^j\om \Phi_t(x''; z'')}f(z'') \, \ud\om\,\ud z'',\label{Rj-expression}
\end{align}
We rely on an idea in  \cite[Chapter XI, \S 3.2.1]{Stein1993}
to show that a $L^p(\bbR^d)$ estimate for 
$\sup_{t\in I}|A_j[\Phi_t;a_t] f|$ is implied by a $L^p$-estimate for a maximal function associated with  a closely related family of oscillatory integral operators acting on functions on $\bbR^{d+1}$ which we will presently define.

Recall that $\beta$ is supported in $[1/2,2]$. Let $\widetilde \beta$ be supported in $(1/4,4)$ such that also $\widetilde \beta(s)=1$ for $s\in [1/3,3]$. Notice that $\widetilde \beta(s) \beta(us)=\beta(us)$ for $2/3<u<3/2$.
Now let $\chi_1\in C^\infty_0(\bbR)$ so that $\chi_1(r)=1$ on $J:=[2/3, 3/2]$.
Consider the  family of  oscillatory integral operators  $T^{2^j}[\phi_t;a_t]$, as defined in \eqref{TLa} but acting on functions $g$ on $\bbR^{d+1}$, where  
\begin{equation}\label{phases relation Aj Tlambda}
  \phi_t(x;z)=x_1z_1\Phi_t(x'';z''),\quad \text{ and } \quad  a_t(x;z)=\chi_1(x_1)x_1 \fa_t(x'';z'')\beta(x_1|z_1|).  
\end{equation}

\begin{lemma}\label{TvsA-lemma}
Let $E\subset (0,\infty)$, $\Phi$, $\phi$, $\fa$, $a$ as in \eqref{phases relation Aj Tlambda}, and define 
\[M_j [\Phi;\fa]f:=\sup_{t\in E} |A_j[\Phi_t;\fa_t]f|,  \qquad  
\cM_j[\phi;a]g=\sup_{t\in E} |T^{2^j}[\phi_t;a_t]g|.\]
Then
\[
\|M_j[\Phi;\fa]\|_{L^p(\bbR^d)\to L^p(\bbR^d)} 
\le  2^j(6/5)^{1/p} \|\widetilde\beta\|_{L^p(\bbR)} \|\cM_j[\phi;a]\|_{L^p(\bbR^{d+1})\to L^p(\bbR^{d+1})}.
\]
\end{lemma}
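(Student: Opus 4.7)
The strategy is to realise $A_j[\Phi_t;\fa_t]f$ as a slice of $T^{2^j}[\phi_t;a_t]g$ for a suitably chosen auxiliary function $g$, exploiting that the phase $\phi_t(x;z)=x_1z_1\Phi_t(x'';z'')$ is separable in the extra pair of variables $(x_1,z_1)$: the product $\omega := x_1z_1$ will play the role of the frequency variable $2^{-j}\theta$ in \eqref{Rj-expression}.

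Given $f\in L^p(\R^d)$, define $g\in L^p(\R^{d+1})$ by $g(z_1,z'') := \widetilde\beta(|z_1|)f(z'')$ so that
\[
\|g\|_{L^p(\R^{d+1})} = \|\widetilde\beta\|_{L^p(\R)} \|f\|_{L^p(\R^d)}.
\]
Substituting the explicit form of $\phi_t$ and $a_t$ from \eqref{phases relation Aj Tlambda} into $T^{2^j}[\phi_t;a_t]g(x_1,x'')$ and changing variables $\omega=x_1z_1$ in the $z_1$-integral (which is valid since $x_1>0$ on $\supp\chi_1$, and whose Jacobian $d\omega = x_1\,dz_1$ cancels the explicit $x_1$ factor in $a_t$) produces an integrand of the form
\[
\chi_1(x_1)\,\fa_t(x'';z'')\,\beta(|\omega|)\,\widetilde\beta(|\omega|/x_1)\,e^{i2^j\omega\Phi_t(x'';z'')}f(z'').
\]
For $x_1\in J=[2/3,3/2]$ and $|\omega|\in\supp\beta\subseteq[1/2,2]$, the ratio $|\omega|/x_1$ lies in $[1/3,3]$, where $\widetilde\beta\equiv 1$ by construction, while $\chi_1\equiv1$ on $J$. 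Consequently, for every $x_1\in J$ and every $t\in E$,
\[
T^{2^j}[\phi_t;a_t]g(x_1,x'') = 2^{-j}A_j[\Phi_t;\fa_t]f(x'').
\]

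Taking the pointwise supremum over $t\in E$ in this identity gives $M_j[\Phi;\fa]f(x'')=2^j\,\cM_j[\phi;a]g(x_1,x'')$ for every $x_1\in J$, uniformly in $x''\in\R^d$. Raising to the $p$-th power and integrating first over $x_1\in J$ (of measure $5/6$) and then over $x''\in\R^d$ yields
\[
\tfrac{5}{6}\,\|M_j[\Phi;\fa]f\|_{L^p(\R^d)}^p \;\le\; 2^{jp}\|\cM_j[\phi;a]g\|_{L^p(\R^{d+1})}^p \;\le\; 2^{jp}\|\cM_j[\phi;a]\|_{p\to p}^p\|g\|_{L^p(\R^{d+1})}^p,
\]
and the claimed bound follows upon rearranging and substituting the expression for $\|g\|_{L^p(\R^{d+1})}$.

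The only subtle step is securing the pointwise identity displayed above: one must verify that the support conventions for $\beta$, $\widetilde\beta$, and $\chi_1$ are compatible so that $\widetilde\beta(|\omega|/x_1)\equiv 1$ throughout the relevant region in $(\omega,x_1)$, and that the change of variables $\omega = x_1z_1$ recovers the \emph{full} frequency integral (including both signs of $\omega$) defining $A_j$. This is precisely the content of the relation ``$\widetilde\beta(s)\beta(us)=\beta(us)$ for $2/3<u<3/2$'' combined with $\chi_1\equiv 1$ on $J$; the remainder is a routine Fubini/H\"older argument combined with the hypothesised $L^p$-boundedness of $\cM_j[\phi;a]$ on $\R^{d+1}$.
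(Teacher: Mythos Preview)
Your proof is correct and follows essentially the same route as the paper: define the tensor product $g=\widetilde\beta\otimes f$, use the substitution $\omega=x_1z_1$ to obtain the pointwise identity $A_j[\Phi_t;\fa_t]f(x'')=2^j\,T^{2^j}[\phi_t;a_t]g(x_1,x'')$ for $x_1\in J$, and then integrate over $x_1\in J$ to pass from the $\R^{d+1}$ bound to the $\R^d$ bound. Your explicit attention to the sign of $z_1$ (via $\widetilde\beta(|z_1|)$) is a welcome clarification; the only cosmetic point is that $\|\widetilde\beta(|\cdot|)\|_{L^p(\R)}$ and $\|\widetilde\beta\|_{L^p(\R)}$ differ by $2^{1/p}$, an irrelevant constant here.
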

\begin{proof}
For fixed  $x_1$ we  change variables $\om=x_1z_1$ in \eqref{Rj-expression}. We  use that  $\chi(x_1)=1$ for $x_1\in J$ and  that 
$\widetilde \beta(|z_1|) \beta(x_1|z_1|)=\beta(x_1|z_1|)$ for $(x_1,z_1)\in J\times\bbR$ to obtain the identity
\begin{align} 
\notag
    A_j[\Phi_t;\fa_t] f (x'')&= 2^j  T^{2^j}[\phi_t;a_t] (\widetilde \beta\otimes f)  (x_1,x'') \quad  \text{ for all }x_1\in J.
\end{align}
This identity  implies 
that 
\begin{align*}2^{-j}\|M_j[\Phi;\fa]f|\|_{L^p(\bbR^d)}&\le  |J|^{-1/p} \|\cM_j[\phi;a] (\widetilde \beta\otimes f)\|_{L^p(J\times \bbR^{d})}
\\
&\le  (3/2-2/3)^{1/p} \|\cM_j[\phi;a] (\widetilde \beta\otimes f)\|_{L^p(\bbR^{d+1})}
\\&
\le (6/5)^{1/p} \|\cM_j[\phi;a]\|_{L^p(\R^{d+1})\to L^p(\R^{d+1})} \|\widetilde \beta\|_{L^p(\bbR)} \|f\|_{L^p(\bbR^d)}
\end{align*}
which implies the assertion.
\end{proof}


\section{Proof of Proposition 
\ref{frequency decomposition prop}:
$L^2$ bounds}\label{L2 section}





In this section we apply the maximal function results in \S\ref{two-parameter-sect} to deduce favourable $L^2$ bounds which will feature in the proof of Proposition \ref{frequency decomposition prop}.


\begin{proposition}\label{L2 proposition} For all $m <0$,  $k \geq -4$, $(k,\ell) \in \fP$ and $j \geq - e(k,\ell)/3$, the following bounds hold, where in each inequality $I$ denotes an interval of length $|I|\sim 1$ containing the $t$-support of the amplitude.
\begin{flushleft}
\begin{tabular}{rll}
\emph{i)}     & $\displaystyle \big\|\sup_{t \in I } |A_j[\Phi_t;  \fb_t]|\big\|_{L^{2}(\R^2) \to L^{2}(\R^2)} \lesssim
    (j\vee1) 2^{-2k/3}\|\fb\|_{C^N}$ &   if $[\Phi;\fb] \in \mathfrak{B}_{\mathrm{Rot}}^{k}$; \\
\emph{ii)}     & $\displaystyle\big\|\sup_{t \in I } |A_j[\Phi_t;  \fa_t]|\big\|_{L^{2}(\R^2) \to L^{2}(\R^2)} \lesssim  2^{-e(k,\ell)/3}\|\fa\|_{C^N}$ &  if $[\Phi;\fa] \in \mathfrak{A}_{\mathrm{Rot}}^{k,\ell}$; \\
\emph{iii)}     & $\displaystyle\big\|\sup_{t \in I } |A_j[\Phi_t;  \fc_t]|\big\|_{L^{2}(\R^2) \to L^{2}(\R^2)} \lesssim  2^{-2k/3}\|\fc\|_{C^N}$ &  if $[\Phi;\fc] \in \mathfrak{C}_{\mathrm{Rot}}^{k}$; \\
\emph{iv)}     & $\displaystyle\big\|\sup_{t \in I } |A_j[\Phi_t;  \fa_t]|\big\|_{L^{2}(\R^2) \to L^{2}(\R^2)} \lesssim  \|\fa\|_{C^N}$ &  if $[\Phi;\fa] \in \mathfrak{A}_{\mathrm{Rot}}^{0}$; \\
\emph{v)}     & $\displaystyle\big\|\sup_{t \in I } |A_j[\Phi_t;  \fa_t]|\big\|_{L^{2}(\R^2) \to L^{2}(\R^2)} \lesssim  2^{m/2}\sup_{x_2}\|\fa\|_{C^N_{x_1,z,t}}$,  &  if $[\Phi;\fa] \in \mathfrak{A}_{\mathrm{Rot}}^{m}$.
\end{tabular}
\end{flushleft}
\end{proposition}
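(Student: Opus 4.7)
The plan is to reduce each of the five estimates to an $L^2$-bound for a three-dimensional maximal oscillatory integral operator via Lemma~\ref{TvsA-lemma}, and then invoke the two-parameter theory of Section~\ref{two-parameter-sect}. The entire argument hinges on the identity
\begin{equation*}
\det \partial_{zx}^2 \phi_t = (x_1 z_1)^d \,\mathrm{Rot}(\Phi_t) \qquad \text{for } \phi_t(x;z) = x_1 z_1 \Phi_t(x'';z''),
\end{equation*}
which follows from the factorisation $\partial_{zx}^2 \phi_t = D_z \mathfrak{M}(\Phi_t)^\top D_x$ with $D_z = \mathrm{diag}(1, z_1, z_1)$ and $D_x = \mathrm{diag}(1, x_1, x_1)$. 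Since $x_1,\ |z_1| \sim 1$ on the support of the lifted amplitude from Lemma~\ref{TvsA-lemma}, these diagonal matrices are invertible with bounded entries.

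For the non-degenerate cases (ii), (iii), (iv), I rescale the lifted 3D phase by $\tilde\phi_t := 2^{-e/3}\phi_t$, where $e := e(k,\ell)$ (with $e = 0$ in case (iv) and $e = 2k + O(1)$ in case (iii)). The derivative bounds in $\Phi_1)_{k,\ell}$ translate into the hypotheses~\eqref{upperboundsder t included} and \eqref{delta smallness t derivative} for $\tilde\phi_t$ with $\delta_\circ = 2^{-e}$, where the last direction $x_3$ of the 3D OIO plays the role of the distinguished $x_2$-direction of the Radon operator. The non-degeneracy $|\det \partial_{zx}^2 \tilde\phi_t| \sim 2^{-e} = \delta_\circ$ required by Proposition~\ref{Hoerm-max-prop} follows from the displayed identity together with $\mathrm{Rot}(\Phi_t) \sim 1$ on the support. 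Proposition~\ref{Hoerm-max-prop} applied in dimension $3$ with $\lambda = 2^{j+e/3}$ yields the OIO bound $\lambda^{-1} = 2^{-j-e/3}$, and the factor $2^j$ from Lemma~\ref{TvsA-lemma} produces the target $2^{-e/3}$.

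Case (i) requires the fold variant Proposition~\ref{fold-max-prop}. With the same rescaling $\tilde\phi_t = 2^{-2k/3}\phi_t$ (so $e = 2k$ and $\delta_\circ = 2^{-2k}$), the identity $\Phi_2)_{k,2k}$ combined with the displayed formula above yields $\partial_t \tilde\phi_t = \tilde c_t(x;z) \det \partial_{zx}^2 \tilde\phi_t$ with $\tilde c_t$ smooth and bounded, verifying~\eqref{pointwise-est-t-deriv}. The fold conditions on $\mathfrak{M}(\Phi_{t_0})$ from Definition~\ref{190723def5.7} transfer to the 3D mixed Hessian of $\tilde\phi_{t_0}$ via the factorisation: composing the matrices $\mathbf{X},\ \mathbf{Z}$ of Definition~\ref{190723def5.7} with $D_x^{-1},\ D_z^{-1}$ produces new bounded invertible matrices that place $\partial_{zx}^2\tilde\phi_{t_0}$ in the normal form required by \eqref{x'z'}, \eqref{lastcolumnrow} and \eqref{folds}, with the third-order terms in \eqref{folds} carrying the correct size $\delta_\circ = 2^{-2k}$ as tracked through the $2^{-2k/3}$ rescaling. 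Proposition~\ref{fold-max-prop} then delivers $\lambda^{-1}\log(2 + \lambda\delta_\circ) = 2^{-j-2k/3}\log(2 + 2^{j-4k/3}) \lesssim (j\vee 1)2^{-j-2k/3}$ for the OIO, which together with the $2^j$ factor from Lemma~\ref{TvsA-lemma} yields the desired $(j \vee 1)2^{-2k/3}$.

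Case (v) is different in character: the anisotropy of $\Phi_t$ in the $x_2$-direction places \emph{large} rather than small derivatives there, so the preceding scheme does not directly apply. Instead I exploit the swap condition $\mathrm{R}^\star)_m$: for each fixed $x_2 = r$, the phase $\Phi_r^\star(x_1, t; z) := \Phi_t(x_1, r; z)$ has uniformly bounded derivatives (since $\partial_{x_2}$ never occurs) and non-vanishing rotational curvature. Setting $\tilde A_j^r f(x_1, t) := A_j[\Phi_t; \fa_t]f(x_1, r)$, the Sobolev inequality \eqref{sobolev lemma} with $p = 2$ applied in the $t$-variable gives
\begin{equation*}
\big\|\sup_{t \in I}|\tilde A_j^r f(\cdot, t)|\big\|_{L^2_{x_1}}^2 \lesssim |I|^{-1}\|\tilde A_j^r f\|_{L^2_{x_1, t}}^2 + \|\tilde A_j^r f\|_{L^2_{x_1, t}}\|\partial_t \tilde A_j^r f\|_{L^2_{x_1, t}}.
\end{equation*}
Combining Lemma~\ref{TvsA-lemma} with Proposition~\ref{2par-Hoerprop} applied to the 3D OIO with phase $y_1 z_1 \Phi_r^\star(y''; z'')$ (which has $\delta_\circ = 1$ and non-degenerate mixed Hessian of size $\sim 1$) yields $\|\tilde A_j^r\|_{L^2 \to L^2} \lesssim 2^{-j/2}$; and since $\partial_t e^{i\theta\Phi_t}$ contributes an extra $\theta\partial_t \Phi_t = O(2^j)$ (using $|\partial_t \Phi_t| \lesssim 1$), one has $\|\partial_t \tilde A_j^r\|_{L^2 \to L^2} \lesssim 2^{j/2}$. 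Combining produces $\|\sup_t |\tilde A_j^r f|\|_{L^2_{x_1}} \lesssim \|f\|_{L^2}$ uniformly in $r$, and integration over the $r$-interval of length $\lesssim 2^m$ (from the support condition in Definition~\ref{fA-m-def}) gives the $2^{m/2}$ factor. The main technical obstacle is in case (i): one must track the normal form of $\mathfrak{M}(\Phi_{t_0})$ through the $D_x,\ D_z$ factors introduced by Lemma~\ref{TvsA-lemma} and verify that the third-order conditions in \eqref{folds} on the transformed phase hold with the correct size $\delta_\circ = 2^{-2k}$, extending the one-parameter fold analyses of \cite{phong-stein91, Cuccagna, GS1999} uniformly in $k$.
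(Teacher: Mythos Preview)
Your proposal is correct and follows essentially the same approach as the paper's proof: Lemma~\ref{TvsA-lemma} reduces each case to a three-dimensional maximal oscillatory integral, after which case~(ii) (and hence (iii), (iv)) follows from Proposition~\ref{Hoerm-max-prop} with $\lambda=2^{j+e/3}$, $\delta_\circ=2^{-e}$; case~(i) from Proposition~\ref{fold-max-prop} with $\lambda=2^{j+2k/3}$, $\delta_\circ=2^{-2k}$ after putting the phase in normal form via the matrices of Definition~\ref{190723def5.7}; and case~(v) by freezing the bad direction, applying the classical H\"ormander $L^2$ bound to the swapped phase $\Phi_r^\star$, using Sobolev in $t$, and integrating over the $O(2^m)$-interval. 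The only organisational differences are that the paper lifts first and then freezes~$x_3$ in case~(v) (doing Sobolev at the OIO level), whereas you freeze $r$ at the Radon level before lifting, and that the paper lists the transformed-phase properties \eqref{tphia}--\eqref{tphie} explicitly rather than invoking your factorisation $\partial_{zx}^2\phi_t = D_z\,\mathfrak{M}(\Phi_t)^\top D_x$; both routes are equivalent.
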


As in Section \ref{decomposition section}, the cases i), iii), iv) and v) are understood to hold for $\ell = 2k$, with $k=0$ in the cases iv) and v).

The proof of Proposition~\ref{L2 proposition} is presented in what follows. Observe that, by the definition of the classes, iii) and iv) are both just special cases of ii). Thus, it will suffice to prove i) , ii) and v) only.

\smallskip

\noindent{\it Remark.} Only rotational curvature considerations are required to establish the above $L^2$ bounds. The cinematic curvature is used in \S\ref{Lp section} to deduce local smoothing estimates in order to obtain summable bounds in the $j$ parameter.

\smallskip

Using Lemma \ref{TvsA-lemma} the estimates in Proposition \ref{L2 proposition} may be deduced from estimates on oscillatory integral operators acting on functions in $\bbR^3$; in particular,
our assumptions on the phase/amplitude pairs allow  direct applications of Propositions \ref{Hoerm-max-prop} and \ref{fold-max-prop} with suitable choices of the parameters $\lambda$ and $\delta_\circ$.

\subsection{\it Proof of Proposition \ref{L2 proposition} (i)}
By Lemma \ref{TvsA-lemma}, it suffices to show that
$$
\| \sup_{t \in I} |T^{2^j}[\phi_t; b_t]| \|_{L^2(\R^3) \to L^2(\R^3)} \lesssim 2^{-j} (j\vee1) 2^{-2k/3} \| \fb \|_{C^N},
$$
where $\phi_t(x;z)=
x_1z_1\Phi(x'',t; z'')$ and $b_t(x;z)= \chi(x_1) x_1 \fb_t (x'';z'')\beta(x_1|z_1|)$.

First we use the fold conditions, inherent in the hypotheses F$_1$)$_k$ and F$_2$)$_k$ in the definition of $\mathfrak{B}_{\mathrm{Rot}}^k$, to place the operator in a normal form. By assumption  b)$_k$, one may assume without loss of generality, decomposing $b_t$ into at most $O(1)$ pieces, that $\mathrm{supp}\,\fb$ is contained in an $\varepsilon_{\circ}$-ball centred at some point $(x_0'';t_0; z_0'')$ with $(x_0'';z_0'') \in$ $\mathcal{Z}_{t_0}$. Here $\mathcal{Z}_{t_0}$ is as defined in \eqref{fold surface}. Fix a pair of $3 \times 3$ matrices $\mathbf{X}$ and $\mathbf{Z}$ satisfying the properties enumerated in property F$_2$)$_k$. Since $|\det\mathbf{X}| \sim |\det \mathbf{Z}| \sim 1$, by a change of variables it suffices to show 
 the $L^2$ bound for the  maximal function $\sup_{|t-t_0| < \varepsilon_\circ} |T^{2^j}[\tilde{\phi}_t; \tilde{b}_t] f(x)|$
in $\R^3$, where 

\begin{equation*}
\tilde{\phi}_t(x;z) :=  \phi_{t}(\mathbf{X}x; \mathbf{Z}z), \quad \tilde{b}_{t}(x;z) :=  b_{t}(\mathbf{X}x; \mathbf{Z}z).
\end{equation*}

Now the assumption $[\Phi;\fb]\in \fB^k_{\text{Rot}}$ implies that
the support of $\tilde b_t$ is contained in a 
$\varepsilon_{\circ}$-ball centred at $(x_{0},t_0;z_{0})=(0,x_0'', t_0; 1, z_0'') \in \R^3 \times \R \times \R^3$;  moreover
we have the following conditions on the derivatives of $\tilde\phi$: 
\begin{subequations}\label{tphi}
\begin{align}
 &|\partial_x^{\alpha}\partial_z^{\beta} \tilde\phi_t(x;z)| \lesssim \begin{cases}
 2^{-4k/3} &  \textrm{if $\alpha_3 \neq 0$,} \\
    2^{2k/3} & \textrm{otherwise},
    \end{cases}
    \label{tphia}
    \\
 &\partial_{x_3 z}^2 \tilde\phi_{t_0}(x_0;z_0) = \partial_{x z_3}^2 \tilde\phi_{t_0}(z_0;z_0) = 0 \text{ and } |\det \partial_{x' z'}^2\tilde \phi_{t_0}(x_0;z_0)| \sim 2^{4k/3},
     \label{tphib}
 \\
  &|\partial_{x_3z_3z_3}^3\tilde\phi_{t_0}(x_0;z_0)|, |\partial_{x_3x_3z_3}^3\tilde\phi_{t_0}(x_0;z_0)| \sim 2^{-4k/3},
      \label{tphic}
\\
&|\partial_x^{\alpha}\partial_z^{\beta} \partial_t^{\gamma} \tilde\phi_t(x;z)|\lesssim 
    2^{-4k/3} \text{  for }\gamma > 0,
        \label{tphid}
\\
&\partial_t \tilde\phi_t(x;z) = \tilde{c}(x,t;z) 2^{-4k/3} \det\partial_{x z}^2 \tilde\phi_t (x;z) \, \text{ for some $\tilde{c} \in C^\infty$ with } \label{tphie}\\
& \text{uniform $C^\infty$ bounds on $\supp \tilde{b}_t$.} \notag
\end{align}
\end{subequations}
The following table shows which conditions for  the class $\fB^k_{\text{rot}}$ of defining functions imply the conditions in 
\eqref{tphi}. 
\medskip

\begin{center}
\begin{tabular}{||r|l||r|l||}
\hline
 \eqref{tphia} & $\Phi_1$)$_{k,2k}$ and F$_2$)$_k$ i) & \eqref{tphid}  & $\Phi_1$)$_{k,2k}$ and F$_2$)$_k$ i)  \\
 \eqref{tphib}  &  F$_2$)$_k$ iii) & \eqref{tphie}  &  $\Phi_2$)$_{k,2k}$ and F$_2$)$_k$ ii) \\
 \eqref{tphic}  &  F$_1$)$_k$ and F$_2$)$_k$ ii) & &  \\
   \hline
\end{tabular}
\end{center}
\medskip

One now  checks that the phase function 
\[ \Psi_t(x;z)=2^{-2k/3}\widetilde \phi_t(x;z)\]
satisfies the assumptions in Proposition \ref{fold-max-prop} with $d=3$ and $\delta_\circ=2^{-2k}$   via (7.1).
If we put $\lambda=2^{j+2k/3}$,  then $\lambda\Psi= 2^j \, \tilde \phi$ and we can apply Proposition \ref{fold-max-prop}  to obtain 
$$
\| \sup_{t \in I} |T^{2^j}[\phi_t; b_t]| \|_{L^2(\R^3) \to L^2(\R^3)} \lesssim \lambda^{-1}  \log (2+\lambda\delta_\circ) \| b \|_{C^N} \lesssim  2^{-j-2k/3} (j\vee1)  \| \fb \|_{C^N},
$$
as desired \qed.

\subsection{\it  Proof of Proposition \ref{L2 proposition} (ii)}
We again use the reduction in \S\ref{reduction to oscillatory integral estimates section} so that it suffices to show
$$
\| \sup_{t \in I} |T^{2^j}[\phi_t; a_t]| \|_{L^2(\R^3) \to L^2(\R^3)} \lesssim 2^{-j}  2^{-e(k,\ell)/3} \| \fa \|_{C^N},
$$
where $\phi_t(x;z)=
x_1z_1\Phi(x'',t; z'')$ and $a_t(x;z)= \chi(x_1) x_1 \fa_t (x'';z'')\beta(x_1|z_1|)$. The condition $[\Phi;\fa]\in \fA^{k,\ell}_{\text{Rot}}$ implies that the phase function
$\phi_t(x;z)=x_1z_1\Phi_t(x'';z'')$ satisfies 
the inequalities
\begin{align}\label{phi-rot-a}
 &|\partial_x^{\alpha}\partial_z^{\beta} \phi_t(x;z)| \lesssim 
 \begin{cases}     2^{-2e(k,\ell)/3} &  \textrm{if $\alpha_3 \neq 0$} \\
    2^{e(k,\ell)/3} & \textrm{otherwise}
    \end{cases}\\
 \label{phi-rot-b} &|\det\partial_{x z}^2 \phi_t (x;z)| \sim 1
\\ \label{phi-rot-c}
&|\partial_x^{\alpha}\partial_z^{\beta} \partial_t^{\gamma} \phi_t(x;z)|\lesssim 
    2^{-2e(k,\ell)/3} \quad  \textrm{  for $\gamma > 0$.}
    \end{align}
These estimates are understood to hold on $\mathrm{supp}\,a_t$ 
(which has diameter $\lc 1$) for all $\alpha, \beta \in \N_0^3$, $\gamma \in \N$ with implicit constants depending on the multiindices.
One checks that \eqref{phi-rot-a} and \eqref{phi-rot-c} are implied by $\Phi_1)_{k,\ell}$ in the definition of $\fA^{k,\ell}$ while \eqref{phi-rot-b} is implied by the additional rotational curvature condition in Definition \ref{fA-k-ell-Rot}.

We can now verify  that the phase function 
\[ \Psi_t(x;z)=2^{-e(k,\ell)/3} \phi_t(x;z)\]
satisfies the assumptions in Proposition \ref{Hoerm-max-prop} with $d=3$ and $\delta_\circ=2^{-e(k,\ell)}$.
If we put $\lambda= 2^{j+e(k,\ell)/3}$,  then $\lambda\Psi= 2^j \phi$ and by  Proposition \ref{Hoerm-max-prop}  we get 
$$
\| \sup_{t \in I} |T^{2^j}[\phi_t; a_t]| \|_{L^2(\R^3) \to L^2(\R^3)} \lesssim \lambda^{-1} \| a \|_{C^N} \lesssim  2^{-j-e(k,\ell)/3} \| \fa \|_{C^N},
$$
as desired. \qed.
%
\subsection{\it Proof of Proposition~\ref{L2 proposition} (v)}
Again, by Lemma \ref{TvsA-lemma}, it suffices to show that
\begin{equation}\label{claim-for-maxTla}
\| \sup_{t \in I} |T^{2^j}[\Psi_t; a_t]| \|_{L^2(\R^3) \to L^2(\R^3)} \lesssim 2^{-j} 2^{m/2} \sup_{x_3} \| a \|_{C^N_{x_1,x_2,z,t}},
\end{equation}
where $\Psi(x;t;z)=x_1z_1 \Phi(x'',t;z'')$ and $a_t(x;z)= \chi(x_1) x_1 \fa_t (x'';z'')\beta(x_1|z_1|)$. By the condition $[\Phi; \fa] \in \fA^{m}_{\text{Rot}}$, the diameter of the support of $a$ is $O(1)$ and moreover the following conditions hold (see
Definitions \ref{fA-m-def} and \ref{fA-m-rot-def}). First,
there exists an interval $I_m$  of length $\lc 2^{m}$ so that
$a(x;t;z)=0$ when $x_3\notin I_m$. 
Next, if $\Psi_{x_3}^{\star}(x_1,x_2,t; z):= x_1z_1\Phi(x'',t;z'') $ then
$\Psi_{x_3}^\star$  satisfies
\begin{subequations}
\begin{align}
&|\partial_x^\alpha\partial_z^\beta\partial_t^\gamma \Psi_{x_3}^\star(x_1,x_2,t;z)|\lc \begin{cases}
&2^{-2m}  \text{ if $\alpha_3\neq 0$}\\&1 \qquad \! \text{ otherwise }
\end{cases}, 
\label{Psi-a}
\\
&|\det \partial^2_{(x_1,x_2,t),(z_1,z_2,z_3)}\Psi_{x_3}^\star | \sim 1.\label{Psi-b}
\end{align}
\end{subequations}
In what follows we will freeze $x_3$, so the derivatives with respect to $x_3$ in \eqref{Psi-a} will be irrelevant for our purposes. 

To establish 
\eqref{claim-for-maxTla}
we show that if 
\begin{equation*}
 S^{2^j}_{x_3} f(x_1,x_2,t) \equiv T^{2^j}[\Psi_{x_3}^\star; a_{x_3}^\star]f(x_1,x_2,t) \equiv T^{2^j} [\Psi_t; a_t] f (x_1,x_2,x_3),   
\end{equation*} 
where $a_{x_3}^\star(x_1,x_2,t;z)=a_t(x;z)$,  then we have, for all $x_3\in I_m$, 
\begin{align} \notag
\Big (\int_{\R^3} |S_{x_3}^{2^j} f(x_1,x_2,t)|^2 \ud x_1\,\ud x_2\,\ud t\Big)^{1/2} + 2^{-j} &\Big(
\int |\partial_t S_{x_3}^{2^j}  f(x_1,x_2,t)|^2 \, \ud x_1 \, \ud x_2 \, \ud t\Big)^{1/2} \\
& \lc 2^{-3j/2 } \sup_{x_3} \| a \|_{C^N_{x_1,x_2,z,t}} \|f\|_2.      \label{fixed-x3-est}
\end{align}
Indeed, note that $\partial_t S_{x_3}^{2^j} f(x_1,x_2,t) = T^{2^j}[\Psi_{x_3}^\star; d_{x_3}^\star] f (x_1,x_2,t)$, where 
\begin{equation*}
    d_{x_3}^\star:=(i 2^j \partial_t  \Psi_{x_3}^\star) \,a + \partial_t a_{x_3}^\star,
\end{equation*} 
and, in view of \eqref{Psi-a} and \eqref{Psi-b}, the estimate \eqref{fixed-x3-est} is now an immediate consequence of the oscillatory integral estimate in Proposition \ref{2par-Hoerprop} with $\delta_\circ=1$, which holds uniformly in $x_3 \in I_m$. Note that, in this case, our application of Proposition \ref{2par-Hoerprop} corresponds to the classical Hörmander $L^2$-estimate for oscillatory integrals \cite{Hormander1973}. Integrating 
the square of the left-hand side of 
\eqref{fixed-x3-est}
over  $x_3\in I_m$ and using   $|I_m|\lc 2^{m}$, we get
\begin{align*}
    \Big (\int_{I_m} \int_{\R^3} |T^{2^j}[\Psi_t;a_t] f(x) |^2+
2^{-2j}  |\partial_t T^{2^j}  [\Psi_t;a_t] & f(x)|^2   \, \ud x_1 \, \ud x_2 \,  \ud t \, \ud x_3\Big)^{1/2} \\
& \lc 2^{-3j/2}  2^{m/2} \sup_{x_3} \| a \|_{C^N_{x_1,x_2,z,t}} \|f\|_2.
\end{align*}
By the Sobolev inequality \eqref{sobolev lemma} and Fubini's theorem, the desired estimate \eqref{claim-for-maxTla} immediately follows. \qed





\section{
Proof of Proposition 
\ref{frequency decomposition prop}:
$L^p$ theory}\label{Lp section}




This section deals with the remainder of the proof of Proposition~\ref{frequency decomposition prop}. Local space-time $L^p$  estimates are used to establish $L^p$ bounds with favourable $j$ dependence when $p > 2$. These bounds can be combined with the $L^2$ estimates from Proposition~\ref{L2 proposition} and  $L^\infty$ estimates to yield the desired results.

\subsection{ $L^p$ bounds}

It is first noted that the $L^2$ bounds of the previous section imply $L^p$ estimates via interpolation with straightforward $L^{\infty}$ bounds. 

\begin{corollary}\label{Lp corollary} For all $m <0$, $(k,\ell) \in \fP$, $j \geq -e(k,\ell)/3$ and $2 \leq p \leq \infty$, there exists $N \in \N$ such that the following bounds hold. In each inequality, $I$ denotes an interval of length $\sim 1$ containing the $t$-support of the amplitude.
\begin{flushleft}
\begin{tabular}{rll}
\emph{i)}     & $\displaystyle \big\|\sup_{t \in I} |A_j[\Phi_t;  \fb_t]|\big\|_{L^{p}(\R^2) \to L^{p}(\R^2)} \lesssim
    (j\vee 1)^{2/p} 2^{-2k/3}\|\fb\|_{C^N}$ &   if $[\Phi;\fb] \in \mathfrak{B}_{\mathrm{Rot}}^{k}$, \\
\emph{ii)}     & $\displaystyle\big\|\sup_{t \in I} |A_j[\Phi_t;  \fa_t]|\big\|_{L^{p}(\R^2) \to L^{p}(\R^2)} \lesssim 
   2^{-e(k,\ell)/3}\|\fa\|_{C^N}$ &  if $[\Phi;\fa] \in \mathfrak{A}_{\mathrm{Rot}}^{k,\ell}$,  \\ 
\emph{iii)}  & $\displaystyle\big\|\sup_{t \in I} |A_j[\Phi_t; \fc_t]|\big\|_{L^{p}(\R^2) \to L^{p}(\R^2)} \lesssim 2^{-2k/3}\|\fc\|_{C^N}$ & if $[\Phi;\fa] \in \mathfrak{C}_{\mathrm{Rot}}^k$, \\
\emph{iv)}  & $\displaystyle\big\|\sup_{t \in I} |A_j[\Phi_t; \fa_t]|\big\|_{L^{p}(\R^2) \to L^{p}(\R^2)} \lesssim \|\fa\|_{C^N}$ & if $[\Phi;\fa] \in \mathfrak{A}^0_{\mathrm{Rot}}$,\\
\emph{v)}  & $\displaystyle\big\|\sup_{t \in I} |A_j[\Phi_t; \fa_t]|\big\|_{L^{p}(\R^2) \to L^{p}(\R^2)} \lesssim 2^{m/p}\sup_{x_2}\|\fa\|_{C^N_{x_1,z,t}}$, & if $[\Phi;\fa] \in \mathfrak{A}_{\mathrm{Rot}}^m$.
\end{tabular}
\end{flushleft}
\end{corollary}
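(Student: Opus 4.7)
The plan is to interpolate the $L^2$ estimates of Proposition~\ref{L2 proposition} with straightforward $L^\infty$ bounds, which I will first establish by direct kernel estimates. The kernel of $A_j[\Phi_t;\fa_t]$ is
\[
K_t^j(x;z) := \frac{1}{2\pi}\int_\R e^{i\theta\Phi_t(x;z)}\fa_t(x;z)\beta^j(|\theta|)\,\ud\theta,
\]
and $N$-fold integration by parts in $\theta$ (using $|\partial_\theta^N\beta^j|=O(2^{-jN})$ and the $\theta$-independence of $\fa_t$) gives $|K_t^j(x;z)|\lesssim 2^j(1+2^j|\Phi_t(x;z)|)^{-N}\|\fa_t\|_{C^0}$. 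Applying the coarea formula, together with the lower bound $|\partial_z\Phi_t|\sim 2^{e(k,\ell)/3}$ from condition $\Phi_1$)$_{k,\ell}$ and the $O(1)$ diameter of $\supp\fa_t$, then yields
\[
\sup_{x\in\R^2}\int_{\R^2}|K_t^j(x;z)|\,\ud z \lesssim 2^{-e(k,\ell)/3}\|\fa\|_{C^N}
\]
uniformly in $t\in I$. Since the $L^\infty$ norm of a pointwise supremum is bounded by the supremum of $L^\infty$ norms, this gives
\[
\big\|\sup_{t\in I}|A_j[\Phi_t;\fa_t]|\big\|_{L^\infty\to L^\infty}\lesssim 2^{-e(k,\ell)/3}\|\fa\|_{C^N}
\]
for $[\Phi;\fa]\in \mathfrak{A}^{k,\ell}_{\mathrm{Rot}}$. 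The analogous $L^\infty$ bounds for the remaining classes follow the same scheme: since $\mathfrak{B}^k_{\mathrm{Rot}}\subset\mathfrak{A}^{k,2k}_{\mathrm{Rot}}$ and $\mathfrak{C}^k_{\mathrm{Rot}}\subset\mathfrak{A}^{k,\ell(k)}$ with $e(k,2k)=e(k,\ell(k))\sim 2k$, cases i) and iii) give $2^{-2k/3}\|\cdot\|_{C^N}$, while cases iv) and v) give $O(1)\|\cdot\|_{C^N}$ because $|\partial_z\Phi_t|\sim 1$ on the relevant supports (noting in case v) that the integration-by-parts step requires only the $C^0$ norm of $\fa_t$, hence is compatible with the weaker $C^N_{x_1,z,t}$ norm appearing in the claim).

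With both endpoint bounds in hand, I will interpolate via linearization. For each $f$ one may select a measurable function $t_\ast(\cdot):\R^2\to I$ such that $|A[\Phi_{t_\ast(x)};\fa_{t_\ast(x)}]f(x)|\geq\tfrac12\sup_{t\in I}|A[\Phi_t;\fa_t]f(x)|$; the associated linear operator $f\mapsto A[\Phi_{t_\ast(\cdot)};\fa_{t_\ast(\cdot)}]f(\cdot)$ inherits both the $L^2$ and $L^\infty$ bounds of the maximal operator, and Riesz--Thorin interpolation then yields the $L^p$ bound with operator norm at most $M_2^{2/p}M_\infty^{1-2/p}$, where $M_q$ denotes the $L^q\to L^q$ norm of the maximal operator. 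Substituting the endpoint values gives the stated bounds: in case i), $[(j\vee 1)2^{-2k/3}]^{2/p}[2^{-2k/3}]^{1-2/p}=(j\vee 1)^{2/p}2^{-2k/3}$; in case v), $[2^{m/2}]^{2/p}\cdot 1^{1-2/p}=2^{m/p}$; and in cases ii), iii), iv) the $L^2$ and $L^\infty$ bounds coincide, so the same bound holds at every $p\in[2,\infty]$. No substantive obstacle is anticipated: the only nontrivial step is the kernel estimate, which is routine, and the interpolation is standard. All real technical content of the corollary is inherited from Proposition~\ref{L2 proposition}.
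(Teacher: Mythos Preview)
Your approach is correct and matches the paper's own proof: establish the $L^\infty$ bound via the kernel estimate using the lower bound $|\partial_z\Phi_t|\gtrsim 2^{e(k,\ell)/3}$ from condition $\Phi_1$)$_{k,\ell}$, then interpolate with Proposition~\ref{L2 proposition}. One small slip: the containment $\mathfrak{B}^k_{\mathrm{Rot}}\subset\mathfrak{A}^{k,2k}_{\mathrm{Rot}}$ is false (the former is precisely the regime of \emph{vanishing} rotational curvature), but this is harmless since your $L^\infty$ argument uses only $\Phi_1$)$_{k,2k}$ and hence applies to all of $\mathfrak{A}^{k,2k}\supset\mathfrak{B}^k_{\mathrm{Rot}}$.
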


\noindent{\it Remark.} The estimates from Corollary \ref{Lp corollary} are not summable in the $j$ parameter, so alone they do not imply Proposition~\ref{frequency decomposition prop}. However, i), ii) and iii) have better $k$ dependence than what is required in Proposition~\ref{frequency decomposition prop} (by a factor of $2^{(1-\frac{2}{p})k - k \varepsilon_p}$) and, similarly, v) has a better $m$ dependence (by a factor of $2^{m/p-m\varepsilon_p}$). This observation is used below to mitigate losses in $k$ and $m$ in Proposition~\ref{Lp proposition}.

\begin{proof}[Proof of Corollary~\ref{Lp corollary}] We will only consider i) since the proofs of the remaining cases are similar. For $p = 2$ the desired bound is precisely Proposition~\ref{L2 proposition} i). By interpolation, it suffices to verify the bound for $p = \infty$. 

Let $[\Phi;\fb] \in \mathfrak{B}_{\mathrm{Rot}}^{k}$ and recall from \eqref{frequence localised definition} that
\begin{equation*}
    A_j[\Phi_t;\fb_t]f(x) = \int_{\R^2} f(z) \fb_t(x;z) 2^j\check{\beta}\big(2^j\Phi_t(x;z)\big)\ud z.
\end{equation*}
Further recall that $\Phi_t$ satisfies Definition~\ref{190723def5.7} and, in particular, the condition $\Phi_1)_{k, 2k}$ as stated in Definition~\ref{190723def5.2}. Thus, on the support of $\fb_t$ we have
\begin{equation*}
|\partial_z \Phi_t(x; z)| \gtrsim 2^{2k/3}
\end{equation*}
and so the desired $L^{\infty}$ estimate follows. 
\end{proof}

The following proposition provides the crucial $j$ summability for $j>0$.

\begin{proposition}\label{Lp proposition} There exist $N, M \in \N$  and $\varepsilon_{\circ} > 0$ such that for all $(k, \ell ) \in \fP$, the inequality
\begin{equation}\label{Lp inequality}
    \big\|\sup_{t \in I} |A_j[\Phi_t;  \fa_t]|\big\|_{L^6(\R^2) \to L^{6}(\R^2)} \lesssim 
   2^{M k} 2^{-j \varepsilon_{\circ} }\|\fa\|_{C^N}
\end{equation}
holds if $[\Phi;\fa]$ belongs to any one of the following classes:

\begin{flushleft}
\begin{tabular}{rlrl}
    \emph{i)} & $\mathfrak{B}_{\mathrm{Cin}}^{k}$, & \emph{iv)} & $\mathfrak{A}_{\mathrm{Cin}}^{0}$ taking $k = 0$ in \eqref{Lp inequality},\\[2pt]
    \emph{ii)} & $\mathfrak{A}_{\mathrm{Cin}}^{k,\ell}$, & \emph{v)} &   $\mathfrak{A}_{\mathrm{Cin}}^{m}$, $m <0$, taking $k = -m$ in \eqref{Lp inequality}.\\[2pt]
    \emph{iii)} & $\mathfrak{C}_{\mathrm{Cin}}^k \cap \mathfrak{C}_{\mathrm{Rot}}^{k}$. &  & 
\end{tabular}
\end{flushleft}
In \eqref{Lp inequality}, $I$ denotes an interval of length $\sim 1$ containing the $t$-support of $\fa$.
\end{proposition}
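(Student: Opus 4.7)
The plan is to deduce the maximal estimate \eqref{Lp inequality} from a local smoothing (space-time $L^6$) estimate for the family $\{A_j[\Phi_t;\fa_t]\}_{t\in I}$ via the Sobolev embedding \eqref{sobolev lemma}. After the rescalings of Section~\ref{decomposition section}, each $A_j[\Phi_t;\fa_t]$ behaves like a Fourier integral operator of order $-1/2$ in two space variables, and the cinematic curvature lower bound afforded by $\textbf{C})_k$ (respectively $\textbf{C})_{-m}$ in case v)) furnishes the non-degeneracy needed to invoke planar local smoothing estimates of the type first introduced by Mockenhaupt--Seeger--Sogge and extended by Sogge to general cinematic FIOs. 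Concretely, the aim is to prove, for some $\alpha > 1/6$,
\begin{equation}\label{local smoothing plan}
\|A_j[\Phi_t;\fa_t]f\|_{L^6(\R^2 \times I)} \lesssim 2^{M_1 k} 2^{-j\alpha} \|f\|_6,
\end{equation}
where the loss $2^{M_1 k}$ absorbs the inverse of the cinematic lower bound in $\textbf{C})_k$ together with losses from the non-isotropic phase estimates $\Phi_1)_{k,\ell}$. Combined with the companion bound $\|\partial_t A_j[\Phi_t;\fa_t]f\|_{L^6(\R^2\times I)} \lesssim 2^{M_1 k} 2^{j(1-\alpha)}\|f\|_6$ (from $\partial_t$ acting on $e^{i\theta\Phi_t}$ to produce $i\theta\partial_t\Phi_t = O(2^j)$ on $\supp \fa$ under our rescalings), the Sobolev inequality \eqref{sobolev lemma} with $p=6$ yields \eqref{Lp inequality} with $\varepsilon_\circ := (6\alpha-1)/6 > 0$ in cases i), ii) and iv).

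Case v) is handled by first freezing the $x_2$ variable so that $[\Phi^\star_{x_2};\fa^\star_{x_2}]$ becomes a cinematic family in the $(x_1,t)$ variables, applying the above local smoothing argument to the resulting two-dimensional operator as in the proof of Proposition~\ref{L2 proposition} (v), and then integrating over the $x_2$ support of $\fa$, which has length $O(2^m)$; this produces an additional factor $2^{m/6}$ which is comfortably absorbed into the allowed $2^{M|m|}$ loss. For case iii), where the cinematic curvature of $[\Phi;\fc]$ vanishes on $\{t=x_2\}$, we implement the $\delta$-decomposition sketched in Section~\ref{sec:proof strategy}. Writing $\fc_t = \fc_t^{\delta,\mathrm{near}} + \fc_t^{\delta,\mathrm{far}}$ with $\fc_t^{\delta,\mathrm{near}}$ supported where $|t-x_2| \lesssim \delta$, hypothesis $\textbf{C}_\delta)_k$ provides a cinematic lower bound $\delta 2^{-Mk}$ for the far piece, so the argument above yields
\[
\big\|\sup_{t\in I}|A_j[\Phi_t;\fc_t^{\delta,\mathrm{far}}]f|\big\|_{L^6} \lesssim \delta^{-M_2} 2^{M_1 k} 2^{-j\varepsilon_\circ}\|f\|_6.
\]
For the near piece the supremum effectively runs over an interval of length $O(\delta)$, and combining Corollary~\ref{Lp corollary} iii) with Minkowski's inequality in $t$ produces a gain of $\delta^{1/6}$ (or a similar positive power). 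Choosing $\delta = 2^{-j\tau}$ for $\tau>0$ sufficiently small balances the two estimates and gives \eqref{Lp inequality} in this case with a slightly diminished $\varepsilon_\circ$.

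The main obstacle is securing the local smoothing bound \eqref{local smoothing plan} under our non-standard hypotheses: the phase satisfies only the non-isotropic derivative estimates of $\Phi_1)_{k,\ell}$ rather than the uniform bounds of the classical formulation, and the cinematic curvature is only bounded from below by a negative power of $2^k$. The plan is to track constants through a wave-packet analysis in the spirit of Mockenhaupt--Seeger--Sogge, or alternatively via a Bourgain--Demeter $\ell^2$-decoupling argument, and verify that the resulting losses are polynomial in $2^k$, hence absorbable into $2^{M k}$. A secondary technical obstacle in case iii) is to check that the near/far decomposition of $\fc_t$ is compatible with the frequency localization defining $A_j$ and that each piece retains the amplitude regularity needed for both Corollary~\ref{Lp corollary} iii) and the local smoothing argument to apply with uniform constants.
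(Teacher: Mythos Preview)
Your overall framework (Sobolev embedding in $t$ followed by local smoothing for the space-time norm) matches the paper, and cases i), ii), iv) and the far piece $\fc^{\delta,\mathrm{far}}$ in case iii) are essentially as you describe. There is, however, a genuine gap in your treatment of the near piece $\fc^{\delta,\mathrm{near}}$ in case iii).

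You assert that, since for each $x$ the $t$-supremum runs over $|t-x_2|\lesssim\delta$, combining Corollary~\ref{Lp corollary} iii) with Minkowski produces a gain of $\delta^{1/6}$. This step does not work. Minkowski does not convert smallness of the $t$-interval into decay for a supremum, and Corollary~\ref{Lp corollary} iii) applied to $\fc^{\delta,\mathrm{near}}$ would in any case pick up $\delta^{-N}$ through the $C^N$-norm of the cutoff $\eta((t-x_2)/\delta)$. More importantly, even a hypothetical maximal bound of order $\delta^{1/6}$ carries \emph{no decay in $j$}. Since the Sobolev step \eqref{sobolev lemma} costs $2^{j/6}$ (via $\partial_tA_j$), the space-time norm of the near piece must be $O(2^{-j(1/6+\varepsilon)})$; this would force $\delta\lesssim 2^{-j}$, whereupon the far piece blows up as $\delta^{-M}\gtrsim 2^{jM}$ and swamps the local-smoothing gain $2^{-j/3}$. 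No choice of $\delta$ balances.

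The missing ingredient is the second half of the hypothesis $[\Phi;\fc]\in\mathfrak{C}^k_{\mathrm{Rot}}$, namely $\textbf{R}^\star)_k$, which your proposal never uses. The paper changes variables $(x_1,x_2,t)\mapsto(x_1,x_2+t,t)$ so that the near piece becomes an integral over $|r|\le\delta$ in the new $x_2$-variable $r$; for each \emph{fixed} $r$, the pair $[(\tilde\Phi)^\star_r;(\tilde\fc^{(\delta)})^\star_r]$ lies in $\mathfrak{A}^{k,\ell(k)}_{\mathrm{Rot}}$ by $\textbf{R}^\star)_k$, and Proposition~\ref{2par-Hoerprop} gives a fixed-$r$ $L^2$ bound $2^{-j/2}$, interpolated to $2^{-j/6}$ in $L^6$ (only $\|\fc\|_{C^N_z}$ enters, so the $\delta$-cutoff is harmless). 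Integrating in $r$ then yields the space-time bound $\delta^{1/6}2^{-j/6}$ of \eqref{singular part}. The $2^{-j/6}$ here comes from \emph{rotational} curvature in the $(x_1,t)$ variables, not from cinematic curvature (which vanishes on the near set); this is precisely why case iii) is stated for $\mathfrak{C}^k_{\mathrm{Cin}}\cap\mathfrak{C}^k_{\mathrm{Rot}}$ rather than $\mathfrak{C}^k_{\mathrm{Cin}}$ alone.

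A smaller point: your plan for case v) (freeze $x_2$ and apply local smoothing to $[\Phi^\star_{x_2};\fa^\star_{x_2}]$) does not type-check---once $x_2$ is frozen there is no remaining parameter to serve as ``time'' in a cinematic family, and $\mathfrak{A}^m_{\mathrm{Cin}}$ does not carry the $\textbf{R}^\star)_m$ hypothesis needed to imitate the $L^2$ argument. The paper instead applies Theorem~\ref{local smoothing theorem} directly to $A_j[\Phi_t;\fa_t]$ in the variables $(x_1,x_2,t)$; the non-isotropic bounds $\Phi_1)_m$ and the degenerate lower bound in $\textbf{C})_{-m}$ enter only as polynomial losses in $2^{|m|}$, which are absorbed into the allowed $2^{Mk}=2^{-Mm}$ factor.
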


\noindent{\it Remark.} The exponent $p=6$ does not play a significant r\^ole and is used merely for convenience (one could equally work with other $p$ values). See the comments after Theorem~\ref{local smoothing theorem} below.

Assuming this result, Proposition~\ref{frequency decomposition prop} easily follows by interpolation with the estimates in Corollary~\ref{Lp corollary}.

\begin{proof}[Proof of Proposition~\ref{frequency decomposition prop} assuming Proposition~\ref{Lp proposition} holds] 
For $-e(k,\ell)/3\le j\le 0$ the asserted bounds are an immediate consequence of Corollary~\ref{Lp corollary}. For $j>0$ it suffices,
by  Corollary~\ref{Lp corollary},  to show each of the five estimates in Proposition~\ref{frequency decomposition prop} hold \textit{for a single value $2 < p_{\ast} < \infty$}: indeed, once this is established, one may interpolate the $p_{
\ast}$ estimates with the $p = 2$ and $p = \infty$ cases of Corollary~\ref{Lp corollary} to obtain Proposition~\ref{frequency decomposition prop} for all $2 < p < \infty$. 

We interpolate the inequalities from Proposition~\ref{Lp proposition}
with the corresponding $L^{\infty}$ estimates of Corollary~\ref{Lp corollary}, or the $L^2$ estimate in case v). In the case v), note that $\| \fa \|_{C^N} \lesssim 2^{-O(N)m}$, which is harmless in view of the $2^{-Mm}$ loss in \eqref{Lp inequality}. Therefore, it follows that Proposition~\ref{frequency decomposition prop} holds for some $p_{\ast}$ in the range $6 < p_{\ast} < \infty$ for the cases i) to iv), or in the range $2 <p_{\ast}<6$ for case v), concluding the proof. 
\end{proof}

It  remains  to  prove  Proposition~\ref{Lp proposition}. By the definition of the classes, Proposition~\ref{Lp proposition} i) and iv) automatically follow from ii). Furthermore, for the purposes of the argument, the cases ii) and v) are essentially simplified variants of case iii). In particular, the main difficulties occur in the proof of iii).




\subsection{Reduction to Fourier integral estimates}\label{sec:reduction to 2 Fourier variables} Following the strategy of~\cite{Mockenhaupt1992, Mockenhaupt1993}, Proposition~\ref{Lp proposition} is derived from local smoothing estimates for Fourier integral operators. In order to invoke the local smoothing inequalities, it desirable to express $A_j[\Phi_t;\fa_t]$ as a Fourier integral operator  with two Fourier variables. That such a representation is possible is a standard result, referred to as the \textit{equivalence of phase theorem} (see, for instance,~\cite{Hormander1971} or~\cite{Duistermaat2011}). Since here, however, the estimates are required to be quantitative, at least in some weak sense, basic stationary phase techniques are instead applied to obtain an explicit two Fourier variable representation of the frequency localised averaging operators.

\subsubsection*{Fourier integral representation} Fix a smooth family of defining pairs $[\Phi; \fa]$ and, for the purposes of this subsection, assume that 
\begin{equation*}
|\kappa(\Phi)(\vec{x};z)|, \:\: |\mathrm{Proj}(\Phi)(\vec{x};z)|,\:\: |\mathrm{Cin}(\Phi)(\vec{x};z)| \geq \varepsilon_{\mathrm{Cin}} > 0 \qquad \textrm{for all $(\vec{x};z) \in \supp \fa$;}
\end{equation*}
moreover, assume that upper bounds for the derivatives of $\Phi$ depend polynomially on $2^{k}$, where $k$ is as in Proposition \ref{Lp proposition}. 
Here $\vec{x} = (x,t) \in \R^2 \times \R$. Owing to the nature of the estimates in Proposition~\ref{Lp proposition}, here one does not need to be very precise about dependencies involving various derivatives of $\Phi$ and $\fa$ and the bounds on the curvatures (as opposed to the situation in \S\ref{L2 section}). For instance, the constant $\varepsilon_{\mathrm{Cin}}$ may depend on the parameters $k, \ell$ and $m$. In what follows, we will not determine the precise dependence of our estimates on these parameters but will only be concerned with showing that it is not worse than $2^{Mk}$ for some large constant $M \geq 1$.

Given a phase/amplitude pair $[\Phi_t; \fa_t]$, from \eqref{frequence localised definition} and the Fourier inversion formula,
\begin{equation*}
    A_j[\Phi_t; \fa_t]f(x) = \int_{\hat{\R}^2} \widetilde{K}^{2^j}(\vec{x};\xi) \hat{f}(\xi)\,\ud \xi
\end{equation*}
where
\begin{equation}\label{oscillatory kernel}
    \widetilde{K}^{\lambda}(\vec{x};\xi) := \frac{1}{(2\pi)^3}\int_{\R^2} \int_{\R} e^{i(\theta \Phi(\vec{x};z) + \langle z, \xi \rangle)}\fa(\vec{x};z)\beta(\theta/\lambda)\,\ud \theta \, \ud z.
\end{equation}
This function can be  analysed via stationary phase arguments. The critical points $(z_{cr},\theta_{cr})$ of the phase function 
\begin{equation} \label{eq:criticalPsi}(z,\theta)\mapsto 
\Psi(z,\theta; \vec x;\xi)= \theta\Phi(\vec x; z)+\inn z \xi
\end{equation}
satisfy $\Phi(\vec{x};z_{cr}) = 0$ and $\theta_{cr}\partial_z \Phi(\vec{x};z_{cr}) + \xi = 0$. 
The former condition implies that $z_{cr} \in \Sigma_{\vec{x}} =\{ z \in \R^2 : \Phi_t(x; z) =0\}$ while the latter implies that the normal to $\Sigma_{\vec{x}}$ at $z_{cr}$ is parallel to $\pm\xi$. 
We also have 
$|\det \partial^2_{(z,\theta)} \Psi|= |\theta |^d\kappa(\Phi) $ so that the critical points are nondegenerate.

Let $C_{\circ} \geq 1$ satisfy 
\begin{equation*}
 (C_{\circ}/10)^{-1} \leq |\partial_z \Phi(\vec{x};z)| \leq C_{\circ}/10 \qquad \text{ for all } (\vec{x}; z) \in \mathrm{supp}\,\fa.
\end{equation*}
There are no critical points for the phase if $|\xi| \geq 4 C_{\circ} \lambda$ or $|\xi| \leq \lambda/4C_{\circ}$. Thus, by repeated integration-by-parts
\begin{equation}\label{kernel localisation}
    \widetilde{K}^{\lambda}(\vec{x}; \xi) = \widetilde{K}^{\lambda}(\vec{x};\xi)\tilde{\beta}(|\xi|/\lambda) + E^{\lambda}(\vec{x};\xi)
\end{equation}
where $\tilde{\beta}(r) := \eta(C_{\circ}^{-1}r) - \eta(C_{\circ}r)$ and the error $E^{\lambda}$ satisfies 
\begin{equation}\label{bounds error 1}
    |\partial_{\xi}^{\alpha} [e^{- i \langle x , \xi \rangle} E^{\lambda}(\vec{x};\xi)]| \lesssim C_\circ^N \lambda^{-N/2}(1 + |\xi|)^{-N/2} \quad \textrm{for all $|\alpha| \leq N$,}
\end{equation}
with implicit bounds depending on $\| \fa \|_{C^N}$. Note that the value of $C_\circ$ will generally depend on $k$ or $m$ for the classes considered in Proposition~\ref{Lp proposition}, but this dependence is admissible in our forthcoming analysis.
\begin{key example} 
Let $[\Phi; \mathfrak{a}] \in \mathfrak{C}^k_{\mathrm{Cin}} \cap \mathfrak{C}^k_{\mathrm{Rot}}$. The condition $\Phi_1$)$_{k,2k}$ ensures that $|\partial_z \Phi (\vec{x}; z)| \sim 2^{2k/3}$ and so $C_\circ \sim 2^{2k/3}$ in this case. 
\end{key example}

We further analyse $\widetilde{K}^\la(\vec x;\xi)$ for $C_\circ^{-1}/4<|\xi|< 4C_\circ |\xi|$. Decompose 
\[\widetilde{K}^{\lambda}(\vec{x};\xi)\tilde{\beta}(|\xi|/\lambda) = \sum_{i\in \cJ} \widetilde{K}^\la_i(\vec x;\xi) \] where the cardinality of the index set $\cJ$ is 
polynomial in $\eps_{\mathrm Cin}$ and $C_\circ$ and each  $\widetilde{K}^{\lambda}_i$ is of the form 
\begin{equation}\label{oscillatory kernel 2}
    \mathcal {K}^{\lambda}(\vec{x};\xi) := \chi(\vec x; \xi) \frac{1}{(2\pi)^3}\int_{\R^2} \int_{\R} e^{i(\theta \Phi(\vec{x};z) + \langle z, \xi \rangle)}\fa(\vec{x};z)\beta(\theta/\lambda)\,\ud \theta \, \ud z,
\end{equation}
with $\chi$, $\beta$, $\fa$  supported on sets of diameter $\eps$ with $  \eps_{\mathrm {Cin}}^C\lc \eps\lc\eps_{\mathrm {Cin}}$.  

It suffices to consider the kernel \eqref{oscillatory kernel 2}.
 If $\Phi(\vec x;z) $ does not vanish in the neighborhood of the support then the integral represents a smooth function with derivative bounds polynomial in  $\eps_{\mathrm {Cin}}^{-1} $, $C_\circ$. Otherwise 
 we may use the method of stationary phase, using that the critical points of 
 \eqref{eq:criticalPsi} are nondegenerate.
 In a neighborhood of the support of the symbol, we can then solve the equation  $\nabla_{\theta,z} \Psi( z,\theta; \vec x,\xi) =0$ in $(z,\theta)$ with  $z=\nu(\vec x;\xi)$, $\theta=\Theta(\vec x;\xi) $  denoting the solutions; moreover  $\nu$ is homogeneous of degree $0$ in $\xi$ and $\Theta$ is homogeneous of degree $1$ in $\xi$. Hence
\begin{equation}\label{critical functions}
    \left\{ \begin{array}{rl}
         \Phi(\vec{x};\nu(\vec{x};\xi)) & = 0  \\
         \Theta(\vec{x};\xi)\partial_z \Phi(\vec{x};\nu(\vec{x};\xi)) + \xi & = 0  
    \end{array} \right. .
\end{equation}
Furthermore, if 
\begin{equation*}
    \varphi(\vec{x}; \xi) := \Psi(\nu(\vec{x},\xi), \Theta(\vec{x}; \xi); \vec{x}; \xi),
\end{equation*}
 then \eqref{critical functions} implies that
\begin{equation}\label{n Fourier phase}
   \varphi(\vec{x}; \xi)  = \langle \nu(\vec{x};\xi), \xi \rangle.
\end{equation}
By rescaling and applying the method of stationary phase~\cite[Theorem 7.7.5]{HormanderBook1}, one deduces that 

\begin{equation}\label{FT kernel 2}
    \mathcal{K}^{\lambda}(\vec{x};\xi) = e^{i\lambda \varphi(\vec{x};\xi/\lambda)} \frac{\ba(\vec{x};\xi/\lambda)}{(1 + |\xi|^2)^{1/4}} + E(\vec{x};\xi/\lambda)
\end{equation}
where, for some $M_N>0$:
\begin{itemize}
    \item The symbol $\ba$ is supported in $\{C_\circ^{-1}\lc |\xi|  \lc C_\circ \}$ and satisfies 
\begin{equation*}
 |\partial_{\vec{x}}^\alpha \partial_\xi^\beta \ba (\vec{x}; \xi) | \lesssim ( \varepsilon_{\mathrm{Cin}}^{-1} +  C_\circ + \| \Phi \|_{C^{3N+2}} + \| \fa \|_{C^{3N}} )^{M_N} 
\end{equation*}
 and all $(\alpha,\beta) \in \mathbb{N}_0^3 \times \mathbb{N}_0^2$ with $|\alpha|, |\beta| \leq N$.
 \item The error term $E$ is rapidly decaying in the sense that
\begin{equation}\label{error bounds 2}
| \partial_\xi^{\alpha} [ e^{- i \langle x, \xi \rangle} E(\vec{x}; \xi/\lambda)]| \lesssim ( \varepsilon_{\mathrm{Cin}}^{-1} + \| \Phi \|_{C^{3N+2}} + \| \fa \|_{C^{3N}} )^{M_N} \lambda^{-N}
\end{equation}
for any $\alpha \in \N_0^2$ with $|\alpha| \leq N$. 
\end{itemize}
 One is therefore led to consider operators belonging to the following class.

\begin{definition} An FIO pair $[\varphi; \ba]$ consists of a pair of functions $\varphi, \ba \in C^{\infty}(\R^3 \times \hat{\R}^2)$ with $\ba$ supported in a compact set of diameter 1. For any such pair $[\varphi; \ba]$ and $\mu \in \R$ define  Fourier integral operators of order $\mu$ by
\begin{equation}\label{FIO}
    \mathcal{F}_{\mu}^{\lambda}[\varphi; \ba]f(\vec{x}\,) := \int_{\hat{\R}^2} e^{i \lambda\varphi(\vec{x};\xi/\lambda)} \frac{\ba(\vec{x};\xi/\lambda)}{(1 + |\xi|^2)^{-\mu/2}} \beta(|\xi|/\lambda) \hat{f}(\xi)\,\ud \xi \qquad \textrm{for $\lambda \geq 1$}. 
\end{equation}
\end{definition}

\subsubsection*{Local smoothing estimates} Under certain `geometric' hypotheses on the phase, $L^p_x \to L^p_{x,t}$ estimates are known for the operators \eqref{FIO} with good $\lambda$ decay (indeed, the best possible decay (up to $\varepsilon$ losses) for $6 \leq p \leq \infty$). Here the relevant hypotheses are stated in a weakly quantitative form. In what follows we use the notation
$\bigwedge_{k=1}^2\vec v_k$ for the standard vector product $\vec v_1\times \vec v_2$ for vectors in $\bbR^3$.

\begin{definition}\label{FIO class} For $R \geq 1$ let $\mathbb{A}(R)$ denote the class of all $[\varphi; \ba]$ satisfying 
\begin{enumerate}
    \item[H0)] $|\partial^{\alpha}_{\vec{x}} \partial^{\beta}_{\xi} \varphi(\vec{x};\xi)| \lesssim R \quad$ for $|\alpha| \leq N$ and $0< |\beta| \leq N$, 
    \item[H1)] $\displaystyle \Big|\bigwedge_{k=1}^2 \partial_{\xi_k} \partial_{\vec{x}}\varphi(\vec{x};\xi)\Big| \geq R^{-1}$,
    \item[H2)] $\displaystyle \max_{1 \leq i, j \leq 2}\Big|\Big\langle \partial_{\xi_i \xi_j}^2 \partial_{\vec{x}}\,\varphi(\vec{x};\xi)\,,\, \bigwedge_{k=1}^2 \partial_{\xi_k} \partial_{\vec{x}}\varphi(\vec{x};\xi)  \Big\rangle \Big| \geq R^{-1}$
\end{enumerate}
for all $(\vec{x};\xi) \in \mathrm{supp}\,\ba$. 
\end{definition}

The following theorem is the key ingredient in the proof of Proposition~\ref{Lp proposition}.

\begin{theorem}[\cite{BHS}] 
\label{local smoothing theorem} There exist $N, M \in \N$ such that
\begin{equation*}
    \|\mathcal{F}_{\mu}^{\lambda}[\varphi; \ba]\|_{L^6(\R^2) \to L^6(\R^3)} \lesssim_{\varepsilon} R^M \lambda^{1/6 + \mu + \varepsilon} \|\ba\|_{C^N} \qquad \textrm{for all $[\varphi; \ba] \in \mathbb{A}(R)$.}
\end{equation*}
\end{theorem}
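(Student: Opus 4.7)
My plan is to reduce the estimate to (a variable-coefficient version of) $\ell^2$-decoupling for the light cone in $\R^3$, and then translate the decoupling bound into a local smoothing inequality in the standard manner. The first step is a normalization. Using the assumed upper bounds H0) together with the lower bounds H1) and H2), I would decompose the amplitude $\ba$ via a partition of unity into $O(R^c)$ pieces, each supported on a set where the quantities featured in H0)--H2) are controlled (both above and below) by absolute constants. A suitable affine change of variables in $\xi$ (depending on $\vec{x}$) and in $\vec{x}$ reduces each piece to an FIO whose phase is an $O(1)$ perturbation of a model phase, at the cost of $R^M$ for some absolute $M$.

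Once in normalized form, hypothesis H1) says that the image of $\xi \mapsto \partial_{\vec{x}}\,\varphi(\vec{x};\xi)$ is, for each fixed $\vec x$, an immersed conic hypersurface $\Gamma_{\vec{x}} \subset \R^3$; the quantity appearing in H2) is (up to normalisations) precisely the nonvanishing principal curvature of $\Gamma_{\vec{x}}$ transverse to the cone direction. Thus the normalised phases satisfy the \emph{cinematic curvature condition} in the sense of Sogge \cite{Sogge1991} with uniform constants, and $\Gamma_{\vec{x}}$ is (locally) a small perturbation of the standard light cone.

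The core step is to apply the variable-coefficient $\ell^2$-decoupling inequality for cinematic cones (Bourgain--Demeter-style, cf.\ the cone decoupling theorem). Decompose the frequency ball of radius $\lambda$ into $\lambda^{-1/2}$-plates $\tau$ adapted to the cone $\Gamma_{\vec x_0}$ for a reference point $\vec{x}_0$, and write $f = \sum_{\tau} f_\tau$ accordingly. The decoupling inequality gives, on any unit ball $B \subset \R^3$,
\[
\big\|\mathcal{F}^\lambda_\mu[\varphi;\ba]f\big\|_{L^6(B)} \lesssim_\varepsilon \lambda^{\varepsilon}\Big(\sum_\tau \big\|\mathcal{F}^\lambda_\mu[\varphi;\ba]f_\tau\big\|_{L^6(\R^3)}^{2}\Big)^{1/2}.
\]
The operator $\mathcal F^\lambda_\mu[\varphi;\ba]$ applied to a single plate $f_\tau$ is essentially a wave packet localised to a $\lambda^{-1/2}\times\lambda^{-1/2}\times 1$ tube; a kernel estimate via stationary phase in $\xi$ (using H2) for the curvature in the tangential direction) yields the fixed-plate bound $\|\mathcal F^\lambda_\mu[\varphi;\ba]f_\tau\|_6 \lesssim \lambda^{1/6+\mu}\|f_\tau\|_6$. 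Combining this with the decoupling inequality, $\ell^2 \hookrightarrow \ell^6$, and Plancherel yields the claimed bound $\lambda^{1/6+\mu+\varepsilon}R^M\|\ba\|_{C^N}$, after summing over a bounded covering of $\supp \ba$ by unit balls $B$.

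The principal obstacle is establishing the variable-coefficient decoupling inequality with polynomial dependence on $R$. This requires carrying out the induction on scales of Bourgain--Demeter in a setting where the cone is replaced at each scale by a family of cones $\Gamma_{\vec{x}}$ perturbed from the model; this entails verifying that the cinematic curvature condition, together with the derivative bounds H0), is preserved under parabolic rescaling at the decoupling scales, and controlling the loss incurred at each inductive step so that the final dependence on the structural constant $R$ is polynomial. Once the decoupling is in place, the passage to the local smoothing bound is standard.
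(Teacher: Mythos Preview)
The paper does not prove this theorem; it is quoted as a black-box result from \cite{BHS}, with the remark that the weakly quantitative dependence on $R$ ``is not explicit in~\cite{BHS} or the corresponding survey~\cite{BHS2} but it may be extracted from the proof.'' So there is no proof in the paper to compare your proposal against.

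That said, your outline is broadly faithful to how \cite{BHS} actually proceeds: normalisation to place the phase in a form satisfying the cinematic curvature condition with uniform constants, followed by a variable-coefficient $\ell^2$-decoupling inequality for the cone and the standard passage from decoupling to local smoothing. Your identification of the principal difficulty --- establishing the variable-coefficient decoupling with only polynomial loss in the structural parameter $R$ --- is accurate, and this is precisely the content that must be extracted from the induction-on-scales argument in \cite{BHS}. One point worth noting: the paper remarks that for its purposes \emph{any} non-trivial local smoothing estimate would suffice (i.e.\ any $\varepsilon$-gain over the fixed-time bound at some $p_\circ>2$), so the full strength of the sharp $L^6$ decoupling is not needed here; the older results of \cite{Mockenhaupt1993} would already do.
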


This weakly quantitative statement is not explicit in~\cite{BHS}
or the corresponding survey
\cite{BHS2}
 but it may  be extracted from the proof. It is remarked that Theorem~\ref{local smoothing theorem} is more than enough for the purposes of this article and, indeed, \textit{any} non-trivial local smoothing estimate 
 (that is,  a gain of an epsilon derivative over the fixed term estimate) would suffice.  Thus one  could equally appeal to the older results of~\cite{Mockenhaupt1993} (see also the related work~\cite[Chapter 3]{KungThesis}, or the more recent work \cite{GMY}).

\subsubsection*{Relating the phase functions} In order to apply Theorem~\ref{local smoothing theorem} we analyse the hypotheses H0), H1) and H2) for the specific case of the phase $\varphi$ arising from the averaging operators $A[\Phi_t;\fa_t]f$. 

Let $\varphi$ be of the form \eqref{n Fourier phase}, induced by some defining function $\Phi$. Implicit differentiation of \eqref{critical functions} yields
\begin{align}
\label{lambda nu derivatives}
    \begin{bmatrix}
    \partial_{\xi}\Theta \\[2pt]
    \partial_{\xi}\nu
    \end{bmatrix} &=
    -\begin{bmatrix}
    0 & (\partial_{z}\Phi)^{\top} \\[2pt]
     \partial_{z}\Phi  & \Theta \partial_{z z}^2\Phi
    \end{bmatrix}^{-1}
    \begin{bmatrix}
    0 \\[2pt]
    \mathrm{Id}_2
    \end{bmatrix}, \\
    \label{lambda nu derivatives 2}
    \begin{bmatrix}
    \partial_{\vec{x}}\Theta \\[2pt]
    \partial_{\vec{x}}\nu
    \end{bmatrix} &=
    -\begin{bmatrix}
    0 & (\partial_{z}\Phi)^{\top} \\[2pt]
     \partial_{z}\Phi  & \Theta \partial_{z z}^2\Phi
    \end{bmatrix}^{-1}
    \begin{bmatrix}
    \partial_{\vec{x}}\,\Phi &
    \Theta\partial_{\vec{x} z}^2 \Phi 
    \end{bmatrix}^{\top},
\end{align}
where the right-hand matrices are evaluated at $z = \nu(\vec{x};\xi)$. In particular, \eqref{lambda nu derivatives} implies that $\partial_{\xi_2}\nu_1 = \partial_{\xi_1}\nu_2$ and combining this with Euler's homogeneity relation $\varphi(\vec{x};\xi) = \langle \partial_\xi \varphi(\vec{x};\xi), \xi \rangle$ yields
\begin{equation}\label{xi derivatives}
    \partial_{\xi} \varphi = \nu. 
\end{equation}
Consequently, one can check that if $(\alpha, \beta) \in \N_0^3 \times \N^2_0$ satisfies $|\alpha|$,  $|\beta| \leq N$, then
\begin{equation}\label{H0}
    |\partial_{\vec{x}}^{\alpha}\partial_{\xi}^{\beta} \varphi(\vec{x};\xi) | \lesssim_N (\| \Phi \|_{C^N} + \varepsilon^{-1}_{\mathrm{Cin}} )^{M_N}.
\end{equation}
for a certain $M_N>0$. 

Furthermore, \eqref{lambda nu derivatives 2} and \eqref{xi derivatives} also imply that
\begin{equation}\label{H1}
    \displaystyle \big|\bigwedge_{j=1}^2 \partial_{\xi_j} \partial_{\vec{x}}\varphi(\vec{x};\xi)\big| \gc \frac{1}{|\partial_{\vec{x}} \Theta(\vec{x};\xi)|}\det\begin{bmatrix}
    \partial_{\vec{x}}\Theta(\vec{x};\xi) \\[2pt]
    \partial_{\vec{x}}\nu(\vec{x};\xi)
    \end{bmatrix} \geq \mathrm{Proj}(\Phi)(\vec{x};\xi)\cdot \|\Phi\|_{C^2}^{-3}.
\end{equation}
These inequalities allow one to deduce H0) and H1); the condition H2) requires a slightly more involved analysis. 

Letting $\sigma_1 := 2$ and $\sigma_2 := 1$, the identities in \eqref{lambda nu derivatives} and \eqref{lambda nu derivatives 2} give
\begin{align}\label{1st order nu}
    \partial_{\xi_j} \nu_i &= \frac{(-1)^{i+j+1} \xi_{\sigma_i}\xi_{\sigma_j}}{\Theta \big(\Theta^2\kappa(\Phi)\big)}, \\
    \label{1st order nu 2}
    \partial_{\vec{x}}\nu_i &= \frac{(-1)^i}{\kappa(\Phi)} \Bigg( 
    \det\begin{bmatrix}
    \partial_{z_1}\Phi & \partial^2_{z_1 z_{\sigma_i}} \Phi \\
    \partial_{z_2}\Phi & \partial^2_{z_2 z_{\sigma_i}} \Phi
    \end{bmatrix} \mathbf{T}^1 - (\partial_{z_{\sigma_i}}\Phi)
    \mathbf{T}^2 \Bigg),
\end{align}
where $\kappa(\Phi)$ is as defined in \eqref{plane curvature definition} and the $\mathbf{T}^i$ are the tangent vector fields from \eqref{tangent vector fields}. Recalling \eqref{xi derivatives}, the condition H2) for the phase function \eqref{n Fourier phase} involves mixed second order derivatives of $\nu$; by \eqref{1st order nu}, computing these derivatives boils down to differentiating $ \big(\Theta^2\kappa(\Phi)\big)^{-1}\Theta^{-1}$ with respect to $\vec{x}$. Recalling the definition of $\Theta$ and $\nu$ from \eqref{critical functions} and the identities of \eqref{lambda nu derivatives 2},
\begin{equation*}
    \partial_{\vec{x}} \,\big(\Theta^2\kappa(\Phi)\big) = \partial_{\vec{x}} 
    \det\begin{bmatrix}
    0 & \xi^{\top} \\
    \xi & \partial_{z z}^2 \Phi
    \end{bmatrix} = \Theta^2 \mathbf{S}^1, \quad \partial_{\vec{x}} \Theta = - \frac{\Theta}{\kappa(\Phi)} \Big(\det \big[\partial_{z z}^2 \Phi\big] \mathbf{T}^1 + \mathbf{S}^2 \Big)
\end{equation*}
where the $\mathbf{S}^i$ are as in Definition~\ref{cinematic curvature definition}. The product rule then yields 
\begin{equation}\label{2nd order nu}
    \partial_{\vec{x}}\, \Big(\big(\Theta^2\kappa(\Phi)\big)^{-1}\Theta^{-1}\Big) = -\Theta^{-3} \kappa(\Phi)^{-2} \big(\mathbf{S} - \det \big[\partial_{z z}^2 \Phi\big]\mathbf{T}^1 \big).
\end{equation}
Combining \eqref{xi derivatives}, \eqref{1st order nu}, \eqref{1st order nu 2} and \eqref{2nd order nu}, one deduces that
\begin{equation}\label{H2}
     \det \begin{bmatrix}
     \partial_{\xi_i \xi_j}^2 \partial_{\vec{x}}\,\varphi(\vec{x};\xi) \\
     \partial_{\xi_1} \partial_{\vec{x}}\,\varphi(\vec{x};\xi) \\
     \partial_{\xi_2} \partial_{\vec{x}}\,\varphi(\vec{x};\xi)
     \end{bmatrix}
     = (-1)^{i+j+1}\frac{\mathrm{Cin}(\Phi)(\vec{x};\nu(\vec{x};\xi))}{\Theta(\vec{x};\xi)^3 \kappa(\Phi)(\vec{x};\nu(\vec{x};\xi))^3} \xi_{\sigma_i}\xi_{\sigma_j} .
\end{equation}

The identities \eqref{H0}, \eqref{H1} and \eqref{H2} allow one to relate the conditions H0), H1) and H2) of the phase $\varphi$ to properties of the underlying defining function (and, in particular, bounds on $\| \Phi \|_{C^N}$, $\kappa(\Phi)$, $\mathrm{Proj}(\Phi)$ and $\mathrm{Cin}(\Phi)$).




\subsection{Application of local smoothing} Theorem~\ref{local smoothing theorem} can now be applied to yield Proposition~\ref{Lp proposition}. 

\begin{proof}[Proof of Proposition~\ref{Lp proposition}] The main difficulty is to prove iii). Fix $[\Phi;\fc] \in \mathfrak{C}^{k}_{\mathrm{Cin}} \cap \mathfrak{C}^k_{\mathrm{Rot}}$ and  $\delta>0$; it 
 Let $I$ denote an interval of length $|I|\sim 1$. 
The Sobolev embedding argument used to prove \eqref{sobolev lemma} yields
\begin{multline}\label{Sobolev embedding LS}
\| \sup_{t \in I} |A_j [\Phi_t; \fc_t] f|  \|_{L^6 (\R^2)}^6  \leq |I|^{-1} \| A_j[\Phi_t; \fc_t] f \|_{L^6(\R^2 \times I)}^6 \\
+ 6 \| A_j[\Phi_t; \fc_t] f \|_{L^6(\R^2 \times I)}^5 \| A_j[\Phi_t; \mathfrak{d}_t] f \|_{L^6(\R^2 \times I)},
\end{multline}
where $\mathfrak{d}_t := 2\pi i 2^j (\partial_t \Phi_t) \mathfrak{c}_t +  \partial_t \mathfrak{c}_t$.\footnote{To be more precise, one may write $A_j[\Phi_t;\mathfrak{d}_t] = A_j[\Phi_t;\mathfrak{d}_t^1] + A_j[\Phi_t;\mathfrak{d}_t^2]$ where $\mathfrak{d}_t^1 :=2\pi i 2^j (\partial_t \Phi_t) \mathfrak{c}_t $  and the average corresponding to $\mathfrak{d}_t^2$ is defined with the frequency cut-off $\theta \mapsto \theta \beta(\theta)$, rather than just $\beta$. It is remarked that this ambiguity in the definition has no bearing on the analysis.}  By the definition of the class $\mathfrak{C}^k_{\mathrm{Cin}}$,
\begin{equation}\label{good curvature}
    |\kappa(\Phi)(x,t;z)|,\:\: |\mathrm{Proj}(\Phi)(x,t;z)|,\:\: |\mathrm{Cin}(\Phi)(x,t;z)|  \gtrsim 2^{-M k}\delta 
\end{equation} 
whenever $(x,t;z) \in \mathrm{supp}\,\fc$ and $|t- x_2| \gtrsim \delta$. Decompose $\mathfrak{c} := \mathfrak{c}^{(\delta)} + \mathfrak{c}^{\dagger}$ where 
\begin{equation*}
    \mathfrak{c}^{(\delta)}(x,t;z) := \mathfrak{c}(x,t;z)\eta((t-x_2)/10\delta)
\end{equation*}
so that the estimates \eqref{good curvature} hold on the support of $\mathfrak{c}^{\dagger}$.

The piece corresponding to $\fc^{(\delta)}$ can be bounded using the theory from Sections \ref{nondegenerate-sect} and \ref{reduction to oscillatory integral estimates section}.  Indeed, let $\cG(x_1,x_2,t,z):=(x_1,x_2+t,t;z)$ and define
\begin{equation*}
  \tilde{\Phi}:= \Phi \circ \cG, \qquad \tilde{\fc}^{(\delta)}:= \fc^{(\delta)} \circ \cG.  
\end{equation*}
Note that $|x_2| \leq \delta$ in $\supp \tilde{\fc}^{(\delta)}$. Performing the above change of variables, by Fubini's theorem 
\begin{equation*}
\| A_j [\Phi_t; \fc^{(\delta)}_t] f \|_{L^6(\R^2 \times I)}^6 = \int_{-\delta}^\delta   \|A_j [(\tilde{\Phi})^{\star}_r; (\tilde{\fc}^{(\delta)})^{\star}_r] f \|_{L^6(\R^2)}^6 \, \ud r
\end{equation*}
where 
\begin{equation*}
 (\tilde{\Phi})^{\star}(u,t,r; v, \rho):=\tilde{\Phi} (u,r,t;v,\rho) \quad \textrm{and} \quad (\tilde{\fc}^{(\delta)})^{\star}(u,t,r;v,\rho):= \tilde{\fc}^{(\delta)} (u,r,t;v,\rho).    
\end{equation*}
Since $[\Phi; \fc] \in \mathfrak{C}^{k}_{\mathrm{Rot}}$, it follows that $[(\tilde{\Phi})^{\star}; (\tilde{\fc}^{(\delta)})^{\star}] \in \mathfrak{A}^{k,\ell(k)}_{\mathrm{Rot}}$. 
Combining Proposition \ref{2par-Hoerprop} with Lemma \ref{TvsA-lemma} we get an $L^2(\bbR^2)$ estimate for fixed $r$,
$$
\|A_j [(\tilde{\Phi})^{\star}_r; \tilde{\fc}_r^{(\delta)} \|_{L^2(\R^2) \to L^2(\R^2)} \lesssim (2^{-j/2} \wedge 2^{-2k/3})\| \fc^{\delta}\|_{C^N_z}.
$$
Interpolating this bound  with the $L^{\infty}$ estimate from Corollary~\ref{Lp corollary} iii) one gets
$$
\|A_j [(\tilde{\Phi})^{\star}_r; \tilde{\fc}_r^{(\delta)} \|_{L^6(\R^2) \to L^6(\R^2)} \lesssim (2^{-j/6} \wedge 2^{-2k/3}) \| \fc^{\delta}\|_{C^N_z}
$$
and therefore
\begin{equation} \label{singular part} 
\| A_j [\Phi_t; \fc_t^{(\delta)}] \|_{L^6(\R^2) \to L^6(\R^2 \times I)} \lesssim \delta^{1/6} (2^{-j/6} \wedge 2^{-2k/3})\| \fc \|_{C^N}
\lesssim \delta^{1/6} 2^{-j/6}\|\fc\|_{C^N}.
\end{equation}

On the other hand, Theorem~\ref{local smoothing theorem} can be used to show that \begin{equation}\label{non-singular part}
    \big\|A_j[\Phi_t;  \fc_t^{\dagger}]\big\|_{L^{6}(\R^2) \to L^{6}(\R^2 \times I)} \lesssim_{\varepsilon}
   \delta^{-M}2^{M k} 2^{-j(1/3 - \varepsilon)}\|\fc\|_{C^N}.
\end{equation}
Temporarily assuming \eqref{non-singular part}, by taking $\delta := 2^{-j/(24 M)}$ and $\varepsilon := 1/12$, 
we get 
\begin{equation*}
\| A_j [\Phi_t; \fc_t^\dagger]  \|_{L^6(\R^2) \to L^6(\R^2 \times I)} \lesssim 2^{Mk} 2^{-j(1/3-1/12-1/24)}\|\fc\|_{C^N}
\end{equation*}
and hence
combining this with  \eqref{singular part} 
we obtain 
\begin{align}\notag
\| A_j [\Phi_t; \fc_t]  \|_{L^6(\R^2) \to L^6(\R^2 \times I)} &\lesssim_{\varepsilon} 2^{Mk} 2^{-j/6} (2^{-j/24}+ 2^{-j/(144M)}) \| \fc \|_{C^N}\\&\lc 2^{Mk} 2^{-j/6- j\eps_0} \|\fc\|_{C^N}
\label{estimate for c LS}
\end{align}
for some  $\varepsilon_0>0$ (indeed $\varepsilon_0= (144 M)^{-1}$). This gives a favourable bound for the terms on the right-hand side of \eqref{Sobolev embedding LS} involving $\fc_t$. For the amplitude $\mathfrak{d}_t$ it suffices to note that $\| \mathfrak{d}\| \lesssim 2^j \| \mathfrak{c} \|$ and that $[\Phi; \mathfrak{d}] \in \mathfrak{C}^k_{\mathrm{Cin}} \cap \mathfrak{C}^k_{\mathrm{Rot}}$. Therefore
\begin{equation}\label{estimate for d LS}
    \| A_j [\Phi_t; \mathfrak{d}_t] \|_{L^6(\R^2) \to L^6(\R^2 \times I)} \lesssim_{\varepsilon} 2^{Mk} 2^{j(5/6 - \varepsilon_0)} \| \mathfrak{c} \|_{C^N}.
\end{equation}
Combining \eqref{estimate for c LS} and \eqref{estimate for d LS} in \eqref{Sobolev embedding LS} concludes the argument  of Proposition~\ref{Lp proposition} for $[\Phi; \mathfrak{c}] \in \mathfrak{C}^{k}_{\mathrm{Cin}} \cap \mathfrak{C}^{k}_{\mathrm{Rot}}$.

It remains to prove \eqref{non-singular part}. Let $[\varphi;\mathbbm{c}]$ be the FIO pair associated to $[\Phi;\fc^{\dagger}] \in \mathfrak{C}_{\mathrm{Cin}}^{k}$, defined as in \eqref{n Fourier phase} and \eqref{FT kernel 2}. Thus,
\begin{equation*}
A_j [\Phi_t; \fc_t^\dagger]f(x)=\mathcal{F}_{-1/2}^{2^j}[\varphi; \bc]f(\vec{x}) + \mathcal{E}_jf(\vec{x}),
\end{equation*}
where the operator $\mathcal{E}_j$ arises from the errors in \eqref{kernel localisation} and \eqref{FT kernel 2}. The smoothing term $\mathcal{E}_j$ can be easily estimated using repeated integration-by-parts and the rapid decay from \eqref{bounds error 1} and \eqref{error bounds 2}. 

Turning to the main term $\mathcal{F}^{2^j}_{-1/2}[\varphi; \bc] f$, the condition \textbf{C}$_{\delta})_k$ together with \eqref{H0}, \eqref{H1} and \eqref{H2} imply that $[\varphi;\mathbbm{c}] \in \mathbb{A}^{k} := \mathbb{A}(\delta^{-M_{\circ}}2^{M_{\circ} k})$ (in the sense of Definition~\ref{FIO class}) for some absolute constant $M_{\circ} \geq 1$. Thus, Theorem~\ref{local smoothing theorem} implies that
\begin{align*}
    \|\mathcal{F}_{\mu}^{\lambda}[\varphi; \bc]\|_{L^6(\R^2) \to L^6(\R^3)} &\lesssim_{\varepsilon} \delta^{-M} 2^{M k} \lambda^{1/6 + \mu + \varepsilon} \|\bc\|_{C^N}.
\end{align*}
The case of interest is given by $\mu = -1/2$; note that for this value the $\lambda$ exponent is $-1/3 + \varepsilon$, corresponding to the $2^j$ exponent in \eqref{non-singular part}.

For the remaining cases i), ii), iv) and v) of the proposition the argument is similar but somewhat easier. Indeed, here the condition \textbf{C})$_k$ provides favourable lower bounds for the various curvatures and this obviates the need to form any decomposition $\fa = \fa^{(\delta)} + \fa^{\dagger}$ (one may bound $
A_j[\Phi;\fa]$ directly using Theorem~\ref{local smoothing theorem}).

\end{proof}




\section{The global maximal function}\label{sec:global}

It remains to extend the bound for the local maximal function from Theorem~\ref{thm local maximal function} to the bound on the `global' maximal function from Theorem~\ref{main theorem}. This is the last step in the proof of Theorem~\ref{circular maximal theorem}.

\begin{proof}[Proof of Theorem~\ref{main theorem}] Break the operator according to the relative size of $r$ with respect to $t$, thus:
\begin{equation*}
    \sup_{t > 0}|A_t f(u, r)| =\sup_{T\in \Z} \sup_{2^T\le t< 2^{T+1}} (\sum_{m\ge 10}+\sum_{m\le -10}+\sum_{|m|< 10}) \beta^{m+T}(r) |A_t f(u, r)|.
\end{equation*}
Each of the three terms is estimated separately. Of these, the first case (corresponding to $t\ll r$) presents the most interesting features.




\subsection*{The first term: $t\ll r$}
The orthogonality relation \eqref{orthogonality relation} induces spatial orthogonality and it therefore suffices to show that
\begin{equation*}
\Big\|\sup_{T\in \Z} \sup_{2^T\le t \leq 2^{T+1}} \sum_{m\ge 10}  \beta^{m+T} \cdot |A_t f| \Big\|_{L^p(\R\times [2^W, 2^{W+1}])} \lesim \|f \chi_{\R\times [2^{W-1}, 2^{W+2}]}\|_p,
\end{equation*}
uniformly in $W\in \Z$. By the rescaling $(u,r,t;v,\rho) \mapsto (2^{2W}u, 2^W r, 2^W t; 2^{2W}v, 2^W \rho)$, the problem reduces to the case $W=0$, and therefore one needs to only show that
\begin{equation*}
 \Big\| \sup_{T\le -5} \sup_{2^T\le t \leq 2^{T+1}} \beta^0 \cdot |A_t f| \Big\|_{L^p(\R \times [1,2])} \lesssim \| f \|_{p}.
\end{equation*}

For fixed $T \leq -5$, decompose $f$ into frequency localised pieces
\begin{equation*}
f = P_{\leq -T} f + \sum_{k=1}^{\infty} P_{-T+k} f,    
\end{equation*}
where $(P_{\leq m} f) \, \,  \widehat{ \, }\, \,(\xi) := \eta^{m} (|\xi|) \widehat{f}(\xi)$ and $(P_m f) \, \, \widehat{ \, } \, \, (\xi) := \beta^{m}(|\xi|) \widehat{f}(\xi)$ for the functions $\eta^m$ and $\beta^m$ defined in \eqref{beta function}. A routine computation shows that the precomposition of the above maximal operator with $P_{\leq -T}$ is pointwise dominated by the Hardy--Littlewood maximal function. Consequently, for $p>2$ it suffices to show that
\begin{equation*}
\Big\| \sup_{T\le -5}\sup_{2^T\le t \leq 2^{T+1}} \beta^0\cdot  |A_t P_{-T+k}f|\Big\|_p \lesim 2^{-\varepsilon_p k}\|f\|_p
\end{equation*}
and Littlewood--Paley theory further reduces the problem to proving
\begin{equation}\label{fixed high frequency single scale}
\Big\|\sup_{2^T\le t \leq 2^{T+1}} \beta^0\cdot |A_t P_{-T+k}f| \Big\|_p \lesim 2^{-\varepsilon_p k}\|f\|_p,
\end{equation}
uniformly in $T \leq -5$. The rescaling $(u,r,t;v,\rho) \mapsto (2^{2T}u, 2^T r, 2^T t; 2^{2T}v, 2^T \rho)$ transforms \eqref{fixed high frequency single scale} into
\begin{equation*}
\Big\|\sup_{1 \le t \leq 2} \beta^{-T}\cdot |A_t P^T_{k}f| \Big\|_p \lesim 2^{-\varepsilon_p k}\|f\|_p,
\end{equation*}
where $P^T_k$ denotes the anisotropic frequency projection associated to the multiplier $\beta^k\big(|(2^{-T}\xi_1, \xi_2)|\big)$. 

The situation in the last display is close to the case $m=-T>0$ in the decomposition \eqref{190715e5.1}, although a direct application of Theorem~\ref{initial decomposition lemma} iii) will not give the desired decay in $j$. Instead, we decompose the operator $A$ as a sum of frequency localised operators $A_{j}$ as in \eqref{frequence localised definition} and appeal to Proposition~\ref{frequency decomposition prop} iv). First, for fixed $T \leq - 5$, write
\begin{equation*}
\beta^{-T}(r)\cdot A_t P_k^T f(u,r)=\sum_{\vec{\sigma}\in \Z^2} 2^{-T} A[\Phi_t; \fa_t^{-T, \vec{\sigma}}]P^T_kf (u,r),
\end{equation*}
where $\fa_t^{-T,\vec{\sigma}}$ is as in \eqref{m>0 def}. The relations \eqref{orthogonality relation} ensure that $|r-\rho| \lesssim 1$ and $|u-v| \lesssim 2^{-T}$, so by spatial orthogonality it suffices to prove
$$
\Big\|\sup_{1 \le t \leq 2} |A [\Phi_t; \fa_t^{-T,\vec{\sigma}} ]P^T_{k}f| \Big\|_p \lesim 2^{T} 2^{-\varepsilon_p k}\|f\|_p
$$
uniformly in $\vec{\sigma} \in \Z^2$. A further rescaling $(u,v)=(2^{-T}u, 2^{-T}v)$ transforms the above estimate into
\begin{equation}\label{desired frequency localised estimate for big r}
\big\| \sup_{1 \le t \leq 2} |A [\Phi_t^{-T}; \tilde{\fa}_t^{-T,\vec{\sigma}} ]P_{k}f| \big\|_p \lesim 2^{-\varepsilon_p k}\|f\|_p,
\end{equation}
where now $P_k$ is the usual dyadic frequency projection at scale $2^k$ and $\Phi^{-T}$ and $\tilde{\fa}^{-T,\vec{\sigma}}$ are defined as in \eqref{definition phi m}; in particular, $[\Phi^{-T}; \fa^{-T, \vec{\sigma}}] \in \mathfrak{A}^{0}_{\mathrm{Cin}} \cap \mathfrak{A}^0_{\mathrm{Rot}}$. Decompose $A[\Phi^{-T}_t; \fa_t^{-T, \vec{\sigma}}] = \sum_{j \geq 0} A_{j} [\Phi^{-T}_t; \fa_t^{-T, \vec{\sigma}}]$ as in \eqref{frequence localised definition}. Then, for fixed $k >0$, one needs to understand 
\begin{equation}\label{Littlewood-Paley terms}
  A_{j} [\Phi^{-T}_t; \fa_t^{-T, \vec{\sigma}}]P_k f (u,r)= \int_{\hat{\R}^2} \widetilde{K}^{2^{j}} (u,r,t; \xi) \beta^k(\xi) \hat{f}(\xi) \, \ud \xi  
\end{equation}
for $j \geq 0$, where $\widetilde{K}^{2^{j}}$ is as in \eqref{oscillatory kernel}. 

The main contribution arises from the terms with  $|j-k| \leq 5$. Here we   appeal to Proposition~\ref{frequency decomposition prop} iv), which yields
\begin{equation*}
 \big\| \sup_{1 \le t \leq 2} |A_{j} [\Phi_t^{-T}; \tilde{\fa}_t^{-T,\vec{\sigma}} ]P_{k}f| \big\|_p \lesim 2^{-k \varepsilon_p} \|f\|_p,   
\end{equation*}
with  some $\varepsilon_p>0$ when $p>2$.

Now consider the case $|j-k|>5$ in \eqref{Littlewood-Paley terms}. In our present rescaled situation we have
 $|\partial_{(v,\rho)} \Phi^{-T}| \sim 1$  and also favourable upper bounds for the higher $(v,\rho)$-derivatives. Hence, arguing as in \S\ref{sec:reduction to 2 Fourier variables},
using  repeated integration-by-parts, we obtain
\begin{equation*}
   |\partial_\xi^{\alpha} [e^{-2\pi i \langle (u,r), \xi \rangle} \widetilde{K}^{2^{j}} (u,r,t;\xi)] | \lesssim \min \{2^{-jN/2}, 2^{-{kN/2}}\} \, (1+|\xi|)^{-N/2} 
\end{equation*}
for all $(u,r,t) \in \supp \tilde{\fa}^{-T, \vec{\sigma}}$, $\xi \in \supp \beta^k$, $\alpha \in \N_0^2$ such that $|\alpha| \leq N$. This yields, via another integration-by-parts,
\begin{equation*}
  |A_{j} [\Phi^{-T}_t; \fa_t^{-T, \vec{\sigma}}]P_k f (u,r)| \lesssim \big(2^{-jN/2} \wedge 2^{-kN/2}\big) \int_{\R^2} \frac{f(v,\rho)}{(1+|(u,r)-(v,\rho)|)^{N/2}} \, \ud v \, \ud \rho,
\end{equation*}
which readily implies that
\begin{equation*}
   \big\| \sup_{1 \le t \leq 2} |A_{j} [\Phi_t^{-T}; \tilde{\fa}_t^{-T,\vec{\sigma}} ]P_{k}f| \big\|_p \lesim \big(2^{-jN/2} \wedge 2^{-kN/2}\big) \|f\|_p 
\end{equation*}
for $1 \leq p \leq \infty$, whenever  $|j-k| > 5$.  Combining the above observations, one obtains the desired estimate \eqref{desired frequency localised estimate for big r}. 




\subsection*{The second term: $t\gg r$.} By the triangle inequality, for all $p>2$ it suffices to show that
\begin{equation*}
\big\| \sup_{T\in \Z} \sup_{2^T\le t< 2^{T+1}} \beta^{m+T} \cdot |A_t f| \big\|_{p} \lesim 2^{\varepsilon_p m}\|f\|_p
\end{equation*}
holds uniformly in $m$ for some $\varepsilon_p>0$. The orthogonality relation \eqref{orthogonality relation} ensures that $|t-\rho| \leq r \sim 2^{m+T} \lesssim 2^T$. This induces spatial orthogonality between the $t$ and $\rho$ variables and reduces the analysis to proving
\begin{equation*}
\big\| \sup_{2^T\le t< 2^{T+1}} \beta^{m+T} \cdot |A_t f| \big\|_{p} \lesim 2^{\varepsilon_p m}\|f\|_p
\end{equation*}
uniformly in $T\in \Z$. By the rescaling $(u,r,t;v,\rho) \mapsto (2^{2T}u, 2^T r, 2^T t; 2^{2T}v, 2^T \rho)$, it suffices to consider the case $T=0$. The resulting term corresponds to
\begin{equation*}
\sum_{\vec{\sigma} \in \Z^2} 2^{m/p} A[\Phi_t; \fa_t^{m, \vec{\sigma}}]f(u,r)
\end{equation*}
in \eqref{190715e5.1}, whose $L^p$ norm is bounded by $2^{m \varepsilon_p}$  for some $\varepsilon_p>0$ if $p>2$ via Theorem~\ref{initial decomposition lemma} iv), using the orthogonality arguments in the proof of Theorem \ref{thm local maximal function}.

\subsection*{The third term: $t\sim r$.} Without loss of generality, by replacing $\beta$ with a cutoff function with slightly larger support, it suffices to bound the term corresponding to $m = 0$. Assuming $f$ is non-negative, for each fixed $T$ perform a decomposition of the operator similar to that in \eqref{190715e5.5} and \eqref{190715e5.6} by dominating
\begin{align*}
   \beta^T(r)\cdot A_t f(u,r) &\lesssim \sum_{\substack{(k,\ell) \in \Z^2 \\k \geq -4 \\ k-3\le \ell < \ell (k)}} \sum_{\vec{\sigma} \in \Z^2} 2^{k(1-1/p)+T}A[\Phi_t; (\fa_T^{k,\ell,\vec{\sigma}})_t]f \\
   & \qquad + \sum_{\substack{k\in \Z \\ k \geq -4}} \sum_{\vec{\sigma} \in \Z^2}2^{k(1-1/p)+T}A[\Phi_t; (\fc_T^{k, \ell,\vec{\sigma}})_t]f 
\end{align*}
where
\begin{align*}
    \fa_T^{k,\ell,\vec{\sigma}}(u,r,t; v,\rho) &:=\fa^{k,\ell,\vec{\sigma}}(2^{-2T}u,2^{-T}r, 2^{-T}t; 2^{-2T}v, 2^{-T}\rho), \qquad  \ell < \ell(k), \\
    \fc_T^{k,\vec{\sigma}}(u,r,t;v,\rho) &:=  \fc^{k,\vec{\sigma}}(2^{-2T}u,2^{-T}r, 2^{-T}t; 2^{-2T}v, 2^{-T}\rho).
 \end{align*}
By the triangle inequality, for all $p > 2$ it suffices to prove 
\begin{align*}
\Big\| \sup_{T\in \Z}\sup_{2^T\le t\le 2^{T+1}} \sum_{ k-3 \leq\ell < \ell(k)} \sum_{\vec{\sigma} \in \Z^2}  2^{k(1-1/p)+T} A[\Phi_t; (\fa_T^{k,\ell,\vec{\sigma}})_t]f\Big\|_p &\lesim 2^{-\varepsilon_p k}\|f\|_p, \\
\Big\| \sup_{T\in \Z}\sup_{2^T\le t\le 2^{T+1}} \sum_{\vec{\sigma} \in \Z^2} 2^{k(1-1/p)+T} A[\Phi_t; (\fc_T^{k,\ell,\vec{\sigma}})_t]f \Big\|_p &\lesim 2^{-\varepsilon_p k}\|f\|_p
\end{align*}
for some $\varepsilon_p>0$. After fixing $k$, spatial orthogonality becomes available: the variable $\rho$ is localised at $\rho \sim 2^{-k+T}$. Therefore, in order to show the above estimates, it suffices to prove
\begin{align*}
   \Big\| \sup_{2^T\le t\le 2^{T+1}} \sum_{k-3 \leq\ell < \ell(k)} \sum_{\vec{\sigma} \in \Z^2} 2^{k(1-1/p)+T} A[\Phi_t;(\fa_T^{k,\ell,\vec{\sigma}})_t]f \Big\|_p &\lesim 2^{-\varepsilon_p k}\|f\|_p, \\
   \Big\| \sup_{2^T\le t\le 2^{T+1}} \sum_{\vec{\sigma} \in \Z^2} 2^{k(1-1/p)+T} A[\Phi_t; (\fc_T^{k,\ell,\vec{\sigma}})_t]f\Big\|_p &\lesim 2^{-\varepsilon_p k}\|f\|_p,
\end{align*}
uniformly in $T$. By the rescaling $(u,r,t;v,\rho) \mapsto (2^{2T}u, 2^T r, 2^T t; 2^{2T}v, 2^T \rho)$, it suffices to only consider the case $T=0$. This follows from Theorem~\ref{initial decomposition lemma} i) and ii) using the arguments in the proof of Theorem \ref{thm local maximal function} (following the statement of Theorem~\ref{initial decomposition lemma}).
\end{proof}




\appendix

 \appendix
\section{Lemmata on integration-by-parts}
 The proofs on oscillatory integrals in \S \ref{two-parameter-sect} use a lemma which keeps track of  the terms that occur in the repeated integration-by-parts arguments. 
Assume  that $z\mapsto h(z) \in C^\infty_c$ 
(and keep track of the $C^N$-norms of $h$), and that $\nabla\Theta\neq 0$
 on
$\supp(h)$.
Define a differential operator $\cL$ by 
\[\cL h= \text{div} \Big( \frac{h\nabla\Theta}{|\nabla\Theta|^2}\Big).\]
Then, by integration by parts,
\[\int_{\R^d} e^{i\la\Theta(z) } h(z) \, \ud z=
i^N\la^{-N} \int_{\R^d} e^{i\la\Theta(z) } \cL^N\!h(z)\, \ud z
\] and thus
\begin{align}
\notag\Big| \int_{\R^d} e^{i\la\Theta(z) } h(z)  \, \ud z\Big| &\le \lambda^{-N} \int_{\R^d}|\cL^N\!h(z)|\,\ud z
\\
&\le \la^{-N}\meas(\supp\chi) \sup_{z \in \R^d}|\cL^N\!h(z)|.
\label{supestforintbyparts}
\end{align}
A careful analysis of the term 
 $\cL^N\!h$ is needed for 
various integration-by-parts arguments in this paper and elsewhere in the  literature, but   a detailed analysis is  often left to the reader. 
 For an explicit reference, a straightforward induction proof of the following lemma is contained e.g. in  the appendix of \cite{ACPS} (and probably elsewhere). 
 
We shall introduce the following notation. We  say that a term  
is of {\it type $(A,j)$} if it is of the form $h_j/|\nabla_z\Theta|^j$ where $h_j$ is a $z$-derivative of order $j$ of $h$. 
 A term  of {\it type $(B,0)$}  is equal to $1$. 
 A term is of {\it type $(B,j)$} for some $j\ge 1$  if it  is of the form $\Theta_{j+1}/|\nabla_z\Theta|^{j+1}$ where $\Theta_{j+1}$ is a $z$-derivative of order $j+1$ of $\Theta$. 
\begin{lemma}\label{intbypartslem}
Let $N=0,1,2,\dots$. Then \[\cL^N\!h = \sum_{\nu=1}^{K(N,d)} c_{N,\nu} h_{N,\nu}\] where $K(N,d)>0$,  $c_{N,\nu}$ are absolute constants independent of $h$ and $\Theta$, and each function $h_{N,\nu}$ is of the form \footnote{The product $\prod_{\ell=1}^{M_\nu}$ is interpreted to be $1$ if $M_\nu=0$, i.e. $j_{A,\nu}=N$.}
\Be P_\nu(\tfrac{\nabla \Theta}{|\nabla\Theta|}) \beta_{A,\nu} \prod_{\ell=1}^{M_\nu} \gamma_{\ell,\nu}
\label{explicit} 
\Ee such that each $P_\nu $ is a polynomial of $d$ variables (independent of $h$ and $\Theta$), $\beta_{A,\nu}$ is of type $(A,j_{A,\nu}) $ for some $j_{A,\nu}\in \{0,\dots, N\}$ and the terms $\gamma_{\ell,\nu} $ are of type $(B,\ka_{\ell,\nu} )$ for some $\kappa_{\ell,\nu} \in \{1, \dots, N\}$, so that for  $\nu=1,\dots, K(N,d)$ 
\Be\label{powerssumtoN}
j_{A,\nu}+\sum_{\ell=1}^{M_\nu} \ka_{\ell,\nu} =N.
\Ee
\end{lemma}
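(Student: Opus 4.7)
\textit{Proof plan.} I would prove the statement by induction on $N$. The base case $N=0$ is immediate: the single summand $h$ itself fits the prescribed form with $P_1 \equiv 1$, $\beta_{A,1} = h$ of type $(A,0)$, and no $(B,\cdot)$-factors, and the index condition $j_{A,1} = 0 = N$ is satisfied.

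For the inductive step, the starting point is the product-rule expansion
\begin{equation*}
\cL h \,=\, \sum_{i=1}^d \frac{\partial_i\Theta}{|\nabla\Theta|^2}\,\partial_i h \,+\, h\cdot \mathrm{div}\Big(\frac{\nabla\Theta}{|\nabla\Theta|^2}\Big),
\end{equation*}
together with the identity
\begin{equation*}
\mathrm{div}\Big(\frac{\nabla\Theta}{|\nabla\Theta|^2}\Big) \,=\, \sum_{i}\frac{\partial_i^2\Theta}{|\nabla\Theta|^2} \,-\, 2\sum_{i,k}\Big(\frac{\partial_i\Theta}{|\nabla\Theta|}\Big)\Big(\frac{\partial_k\Theta}{|\nabla\Theta|}\Big)\frac{\partial_i\partial_k\Theta}{|\nabla\Theta|^2},
\end{equation*}
which is manifestly a polynomial in the coordinates of $\nabla\Theta/|\nabla\Theta|$ multiplied by a type $(B,1)$ factor. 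Given a representative summand
$F = P(\nabla\Theta/|\nabla\Theta|)\,\beta_A \,\prod_{\ell}\gamma_\ell$ of $\cL^N h$ as in \eqref{explicit}, one then applies $\cL$ and distributes the derivatives via Leibniz. It suffices to check that every resulting term again has the prescribed shape and that the weight sum \eqref{powerssumtoN} rises from $N$ to $N+1$.

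The bookkeeping splits into four sub-cases corresponding to which factor absorbs the derivative. \textit{(i)} The contribution $F\cdot\mathrm{div}(\nabla\Theta/|\nabla\Theta|^2)$ multiplies $F$ by a new type $(B,1)$ factor times a polynomial in $\nabla\Theta/|\nabla\Theta|$, so the total weight increases by $1$. \textit{(ii)} When $\partial_i$ hits $\beta_A$ of type $(A,j_A)$, the quotient rule produces either a type $(A,j_A+1)$ factor (increasing the $(A)$-index by one) or keeps $\beta_A$ and spawns an additional type $(B,1)$ factor times a polynomial in $\nabla\Theta/|\nabla\Theta|$; subsequent multiplication by $\partial_i\Theta/|\nabla\Theta|^2$ only supplies another power of $\partial_i\Theta/|\nabla\Theta|$ absorbed into the polynomial. \textit{(iii)} When $\partial_i$ hits a type $(B,\kappa_\ell)$ factor $\gamma_\ell = \Theta_{\kappa_\ell+1}/|\nabla\Theta|^{\kappa_\ell+1}$, one summand replaces $\gamma_\ell$ with a type $(B,\kappa_\ell+1)$ factor, while the other summands keep $\gamma_\ell$ but introduce a new type $(B,1)$ factor (from differentiating the denominator). \textit{(iv)} When $\partial_i$ hits the polynomial $P(\nabla\Theta/|\nabla\Theta|)$, differentiating $\partial_j\Theta/|\nabla\Theta|$ again produces a polynomial in $\nabla\Theta/|\nabla\Theta|$ times a type $(B,1)$ factor. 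In every case the total index rises by exactly one, confirming \eqref{powerssumtoN} at stage $N+1$.

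I do not expect any genuine conceptual obstacle here; the only delicate aspect is the uniform rewriting of fractions such as $\partial_i\Theta/|\nabla\Theta|^2$ as $(\partial_i\Theta/|\nabla\Theta|)\cdot(1/|\nabla\Theta|)$ and of $\partial_i\partial_k\Theta/|\nabla\Theta|^{m}$ as $(\partial_i\partial_k\Theta/|\nabla\Theta|^{2})\cdot|\nabla\Theta|^{2-m}$, so that every auxiliary factor produced by the product rule can be absorbed either into the polynomial $P$, into an increment of the type $(A)$-index, or into a new type $(B,1)$ factor without changing any other $(B,\kappa_\ell)$ factor's index. Once this clerical accounting is in place, the induction closes and the enumeration of terms yields the combinatorial bound $K(N,d)$.
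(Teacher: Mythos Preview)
Your proposal is correct and follows precisely the approach the paper indicates: the paper does not give its own proof of this lemma but states that ``a straightforward induction proof \ldots\ is contained e.g.\ in the appendix of \cite{ACPS},'' and your induction on $N$ via the Leibniz expansion of $\cL$ applied to a generic term of the form \eqref{explicit} is exactly that argument. Your bookkeeping in cases (i)--(iv), together with the observation that the prefactor $\partial_i\Theta/|\nabla\Theta|^2 = (\partial_i\Theta/|\nabla\Theta|)\cdot|\nabla\Theta|^{-1}$ contributes one unit to the polynomial and one extra power of $|\nabla\Theta|^{-1}$ to shift the relevant $(A)$- or $(B)$-index, is the complete content of the proof.
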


\noi{\bf Example.} 
In \S\ref{two-parameter-sect}  we use the Lemma \ref{intbypartslem} in the form of Corollary \ref{corollaryapp} below, choosing  \Be\label{choiceofTheta}\Theta(z)= \Psi(x;z)-\Psi(y;z),\Ee for  fixed $x=(x',x_d)$, $y=(y',y_d) \in \R^d$. 
Our differential operator $\cL=\cL_{x,y}$ depends then on $x,y$.

\begin{corollary} \label{corollaryapp}
Let   $h\in C^{N}(\bbR^d)$ be compactly supported. Let $\rho(x,y)>0$, 
and assume that for all $z$ in a neighborhood of $\supp h$
\begin{subequations}
\Be  |\nabla_z\Psi(x;z)-\nabla_z\Psi(y;z)| \gc \rho(x,y).
\label{lowerbdpartials}
\Ee Let $R(x,y)\ge 1$  and assume that for  all  $z$-derivatives up to order $|\alpha|\le N+1$,
\Be  |\partial^{\alpha}_z\Psi(x;z)-\partial_z^\alpha\Psi(y;z)| \lc_N R(x,y)\rho(x,y).
\label{upperbtpartials}
\Ee 
\end{subequations}
 Then 
\Be\notag  \Big|\int_{\R^d} e^{i\la(\Psi(x;z)-\Psi(y;z))} h(z) \, \ud z\Big| \lc_N \la^{-N}
\meas(\supp h) \max_{j=0,\dots, N}\frac {  \|h\|_{C^j}R(x,y)^{N-j}}{\rho(x,y)^N}.
\Ee
\end{corollary}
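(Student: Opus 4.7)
The plan is to apply Lemma~\ref{intbypartslem} and the pointwise estimate \eqref{supestforintbyparts} with the choice $\Theta(z)=\Psi(x;z)-\Psi(y;z)$, where $x,y$ are treated as fixed parameters. The hypotheses \eqref{lowerbdpartials} and \eqref{upperbtpartials} are exactly the lower bound on $|\nabla_z\Theta|$ and the upper bounds on derivatives of $\Theta$ needed to feed into Lemma~\ref{intbypartslem}.

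First, I would invoke Lemma~\ref{intbypartslem} to write $\mathcal{L}^N h$ as a sum of at most $K(N,d)$ terms, each of the form \eqref{explicit}. Then I would bound each factor in \eqref{explicit} separately. The polynomial factor $P_\nu(\nabla\Theta/|\nabla\Theta|)$ is harmless since its argument is a unit vector, so $|P_\nu|\lesssim_N 1$. For the type $(A,j_{A,\nu})$ factor $\beta_{A,\nu}=h_{j_{A,\nu}}/|\nabla_z\Theta|^{j_{A,\nu}}$, the lower bound \eqref{lowerbdpartials} together with $\|h_{j_{A,\nu}}\|_\infty\le \|h\|_{C^{j_{A,\nu}}}$ yields
\[
|\beta_{A,\nu}(z)| \lesssim \|h\|_{C^{j_{A,\nu}}}\,\rho(x,y)^{-j_{A,\nu}}.
\]
For each type $(B,\kappa_{\ell,\nu})$ factor $\gamma_{\ell,\nu}=\Theta_{\kappa_{\ell,\nu}+1}/|\nabla_z\Theta|^{\kappa_{\ell,\nu}+1}$, combining \eqref{upperbtpartials} for $|\alpha|=\kappa_{\ell,\nu}+1\le N+1$ with \eqref{lowerbdpartials} gives
\[
|\gamma_{\ell,\nu}(z)| \lesssim_N R(x,y)\,\rho(x,y)^{-\kappa_{\ell,\nu}}.
\]

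Multiplying these bounds and using the key identity \eqref{powerssumtoN}, each term \eqref{explicit} is pointwise dominated by
\[
\|h\|_{C^{j_{A,\nu}}}\, R(x,y)^{M_\nu}\,\rho(x,y)^{-\,j_{A,\nu}-\sum_\ell\kappa_{\ell,\nu}} \;=\; \|h\|_{C^{j_{A,\nu}}}\,R(x,y)^{M_\nu}\,\rho(x,y)^{-N}.
\]
Since each $\kappa_{\ell,\nu}\ge 1$, \eqref{powerssumtoN} forces $M_\nu\le N-j_{A,\nu}$, and because $R(x,y)\ge 1$ we may enlarge the exponent to $N-j_{A,\nu}$. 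Thus every term is bounded by $\|h\|_{C^{j_{A,\nu}}}R(x,y)^{N-j_{A,\nu}}\rho(x,y)^{-N}$, which after taking a maximum over $j\in\{0,\dots,N\}$ and summing over the finitely many $\nu$ yields the pointwise bound
\[
|\mathcal{L}^N h(z)| \lesssim_N \max_{0\le j\le N}\frac{\|h\|_{C^j}R(x,y)^{N-j}}{\rho(x,y)^N}.
\]
Plugging this into \eqref{supestforintbyparts} and noting that $\mathcal{L}^N h$ is supported in $\operatorname{supp} h$ (since each factor in \eqref{explicit} involving $h$ is a derivative of $h$) gives exactly the claimed estimate.

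The argument is essentially mechanical once Lemma~\ref{intbypartslem} is invoked; the only place that requires a small observation is the use of $R(x,y)\ge 1$ together with \eqref{powerssumtoN} to convert the combinatorial constraint $M_\nu+\sum\kappa_{\ell,\nu}\ge M_\nu$ into the cleaner exponent $N-j_{A,\nu}$ appearing in the statement. I do not anticipate any genuine obstacle.
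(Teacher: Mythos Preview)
Your proof is correct and follows essentially the same approach as the paper: apply Lemma~\ref{intbypartslem} with $\Theta(z)=\Psi(x;z)-\Psi(y;z)$, bound type $(A,j)$ and type $(B,\kappa)$ factors using \eqref{lowerbdpartials} and \eqref{upperbtpartials}, invoke \eqref{powerssumtoN} together with $R(x,y)\ge 1$ to pass from $R^{M_\nu}$ to $R^{N-j_{A,\nu}}$, and then plug into \eqref{supestforintbyparts}. The only additions you make beyond the paper's write-up are the explicit remarks about $P_\nu$ being bounded and about the support of $\mathcal L^N h$, both of which are correct and harmless.
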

\begin{proof} We use \eqref{supestforintbyparts} and the assertion follows from 
\Be  |\cL_{x,y}^Nh(z)|\lc_N \max_{j=0,\dots, N}\frac{\|h\|_{C^j}R(x,y)^{N-j}}{\rho(x,y)^N}.
\label{LNbd1}\Ee
To see this use Lemma \ref{intbypartslem} with the choice \eqref{choiceofTheta}. Observe that  by \eqref{lowerbdpartials} 
an  expression of type $(A,j)$ is bounded by a constant times $ \|h\|_{C^j}(\rho(x,y))^{-j}$. 
By \eqref{lowerbdpartials} and \eqref{upperbtpartials} an  expression of type
$(B,\kappa)$ is  bounded by a constant times
 $ R(x,y) (\rho(x,y))^{-\kappa}$.  We use \eqref{powerssumtoN} to see that  the  expression corresponding to  \eqref{explicit} is 
 bounded by \[C_N \frac{\|h\|_{C^{j_{A,\nu}}} R(x,y)^{M_\nu}}{
 (\rho(x,y))^{j_{A,\nu}+\sum_{\ell=1}^{M_\nu} \kappa_{\ell,\nu}} }
 \lc_N \frac{\|h\|_{C^{j_{A,\nu}}} R(x,y)^{N-j_{A,\nu}} }{\rho(x,y)^{N}}\] 
 and hence  we get \eqref{LNbd1}.
 \end{proof}

\subsubsection*{Applications of Corollary  \ref{corollaryapp}} Here $0<\delta_\circ\le 1$ and $m>0$. 

\begin{itemize}
    \item In the proof of Proposition \ref{2par-Hoerprop}, Corollary \ref{corollaryapp} is applied with
the choice of 
$\rho(x,y):=|x'-y'|+\delta_\circ|x_d-y_d|$, $R(x,y)\lc 1$ and the $C^N$ norm of the amplitude is $O(1)$.
\item In the proof of Lemma \ref{off-diagonal lemma} 
Corollary \ref{corollaryapp} is applied with
$\rho(x,y):=|x'-y'|$ and $R(x,y)\lc 1$, and the $C^N$ norm of the amplitude is $O(2^{mN})$.
\item In the proof of Lemma \ref{diagonal lemma-M} the $d-1$-dimensional version of 
Corollary \ref{corollaryapp} is applied with
$\rho(x',y'):=|x'-y'|$ and $R(x',y')\lc 1$, and the $C^N$ norm of the amplitude is $O(2^{mN})$.
\item In the proof of Lemma \ref{diagonal lemma-m}
Corollary \ref{corollaryapp} is applied with the choices of
$\rho(x,y):=|x'-X_\nu(x_d,y;z_\nu)|+\delta_\circ 2^{-m}|x_d-y_d|$,  and $R(x,y)\lc 2^m$,  and the $C^N$ norm of the amplitude is $O(2^{mN})$.
\end{itemize}

\section{Computations related to the defining function}

\subsection{Derivative dictionary}\label{derivative appendix} For reference, here some derivatives are computed for the specific defining function $\Phi_t$ in \eqref{defining function}. Recall,
\begin{equation*}
  \Phi(u, r, t;v, \rho) := (u-v)^2 - \Big(\frac{b}{2}\Big)^2\big( 4r^2\rho^2 - (r^2 + \rho^2 - t^2)^2 \big)
\end{equation*}
so that the first order derivatives are
\begin{equation*}
     \partial_u \Phi_t = 2(u-v) , \qquad \partial_r \Phi_t = -b^2 r (t^2-r^2+\rho^2) 
\end{equation*}
and
\begin{equation*}
   \partial_v \Phi_t = -2(u-v), \qquad \partial_\rho \Phi_t = -b^2 \rho (t^2+r^2-\rho^2) 
\end{equation*}
together with the time derivative
\begin{equation*}
  \partial_t \Phi_t = b^2t(t^2 - r^2 - \rho^2).  
\end{equation*}
Of course $\partial^2_{ru} \Phi_t = \partial_{\rho u}^2 \Phi_t = \partial_{r v}^2 \Phi_t = \partial_{\rho v}^2 \Phi_t =0$ whilst the non-vanishing second order derivatives are
\begin{gather*}
 \partial^2_{uu} \Phi_t = \partial^2_{vv} \Phi_t = 2, \qquad \partial^2_{uv} \Phi_t = -2, \\  
\partial^2_{r r} \Phi_t = -b^2(t^2-3r^2+\rho^2), \qquad \partial^2_{r \rho} \Phi_t = -2b^2r\rho, \qquad \partial^2_{\rho \rho} \Phi_t = -b^2(t^2+r^2-3\rho^2)
\end{gather*}
and the time derivatives
\begin{equation*}
  \partial_{t r}^2 \Phi = -2 b^2 t r \qquad \textrm{and}  \qquad \partial^2_{t \rho}\Phi = - 2 b^2 t \rho.  
\end{equation*}
Finally, the third order derivative relevant to the argument are 
\begin{equation*}
   \partial_{\rho rr}^3 \Phi_t = -2 b^2 \rho \qquad \text{and} \qquad \partial_{\rho \rho r}^3=-2b^2r. 
\end{equation*}
With these formul\ae\ in hand, it is a simple computation to obtain the expressions \eqref{fixed t rotational curvature} and \eqref{fixed r rotational curvature} for the rotational curvature,
   \begin{align*}
   \mathrm{Rot}(\Phi_t) (u,r;v,\rho) &= 4b^4rt^2\rho (t^2 - r^2 - \rho^2), \\
   \mathrm{Rot}(\Phi_r^{\star }) (u,t;v,\rho) &= 4b^4 r^2t\rho(r^2 - t^2 - \rho^2),
\end{align*}
as well as the key identity \eqref{key identity},
\begin{equation*}
    \mathrm{Rot}(\Phi_t)(u,r;v,\rho) = 4b^2 r t \rho (\partial_t \Phi_t)(u,r;v,\rho),
\end{equation*}
and expressions \eqref{projection formula} and \eqref{cinematic formula} related to the cinematic curvature
\begin{align*}
\mathrm{Proj}(\Phi)(u,r,t;v,\rho) &= -8b^4rt\rho(r^2 - t^2), \\
 \mathrm{Cin}(\Phi)(u,r,t;v,\rho) &=  64b^8r^3t^3\rho^3(r^2 - t^2)
 \end{align*}
for $(v,\rho) \in \Sigma_{u,r,t}$.

\subsection{Rescaling}\label{rescaling appendix} It is useful to note how the expressions in the previous subsection behave under rescaling. Given $k, \tau \in \Z$ and $\varepsilon, \delta \in \Z^2$, let $\Phi^{k, \varepsilon, \tau, \delta}:= 2^k\, \Phi \circ D^{\varepsilon, \tau, \delta}$ where
\begin{equation*}
    D^{\varepsilon, \tau, \delta}(u,r,t;v,\rho):= (2^{\varepsilon_1} u, 2^{\varepsilon_2}r, 2^{\tau}t; 2^{\delta_1}v, 2^{\delta_2}\rho).
\end{equation*}
Then
\begin{equation*}
    \partial^{\alpha}_x \partial^\beta_z \partial^\gamma_t \Phi^{k,\varepsilon, \tau, \delta} (x,t;z)= 2^{k} 2^{\varepsilon \cdot \alpha} 2^{\delta \cdot \beta} 2^{\tau \gamma} (\partial^{\alpha}_x \partial^\beta_z \partial^\gamma_t \Phi ) \circ D^{\varepsilon, \tau, \delta}(x,t;z)
\end{equation*}
for all $\alpha, \beta \in \N_0^2$, $\gamma \in \N_0$. In particular,
\begin{equation*}
    \mathrm{Rot} (\Phi^{k, \varepsilon, \tau, \delta}_t) (x;z) = 2^{3k} 2^{|\varepsilon| + |\delta|} \mathrm{Rot} ( \Phi_{2^\tau t}) \circ D^{\varepsilon, \delta}(x;z) 
\end{equation*}
where $D^{\varepsilon, \delta}(x;z):= (2^\varepsilon x; 2^\delta z)$, and the rescaled key identity becomes
$$
\mathrm{Rot} (\Phi^{k, \varepsilon, \tau, \delta}_t) (x;z) = 4 b^2 r \rho t 2^{\varepsilon_2 + \delta_2} 2^{2k} 2^{|\varepsilon| + |\delta|} \partial_t \Phi^{k,\varepsilon, \tau, \delta} (x,t;z).
$$
Furthermore,
\begin{align*}
  \kappa (\Phi^{k, \varepsilon, \tau, \delta})(\vec{x};z) &= 2^{3k} 2^{2|\delta|} \kappa (\Phi) \circ D^{\varepsilon, \tau, \delta} (\vec{x}; z),  \\
  \mathrm{Proj}(\Phi^{k, \varepsilon, \tau, \delta})(\vec{x}; z) &= 2^{3k} 2^{|\varepsilon|+ \tau + |\delta| } \mathrm{Proj} (\Phi) \circ D^{\varepsilon, \tau, \delta} (\vec{x};z), \\
  \mathrm{Cin}(\Phi^{k, \varepsilon, \tau, \delta})(\vec{x}; z) &= 2^{6k} 2^{ |\varepsilon| + \tau + 3 |\delta| } \mathrm{Cin}(\Phi)\circ D^{\varepsilon, \tau, \delta}(\vec{x};z).
\end{align*}





\begin{thebibliography}{10}

\bibitem{ACPS}
Theresa~C. Anderson, Laura Cladek, Malabika Pramanik, and Andreas Seeger,
  \emph{{S}pherical means on the {H}eisenberg group: {S}tability of a maximal
  function estimate,} To appear \textit{J. Anal. Math. } Preprint:
  arXiv:1801.06981.

\bibitem{BHRT}
Sayan Bagchi, Sourav Hait, Luz Roncal, and Sundaram Thangavelu, \emph{{O}n the
  maximal function associated to the spherical means on the
  {H}eisenberg group,} Dedicated to the memory of Eli Stein. Preprint:
arXiv:1812.11926v2.



\bibitem{BHS}
David Beltran, Jonathan Hickman, and Christopher~D. Sogge, \emph{{V}ariable coefficient {W}olff-type inequalities and sharp local
  smoothing estimates for wave equations on manifolds,} Anal. PDE \textbf{13} (2020), no. 2, 403–433, \MR{4078231}.

\bibitem{BHS2}
\bysame, \emph{{S}harp local
  smoothing estimates for {F}ourier integral operators,}  Preprint:
  arXiv:1812.11616.
  
  \bibitem{Bourgain1986}
J.~Bourgain, \emph{Averages in the plane over convex curves and maximal
  operators,} J. Analyse Math. \textbf{47} (1986), 69--85. \MR{874045}

\bibitem{CV1971}
Alberto-P. Calder\'{o}n and R\'{e}mi Vaillancourt, \emph{On the boundedness of
  pseudo-differential operators,} J. Math. Soc. Japan \textbf{23} (1971),
  374--378. \MR{0284872}

\bibitem{Christ1985}
Michael Christ, \emph{Hilbert transforms along curves. {I}. {N}ilpotent
  groups,} Ann. of Math. (2) \textbf{122} (1985), no.~3, 575--596. \MR{819558}

\bibitem{Cuccagna}
Scipio Cuccagna, \emph{{$L^2$} estimates for averaging operators along curves
  with two-sided {$k$}-fold singularities,} Duke Math. J. \textbf{89} (1997),
  no.~2, 203--216. \MR{1460620}

\bibitem{Duistermaat2011}
J.~J. Duistermaat, \emph{Fourier integral operators,} Modern Birkh\"{a}user
  Classics, Birkh\"{a}user Springer, New York, 2011, Reprint of the 1996
  edition, based on the original lecture notes published in 1973. \MR{2741911}

\bibitem{GMY}
Chuanwei Gao, Changxing Miao, and Jianwei-Urbain Yang, \emph{{I}mproved
  variable coefficient square functions and local smoothing of {F}ourier
  integral operators,} Preprint: arXiv:1901.01487.

\bibitem{GS1999}
Allan Greenleaf and Andreas Seeger, \emph{On oscillatory integral operators
  with folding canonical relations,} Studia Math. \textbf{132} (1999), no.~2,
  125--139. \MR{1669698}

\bibitem{Hormander1971}
Lars H\"{o}rmander, \emph{Fourier integral operators. {I},} Acta Math.
  \textbf{127} (1971), no.~1-2, 79--183. \MR{0388463}

\bibitem{Hormander1973}
\bysame, \emph{Oscillatory integrals and multipliers on {$FL^{p}$},} Ark. Mat.
  \textbf{11} (1973), 1--11. \MR{0340924}

\bibitem{HormanderBook1}
\bysame, \emph{The analysis of linear partial differential operators. {I},}
  Classics in Mathematics, Springer-Verlag, Berlin, 2003, Distribution theory
  and Fourier analysis, Reprint of the second (1990) edition. \MR{1996773}

\bibitem{joonil2019}Joonil Kim, \emph{Annulus maximal averages on variable hyperplanes,} Preprint: arXiv:1906.03797.

\bibitem{KungThesis}
David T.~S. Kung, \emph{Local smoothing phenomena for operators failing the
  cinematic curvature condition,} ProQuest LLC, Ann Arbor, MI, 2000, Thesis
  (Ph.D.)--The University of Wisconsin - Madison. \MR{2701188}

\bibitem{Kung}
\bysame, \emph{Local smoothing for operators failing the cinematic curvature
  condition,} J. Math. Anal. Appl. \textbf{324} (2006), no.~1, 460--471.
  \MR{2262484}


\bibitem{Melrose-Taylor}
Richard B. Melrose and Michael E. Taylor, \emph{Near peak scattering and the corrected Kirchhoff approximation for a convex obstacle.} Adv. in Math. 55 (1985), no. 3, 242--315. 
\MR{0778964}.

\bibitem{Mockenhaupt1992}
Gerd Mockenhaupt, Andreas Seeger, and Christopher~D. Sogge, \emph{Wave front
  sets, local smoothing and {B}ourgain's circular maximal theorem,} Ann. of
  Math. (2) \textbf{136} (1992), no.~1, 207--218. \MR{1173929}

\bibitem{Mockenhaupt1993}
\bysame, \emph{Local smoothing of {F}ourier integral operators and
  {C}arleson-{S}j\"olin estimates,} J. Amer. Math. Soc. \textbf{6} (1993),
  no.~1, 65--130. \MR{1168960}

\bibitem{MS2004}
Detlef M\"uller and Andreas Seeger, \emph{Singular spherical maximal operators
  on a class of two step nilpotent {L}ie groups,} Israel J. Math. \textbf{141}
  (2004), 315--340. \MR{2063040}


\bibitem{Narayanan2004}
E.~K. Narayanan and S.~Thangavelu, \emph{An optimal theorem for the spherical
  maximal operator on the {H}eisenberg group,} Israel J. Math. \textbf{144}
  (2004), 211--219. \MR{2121541}

\bibitem{Nevo1997}
Amos Nevo and Sundaram Thangavelu, \emph{Pointwise ergodic theorems for radial
  averages on the {H}eisenberg group,} Adv. Math. \textbf{127} (1997), no.~2,
  307--334. \MR{1448717}

\bibitem{pan-sogge}
Yibiao Pan and Christopher D. Sogge, \emph{Oscillatory integrals associated to folding canonical relations.}
Colloq. Math. 60/61 (1990), no. 2, 413--419. 
\MR{1096387}

\bibitem{phong-stein91}
Duong H. Phong and Elias M. Stein, \emph{Radon transforms and torsion.}
 Internat. Math. Res. Notices \textbf{1991}, no. 4, 49--60. \MR{1121165}.
  
\bibitem{PrSe2007}
Malabika Pramanik and Andreas Seeger, \emph{{$L^p$} regularity of averages over
  curves and bounds for associated maximal operators,} Amer. J. Math.
  \textbf{129} (2007), no.~1, 61--103. \MR{2288738}


\bibitem{PS2019}
\bysame, \emph{{$L^p$-S}obolev estimates for a class of integral operators with
  folding canonical relations,} Journal of Geometric Analysis,   DOI 10.1007/s12220-020-00388-0.  arXiv:1909.04173 

\bibitem{Schlag1998}
W.~Schlag, \emph{A geometric proof of the circular maximal theorem,} Duke Math.
  J. \textbf{93} (1998), no.~3, 505--533. \MR{1626711}

\bibitem{SS1997}
Wilhelm Schlag and Christopher~D. Sogge, \emph{Local smoothing estimates
  related to the circular maximal theorem,} Math. Res. Lett. \textbf{4} (1997),
  no.~1, 1--15. \MR{1432805}

\bibitem{Sogge1991}
Christopher~D. Sogge, \emph{Propagation of singularities and maximal functions
  in the plane,} Invent. Math. \textbf{104} (1991), no.~2, 349--376.
  \MR{1098614}

\bibitem{Stein1976}
Elias~M. Stein, \emph{Maximal functions. {I}. {S}pherical means}, Proc. Nat.
  Acad. Sci. U.S.A. \textbf{73} (1976), no.~7, 2174--2175. \MR{0420116}

\bibitem{Stein1993}
\bysame, \emph{Harmonic analysis: real-variable methods, orthogonality, and
  oscillatory integrals}, Princeton Mathematical Series, vol.~43, Princeton
  University Press, Princeton, NJ, 1993, With the assistance of Timothy S.
  Murphy, Monographs in Harmonic Analysis, III. \MR{1232192}

\end{thebibliography}

\end{document}